\theoremstyle{plain}
\newtheorem{theorem}{Theorem}
\newtheorem{lemma}[theorem]{Lemma}
\newtheorem{proposition}[theorem]{Proposition}
\newtheorem{corollary}[theorem]{Corollary}
\newtheorem*{result}{Result}
\theoremstyle{remark}
\newtheorem{definition}{Definition}
\newtheorem{remark}{Remark}
\newtheorem{assumption}{Assumption}
\newcommand{\changed}{}
\newcommand{\mbb}[1]{\mathbb{#1}}
\newcommand{\mcal}[1]{\mathcal{#1}}
\newcommand{\dd}{\mathtt{d}}
\newcommand{\floor}[1]{\left\lfloor #1\right\rfloor}
\newcommand{\ceil}[1]{\left\lceil #1\right\rceil}
\newcommand{\Var}{\text{Var}}
\newcommand{\grad}{\nabla}
\newcommand*\quot[2]{{^{\textstyle #1}/_{\textstyle #2}}}
\newcommand{\Vol}{\text{Vol}}
\newcommand{\bmX}{\bm{X}}
\newcommand{\bmZ}{\bm{Z}}
\newcommand{\bmx}{\bm{x}}
\newcommand{\bmz}{\bm{z}}
\newcommand{\bmW}{\bm{W}}
\newcommand{\bmw}{\bm{w}}
\begin{document}

\begin{frontmatter}
%%%%%%%%%%%%%%%%%%%%%%%%%%%%%%%%%%%%%%%%%%%%%%
%%                                          %%
%% Enter the title of your article here     %%
%%                                          %%
%%%%%%%%%%%%%%%%%%%%%%%%%%%%%%%%%%%%%%%%%%%%%%
%\title{Differential Privacy of Bayesian Posterior under Contamination}
%\title{Local Mechanism from Central Perspective -- Privacy Guarantees in Posterior Sampling}
\title{Privacy Guarantees in Posterior Sampling under Contamination}
\runtitle{DP in Posterior under Contamination}
%\thankstext{T1}{A sample of additional note to the title.}

\begin{aug}
%%%%%%%%%%%%%%%%%%%%%%%%%%%%%%%%%%%%%%%%%%%%%%%
%% Only one address is permitted per author. %%
%% Only division, organization and e-mail is %%
%% included in the address.                  %%
%% Additional information can be included in %%
%% the Acknowledgments section if necessary. %%
%% ORCID can be inserted by command:         %%
%% \orcid{0000-0000-0000-0000}               %%
%%%%%%%%%%%%%%%%%%%%%%%%%%%%%%%%%%%%%%%%%%%%%%%
\author[A]{\fnms{Shenggang}~\snm{Hu}\ead[label=e1]{shenggang.hu@warwick.ac.uk}},
\author[B]{\fnms{Louis}~\snm{Aslett}\ead[label=e2]{louis.aslett@durham.ac.uk}},
\author[C]{\fnms{Hongsheng}~\snm{Dai}\ead[label=e3]{hongsheng.dai@newcastle.ac.uk}},
\author[C]{\fnms{Murray}~\snm{Pollock}\ead[label=e4]{murray.pollock@newcastle.ac.uk}},
\and
\author[A]{\fnms{Gareth} O.~\snm{Roberts}\ead[label=e5]{gareth.o.roberts@warwick.ac.uk}}
%%%%%%%%%%%%%%%%%%%%%%%%%%%%%%%%%%%%%%%%%%%%%%
%% Addresses                                %%
%%%%%%%%%%%%%%%%%%%%%%%%%%%%%%%%%%%%%%%%%%%%%%
\address[A]{Department of Statistics, University of Warwick\printead[presep={,\ }]{e1,e5}}
\address[B]{Department of Mathematical Sciences, Durham University\printead[presep={,\ }]{e2}}
\address[C]{School of Mathematics, Statistics and Physics, Newcastle University\printead[presep={,\ }]{e3,e4}}
\end{aug}

\begin{abstract}
In recent years, differential privacy has been adopted by tech-companies and governmental agencies as the standard for measuring privacy in algorithms. 
In this article, we study differential privacy in Bayesian posterior sampling settings. 
We begin by considering differential privacy in the most common privatisation setting in which Laplace or Gaussian noise is injected into the output. 
In an effort to achieve better differential privacy, we consider adopting {\em Huber's contamination model} for use within privacy settings, and replace at random data points with samples from a heavy-tailed distribution ({\em instead} of injecting noise into the output). 
We derive bounds for the differential privacy level $(\epsilon,\delta)$ of our approach, without requiring bounded observation and parameter spaces, a restriction commonly imposed in the literature.
We further consider for our approach the effect of sample size on the privacy level and the rate at which $(\epsilon,\delta)$ converges to zero.
Asymptotically, our contamination approach is fully private with no information loss.
We also provide examples of inference models for which our approach applies, with theoretical convergence rate analysis and simulation studies.
\end{abstract}

% We study the level of differential privacy in Bayesian posterior sampling setups.
% As opposed to the common privatization approach of injecting Laplace/Gaussian noise into the output, we consider privatization by intentionally contaminating the dataset through Huber's contamination model, which replaces at random the data points with samples from a heavy-tailed distribution.
% We bound the level of differential privacy $(\epsilon,\delta)$ for our approach and deduced a result of form matching the existing literature while lifting the restriction on bounded observation space.
% We further consider the effect of sample size on privacy level and the convergence rate of $(\epsilon,\delta)$ to zero.
% Asymptotically, the contamination approach is fully private at no cost of information loss.
% We also provide some examples depicting inference models that our setup is applicable to with a theoretical estimation of convergence rate.

\begin{keyword}[class=MSC]
\kwd[Primary ]{62F15}
\kwd[; secondary ]{62J12}
\end{keyword}

\begin{keyword}
\kwd{Huber's Contamination Model}
\kwd{Posterior Sampling}
\kwd{Bayesian Inference}
\kwd{Differential Privacy}
\end{keyword}

\end{frontmatter}
%%%%%%%%%%%%%%%%%%%%%%%%%%%%%%%%%%%%%%%%%%%%%%
%% Please use \tableofcontents for articles %%
%% with 50 pages and more                   %%
%%%%%%%%%%%%%%%%%%%%%%%%%%%%%%%%%%%%%%%%%%%%%%
%\tableofcontents

%%%%%%%%%%%%%%%%%%%%%%%%%%%%%%%%%%%%%%%%%%%%%%
%%%% Main text entry area:

% Section 1: introduction
% Keep + hint about the frequentist comparison (with a suitable)
\section{Introduction}
% Some opening words
With the rapid growth of smart electronic devices in recent decades, the quantity of data generated and collected every day has increased by orders of magnitude. A byproduct of this expansion is increased awareness of personal privacy and concerns about privacy leakage as a consequence of utilising the collected data in studies.
In recent years, differential privacy has become of significant interest to computer scientists, statisticians and others for studying the properties and trade-offs in designing algorithms that protect the privacy of individuals, and is also being increasingly adopted by tech-companies, for instance, Google \citep{erlingsson2014rappor,aktay2020google}, Microsoft \citep{ding2017collecting,pereira2021us}, Apple  \citep{apple2017learning}, or government agencies \citep{machanavajjhala2008privacy, abowd2018us}.

%%%
Differential privacy, as defined in \cite{dwork2006calibrating,dwork2014algorithmic}, bounds the difference in response of a randomized algorithm $\mcal{A}:\mcal{X}^n\rightarrow\Omega$ when being supplied two neighbouring datasets of size $n$, i.e., one dataset is a modification of the other dataset by changing only one data point.
In particular, the response pattern should satisfy the following inequality for any measurable set $S\subset\Omega$ and any neighbouring datasets $\bm{X}$ and $\bm{Z}\in\mcal{X}^n$,
\begin{equation}\label{eq:dp}
   \mbb{P}(\mcal{A}(\bmX)\in S)\leq \exp(\epsilon)\mbb{P}(\mcal{A}(\bmZ)\in S) + \delta. 
\end{equation}
For simplicity, in this paper, we consider $\mcal{X}$ to be $\mbb{R}^d$ or a convex subset of $\mbb{R}^d$.
The algorithm is considered "more private" when $\epsilon$ and $\delta$ are smaller, in the sense that an arbitrarily large change in a single data point does not incur a significant change in the response pattern.

% two modes of distortion mechanism:
    % local model: data contamination before inference
        % Mostly considered in the frequentist setup
        % a few works on the minimax risk (cost) of dp
        % a many works on Bayesian setup to quantify the rate of dp
    % central model: output distortion after inference
        % sampling based
        % strong assumptions
\subsection{Related Literature}
In most cases, differential privacy is achieved by purposefully injecting noise into the computation process, which perturbs the final outcome.
The injection of noise can happen in two places, either at the individual level before data collection or after data collection before releasing the result.
The most studied mechanism for privatising Bayesian inference is the latter, where the outcome is perturbed to ensure privacy.
Two main types of problems have been targeted: direct estimation problems \citep{foulds2016theory,wang2015privacy,zhang2016differential,bernstein2019differentially,kulkarni2021differentially} and sampling problems \citep{dimitrakakis2013bayesian,zhang2016differential,foulds2016theory,heikkila2019differentially,yildirim2019exact}.

The typical approach for private estimation problems is to directly inject Laplacian (or Gaussian) noise into intermediate steps or the final output \citep{wang2015privacy,foulds2016theory,zhang2016differential}.
\cite{bernstein2019differentially,kulkarni2021differentially} considered the injection of noise after moment approximations of the sufficient statistics to achieve privacy.
Gaussian/Laplacian mechanisms have the limitation that the magnitude of the noise scales with the sensitivity, defined as the maximal difference in the output of the target function with respect to neighbouring datasets.
In most applications, the sensitivity is unbounded unless the observation space is bounded.

Note that the definition of "Bayesian Differential Privacy" in \cite{triastcyn2019federated,triastcyn2020bayesian} differs from the previously mentioned work on differential privacy in Bayesian inference following \cite{dwork2006calibrating}'s definition.
In \cite{triastcyn2019federated,triastcyn2020bayesian}, the differing data point $z$ is treated as a random variable and marginalised, in contrast to (\ref{eq:dp}) where both $\bm{X}$ and $\bm{Z}$ are considered as arbitrary but fixed.

In this paper, we consider the differential privacy property of the Bayesian posterior sampling problem $\theta\sim\pi_n(\cdot|\bm{X})$ where $\pi_n$ is the posterior distribution given the dataset $\bm{X}\in\mcal{X}^n$ of size $n$, modelled by likelihood function $f(x;\theta)$, $\theta\in\Omega$, and prior $\pi_0(\theta)$.
Thereafter, we will call $\mcal{X}$ the observation (or data) space where the input data resides and $\Omega$ the parameter space where the model parameter $\theta$ takes values.

As noted in \cite{dimitrakakis2013bayesian}, Bayesian posterior sampling is itself a differentially private mechanism without additional noise under the condition that the log density is Lipschitz continuous in data with respect to some pseudometric $\rho$ for any fixed parameter $\theta$.
The level of privacy can be quantified by the pseudometric $\rho$ between the differing data points $\rho(\bmx,\bmz)$.
However, allowing the level of privacy to depend on $\rho(\bmx,\bmz)$ implies the algorithm can be arbitrarily non-private when the observation space is unbounded.
This limitation is shared by subsequent studies following \cite{dimitrakakis2013bayesian} on posterior sampling due to having a similar setup \citep{wang2015privacy,foulds2016theory,heikkila2019differentially,yildirim2019exact}.

An additional difficulty in bounding the privacy level of posterior sampling comes from the dependence of sensitivity on the parameter space.
The sensitivity of the posterior sampling problem is determined by the sensitivity of the log-posterior density function, which depends directly on the sampled parameter $\theta$.
Thus, the sensitivity could also be unbounded if the parameter space is unbounded \citep{zhang2016differential,foulds2016theory,heikkila2019differentially,yildirim2019exact, zhang2023dp}.

In contrast to adding noise at the outcome level, we consider Huber's contamination model \citep{huber2004robust} as a differential privacy mechanism.
A body of literature exists on quantifying the trade-off between privacy and efficient statistical inference under Huber's contamination model \citep{rohde2020geometrizing,cai2021cost,kroll2021density,li2022robustness}.
However, these analyses concern local differential privacy models, where privacy is assessed with respect to datasets of size $1$, whereas we study the differential privacy of the entire inference/sampling algorithm under Huber's contamination model, otherwise known as the central differential privacy model.

\begin{figure}[t]
    \centering
    \includegraphics[width=0.9\linewidth]{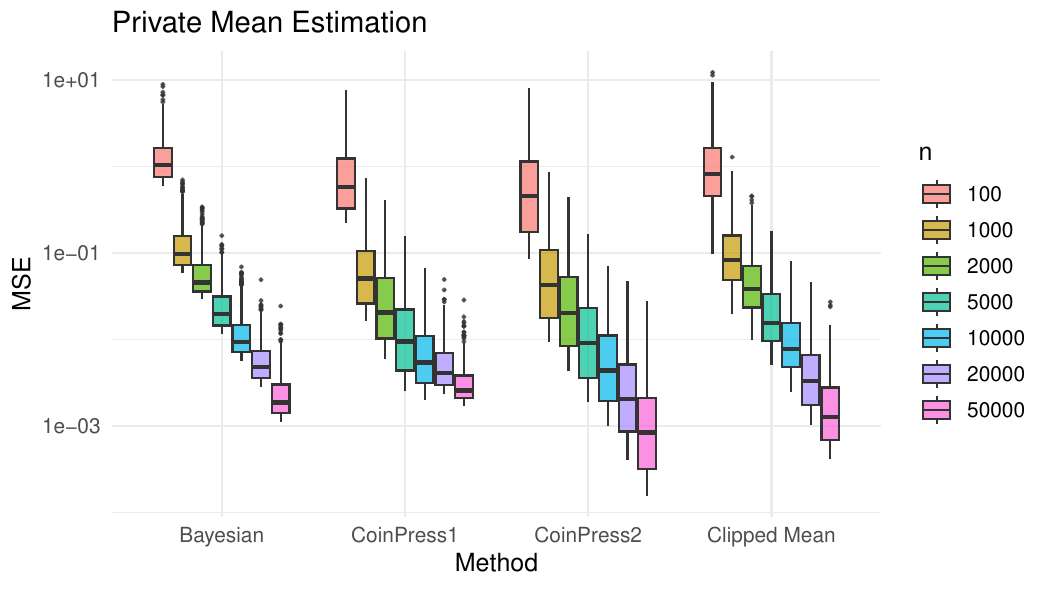}
    \caption{Mean Squared Error (MSE) Comparison for Private Mean Estimation Task. $n$ represents the size of the data set, ranging from $100$ to $50000$. MSE is plotted on log10 scale.}
    \label{fig:MSE_comparison_ct10}
\end{figure}

\subsection{Motivations}
Here, we briefly outline the motivation for choosing to privatise the posterior distribution instead of other steps in the inference process.
In practice, a broadly applicable approach to achieving differential privacy is to inject Gaussian/Laplace noise at an appropriate step.
The magnitude of the noise depends on the sensitivity of the quantity to be privatised.
In certain settings, such as logistic regression, the sensitivity of interest is naturally bounded.
However, in general, sensitivity is unbounded unless the analysis is restricted to bounded spaces.

When Gaussian/Laplace noise is injected in a straightforward manner, the injected noise usually scales polynomially with the diameter of the bounded domain. If the domain is conservatively chosen to be large, this scaling can result in poor performance.
Certain algorithms avoid the need for a bounded-domain assumption by applying appropriate clipping, e.g., \cite{biswas2020coinpress,huang2021instance} for private mean estimation.
However, such algorithms are often intricate to design and problem-specific.

In contrast, the Bayesian paradigm provides a general and well-studied foundation for statistical inference.
Inference proceeds via the posterior distribution, and sampling from the posterior can naturally be viewed as a “privatisation mechanism” due to its inherent randomness.
Although numerous works have investigated differential privacy guarantees for posterior sampling \citep{wang2015privacy,zhang2016differential,foulds2016theory,heikkila2019differentially,yildirim2019exact, zhang2023dp},  most of these results rely on strong assumptions about the likelihood function, which often entail bounded domains for either the dataset $\bmX$ or the parameter $\theta$.
Given the flexibility of the Bayesian framework, it is of significant interest to develop a general mechanism with theoretical guarantees for privatising posterior distributions without imposing restrictive assumptions.
This goal motivates the present work.

\subsection{Our contributions}
The novel contributions of this work are as follows:
\begin{itemize}
    \item We derive, in a probabilistic setting similar to the work of \cite{hall2013random}, the differential privacy properties of a general posterior sampling problem under Huber's contamination model \citep{huber2004robust} \textbf{without} assuming bounded parameter space or observation space, which are conditions that often appear explicitly or implicitly in the existing literature to ensure finite sensitivity;
    \item We characterise the rate of convergence for the differential privacy cost $(\epsilon,\delta)$ as a function of the number of data points $n$, which, to the best of our knowledge, has yet to be addressed;
    \item As an application, we show that the contaminated Bayesian inference may be used to solve private mean estimation problems with comparable performance to state-of-the-art frequentist methods (see Figure \ref{fig:MSE_comparison_ct10} and Section \ref{sec:simulation_result}).
\end{itemize}
Let $k_p$ denote the contaminated likelihood with contamination rate $p$, written as a mixture of the original likelihood function $f$ and a contamination density $g$,
$$
k_p(\bmx;\theta) := (1-p)f(\bmx;\theta) +pg(\bmx).
$$
We informally state here our main result from Section \ref{sec:main_noncompact}:
\begin{result}
Under mild assumptions, there exists a sequence of contamination probabilities $p_n\rightarrow0$ such that $\forall \epsilon,\delta>0$,
$$
\forall S\text{ measurable, }  \,\,\,\mbb{P}_{\pi_n}(S|\bm{X}) \leq \exp(\epsilon)\mbb{P}_{\pi_n}(S|\bm{Z}) + \delta
$$
fails with probability shrinking exponentially to $0$ as $n\rightarrow\infty$, where the probability is taken with respect to the neighbouring datasets $\bmX$ and $\bmZ$ (differ by only one entry) generated from the true contaminated density $k_{p_n}$.

In addition, for any $\theta$, the Fisher information $I_{p_n}(\theta)$ for the contaminated model converges entrywise to that of the uncontaminated model $I(\theta)$
$$
    I_{p_n}(\theta)\rightarrow I(\theta).
$$
\end{result}
The foregoing result demonstrates that, under mild regularity conditions, as the dataset size $n \to \infty$, sampling from the contaminated posterior achieves perfect differential privacy almost surely. 
Moreover, since the contamination rate satisfies $p_n \to 0$, the contaminated posterior distribution converges asymptotically to the true posterior. 
Consequently, the statistical efficiency of the proposed approach becomes asymptotically equivalent to that of the uncontaminated Bayesian model.

\subsection{Organisation of the Paper}
This paper is organised as follows. 
In Section \ref{sec:dp}, we introduce the setup of differential privacy and the data contamination mechanism, with relevant definitions needed for the main result.

Following that, we present our main results on the privacy level of $p$-contaminated posterior sampling in Section \ref{sec:main_results}.
Intermediate results for the main theorems are presented in Appendix \ref{sec:appx_main}, and the longer proofs appear in the supplementary material \citep{hu2023supp}.
We lift the restrictions of bounded parameter and observation spaces and derive in Section \ref{sec:main_noncompact} a privacy bound that shrinks with $n$, with accompanying discussions of the assumptions.

The link between the $(\epsilon,\delta)$ level and the contamination parameters $p$ and $g(\cdot)$ is implicit.
To help practitioners tune the model, we present, in Section \ref{sec:simulation}, a simulation-based approach to estimate $\epsilon$ (with $\delta$ fixed) for a given model setup, and to examine the privacy level of several regression models as the data size $n$ varies.
To illustrate the performance, we compare our approach with two state-of-the-art frequentist methods on a private mean estimation problem, in Section \ref{sec:pme}.
On the theoretical side, we show in Section \ref{sec:examples} and in the supplementary material (Appendix C) that our assumptions are satisfied for many regression models when $n$ is large enough.
Finally, we conclude the paper with a short discussion in Section \ref{sec:discussion}.

% Section 2: Differential Privacy in Bayesian
% Keep 2.1, 2.2
% 2.3: Posterior Decomposition (was 3.1)
% Compare frequentist and Bayesian by simulation

\section{Preliminaries}\label{sec:dp}
In this section, we introduce the basic ideas and definitions needed to construct our framework for establishing differential privacy in Bayesian posterior sampling.

\subsection{Differential Privacy in Bayesian Inference}
In a typical Bayesian inference setting, we have access to a dataset of size $n$, $\mcal{D}\in \mcal{X}^n$, where $\mcal{X}$ is the observation space and the data are modelled by a density $f(\bmx;\theta):\mcal{X}\times \Omega \rightarrow \mbb{R}_{\geq0}$ with parameter $\theta\in\Omega$.
The parameter $\theta$ is a random variable with prior density $\pi_0(\theta)$ on the measurable space $(\Omega, \mcal{B}(\Omega))$.
Then the posterior distribution for $\theta$ given the dataset $\mcal{D}$ can be expressed as
\begin{equation}\label{eq:simple_posterior}
    \pi_n(\theta|\mcal{D}) = \frac{\prod_{i=1}^n f(\bmx_i;\theta) \pi_0(\theta)}{\int_{\Omega} \prod_{i=1}^n f(\bmx_i;\theta) \pi_0(\theta) \dd\theta}.
\end{equation}
Throughout this paper, we assume the following:
\begin{itemize}
    \item The likelihood functions $f(\bmx;\theta)$ and $k_p(\bmx;\theta)$ are jointly continuous in $\bmx$ and $\theta$, and the contamination likelihood $g(\bmx)$ has full support over the observation space;
    \item The observation space $\mcal{X}$ and the parameter space $\Omega$ are convex subsets of Euclidean space, but need not be bounded or of the same dimension.
\end{itemize}
For now, we also assume that the model is weakly consistent at $\theta^*$ (see, e.g., Definition 6.1 of \cite{ghosal2017fundamentals}).
In later sections, this weak consistency assumption will be implied by other assumptions.
\begin{definition}[Neighbouring Datasets]\label{def:neighbour}
    Two datasets $\bm{X}$ and $\bm{Z}$ are neighbours if and only if $\exists \mcal{D}$ and $\bmx,\bmz\notin\mcal{D}$ such that $\bmX=\mcal{D}\cup\{\bmx\}$, $\bmZ=\mcal{D}\cup\{\bmz\}$.
\end{definition}
{\changed
We adapt the following definition from \cite{hall2013random}.
\begin{definition}[$(\epsilon_n,\delta_n,p_n)$-Random Differential Privacy]\label{def:rdp}
The direct sampling from the posterior distribution in (\ref{eq:simple_posterior}) is considered to be $(\epsilon_n,\delta_n,p_n)$-random differentially private if for any two neighbouring datasets $\bm{X},\bm{Z}\in\mcal{X}^n$, with i.i.d. entries generated from $\mbb{P}_{\theta^*}$,
\begin{equation}\label{eq:privacy_def}
    \mbb{P}_{\theta^*}\left( \forall S\in\mcal{B}(\Omega), \, \mbb{P}_{\pi_n}(S|\bm{X}) \leq \exp({\epsilon_n}) \mbb{P}_{\pi_n}(S |\bm{Z}) + \delta_n\right) \geq 1-p_n,
\end{equation}
where $\epsilon_n,\delta_n,p_n\geq0$ are decreasing sequences of constants.
\end{definition}
\begin{remark}
    Setting $p_n=0$ does not imply $(\epsilon_n,\delta_n)$-DP, because that would require the inner inequality to be satisfied uniformly for all possible pairs of datasets.
    Such a condition usually cannot be satisfied uniformly.
    For instance, when fitting a Bayesian posterior using a Gaussian likelihood of variance $1$ and a Gaussian prior $\mcal{N}(\mu_0,1)$ on the mean $\mu$.
    Let $\bm{X},\bm{Z}$ be neighbouring datasets with $\mcal{D}=\bm{X}\cap \bm{Z}$, $\bm{X} = \mcal{D}\cup\{x\}$, $\bm{Z}=\mcal{D}\cup\{z\}$, where $\mcal{D}:=\{x_1,\dots,x_n\}$.
    Then
    \begin{equation*}
        \log\left(\frac{\pi(\mu|\bm{X})}{\pi(\mu|\bm{Z})}\right) = -\frac{1}{2(n+2)}(x^2-z^2)+(x-z)\left(\mu-\frac{1}{n+2}\sum_{i=1}^{n}(x_i+\mu_0)\right),
    \end{equation*}
    which is unbounded in $\mu$, $x$ and $z$.
\end{remark}
}

\subsection{p-Contaminated Data}
Here, we consider the injection of noise before the inference stage, where for any data point $\bm{y}$ in the dataset $\mcal{D}$, we replace it with $\mcal{C}(\bm{y})$ by
\begin{align*}
    \mbb{P}(\mcal{C}(\bm{y}) = \bm{y}) &\,= 1-p, \\
    \mbb{P}(\mcal{C}(\bm{y})= \bmx) &\, = p, 
\end{align*}
where $0<p<1$, $\bmx\sim g(\cdot)$, and $g(\cdot)$ is a density function with a heavier tail than $f(\bmx;\theta),\forall\theta$.
Thus, the likelihood function of the contaminated dataset is given by
$k_p(\bmx;\theta) = (1-p) f(\bmx;\theta) + pg(\bmx)$.

\begin{remark}
We may consider two distinct DP setups with the same contamination strategy.
When we discuss the differential privacy of a randomised algorithm $\mcal{A}$, see (\ref{eq:dp}), it is important to identify what random process is involved in $\mcal{A}$, in this case, whether contamination is part of $\mcal{A}$, i.e., contamination at runtime, or not part of $\mcal{A}$, i.e., contamination before $\mcal{A}$.
In our framework, we adopt the latter configuration due to its simplicity.

Nevertheless, one could alternatively consider the former approach, in which uncontaminated neighbouring datasets $\bmX$ and $\bmZ$ are contaminated at runtime to yield $\mcal{C}(\bmX)$ and $\mcal{C}(\bmZ)$. In this setting, $\mcal{C}(\bmX)$ and $\mcal{C}(\bmZ)$ are random variables whose realisations are not necessarily neighbouring datasets.
It is not hard to verify that if a pre-contaminated sampling algorithm is $(\epsilon,\delta)$-DP, then contamination at runtime will also satisfy $(\epsilon,\delta)$-DP.
However, because $\mcal{C}(\bmX)$ and $\mcal{C}(\bmZ)$ are not necessarily neighbouring datasets when contamination occurs at runtime, the resulting analysis becomes substantially more complicated.
Either way, one should treat the contamination density $g(\cdot)$ and the contamination rate $p$ as public knowledge to avoid having model identifiability issues \citep{mu2023huber}.
\end{remark}
Hereafter, we will assume the dataset to be \textbf{precontaminated} and denote $\bmX$ as the result of the contamination. 
Thus, the hypothetical neighbouring dataset $\bmZ$ obtained by changing one entry of the already contaminated dataset $\bmX$ carries the same contamination as $\bmX$ except possibly for one entry.

\subsection{Posterior Decomposition}
Here we quickly outline our strategy to bound $\epsilon$ and $\delta$. Let 
$$d(\bmx;\theta):= \frac{k_p(\bmx;\theta)}{k_p(\bmx;\theta^*)},$$ 
where $\theta^*$ is the true parameter. 
If $\bmX$ is a dataset with $n$ data points $x_i,i=1,\dots,n$, then for $A\subset\Omega$, denote
$$
m_n(A,\bmX) = \int_A \prod_{i=1}^n d(\bmx_i;\theta) \pi_0(\theta)\dd\theta,
$$ 
where the dependence on $\theta^*$ is omitted, since $\theta^*$ is considered constant in the model.
Now, we can write the posterior distribution function $\pi_n(\cdot|\bmX)$ in terms of $m_n(\cdot,\bmX)$,
\begin{equation} \label{eq:posterior_mass}
    \mbb{P}_{\pi_n}(A|\bmX) = \frac{\int_A \prod_{i=1}^n d(\bmx_i;\theta)\pi_0(\dd \theta)}{\int_\Omega \prod_{i=1}^n d(\bmx_i;\theta)\pi_0(\dd \theta)} = \frac{m_n(A,\bmX)}{m_n(\Omega,\bmX)}.
\end{equation}
Let $A_n:=\{\theta\in\Omega: h(\theta,\theta^*)\leq\phi_n\}$, for a positive sequence $\phi_n\rightarrow0$, where $h$ denotes the Hellinger distance which will be defined in Definition \ref{def:hellinger}.
$A_n$ may be alternatively defined using other distance metrics such that $\mathbb{P}_{\pi_n}(A_n^c|\bm{X})\rightarrow0$.
We will prove our main theorems for Hellinger distance, but use $L_{\infty}$ distance in the simulation.
Then
$$
\mbb{P}_{\pi_n}(S|\bm{X}) = \mbb{P}_{\pi_n}(S\cap A_n|\bm{X}) + \mbb{P}_{\pi_n}(S\cap A_n^{c}|\bm{X}),
$$
where we aim to bound $\mbb{P}_{\pi_n}(S\cap A_n^c|\bmX)$ by $\delta$, so that $\epsilon$ is only required to be upper bounded within a compact set $A_n$.

\begin{proposition}\label{prop:posterior_decomp}
Let $\bmX$ and $\bmZ$ be neighbouring datasets, i.e., $\bm{X}=\mcal{D}\cup\{\bmx\}$, $\bm{Z}=\mcal{D}\cup\{\bmz\}$, $\bmx,\bmz\notin \mcal{D}$.
Let $A_n$ be defined as above, then for any $S\subset\Omega$,
\begin{equation}
    \mbb{P}_{\pi_n}(S|\bm{X})\leq  \eta_n\left[\eta_n + \frac{m_n(A_n^c, \bm{Z})}{m_n(\Omega,\bm{X})}\right] \mbb{P}_{\pi_n}(S|\bm{Z}) + \frac{m_n(A_n^c, \bm{X})}{m_n(\Omega,\bm{X})}, \label{eq:dp_decompose}
\end{equation}
where
\begin{equation}\label{eq:sensitivity_eta}
    \eta_n =\sup_{\bmx,\bmz\in\mcal{X}} \sup_{\theta\in A_n} \frac{d(\bmx;\theta)}{d(\bmz;\theta)}.
\end{equation}
\end{proposition}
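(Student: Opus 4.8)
The plan is to split the posterior mass of $S$ according to whether $\theta$ lies in the Hellinger ball $A_n$ or in $A_n^c$: the part supported on $A_n^c$ is bounded crudely by $\mbb{P}_{\pi_n}(A_n^c\mid\bmX)=m_n(A_n^c,\bmX)/m_n(\Omega,\bmX)$, which becomes the additive (``$\delta$-type'') term, while on $A_n$ we exploit that $\bmX=\mcal{D}\cup\{\bmx\}$ and $\bmZ=\mcal{D}\cup\{\bmz\}$ share the common sub-dataset $\mcal{D}$, so the two integrands entering $m_n(\cdot,\bmX)$ and $m_n(\cdot,\bmZ)$ differ only through the single-point ratio $d(\bmx,\theta)/d(\bmz,\theta)$, which is uniformly controlled by $\eta_n$ on $A_n$ by \eqref{eq:sensitivity_eta}.

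First I would record that the ratios are well defined: since $g$ has full support and $f(\cdot;\cdot)\ge0$, we have $k_p(\bmw,\theta)=(1-p)f(\bmw;\theta)+pg(\bmw)\ge pg(\bmw)>0$ for every $\bmw\in\mcal{X}$, $\theta\in\Omega$, and likewise with $\theta^*$; hence $d(\bmw,\theta)\in(0,\infty)$ everywhere, every ratio $d(\bmx,\theta)/d(\bmz,\theta)$ is finite and positive, and the supremum defining $\eta_n$ in \eqref{eq:sensitivity_eta} is symmetric under exchanging the roles of $\bmx$ and $\bmz$, with $\eta_n\ge1$. Next, because $\bmX$ and $\bmZ$ share $\mcal{D}$, the product $\prod_{i=1}^n d(\cdot_i,\theta)$ over the points of $\bmX$ equals $\dfrac{d(\bmx,\theta)}{d(\bmz,\theta)}$ times the analogous product over the points of $\bmZ$; therefore, for any measurable $B\subseteq A_n$,
\[
m_n(B,\bmX)=\int_B \frac{d(\bmx,\theta)}{d(\bmz,\theta)}\prod_{i=1}^n d((\bmZ)_i,\theta)\,\pi_0(\dd\theta)\ \le\ \eta_n\, m_n(B,\bmZ),
\]
and, by the symmetry of $\eta_n$, the same inequality holds with $\bmX$ and $\bmZ$ interchanged.

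I would then assemble the bound. Applying the last display with $B=S\cap A_n$ gives $m_n(S\cap A_n,\bmX)\le\eta_n\,m_n(S\cap A_n,\bmZ)\le\eta_n\,m_n(S,\bmZ)$, so by \eqref{eq:posterior_mass},
\[
\mbb{P}_{\pi_n}(S\cap A_n\mid\bmX)=\frac{m_n(S\cap A_n,\bmX)}{m_n(\Omega,\bmX)}\ \le\ \eta_n\,\frac{m_n(\Omega,\bmZ)}{m_n(\Omega,\bmX)}\,\mbb{P}_{\pi_n}(S\mid\bmZ).
\]
Splitting the normaliser as $m_n(\Omega,\bmZ)=m_n(A_n,\bmZ)+m_n(A_n^c,\bmZ)$ and applying the $B=A_n$ case (with $\bmX,\bmZ$ swapped), $m_n(A_n,\bmZ)\le\eta_n\,m_n(A_n,\bmX)\le\eta_n\,m_n(\Omega,\bmX)$, so the prefactor above is at most $\eta_n\big[\eta_n+m_n(A_n^c,\bmZ)/m_n(\Omega,\bmX)\big]$. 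For the complementary part, $\mbb{P}_{\pi_n}(S\cap A_n^c\mid\bmX)\le\mbb{P}_{\pi_n}(A_n^c\mid\bmX)=m_n(A_n^c,\bmX)/m_n(\Omega,\bmX)$. Adding the two contributions of $\mbb{P}_{\pi_n}(S\mid\bmX)=\mbb{P}_{\pi_n}(S\cap A_n\mid\bmX)+\mbb{P}_{\pi_n}(S\cap A_n^c\mid\bmX)$ yields exactly \eqref{eq:dp_decompose}.

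I do not expect a genuine obstacle here: the argument is a bookkeeping exercise once the single-point factorisation is in place. The only points needing care are (i) checking that $d$ is strictly positive so that $\eta_n$ and all ratios make sense, and (ii) consistently normalising by $m_n(\Omega,\bmX)$ throughout rather than by $m_n(\Omega,\bmZ)$ — it is precisely this choice that forces the extra factor $m_n(\Omega,\bmZ)/m_n(\Omega,\bmX)$ to appear and to be bounded, again via the sensitivity estimate on $A_n$, producing the $\eta_n^2$ term. The substantive work of showing $\eta_n\to1$ and $m_n(A_n^c,\cdot)/m_n(\Omega,\bmX)\to0$ is left to the later results that invoke this proposition.
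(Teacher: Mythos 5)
Your proof is correct and follows essentially the same route as the paper: bound $\mbb{P}_{\pi_n}(S\cap A_n^c\mid\bmX)$ by the posterior mass of $A_n^c$, use the single-point factorisation $\prod_i d(\bmx_i,\theta)=\frac{d(\bmx,\theta)}{d(\bmz,\theta)}\prod_i d(\bmz_i,\theta)$ on $A_n$ to pull out $\eta_n$, then split the resulting normaliser ratio via $m_n(\Omega,\bmZ)=m_n(A_n,\bmZ)+m_n(A_n^c,\bmZ)$ and bound $m_n(A_n,\bmZ)\le\eta_n m_n(\Omega,\bmX)$. The added remarks on positivity of $k_p$ and symmetry of $\eta_n$ are sound bookkeeping but not different in substance from the paper's argument.
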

If we can find upper bounds for $m_n(A_n^c,\bmX)$, $m_n(A_n^c,\bmZ)$ and $\eta_n$, and lower bound for $m_n(\Omega,\bm{X})$, then we can treat (the upper bounds of) $\eta_n(\eta_n + \tfrac{m_n(A_n^c, \bm{Z})}{m_n(\Omega,\bm{X})})$ as $e^{\epsilon}$ and $\frac{m_n(A_n^c, \bm{X})}{m_n(\Omega,\bm{X})}$ as $\delta$.
\begin{proof}[Proof for Proposition \ref{prop:posterior_decomp}]
    Note that using the notation of (\ref{eq:posterior_mass}),
    $$
        \mbb{P}_{\pi_n}(S\cap A_n^c|\bmX) \leq \mbb{P}_{\pi_n}(A_n^c|\bmX) = \frac{m_n(A_n^c, \bm{X})}{m_n(\Omega,\bm{X})},
    $$
    and
    \begin{align}
        \mbb{P}_{\pi_n}(S\cap A_n|\bmX) \leq &\, \sup_{\bmx,\bmz\in \mcal{X}}\left[\sup_{\theta\in A_n} \frac{d(\bmx;\theta)}{d(\bmz;\theta)} \right]\frac{\int_{A_n\cap S} \prod_{i=1}^n d(\bmz_i,\theta)\pi_0(\dd \theta)}{\int_\Omega \prod_{i=1}^n d(\bmx_i,\theta)\pi_0(\dd \theta)}\nonumber\\
        \leq &\, \sup_{\bmx,\bmz\in \mcal{X}}\left[\sup_{\theta\in A_n} \frac{d(\bmx;\theta)}{d(\bmz;\theta)} \right]\frac{\int_{A_n\cap S} \prod_{i=1}^n d(\bmz_i;\theta)\pi_0(\dd \theta)}{\int_\Omega \prod_{i=1}^n d(\bmx_i;\theta)\pi_0(\dd \theta)}\nonumber\\
        \leq &\, \sup_{\bmx,\bmz\in \mcal{X}}\left[\sup_{\theta\in A_n} \frac{d(\bmx;\theta)}{d(\bmz;\theta)}\right] \frac{\int_\Omega \prod_{i=1}^n d(\bmz_i;\theta)\pi_0(\dd \theta)}{\int_\Omega \prod_{i=1}^n d(\bmx_i;\theta)\pi_0(\dd \theta)} \mbb{P}_{\pi_n}(S\cap A_n|\bmZ)\label{eq:epsilon_split}\\
        \leq &\, \sup_{\bmx,\bmz\in \mcal{X}}\left[\sup_{\theta\in A_n} \frac{d(\bmx;\theta)}{d(\bmz;\theta)}\right] \frac{m_n(A_n,\bmZ)+m_n(A_n^c,\bmZ)}{m_n(\Omega,\bmX)} \mbb{P}_{\pi_n}(S\cap A_n|\bmZ) \nonumber\\
        \leq &\, \eta_n \left(\eta_n \mbb{P}_{\pi_n}(A_n|\bm{X})+ \frac{m_n(A_n^c,\bmZ)}{m_n(\Omega,\bmX)}\right) \mbb{P}_{\pi_n}(s\cap A_n|\bmZ),\nonumber
    \end{align}
    where the last step comes from drawing the supremum of the fraction $\tfrac{d(\bm{z};\theta)}{d(\bm{x};\theta)}$ out of the integral to transform $m_n(A_n,\bmZ)$ into $m_n(A_n,\bmX)$.
\end{proof}

\subsection{Hellinger Distance and Bracketing Entropy}
When the parameter space is non-compact, we need some control over the space complexity of density functions to compute the tail bound for the posterior distribution, which will act as an upper bound for $\delta$.
In empirical processes, covering numbers and bracketing numbers are often used to quantify the size and complexity of a metric space.
\begin{definition}[Hellinger Distance]\label{def:hellinger}
    Let $p,q$ be two density functions defined on a measurable space $(\mbb{R}^{d},\mcal{B}(\mbb{R}^{d}))$, then the squared Hellinger distance between $p$ and $q$ is defined as 
    $$
    h^2(p,q):= \frac{1}{2}\left\|p^{1/2}-q^{1/2}\right\|_2^2 = \frac{1}{2}\int_{\mbb{R}^{d}} \left(\sqrt{p(\bmx)}-\sqrt{q(\bmx)}\right)^2\dd \bmx.
    $$
\end{definition}

Throughout this paper, the \textbf{Hellinger distance} will be used as the \textbf{preferred metric} for computing \textbf{bracketing numbers}, unless specifically stated.
We hereafter omit the inclusion of metric choice in the notations.

When the context is clear, we will use $h(\theta_1,\theta_2)$ to denote the distance between two contaminated density functions $h(k_p(\cdot;\theta_1), k_p(\cdot;\theta_2))$, and replace the function space with the parameter space $\Omega$. 

\vspace{1em}
\noindent
Let $\Xi$ denote the set of non-negative integrable functions on $\mbb{R}^{d}$ and $\mcal{P}\subset\Xi$ be the set of density functions of our interest.
\begin{definition}[$r$-Hellinger Bracketing]
    A set of function pairs $\{[L_j,U_j], j=1,\dots,N\}\subset \Xi\times\Xi$ is a \textbf{$r$-Hellinger bracketing} of the space $\mcal{P}$ if 
    $$
        h(L_j, U_j)\leq r \text{ for all } j\in\{1,\dots,N\},
    $$
    and 
    $$ 
    \forall p\in\mcal{P},\,\, \exists j\in\{1,\dots,N\} \text{ such that }  L_j(\bmx)\leq p(\bmx)\leq U_j(\bmx), \forall \bmx\in \mbb{R}^{d}.
    $$
    An $r$-bracket is a pair $[L_j,U_j]$ with $h(L_j,U_j)\leq r$.
\end{definition}
\begin{remark}
    The Hellinger distance is a metric on density functions, but it also generalises easily to a metric on the space of non-negative integrable functions for the purpose of the above definition.
\end{remark}
\begin{definition}[Hellinger Bracketing Number]
    The \textbf{$r$-Hellinger bracketing number} is the size of the smallest $r$-bracketing, defined as $$N_{[]}(r,\mcal{P}):=\min\{|B| : B \text{ is an $r$-Hellinger bracketing of $\mcal{P}$}\}.$$
    The \textbf{$r$-Hellinger bracketing entropy} is the log of the bracketing number $$H_{[]}(r, \mcal{P}) := \log N_{[]}(r,\mcal{P}).$$
\end{definition}

\section{Main Results} \label{sec:main_results}

This section is devoted to presenting our main results and discussing the required assumptions.

\subsection{Main Assumptions}
First, we define some necessary assumptions.

For $A\subset\Omega$, denote the set of density functions $f(\bmx;\theta)$ indexed by $\theta\in A$ by $\mcal{F}(A):=\{f(\cdot;\theta):\theta\in A\}$.
Similarly, $\mcal{K}_p(A):=\{k_p(\cdot;\theta);\theta\in A\}$.
% some description about the assumption
\begin{assumption}\label{as:bracket_entropy}
     There exists a positive sequence $\phi_n\rightarrow0$ with $n\phi_n^2\rightarrow\infty$, constants $c>0$ with a sequence of expanding subsets $\Omega_1\subset\Omega_2\subset\dots\subseteq \Omega$, $\Omega_n\rightarrow\Omega$, such that
\begin{equation}\label{eq:bracket_entropy}
    \int_{\phi_n^2/2^{10}}^{\phi_n} H^{1/2}_{[]}(u,\mcal{F}(\Omega_n))\dd u \leq c\sqrt{n} \phi_n^2.
\end{equation}
\end{assumption}
Assumption \ref{as:bracket_entropy} measures the space complexity of the density functions $f(\cdot;\theta)$. 
Note that $H_{[]}(u,\mcal{F}(\Omega_n))\leq c^2n\phi_n^2,\forall u\in(\phi_n^2/2^{10},\phi_n)$ implies (\ref{eq:bracket_entropy}), thus this condition can be verified with the following lemma.

\begin{lemma}\label{lemma:cover_bound}
    Let $\Omega_1\subset\Omega_2\subset\dots\subset\Omega$ be a sequence of subsets such that $\Omega_n\rightarrow\Omega$ and $\Omega$ is a convex subset of $\mbb{R}^d$. 
    Let $R_n$ denote the radius of $\Omega_n$, which grows at most polynomially with $n$.
    Suppose that the set of density functions $\mcal{F}(\Omega_n)$ is locally Lipschitz in the sense that
    $$
    \left|\sqrt{f_\theta(\bmx)}-\sqrt{f_\vartheta(\bmx)}\right| \leq M^{(n)}_{\vartheta,u}(\bmx)\|\theta-\vartheta\|_\infty
    $$
    for some function $M^{(n)}_{\vartheta,u}:\mcal{X}\rightarrow\mbb{R}$, $\forall \bmx\in\mcal{X}$, $\theta,\vartheta\in\Omega_n$, $\|\theta-\vartheta\|_\infty \leq u$.
    Let
    $$
        M_{u}^{(n)}:=\sup_{\vartheta\in\Omega_n} \|M^{(n)}_{\vartheta,u}\|_2,
    $$
    which is a constant depending on $R_n$.
    If there exists a constant $q\in(0,1)$ such that $\forall u\in(0,\phi_0)$ for some constant $\phi_0$,
    \begin{equation}\label{eq:loglog-bound}
        \frac{\log\log M_{u}^{(n)}}{\log n} \leq q,
    \end{equation}
    then there exists a positive sequence $\phi_0>\phi_n\rightarrow 0$ with $\phi_n > n^{-\tfrac{1-q}{4}}$ such that (\ref{eq:bracket_entropy}) holds.
\end{lemma}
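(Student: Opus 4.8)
The plan is to bound the bracketing entropy $H_{[]}(u,\mcal{F}(\Omega_n))$ directly via a covering argument on the parameter space $\Omega_n$, then verify that the entropy integral in (\ref{eq:bracket_entropy}) is controlled by choosing $\phi_n$ appropriately. First I would observe that, since $\Omega_n$ is a convex subset of $\mbb{R}^d$ of radius $R_n$, it admits an $\gamma$-cover in $\|\cdot\|_\infty$ of cardinality at most $(C R_n/\gamma)^d$ for a dimensional constant $C$. Given such a cover $\{\vartheta_1,\dots,\vartheta_N\}$, the local Lipschitz hypothesis lets me form brackets: for each $\vartheta_j$ set $L_j(\bmx) := \left(\left(\sqrt{f_{\vartheta_j}(\bmx)} - M^{(n)}_{\vartheta_j,\gamma}(\bmx)\gamma\right)_+\right)^2$ and $U_j(\bmx) := \left(\sqrt{f_{\vartheta_j}(\bmx)} + M^{(n)}_{\vartheta_j,\gamma}(\bmx)\gamma\right)^2$; then any $f_\theta$ with $\|\theta-\vartheta_j\|_\infty\le\gamma$ is squeezed between $L_j$ and $U_j$, and $h(L_j,U_j) \le \sqrt{2}\,\|M^{(n)}_{\vartheta_j,\gamma}\|_2\,\gamma \le \sqrt{2}\,M^{(n)}_\gamma\,\gamma$ by the definition of $M^{(n)}_\gamma$ and the elementary inequality $\|a-b\|_2 \le \|\,|\sqrt{a}-\sqrt{b}|\cdot|\sqrt{a}+\sqrt{b}|\,\|$ handled termwise. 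So choosing $\gamma = \gamma(u)$ with $\sqrt{2}\,M^{(n)}_{\gamma}\gamma = u$ yields a $u$-Hellinger bracketing, giving
$$
H_{[]}(u,\mcal{F}(\Omega_n)) \le d\log\left(\frac{C R_n M^{(n)}_\gamma}{u}\right) + O(1).
$$

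Next I would translate the growth condition (\ref{eq:loglog-bound}) into a bound on this entropy. Condition (\ref{eq:loglog-bound}) says $\log\log M^{(n)}_u \le q\log n$, i.e. $\log M^{(n)}_u \le n^q$ for all relevant $u$, and $R_n$ is polynomial in $n$ so $\log R_n = O(\log n)$. Hence for $u$ in the integration range, $H_{[]}(u,\mcal{F}(\Omega_n)) \le d\,(n^q + O(\log n) + \log(1/u)) \le c_1 n^q$ for $n$ large and $u \ge \phi_n^2/2^{10}$ with $\phi_n$ not too small — specifically the $\log(1/u)$ term is $O(\log n)$ provided $\phi_n \ge n^{-\kappa}$ for a fixed $\kappa$, which will be guaranteed by the forthcoming choice $\phi_n > n^{-(1-q)/4}$. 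Then
$$
\int_{\phi_n^2/2^{10}}^{\phi_n} H^{1/2}_{[]}(u,\mcal{F}(\Omega_n))\,\dd u \le \phi_n \sqrt{c_1 n^q} = \sqrt{c_1}\,\phi_n\, n^{q/2},
$$
and this is $\le c\sqrt{n}\,\phi_n^2$ as soon as $n^{q/2} \le (c/\sqrt{c_1})\sqrt{n}\,\phi_n$, i.e. $\phi_n \ge c_2 n^{(q-1)/2}$. Since $(q-1)/2 < -(1-q)/4$, any $\phi_n$ with $\phi_n > n^{-(1-q)/4}$ and $\phi_n\to 0$ (and still $\phi_n < \phi_0$, compatible because $(1-q)/4>0$) satisfies this for $n$ large; and $n\phi_n^2 > n^{1-(1-q)/2} = n^{(1+q)/2} \to\infty$, so Assumption \ref{as:bracket_entropy}'s side conditions hold too. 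Finally one checks the hypothesis $\|\theta-\vartheta\|_\infty \le u$ under which local Lipschitzness is assumed is met: the cover radius $\gamma(u)\to 0$, and for $u<\phi_0$ we have $\gamma(u)<\phi_0$, so $M^{(n)}_{\vartheta,\gamma}$ is the valid local constant.

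The main obstacle I anticipate is making the bracketing construction rigorous when $\sqrt{f_{\vartheta_j}} - M^{(n)}_{\vartheta_j,\gamma}\gamma$ can go negative and when $M^{(n)}_{\vartheta_j,\gamma}(\bmx)$ is an unbounded function of $\bmx$ — truncating $L_j$ at zero is harmless for the lower bracket, but I need to confirm the Hellinger distance $h(L_j,U_j)$ is still controlled after truncation (it can only decrease, so this is fine) and that $U_j$ remains integrable, which follows from $\|M^{(n)}_{\vartheta_j,\gamma}\|_2 \le M^{(n)}_\gamma < \infty$ together with $f_{\vartheta_j}$ being a density. A secondary technical point is the dependence of the covering constant on $d$ and on whether $\Omega_n$ is all of $\mbb{R}^d$ or a genuine subset; since we only need $\Omega_n$ of polynomial radius, a standard volumetric covering bound for convex bodies suffices, and the fixed dimension $d$ is absorbed into constants. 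Everything else is bookkeeping on the exponents, which I would organize by first fixing $q$, then exhibiting the explicit admissible range of $\phi_n$ and checking each required inequality in turn.
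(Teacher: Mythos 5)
Your proof is correct and follows essentially the same route as the paper's: a $\gamma$-cover of the convex set $\Omega_n$, brackets of the form $L_j=\bigl(\max\{\sqrt{f_{\vartheta_j}}-\gamma M^{(n)}_{\vartheta_j,\gamma},0\}\bigr)^2$ and $U_j=(\sqrt{f_{\vartheta_j}}+\gamma M^{(n)}_{\vartheta_j,\gamma})^2$, a volumetric bound on the covering number giving $H_{[]}(u,\mcal{F}(\Omega_n))\lesssim d\log(R_n M^{(n)}/u)$, and then the translation of (\ref{eq:loglog-bound}) into $\log M^{(n)}_u\leq n^q$ so that the entropy integral is $O(\phi_n n^{q/2})\leq c\sqrt{n}\phi_n^2$ once $\phi_n\gtrsim n^{-(1-q)/2}$. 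You also correctly observe that the stated lower bound $\phi_n>n^{-(1-q)/4}$ is more restrictive than what the calculation actually requires and hence is consistent with it; the only cosmetic issue is the spurious invocation of the identity $a-b=(\sqrt a-\sqrt b)(\sqrt a+\sqrt b)$ where the Hellinger calculation only needs $|\sqrt{L_j}-\sqrt{U_j}|\leq 2\gamma M^{(n)}_{\vartheta_j,\gamma}$ pointwise.
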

\begin{remark}
    It is usually enough to consider $\Omega_n$ as the $L_2$ neighbourhood of radius $R_n$ around $\theta^*$, with $R_n\rightarrow\infty$ no faster than polynomial in $n$.
    If $R_n$ grows exponentially in $n$, $(\ref{eq:bracket_entropy})$ might not hold for any $\phi_n$.
    
    Later, in the examples, we show that $M_u^{(n)}$ grows with $u$, so it usually suffices to check (\ref{eq:loglog-bound}) for only $u=\phi_0$.
\end{remark}

\begin{definition}\label{def:neighbourhood}
For $\alpha\in(0,1]$, define
$$
\rho_{p,\alpha}(\theta,\vartheta):=\frac{1}{\alpha}\int\left[\left(\frac{k_p(\bmx;\theta)}{k_p(\bmx;\vartheta)}\right)^{\alpha}-1\right]k_p(\bmx;\theta)\dd \bmx.
$$
For $t>0$, $p\in(0,1)$, define the $\rho_{p,\alpha}$ neighbourhood around $\theta^*$ by 
$$
S_{p,\alpha}(t):=\{\theta : \rho_{p,\alpha}(\theta^*,\theta)\leq t\}.
$$
\end{definition}

\begin{assumption}\label{as:prior_center}
    The positive sequence $\phi_n\rightarrow0$ satisfies
    \begin{equation}\label{eq:prior_centre}
        \mbb{P}_{\pi_0}(S_{0,\alpha}(t_{n}))\geq c_3 e^{-2nt_{n}},
    \end{equation}
    for some constants $\alpha\in(0,1]$, $c_3>0$,
    where $t_n:=c_1\phi_n^2/8$, 
    $
        \tfrac{n t_{n}}{\log n}> \max\left\{\tfrac{1}{\alpha}, \tfrac{c_1}{8c_2}\right\}
    $, $\forall n>0$, and $c_2$ is a constant from Lemma \ref{lemma:wong1995probability}.
    %$c_1=1/24,c_2>2/(27\times963)$ are possible choices as given in \cite{wong1995probability}.
\end{assumption}

\begin{assumption}\label{as:prior_tail}
    The positive sequence $\phi_n\rightarrow0$ and $\Omega_n\rightarrow\Omega$ satisfy
    \begin{equation}\label{eq:prior_tail}
        \mbb{P}_{\pi_0}(\Omega_{n}^c)\leq c_4\exp(-2c_1 n\phi_{n}^2),
    \end{equation} 
    for some constant $c_4>0$.
\end{assumption}

Assumptions \ref{as:prior_center} and \ref{as:prior_tail} control the prior concentration around the centre and in the tails, respectively.
The first three assumptions are needed to control the $\delta$ term in (\ref{eq:privacy_def}).
Note that Assumptions \ref{as:bracket_entropy}-\ref{as:prior_tail} feature the same symbol $\phi_n$ for the sequence.
In the proof of the main result, we require the same sequence $\phi_n$ to satisfy these three assumptions simultaneously.

\begin{definition}\label{def:sup_grad}
    Denote the supremum of the directional derivative of $f(\bmx;\theta)$ with respect to $\theta$ at $(\bmx,\theta)$ by $\tilde{\grad}_{\theta} f(\bmx;\theta)$,
    $$\tilde{\grad}_{\theta} f(\bmx;\theta) = \sup_{\|\vec{u}\|_2=1} \lim_{\xi\downarrow0} \frac{f(\bmx;\theta+\xi\vec{u}) - f(\bmx;\theta)}{\xi}.$$ 
\end{definition}

\begin{assumption}\label{as:ratio_bound}
    All the directional derivatives of the density function $f(\bmx;\theta)$ with respect to $\theta$ exist and are bounded, and the contamination density $g$ has tails no lighter than $\tilde{\grad}_\theta f(\bmx;\theta)$ in any direction, i.e., for any fixed $\theta\in\Omega$,
    $$
        \sup_{\bmx\in\mcal{X}} \left\|\frac{\tilde{\grad}_{\theta} f(\bmx;\theta)}{g(\bmx)}\right\|_2 <\infty.
    $$
\end{assumption}
\begin{remark}
    We assume a weaker condition than total differentiability to accommodate the use of absolute value $|\cdot|$ in density functions, e.g., the Laplace distribution.

    When $g$ is continuous and has full support over the support of $\tilde{\grad}_{\theta} f$, the assumption holds trivially whenever $g$ has suitably heavy tails.
\end{remark}

\begin{assumption}\label{as:identifiability}
For every $\psi>0$, 
\begin{equation}\label{cond:inf_hellinger}
    \inf_{\|\theta-\theta^*\|_2>\psi} h(f(\cdot;\theta),f(\cdot;\theta^*)) > 0.
\end{equation}
\end{assumption}
\begin{remark}
    Assumption \ref{as:identifiability} is similar to one of the conditions in Theorem 9.13 of \cite{wasserman2004all}, but stronger, since we stated it in terms of Hellinger distance rather than KL divergence. The condition directly implies the identifiability of the model. See also Condition 2 in the lecture notes \cite{wasserman2020intermediate}.
\end{remark}

The following Proposition helps quantify the rate of convergence for $\epsilon_n$ within a Hellinger neighbourhood of $\theta^*$ by employing Cauchy's mean value theorem on the likelihood ratio $d(\bmx;\theta)$.
\begin{proposition}\label{prop:hellinger_convergence}
If Assumption \ref{as:identifiability} holds, then for any sequence $\phi_n\rightarrow0$, and any decreasing sequence $1> p_n\rightarrow 0$, there exists a sequence $\psi_n\rightarrow0$ such that
\begin{equation}\label{cond:hellinger}
    h(k_{p_n}(\cdot;\theta),k_{p_n}(\cdot;\theta^*))\leq \phi_n \implies \|\theta-\theta^*\|_2\leq \psi_n. 
\end{equation}
\end{proposition}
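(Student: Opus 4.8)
The plan is to transfer the hypothesis on the \emph{contaminated} densities to the \emph{uncontaminated} ones and then invoke Assumption~\ref{as:identifiability}. Write $\|q_1-q_2\|_{\TV}:=\tfrac12\int|q_1-q_2|$ for the total variation distance between two densities and recall the standard two-sided comparison $h^2(q_1,q_2)\le\|q_1-q_2\|_{\TV}\le\sqrt2\,h(q_1,q_2)$, valid for the Hellinger distance as normalised in Definition~\ref{def:hellinger}. The key point is that the contamination component $p\,g$ is shared by $k_p(\cdot;\theta)$ and $k_p(\cdot;\theta^*)$, so it cancels in the difference and
\[
\|k_p(\cdot;\theta)-k_p(\cdot;\theta^*)\|_{\TV}=(1-p)\,\|f(\cdot;\theta)-f(\cdot;\theta^*)\|_{\TV}
\]
\emph{exactly}. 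Chaining this identity with the comparison inequalities gives
\[
h^2\bigl(f(\cdot;\theta),f(\cdot;\theta^*)\bigr)\le\|f(\cdot;\theta)-f(\cdot;\theta^*)\|_{\TV}=\frac{\|k_p(\cdot;\theta)-k_p(\cdot;\theta^*)\|_{\TV}}{1-p}\le\frac{\sqrt2}{1-p}\,h\bigl(k_p(\cdot;\theta),k_p(\cdot;\theta^*)\bigr),
\]
so that $h\bigl(k_{p_n}(\cdot;\theta),k_{p_n}(\cdot;\theta^*)\bigr)\le\phi_n$ forces $h\bigl(f(\cdot;\theta),f(\cdot;\theta^*)\bigr)\le\beta_n:=\bigl(\sqrt2\,\phi_n/(1-p_n)\bigr)^{1/2}$, and $\beta_n\to0$ since $\phi_n\to0$ and $p_n\to0$.

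With this in hand I would simply set $\psi_n:=\sup\bigl\{\|\theta-\theta^*\|_2:\ h(k_{p_n}(\cdot;\theta),k_{p_n}(\cdot;\theta^*))\le\phi_n\bigr\}$ (with $\sup\emptyset:=0$), so that implication~(\ref{cond:hellinger}) holds by construction, and it remains only to check $\psi_n\to0$. If not, there exist $\delta_0>0$ and a subsequence $(n_k)$ with $\psi_{n_k}>\delta_0$; since $\delta_0$ is then not an upper bound of the defining set, we may choose $\theta_{n_k}$ with $\|\theta_{n_k}-\theta^*\|_2>\delta_0$ and $h(k_{p_{n_k}}(\cdot;\theta_{n_k}),k_{p_{n_k}}(\cdot;\theta^*))\le\phi_{n_k}$. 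By the first paragraph, $h(f(\cdot;\theta_{n_k}),f(\cdot;\theta^*))\le\beta_{n_k}\to0$, while Assumption~\ref{as:identifiability} applied with $\psi=\delta_0$ gives $h(f(\cdot;\theta_{n_k}),f(\cdot;\theta^*))\ge\inf_{\|\theta-\theta^*\|_2>\delta_0}h(f(\cdot;\theta),f(\cdot;\theta^*))>0$ for every $k$ --- a contradiction. Hence $\psi_n\to0$ and the proof is complete.

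I do not expect a genuine obstacle here; the only delicate point is the mixture step. The naive bound $h(k_p(\cdot;\theta),k_p(\cdot;\theta^*))\ge(1-p)\,h(f(\cdot;\theta),f(\cdot;\theta^*))$ is \emph{false} in general because the Hellinger distance does not behave linearly under the addition of a common mixture component (one can see this already when $f(\cdot;\theta)$ and $f(\cdot;\theta^*)$ have disjoint supports), which is exactly why the argument must pass through total variation, where the common mass $p\,g$ cancels cleanly; the price is only the square appearing on the left-hand side of the displayed bound, which is harmless here. One could instead produce an explicit modulus $\psi_n=\omega(\beta_n)$ by inverting the nondecreasing map $\psi\mapsto\inf_{\|\theta-\theta^*\|_2>\psi}h(f(\cdot;\theta),f(\cdot;\theta^*))$, but the subsequence argument avoids having to discuss the regularity of that map or whether it tends to $0$ at the origin.
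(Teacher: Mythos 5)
Your proof is correct and, if anything, more complete than the paper's. The paper opens its argument by asserting that Assumption~\ref{as:identifiability} automatically ``holds for the set of density functions $k_p$,'' proving only the statement for $f=k_0$ and then appealing to that assertion to cover the contaminated case; it never actually quantifies how the Hellinger separation of $\{f(\cdot;\theta)\}$ propagates to $\{k_{p_n}(\cdot;\theta)\}$ as $p_n$ varies, which is precisely where the content of the proposition lies. Your total-variation identity
$\|k_p(\cdot;\theta)-k_p(\cdot;\theta^*)\|_{\TV}=(1-p)\,\|f(\cdot;\theta)-f(\cdot;\theta^*)\|_{\TV}$,
chained with $h^2\le\|\cdot\|_{\TV}\le\sqrt2\,h$, is exactly the step that fills this gap, turning a contaminated Hellinger bound into an uncontaminated one uniformly in $p\in[0,p_0]$. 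You are also right that the tempting multiplicative bound $h(k_p(\cdot;\theta),k_p(\cdot;\theta^*))\ge(1-p)\,h(f(\cdot;\theta),f(\cdot;\theta^*))$ is false (a shared heavy-tailed $g$ can pull Hellinger distances together faster than $1-p$), and the detour through total variation, where the common mass cancels exactly, is the honest way around. Beyond that, the remaining structure differs cosmetically from the paper's: the paper works with the modulus $A(\psi)=\inf_{\|\theta-\theta^*\|_2>\psi}h(f(\cdot;\theta),f(\cdot;\theta^*))$, argues it is nondecreasing and right-continuous, and inverts it to produce $\psi_n$, whereas you define $\psi_n$ directly as the supremum of $\|\theta-\theta^*\|_2$ over the sublevel set and avoid any regularity discussion of $A$; both finish with the same contradiction/subsequence argument. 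Your route buys explicitness about the contamination step and spares the reader the topology of $A(\psi)$; the paper's modulus route would, if carried through carefully, give a slightly more quantitative $\psi_n$ but only at the cost of precisely the bookkeeping your argument sidesteps.
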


\subsection{DP under Bayesian Sampling} \label{sec:main_noncompact}
\begin{theorem} \label{thm:e-d_privacy}
Suppose that there exists a decreasing sequence $\phi_n>0$ that satisfies the conditions in Assumptions \ref{as:bracket_entropy}-\ref{as:prior_tail}, and suppose also that Assumptions \ref{as:ratio_bound}, \ref{as:identifiability} hold.
For a fixed $p_0\in(0,1)$, there exist positive constants (or a sequence of constants) $\{T_n\},c_1,c_2,\dots,C_1,C_2,\dots<\infty$ such that for any sequence $p_0 \geq p_n \downarrow 0$,
\begin{align}\label{eq:dp_main}
    \mbb{P}^*_{\theta^*}\Bigl(\forall S\in\mcal{B}(\Omega),&\,\mbb{P}_{\pi_n}(S|\bm{X}) \leq  e^{\epsilon_n}\mbb{P}_{\pi_n}(S|\bm{Z}) + \delta_n \Bigr) \\
     \geq &\,\begin{dcases}
    1- C_1e^{-nc_5 \phi^2_n}, & \text{for } nc_2\phi_n^2\geq \log 2,\\
    1- (C_2+C_3(\log \phi_n)^2)e^{-nc_5 \phi^2_n}, & \text{otherwise}
\end{dcases} \nonumber
\end{align}
with
$$\epsilon_n:= \varepsilon_n + \log(\delta_n+\exp(\varepsilon_n)),\quad \varepsilon_n := 2(1-p_n)T_n\psi_n/p_n,\quad \delta_n:= \tfrac{4}{c_3}e^{-\frac{1}{2}c_1 n\phi_n^2},$$ 
where $\mbb{P}^*_{\theta^*}$ is the data generating (outer) measure for the pair of neighbouring datasets of size $n$, $\bmX$ and $\bmZ$, with entries generated from the true likelihood $k_{p_n}(\cdot;\theta^*)$. 
In addition, $t_n = c_1\phi_n^2/8$, $\psi_n$ is defined in Proposition \ref{prop:hellinger_convergence}, and $c_5 =  \min\{c_1\alpha/8,c_2\}$.
In other words,
$$
\forall S\in\mcal{B}(\Omega),\,\,\mbb{P}_{\pi_n}(S|\bm{X}) \leq \exp(\epsilon_n)\mbb{P}_{\pi_n}(S|\bm{Z}) + \delta_n
$$
holds with probability tending to one as $n\rightarrow\infty$.
\end{theorem}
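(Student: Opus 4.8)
The plan is to combine the deterministic posterior decomposition in Proposition \ref{prop:posterior_decomp} with high-probability bounds on each of its ingredients. Recall that Proposition \ref{prop:posterior_decomp} gives, for $A_n = \{\theta : h(\theta,\theta^*)\le\phi_n\}$,
$$
\mbb{P}_{\pi_n}(S|\bmX) \le \eta_n\Bigl[\eta_n + \tfrac{m_n(A_n^c,\bmZ)}{m_n(\Omega,\bmX)}\Bigr]\mbb{P}_{\pi_n}(S|\bmZ) + \tfrac{m_n(A_n^c,\bmX)}{m_n(\Omega,\bmX)}.
$$
So the task reduces to three estimates: (i) a deterministic upper bound on the sensitivity $\eta_n$ of the form $\eta_n \le \exp(\varepsilon_n)$ with $\varepsilon_n = 2(1-p_n)T_n\psi_n/p_n$; (ii) a high-probability lower bound $m_n(\Omega,\bmX)\ge c_3 e^{-\frac14 c_1 n\phi_n^2}$ (say); and (iii) high-probability upper bounds $m_n(A_n^c,\bmX), m_n(A_n^c,\bmZ) \le c\, e^{-\frac34 c_1 n\phi_n^2}$ or similar, so that the ratio in the $\delta$ term is at most $\tfrac{4}{c_3}e^{-\frac12 c_1 n\phi_n^2}$. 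Given these, plug in: the $\delta$-term is $\delta_n$; the $e^\epsilon$ factor is $\eta_n(\eta_n + \delta_n) \le \exp(\varepsilon_n)(\exp(\varepsilon_n)+\delta_n) = \exp(\varepsilon_n + \log(\exp(\varepsilon_n)+\delta_n)) = \exp(\epsilon_n)$, matching the stated $\epsilon_n$.

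First I would handle the sensitivity bound (i). Here $\eta_n = \sup_{\bmx,\bmz}\sup_{\theta\in A_n} d(\bmx;\theta)/d(\bmz;\theta) = \sup_{\bmx,\bmz}\sup_{\theta\in A_n} \tfrac{k_{p_n}(\bmx;\theta)}{k_{p_n}(\bmz;\theta)}\tfrac{k_{p_n}(\bmz;\theta^*)}{k_{p_n}(\bmx;\theta^*)}$, and since $k_{p_n}(\bmx;\theta) = (1-p_n)f(\bmx;\theta) + p_n g(\bmx) \ge p_n g(\bmx)$, one gets $\log\bigl(k_{p_n}(\bmx;\theta)/k_{p_n}(\bmx;\vartheta)\bigr)$ is controlled via a mean-value/directional-derivative argument by $\sup_{\bmx}\|\tilde\grad_\theta f(\bmx;\cdot)/g(\bmx)\|_2 \cdot \tfrac{1-p_n}{p_n}\|\theta-\vartheta\|_2$, which is finite by Assumption \ref{as:ratio_bound}. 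Combined with Proposition \ref{prop:hellinger_convergence} — $h(k_{p_n}(\cdot;\theta),k_{p_n}(\cdot;\theta^*))\le\phi_n$ forces $\|\theta-\theta^*\|_2\le\psi_n$, so the diameter of $A_n$ in Euclidean distance is $O(\psi_n)$ — this yields $\log\eta_n \le 2(1-p_n)T_n\psi_n/p_n = \varepsilon_n$ with $T_n$ bounded (essentially the Assumption \ref{as:ratio_bound} supremum, controlled on $\Omega_n$). This step is purely deterministic and does not invoke the data-generating measure.

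The harder part, and the main obstacle, is (ii)–(iii): the probabilistic control of the normalizing "constant" $m_n(\Omega,\bmX)$ from below and of the posterior tail mass $m_n(A_n^c,\bmX)$ from above, simultaneously for the random dataset $\bmX$ (and its neighbour $\bmZ$, which differs in one coordinate — so one extra data point must be handled separately, contributing at most a bounded multiplicative factor since $d(\bmx;\theta)\le \frac{1-p_n}{p_n}\cdot\frac{f(\bmx;\theta)}{g(\bmx)}\cdot(\text{stuff})$, or more simply by absorbing it into constants). The lower bound on $m_n(\Omega,\bmX)$ is the standard Bayesian prior-mass / Kullback–Leibler argument: restrict the integral to the $\rho_{0,\alpha}$-neighbourhood $S_{0,\alpha}(t_n)$, use Assumption \ref{as:prior_center} for its prior mass $\ge c_3 e^{-2nt_n}$, and control $\prod_i d(\bmx_i,\theta)$ from below on that set via a Markov/moment bound on $\sum_i \log d(\bmx_i,\theta)$ (this is where Lemma \ref{lemma:wong1995probability} and the $\rho_{p,\alpha}$ functional enter, since $\mbb{E}_{\theta^*}[d(\bmx;\theta)^\alpha]$ is exactly tied to $\rho_{p_n,\alpha}$). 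The tail bound (iii) is the empirical-process step: split $A_n^c = (A_n^c\cap\Omega_n)\cup\Omega_n^c$; on $\Omega_n^c$ use Assumption \ref{as:prior_tail} directly ($\mbb{P}_{\pi_0}(\Omega_n^c)\le c_4 e^{-2c_1 n\phi_n^2}$, and $\prod d(\bmx_i,\theta)$ has bounded expectation); on $A_n^c\cap\Omega_n$ apply a Wong–Shen / Ghosal–van der Vaart type testing argument, using Assumption \ref{as:bracket_entropy} (the bracketing-entropy integral) to build exponentially powerful tests separating $\theta^*$ from $\{h(\theta,\theta^*)>\phi_n\}$, together with Proposition \ref{prop:hellinger_convergence} to translate between the contaminated and uncontaminated Hellinger geometry and the fact that $h(k_{p_n}(\cdot;\theta),k_{p_n}(\cdot;\theta^*)) \ge \sqrt{1-p_n}\, h(f(\cdot;\theta),f(\cdot;\theta^*))$-type inequalities relate $\mcal{K}_{p_n}$ brackets to $\mcal{F}$ brackets. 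Assembling these, the failure probability is the union of the KL-lower-bound failure, the $\Omega_n^c$ contribution, and the test errors, which are each $O(e^{-n c_5\phi_n^2})$ up to the polynomial-in-$\log\phi_n$ prefactor that appears in the small-$\phi_n$ regime $nc_2\phi_n^2 < \log 2$ (coming from chaining the entropy integral down to scale $\phi_n^2/2^{10}$). Finally, since $\sum_n e^{-n c_5\phi_n^2}<\infty$ under $n\phi_n^2\to\infty$ (indeed $\phi_n > n^{-(1-q)/4}$ makes $n\phi_n^2 \gtrsim n^{(1+q)/2}$), Borel–Cantelli upgrades the in-probability statement to the $\mbb{P}_{\theta^*}$-almost-sure statement for all large $n$.
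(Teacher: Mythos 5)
Your proposal follows the paper's own strategy exactly: decompose via Proposition \ref{prop:posterior_decomp}, bound $\eta_n$ deterministically through Lemma \ref{lemma:ratio_bound} together with Proposition \ref{prop:hellinger_convergence}, bound $m_n(\Omega,\bmX)$ from below via the $\rho_{p,\alpha}$ prior-mass argument (Lemmas \ref{lemma:divergence_order} and \ref{lemma:denominator_bound}), bound $m_n(A_n^c,\cdot)$ from above by splitting into $\Omega_n$ (bracketing entropy, Lemma \ref{lemma:wong1995probability}) and $\Omega_n^c$ (prior tail, Assumption \ref{as:prior_tail}), apply the same bound to $\bmZ$, and assemble exactly as in Appendix~\ref{sec:appx_main}, finishing with Borel--Cantelli for the almost-sure claim. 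The only minor slip is the direction of the auxiliary Hellinger inequality used to relate $\mcal{K}_{p_n}$-brackets to $\mcal{F}$-brackets: contamination contracts Hellinger distance, $h(k_{p_n}(\cdot;\theta),k_{p_n}(\cdot;\theta^*))\leq\sqrt{1-p_n}\,h(f(\cdot;\theta),f(\cdot;\theta^*))$, which is what yields $H_{[]}(u,\mcal{K}_{p_n}(A))\leq H_{[]}(u,\mcal{F}(A))$ in Lemma \ref{lemma:entropy_order} — but this does not affect the correctness of your plan.
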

We define in Definition \ref{def:rdp} the type of DP proved here, which is adapted from \cite{hall2013random}, but in our case, the probability of failing $(\epsilon_n,\delta_n)$-DP diminishes exponentially with $n$.
The main conclusion of Theorem \ref{thm:e-d_privacy} is that as $n\rightarrow\infty$, we may choose a sequence $p_n\rightarrow0$ such that $\epsilon_n,\delta_n\rightarrow0$ and for $n$ large enough the failure probability falls into the first case of (\ref{eq:dp_main}), i.e., $<C_1\exp(-nc_5\phi_n^2)\rightarrow0$.

\begin{proof}[Sketch Proof]
    For simplicity, we will write $\mbb{P}$ instead of $\mbb{P}^*_{\theta^*}$ when the context is clear.
    Proposition \ref{prop:posterior_decomp} provides insight into how to upper bound the $\epsilon_n$ and $\delta_n$. 
    Recall that
    \begin{equation*}
    \mbb{P}_{\pi_n}(S|\bm{X})\leq  \eta_n\left[\eta_n + \frac{m_n(A_n^c, \bm{Z})}{m_n(\Omega,\bm{X})}\right] \mbb{P}_{\pi_n}(S|\bm{Z}) + \frac{m_n(A_n^c, \bm{X})}{m_n(\Omega,\bm{X})},
    \end{equation*}
    where
    \begin{equation*}
        \eta_n =\sup_{\bmx,\bmz\in\mcal{X}} \sup_{\theta\in A_n} \frac{d(\bmx;\theta)}{d(\bmz;\theta)}.
    \end{equation*}
    Under Assumptions \ref{as:ratio_bound} and \ref{as:identifiability}, we can show that $\eta_n$ is always finite with $\eta_n\rightarrow1$.
    We can also characterise its convergence rate (see Lemma \ref{lemma:ratio_bound}).

    Then we prove the following:
    \begin{itemize}
        \item In Lemma \ref{lemma:denominator_bound}, we prove under Assumption \ref{as:prior_center} that
        $$
        \mbb{P}\left(m_n(\Omega,\bmX)\leq \frac{c_3}{2}e^{-4nt_n}\right)\leq 2e^{-2n\alpha t_n}.
        $$
        \item By Lemma \ref{lemma:wong1995probability}, Assumption \ref{as:bracket_entropy} implies that 
        $$
        \mbb{P}\left(m_n(A_n^c\cap \Omega_n, \bmX)\geq e^{-c_1n\phi_n^2}\right) \leq C^{**}\exp(-c_2n\phi_n^2),
        $$ 
        for some constant $C^{**}$ we specify in the lemma.
        The proof is provided in the supplementary material. 
        \item Then Lemma \ref{lemma:numerator_bound} shows that 
        $$
        \mbb{P}\left(m_n(A_n^c\cap \Omega_n^c, \bmX) \geq e^{-c_1n\phi_n^2}\right) \leq c_4e^{-c_1n\phi_n^2},
        $$ and consequently 
        $$
        \mbb{P}\left(m_n(A_n^c, \bmX)\geq 2e^{-c_1n\phi_n^2}\right) \leq C^{**}e^{-c_2n\phi_n^2}+c_4e^{-nc_1\phi_n^2}.
        $$
        \item The relevant proofs are contained in the Appendix \ref{sec:appx_main}, apart from the proof of Lemma \ref{lemma:wong1995probability} in the supplement.
    \end{itemize}
    Since we assume that $\bmZ$ also follows the same data generating measure, $m_n(A_n^c,\bmZ)$ shares the same upper bound as $m_n(A_n^c, \bmX)$.
    Now, every term in the inequality has a probabilistic upper bound; combining these bounds completes the proof.
\end{proof}
Lemmas \ref{lemma:entropy_order}, \ref{lemma:divergence_order}, and Proposition \ref{prop:hellinger_convergence} are intermediate results such that whenever the Assumptions \ref{as:bracket_entropy} and \ref{as:prior_center} hold with respect to uncontaminated likelihood $f(\cdot;\theta)$, the same assumptions also hold for the actual contaminated likelihood $k_p(\cdot;\theta)$, and in practice, one only needs to check the assumptions for the case $p=0$.

\begin{theorem}[Fisher Information] \label{thm:fisher_information}
Let $p_n\rightarrow0$ be a sequence of positive real numbers.
Let $I_{p_n}(\theta)$ denote the Fisher information matrix for a single data point with respect to the density $k_{p_n}(\cdot;\theta)$ and $I(\theta)$ denote the Fisher information matrix with respect to $f(\cdot;\theta)$.
Then 
$$
 I_{p_n}(\theta^*)\rightarrow I(\theta^*).
$$
\end{theorem}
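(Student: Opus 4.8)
The plan is to apply dominated convergence entry-by-entry to the integral defining the Fisher information. Since the contamination density $g$ does not depend on $\theta$, we have $\grad_\theta k_p(\bmx;\theta)=(1-p)\grad_\theta f(\bmx;\theta)$, so that for each pair of coordinates $(i,j)$ of $\theta$,
\begin{equation*}
    [I_p(\theta^*)]_{ij} = \int_{\mcal{X}} \frac{\partial_i k_p(\bmx;\theta^*)\,\partial_j k_p(\bmx;\theta^*)}{k_p(\bmx;\theta^*)}\dd\bmx = (1-p)^2\int_{\mcal{X}} \frac{\partial_i f(\bmx;\theta^*)\,\partial_j f(\bmx;\theta^*)}{(1-p)f(\bmx;\theta^*)+pg(\bmx)}\dd\bmx .
\end{equation*}
First I would dispose of the null set $N:=\{\bmx:f(\bmx;\theta^*)=0\}$: since $f(\bmx;\cdot)\ge 0$ everywhere, $\theta^*$ minimises $\theta\mapsto f(\bmx;\theta)$ for $\bmx\in N$, hence $\grad_\theta f(\bmx;\theta^*)=0$ on $N$ and the integrand vanishes there for every $p$; so the integral may be restricted to $\{f(\bmx;\theta^*)>0\}$, on which $k_{p_n}(\bmx;\theta^*)>0$ for all $n$ and the integrand converges pointwise, as $p_n\to 0$, to $\partial_i f(\bmx;\theta^*)\,\partial_j f(\bmx;\theta^*)/f(\bmx;\theta^*)$, the integrand of $[I(\theta^*)]_{ij}$.

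For the majorant, bounding the denominator below by $(1-p_n)f(\bmx;\theta^*)$ gives, uniformly in $n$ and with no monotonicity of $\{p_n\}$ needed,
\begin{equation*}
    \left|\frac{(1-p_n)^2\,\partial_i f(\bmx;\theta^*)\,\partial_j f(\bmx;\theta^*)}{(1-p_n)f(\bmx;\theta^*)+p_ng(\bmx)}\right| \le \frac{|\partial_i f(\bmx;\theta^*)|\,|\partial_j f(\bmx;\theta^*)|}{f(\bmx;\theta^*)} \le \frac12\left(\frac{(\partial_i f(\bmx;\theta^*))^2}{f(\bmx;\theta^*)}+\frac{(\partial_j f(\bmx;\theta^*))^2}{f(\bmx;\theta^*)}\right),
\end{equation*}
which is integrable because the diagonal entries $[I(\theta^*)]_{ii}$ are finite under the standing regularity of the model at $\theta^*$. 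This simultaneously shows each $[I_{p_n}(\theta^*)]_{ij}$ is finite and, via dominated convergence, that $[I_{p_n}(\theta^*)]_{ij}\to[I(\theta^*)]_{ij}$; as $\theta$ has finitely many coordinates, $I_{p_n}(\theta^*)\to I(\theta^*)$ entrywise.

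The only delicate point is the behaviour near $N$, and more generally near the boundary of $\mathrm{supp}\,f(\cdot;\theta^*)$, where $f(\bmx;\theta^*)$ is small while $g(\bmx)$ need not be, so that $k_{p_n}(\bmx;\theta^*)$ stays bounded away from $0$: one must check no mass of $[I_{p_n}(\theta^*)]_{ij}$ escapes there. This is exactly what the uniform majorant $|\partial_i f|\,|\partial_j f|/f$ controls, but one should confirm its integrability under the paper's hypotheses — e.g.\ by Cauchy--Schwarz together with finiteness of $I(\theta^*)$, or directly by combining Assumption \ref{as:ratio_bound} (which gives $|\partial_i f(\bmx;\theta^*)|\le C\,g(\bmx)$) with a finite-information/light-tail condition on $f(\cdot;\theta^*)$. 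That integrability check is the main obstacle; everything else is a routine limit computation.
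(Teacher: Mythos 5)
Your proof takes essentially the same route as the paper's: both rewrite $[I_{p_n}(\theta^*)]_{ij}$ with the $(1-p_n)^2$ prefactor, bound the denominator $k_{p_n}(\cdot;\theta^*)$ below by $(1-p_n)f(\cdot;\theta^*)$ to produce an $n$-independent majorant of order $|\partial_i f\,\partial_j f|/f$, and conclude by dominated convergence entry by entry; the only cosmetic difference is that the paper splits each entry over the sign sets $A_{i,j}$ and $A_{i,j}^c$ rather than taking absolute values and invoking AM--GM as you do. The integrability of the majorant (equivalently, finiteness of $I(\theta^*)$) is assumed implicitly in the paper just as in your writeup, so your flagged caveat is a shared feature rather than a gap specific to your argument.
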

\begin{proof}
The proof is in Appendix B in the supplement.
\end{proof}
Under Assumptions \ref{as:bracket_entropy}-\ref{as:identifiability}, Theorem \ref{thm:e-d_privacy} holds for any decreasing $p_n\rightarrow0$, $p_n\leq p_0<1$, especially for $n$ large enough,
$$
\mbb{P}\Bigl(\mbb{P}_{\pi_n}(S|\bm{X}) \leq \exp(\epsilon_n)\mbb{P}_{\pi_n}(S|\bm{Z}) + \delta_n \Bigr) \geq 1- Ce^{-nc_5 \phi_n^2}, 
$$
where
$$\epsilon_n:= \varepsilon_n + \log(\delta_n+\exp(\varepsilon_n)),\quad \varepsilon_n := 2(1-p_n)T_n\psi_n/p_n,\quad \delta_n:= \tfrac{4}{c_3}e^{-\frac{1}{2}c_1 n\phi_n^2},$$ 
By picking $p_n\rightarrow0$ such that $\psi_n/p_n\rightarrow0$ as $n\rightarrow\infty$, both $\delta_n$ and $\epsilon_n$ converge to zero.
Asymptotically, the algorithm achieves differential privacy for arbitrarily small $(\epsilon,\delta)$.
Finally, since $p_n\rightarrow 0$, the convergence in the Fisher information follows from Theorem \ref{thm:fisher_information}.

Our results address the case where samples are drawn exactly from the posterior distribution.
For Markov Chain Monte Carlo algorithms \citep{robert1999monte} that do not produce i.i.d. samples, our results indicate the differential privacy level upon reaching stationarity.
Furthermore, \cite{minami2016differential} can be used to derive a tight bound on the privacy loss incurred when generating an approximate sample, by bounding the total variation distance between the distribution of the approximate sample and the target posterior distribution. 
There is a growing body of literature on the convergence rates of MCMC algorithms, e.g., \cite{durmus2017nonasymptotic}, \cite{andrieu2022comparison}, \cite{andrieu2023weak}, which can be leveraged to complement our analysis.

\subsection{Remarks on Assumptions}

\begin{remark}
Lemmas \ref{lemma:wong1995probability} and \ref{lemma:numerator_bound} jointly provide a probabilistic bound for the posterior mass on $A_n^c$. 
This result could alternatively be established via Theorem 8.11 (or Theorem 8.9) of \cite{ghosal2017fundamentals}, under a similar set of assumptions.
We adopt the proof techniques from \cite{shen1994convergence,wong1995probability,shen2001rates} to obtain a more concise representation of the bounds along with exponentially shrinking failure probability, in contrast to polynomial decay obtained from the results of \cite{ghosal2017fundamentals}.
The assumptions used by \cite{ghosal2017fundamentals} are, however, arguably less restrictive with respect to the prior and size of $n$, which we discuss in more detail in Appendix D.1.
\end{remark}

\begin{remark}
One sufficient condition for Assumption \ref{as:bracket_entropy} to hold is the following:
$$
H_{[]}(u,\mcal{F}(\Omega_n))\leq c^2n\phi_n^2, \,\,\, \forall u\in(\phi_n^2/2^{10},\phi_n).
$$
By Lemma \ref{lemma:cover_bound}, the above holds for most families of likelihood functions $f(\cdot;\theta)$ when $n$ is large enough.
However, due to the constant $c$ being quite small in magnitude, see Lemma B.2 in the supplement, one typically requires unrealistically large $n$ for Theorem \ref{thm:e-d_privacy} to hold.
Despite having derived its convergence rate in a non-asymptotic fashion, the result is better deemed as an asymptotic one.
We will show in the next section using empirical simulations that Theorem \ref{thm:e-d_privacy} provides insight into how $\epsilon_n$ and $\delta_n$ converge.
\end{remark}

% How to check assumptions 2,3
\noindent 
Both Assumptions \ref{as:prior_center} and \ref{as:prior_tail} are conditions on the prior distribution.
Let $\bar{S}(r):=\{\theta\in\Omega:\|\theta^*-\theta\|_2\leq r\}$ denote the $L_2$-ball of radius $r$ around $\theta^*$.
One can show that under mild conditions, the set $S_{0,\alpha}(t_n)\cap \bar{S}(c_r)$ contains an $L_2$-ball with radius proportional to $t_n$ for some constant $c_r<\infty$.

\begin{lemma}\label{lemma:local_set}
    Let $S_{0,\alpha}(t_n):=\{\theta:\rho_{0,\alpha}(\theta^*,\theta)\leq t_n\}$ defined as in Definition \ref{def:neighbourhood}.
    Suppose that there exist $\alpha\in(0,1]$, and $c_r<\infty$ such that 
    $$
        \max_{\theta,\tilde{\theta}\in \bar{S}(c_r)}\int \left[\frac{f(\bmx;\theta^*)}{f(\bmx;\theta)}\right]^{\alpha+1} \|\tilde{\bm{\grad}}_{\theta} f(\bmx;\tilde{\theta})\|_2\dd x < M_{c_r},
    $$
    where $M_{c_r}$ is a constant depending on $c_r$, and $\tilde{\bm{\grad}}$ is the supremum directional derivative defined in Definition \ref{def:sup_grad}.
    Then $\bar{S}(t_n/M_{c_r})\subseteq S_{0,\alpha}(t_n)$.
\end{lemma}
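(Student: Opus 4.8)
The plan is to reduce the multivariate claim to a one–dimensional calculus estimate along line segments issuing from $\theta^*$. Fix an arbitrary $\theta\in\bar S(t_n/M_{c_r})$ with $\theta\neq\theta^*$ (the case $\theta=\theta^*$ being trivial since $\rho_{0,\alpha}(\theta^*,\theta^*)=0$), set $s:=\|\theta-\theta^*\|_2\le t_n/M_{c_r}$ and $u:=(\theta-\theta^*)/s$, and define $\varphi(r):=\rho_{0,\alpha}(\theta^*,\theta^*+ru)$ for $r\in[0,s]$. Since $k_0=f$, we have $\varphi(0)=\tfrac1\alpha\int[1-1]f(\bmx;\theta^*)\,\dd\bmx=0$, so it suffices to show $\varphi(s)\le sM_{c_r}$: then $s\le t_n/M_{c_r}$ gives $\rho_{0,\alpha}(\theta^*,\theta)=\varphi(s)\le t_n$, i.e.\ $\theta\in S_{0,\alpha}(t_n)$, and since $\theta$ was arbitrary the inclusion follows.

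The key step is to differentiate $\varphi$ under the integral sign. Writing $b_r(\bmx):=f(\bmx;\theta^*+ru)$ and differentiating $\bigl(f(\bmx;\theta^*)^\alpha b_r(\bmx)^{-\alpha}\bigr)$ in $r$, the factor $-\alpha$ cancels the prefactor $1/\alpha$ and one power $f(\bmx;\theta^*)^\alpha\cdot f(\bmx;\theta^*)=f(\bmx;\theta^*)^{\alpha+1}$ is collected, giving
\[
\varphi'(r)=-\int\left[\frac{f(\bmx;\theta^*)}{f(\bmx;\theta^*+ru)}\right]^{\alpha+1}\partial_r f(\bmx;\theta^*+ru)\,\dd\bmx .
\]
The directional derivative is controlled by Definition \ref{def:sup_grad}: the forward and backward one–sided derivatives of $f(\bmx;\cdot)$ in directions $\pm u$ are each bounded by $\tilde\grad_\theta f(\bmx;\theta^*+ru)$, hence $|\partial_r f(\bmx;\theta^*+ru)|\le\tilde\grad_\theta f(\bmx;\theta^*+ru)$, so that
\[
|\varphi'(r)|\le\int\left[\frac{f(\bmx;\theta^*)}{f(\bmx;\theta^*+ru)}\right]^{\alpha+1}\tilde\grad_\theta f(\bmx;\theta^*+ru)\,\dd\bmx .
\]
Provided $t_n/M_{c_r}\le c_r$ — which holds for all sufficiently large $n$ since $t_n\to0$, and which we may therefore assume — the point $\theta^*+ru$ stays in $\bar S(c_r)$ for every $r\in[0,s]$; taking $\theta=\tilde\theta=\theta^*+ru$ in the hypothesis bounds the right–hand side by $M_{c_r}$ uniformly in $r$. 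Integrating, $\varphi(s)=\int_0^s\varphi'(r)\,\dd r\le\int_0^s|\varphi'(r)|\,\dd r\le sM_{c_r}$, which is what we wanted.

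The main obstacle is the analytic justification of the differentiation under the integral sign together with the absolute continuity of $r\mapsto\varphi(r)$; but the hypothesis of the lemma is designed to provide exactly the required domination, namely that $\int[f(\bmx;\theta^*)/f(\bmx;\theta)]^{\alpha+1}\tilde\grad_\theta f(\bmx;\tilde\theta)\,\dd\bmx$ is finite and $\le M_{c_r}$ uniformly over $\theta,\tilde\theta\in\bar S(c_r)$. Concretely I would either invoke the Leibniz rule with this uniform integrable dominating function, or — more robustly, to avoid existence-a.e.\ technicalities for non–differentiable densities such as the Laplace case — bound the difference quotients $|\varphi(r+\xi)-\varphi(r)|/|\xi|$ directly by $M_{c_r}$ via the one–dimensional mean value estimate along the segment, concluding that $\varphi$ is Lipschitz with constant $M_{c_r}$ and hence $\varphi(s)\le\varphi(0)+sM_{c_r}=sM_{c_r}$. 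A minor secondary point worth recording is that $\rho_{0,\alpha}\ge0$ (Jensen's inequality applied to the convex map $t\mapsto t^{\alpha+1}$, using $\int\tfrac{f(\bmx;\theta^*)}{f(\bmx;\theta)}f(\bmx;\theta)\,\dd\bmx=1$), so the estimate $\varphi(s)\le sM_{c_r}$ is a genuine two–sided control and the resulting inclusion is non-vacuous.
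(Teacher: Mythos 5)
Your proposal is correct and takes essentially the same route as the paper: both establish the Lipschitz estimate $\rho_{0,\alpha}(\theta^*,\theta)\leq M_{c_r}\|\theta-\theta^*\|_2$ for $\theta\in\bar S(c_r)$ via a mean-value bound along the segment from $\theta^*$ to $\theta$, with the $[f(\bmx;\theta^*)/f(\bmx;\cdot)]^{\alpha+1}\,\tilde\grad_\theta f(\bmx;\cdot)$ integrand emerging from differentiating $(f(\theta^*)/f(\theta))^\alpha$. The paper states the pointwise-in-$\bmx$ MVT inequality and then integrates against $f(\bmx;\theta^*)\,\dd\bmx$, while you differentiate the integral $\varphi(r)$ in $r$ first and then integrate in $r$; these are the same computation, and your closing remarks on domination, the difference-quotient fallback for non-differentiable densities, and nonnegativity of $\rho_{0,\alpha}$ are sound (the last is a sanity check rather than a needed step).
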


\begin{remark}
The condition holds for most likelihood functions, since $\|\tilde{\bm{\grad}}_\theta f(\bmx;\theta)\|_2$ is typically integrable.
When $\theta$ does not control the dispersion, $f(\bmx;\theta^*)$ and $f(\bmx;\theta)$ have matching rates of decay and hence the expression is integrable.
Otherwise, one can typically choose $c_r$ small enough such that the expression remains integrable.
\end{remark}

\begin{remark}
One may use a similar argument to show that the Hellinger distance and the $L_2$ distance are locally related (see Appendix C), allowing $\phi_n$ and $\psi_n$ to be chosen to decay at the same rate.
Then, the rate of $p_n$ can be chosen freely as long as $\psi_n/p_n\rightarrow0$.
\end{remark}

Note that we may choose $\Omega_n$ in Assumption \ref{as:prior_tail} as an expanding $L_2$-ball. 
Then both Assumptions \ref{as:prior_center}, \ref{as:prior_tail} reduce to computing the prior mass on the $L_2$-ball $\bar{S}(r)$, i.e., $\mbb{P}_{\pi_0}(\bar{S}(r))$ for some (sequence of) $r$.
In general, Assumption \ref{as:prior_center} should hold whenever there is sufficient mass around the true parameter $\theta^*$, i.e., the prior's support covers $\theta^*$, and Assumption \ref{as:prior_tail} should hold whenever the prior has an exponentially decaying tail.

When the prior has a polynomial tail, Assumption \ref{as:prior_tail} requires the set $\Omega_n$ to expand at least exponentially fast with $n$.
For instance, the horseshoe prior readily satisfies Assumption \ref{as:prior_tail} when $\Omega_n$ expands with radius $e^n$.
Following the proof of Lemma \ref{lemma:cover_bound}, one can derive a similar result for $\Omega_n$ with radius growing at most $O(e^n)$ fast, which still holds asymptotically for a wide range of common likelihood functions, e.g., Gaussian.

% Take lemmas in Section 4 to here
\begin{lemma}\label{lemma:gaussian_prior_mass}
    Let $\bar{S}(r)$ be defined as above, and let $\pi_0(\cdot)$ be a standard Gaussian prior.
    We have two separate lower bounds for the prior mass
    $$
    \mbb{P}_{\pi_0}(\bar{S}(r)) \geq \begin{dcases}
        \frac{\exp(-\lambda/2)}{\Gamma(d/2)} \gamma(d/2, r^2/2), & \text{ for } r \text{ small,}\\
        \frac{1}{\Gamma(d/2)}\gamma(d/2, R^2/2), &  \text{ for } r>\|\theta^*\|_2.
    \end{dcases}
    $$
    where $\lambda=\|\theta^*\|_2^2$, $R=r-\|\theta^*\|_2$, and
    $\gamma(\alpha,r) = \int_0^r u^{\alpha-1}\exp(-u)\dd u$
    is the incomplete gamma function.
\end{lemma}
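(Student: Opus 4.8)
The plan is to reduce the statement to standard facts about the $\chi^2$ distribution. Writing $\theta\sim\mcal{N}(0,I_d)$ for a draw from the prior, we have $\mbb{P}_{\pi_0}(\bar{S}(r))=\mbb{P}(\|\theta-\theta^*\|_2\le r)$, and after the translation $u=\theta-\theta^*$ the density becomes $(2\pi)^{-d/2}\exp(-\tfrac12\|u+\theta^*\|_2^2)$. Expanding $\|u+\theta^*\|_2^2=\|u\|_2^2+2\langle u,\theta^*\rangle+\lambda$ with $\lambda=\|\theta^*\|_2^2$ gives
$$
\mbb{P}_{\pi_0}(\bar{S}(r)) = e^{-\lambda/2}\int_{\|u\|_2\le r}(2\pi)^{-d/2}e^{-\|u\|_2^2/2}\,e^{-\langle u,\theta^*\rangle}\,\dd u .
$$
Since both the ball $\{\|u\|_2\le r\}$ and the weight $e^{-\|u\|_2^2/2}$ are invariant under $u\mapsto-u$, this integral equals $\int_{\|u\|_2\le r}(2\pi)^{-d/2}e^{-\|u\|_2^2/2}\cosh(\langle u,\theta^*\rangle)\,\dd u$, and bounding $\cosh(\cdot)\ge1$ yields
$$
\mbb{P}_{\pi_0}(\bar{S}(r)) \ge e^{-\lambda/2}\,\mbb{P}(\|\theta\|_2\le r) = e^{-\lambda/2}\,\mbb{P}(\chi^2_d\le r^2).
$$
Combining this with the identity $\mbb{P}(\chi^2_d\le t)=\gamma(d/2,t/2)/\Gamma(d/2)$ for the $\chi^2_d$ c.d.f.\ (a change of variables $u=x/2$ in the density) at $t=r^2$ gives the first bound; note this in fact holds for every $r>0$, and is the sharp one only when $r$ is small.

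For the second bound I would argue purely by the triangle inequality. If $\|\theta\|_2\le R:=r-\|\theta^*\|_2$ then $\|\theta-\theta^*\|_2\le\|\theta\|_2+\|\theta^*\|_2\le r$, so $\{\|\theta\|_2\le R\}\subseteq\bar{S}(r)$ whenever $R\ge0$, i.e.\ whenever $r>\|\theta^*\|_2$. Hence
$$
\mbb{P}_{\pi_0}(\bar{S}(r))\ge\mbb{P}(\|\theta\|_2\le R)=\mbb{P}(\chi^2_d\le R^2)=\frac{\gamma(d/2,R^2/2)}{\Gamma(d/2)} .
$$
The extension to a spike-and-slab prior with a Gaussian slab is immediate: since $\theta^*$ lies in the full-dimensional slab support, the slab part alone already places at least the mass above on $\bar{S}(r)$, so multiplying by the slab weight gives a bound of the same shape up to a constant factor.

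There is no genuinely hard step here. The only points that need a little care are performing the $u\mapsto-u$ symmetrization \emph{before} discarding the $\cosh$ factor (applying $\cosh\ge1$ directly to $e^{-\langle u,\theta^*\rangle}$ would of course be false), and correctly normalizing the $\chi^2_d$ c.d.f.\ as the regularized lower incomplete gamma function $\gamma(d/2,\cdot)/\Gamma(d/2)$; the remaining manipulations are just a change of variables and the triangle inequality.
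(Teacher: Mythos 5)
Your proof is correct, and the second bound (triangle inequality $\{\|\theta\|_2\le R\}\subseteq\bar{S}(r)$ plus the $\chi^2_d$ c.d.f.) is identical to the paper's. For the first bound you take a genuinely different, more elementary route: after the shift $u=\theta-\theta^*$ you symmetrize over $u\mapsto -u$, replace $e^{-\langle u,\theta^*\rangle}$ by $\cosh(\langle u,\theta^*\rangle)$, and then drop the $\cosh\ge 1$ factor, landing directly on $e^{-\lambda/2}\,\mbb{P}(\chi^2_d\le r^2)$. The paper instead writes $\|\theta-\theta^*\|_2^2$ as a noncentral $\chi^2(d,\lambda)$, inserts the closed-form density involving the modified Bessel function $I_{d/2-1}$, and lower-bounds $I_\nu(x)\ge(x/2)^\nu/\Gamma(\nu+1)$ by keeping only the $j=0$ term of the series; after simplification the algebra gives the same $e^{-\lambda/2}\gamma(d/2,r^2/2)/\Gamma(d/2)$. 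The two are really the same inequality seen at different levels — the Bessel function is precisely the spherical average of $e^{\langle u,\theta^*\rangle}$, and truncating its series at $j=0$ is the integrated form of your $\cosh\ge 1$ — but your version avoids quoting the noncentral $\chi^2$ density and the Bessel series altogether, which is cleaner and more self-contained. You also correctly flag that the first bound actually holds for all $r>0$ (as does the paper's derivation); the ``$r$ small'' qualifier in the lemma merely marks where that bound is the useful one. One very small point of style: it is worth saying explicitly, as you essentially do, that applying $\cosh\ge 1$ must happen \emph{after} the symmetrization, since $e^{-\langle u,\theta^*\rangle}\ge 1$ is of course false pointwise.
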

The above lemma adapts trivially to Gaussian priors with different mean and covariance structures, or to spike and slab priors with Gaussian slabs.
\begin{remark}\label{rmk:gaussian_prior_mass}
    For $r$ small, we use the first inequality to verify Assumption \ref{as:prior_center}
    \begin{align*}
        \frac{\gamma(\alpha,r)}{\Gamma(\alpha)} =\,\,& \frac{1}{\Gamma(\alpha)} \int_0^r w^{\alpha-1}e^{-w}\dd w, \\
        \geq\,\, & \frac{e^{-r}}{\Gamma(\alpha)} \int_0^r w^{\alpha-1}\dd w \geq \frac{1}{2\pi} e^{\alpha}\alpha^{-\alpha-\frac{1}{2}} r^{\alpha}e^{-r},
    \end{align*}
    where the last step is due to Stirling's approximation (see, for instance, (3.9) page 24 of \cite{artin1964gamma}).
    Thus $\mbb{P}_{\pi_0}(\bar{S}(r))$ shrinks polynomially with $r$, which is always slower than the exponential shrinkage in Assumption \ref{as:prior_center} for sufficiently large $n$.
\end{remark}
\begin{remark}\label{rmk:gaussian_prior_tail}
    For $r$ large, to verify Assumption \ref{as:prior_tail}, we use the second inequality with the approximation $\Gamma(\alpha)-\gamma(\alpha,{r})\sim r^{\alpha-1}e^{-r}$, 
    and hence,
    $$
    1-\mbb{P}_{\pi_0}(\bar{S}(r)) \leq \frac{\Gamma(d/2)-\gamma(d/2, R^2/2)}{\Gamma(d/2)} \sim \frac{1}{\Gamma(d/2)} r^{d/2}\exp(-(r-\|\theta^*\|_2)^2/2),
    $$
    where $\sim$ indicates the leading order term.
\end{remark}

\section{Empirical Assessment of Differential Privacy} \label{sec:simulation}
It is nontrivial to directly link the $(\epsilon,\delta)$ of the model with the amount of contamination being applied.
In other words, it can be hard for practitioners to directly derive the amount of contamination needed to achieve a certain degree of $(\epsilon,\delta)$ by consulting Theorem \ref{thm:e-d_privacy} alone.
This section is devoted to providing guidance on empirically estimating $(\epsilon,\delta)$ for a given model.
The contamination level can then be adjusted based on the empirical estimation, e.g., increase the contamination rate $p_n$ if $\epsilon$ is not low enough.

We begin by introducing our pipeline for estimating DP, followed by some empirical simulations for a few settings of common regression models.

\subsection{Estimation Procedure}
Estimating $\epsilon$ and $\delta$ mostly follows from the result of Proposition \ref{prop:posterior_decomp}.
Recall (\ref{eq:epsilon_split}) from Proposition \ref{prop:posterior_decomp},
$$
\mbb{P}_{\pi_n}(S\cap A_n|\bmX) \leq \sup_{\bmx,\bmz\in \mcal{X}}\left[\sup_{\theta\in A_n} \frac{d(\bmx;\theta)}{d(\bmz;\theta)}\right] \frac{m_n(\Omega,\bmZ)}{m_n(\Omega,\bmX)} \mbb{P}_{\pi_n}(S \cap A_n|\bmZ),
$$
where $m_n(\Omega,\bmX)=\int_{\Omega}\prod_{i=1}^n d(\bmx_i;\theta)\pi_0(\theta)\dd\theta$, and
$$
    d(\bmx;\theta):= \frac{k_{p_n}(\bmx;\theta)}{k_{p_n}(\bmx;\theta^*)}=\frac{(1-p_n)f(\bmx;\theta)+p_ng(\bmx)}{(1-p_n)f(\bmx;\theta^*)+p_ng(\bmx)}.
$$
Conditioned on a dataset $\mcal{D}$ of size $n-1$, we may compute the empirical $\hat{\epsilon}_n$ associated with $\mcal{D}$ by solving
$$
\exp(\hat{\epsilon}_n) \geq \sup_{\bmx,\bmz\in \mcal{X}}\left[\sup_{\theta\in A_n} \frac{d(\bmx;\theta)}{d(\bmz;\theta)}\right] \frac{m_n(\Omega,\mcal{D}\cup\{\bmz\})}{m_n(\Omega,\mcal{D}\cup\{\bmx\})} = \sup_{\bmx,\bmz\in \mcal{X}}\left[\sup_{\theta\in A_n}\frac{d(\bmx;\theta)}{d(\bmz;\theta)} \right]\frac{\mathbb{E}_{\pi_{n-1}}[d(\bmz;\theta)]}{\mathbb{E}_{\pi_{n-1}}[d(\bmx;\theta)]},
$$
where $\pi_{n-1}$ is the posterior distribution on $\mcal{D}$.
The first fraction can be dealt with separately for $\bmx$ and $\bmz$, by solving
\begin{equation}\label{eq:eta_opt}
    \sup_{\bmx\in \mcal{X}}\sup_{\theta\in A_n} d(\bmx;\theta) \text{ and } \inf_{\bmz\in\mcal{X}}\inf_{\theta\in A_n} d(\bmz;\theta).
\end{equation}
Since $\pi_{n-1}$ is usually only available up to a constant, we may approximate the expectations $\mathbb{E}_{\pi_{n-1}}$ through importance sampling and solve the approximated optimization problems
\begin{equation}\label{eq:algorithm_epsilon_constant}
    \sup_{\bmx} \sum_{i=1}^m w_id(\bmx;\theta_i), \quad \inf_{\bmz} \sum_{i=1}^m w_id(\bmz;\theta_i),
\end{equation}
where $(\theta_i,w_i)$ are weighted samples for $\pi_{n-1}(\cdot|\mcal{D})$ with $\theta_i$ drawn from some proposal distribution, and $w_i$ being the importance weight associated with $\theta_i$.
For instance, the Laplace approximation of the posterior distribution $\mcal{N}(\theta_{\text{MAP}}, \bm{H}^{-1})$ may be used as the proposal distribution by computing the MAP estimator and the Hessian $\bm{H}$ of $-\log(\pi_{n-1})$ (see Section 4.6.8.2, pg. 152-153 of \cite{murphy2022probabilistic}).
\begin{remark}
There are a few places where the use of $\theta^*$ is preferred.
An estimate $\hat{\theta}^*$ would suffice when $\theta^*$ is unknown. 
\end{remark}
For $\delta$, we will estimate $\mathbb{P}_{\pi_n}(A_n^c|\bmX)$ as an upper bound for $\delta_n$.
Again, an analytic solution is usually unavailable, so a numerical scheme is required.
Here we recommend using the Laplace approximation again to implement an importance sampling scheme by sampling $\theta_i\sim \mcal{N}(\theta_{\text{MAP}}, \bm{H}^{-1})$ and computing importance weight $w_i:=\pi_n(\theta_i|\bm{X})/f_{\mcal{N}}(\theta_i;\theta_{\text{MAP}},\bm{H}^{-1})$, where $f_{\mcal{N}}$ is the Gaussian density function.
We may estimate $\hat{\delta}_n$ by summing the weights $w_i$ of the importance samples $\theta_i\in A_n^c$,
\begin{equation}\label{eq:algorithm_delta}
    \hat{\delta}_n:= \sum_{\theta_i\in A_n^c} w_i.
\end{equation}

In the proof, we specified the size of the neighbourhood through the sequence $\phi_n$.
However, this may pose difficulty in controlling $\delta$.
For empirical estimation, we instead fix the maximum size of $\delta$ and choose $\phi_n$ to be the minimal value for which $\hat{\delta}_n<\delta$ in (\ref{eq:algorithm_delta}). 
Through $\phi_n$, we construct the set $A_n$ to be a neighbourhood around $\theta^*$, e.g., $L_\infty$-ball around $\theta^*$. 
In the simulations, we choose the contamination rate $p_n$ through
\begin{equation}\label{eq:rate_constants}
    p_n:=n^{1/8}.
\end{equation}
Now, $\epsilon_n$ may be estimated by Algorithm \ref{alg:empirical_estimation_full}.
\begin{remark}
    In order for Assumption \ref{as:ratio_bound} to be satisfied for regression settings, one needs to impose a constraint on the size of covariates.
    In theory, we may allow the size of $\bm{W}$ to grow with $n$; for simplicity, we assume in all the simulations that $\|\bm{W}\|_\infty \leq 1$.
\end{remark}

\begin{algorithm}[t]
\caption{An empirical procedure to estimate $\epsilon$ given $\delta$ and model setup.}
\label{alg:empirical_estimation_full}
 \SetAlgoLined
 \SetKwInOut{Input}{input}
 \SetKwInOut{Output}{output}
\Input{Data set size $n$; Contamination rate $p_n$; $\delta_n>0$; Likelihood function $f$; Contamination density $g$; prior $\pi_0$;  Number of repeats $K$; Output quantile $q$; Number of particles $m$; True parameter $\theta^*$;}
\For{$k=1,\dots,K$}
{
    \uIf{No dataset is present}
    {
        %Generate covariates $\bm{W}:=\{\bm{w}_1,\dots,\bm{w}_n\}$ from a desired distribution, e.g., standard multivariate normal.\;
        Generate true observations $\bm{X}:=\{\bmx_1,\dots,\bmx_n\}$ according to $\bmx_i\sim f(\cdot;\theta^*)$\;
    }
    Contaminate the observations $\bmX$, denoted $\tilde{\bmX}$, by replacing each $\bmx_i$ with probability $p_n$ by a draw from $g(\cdot)$\;
    Compute the Laplace approximation of the contaminated posterior distribution $\pi_n(\cdot;\tilde{\bmX})$ \;
    Sample $\theta_1,\dots,\theta_m$ from the Laplace approximation\label{step:laplace}\;
    Compute normalized importance weights $w_1,\dots,w_m$ with respect to $\pi_n(\cdot;\tilde{\bmX})$\;
    Choose the minimal $\phi_n$ such that the total weight of particles outside $A_n:=B_{\phi_n}(\hat{\theta}^*, \infty)$ is less than $\delta_n$\, where $A_n$ is the $L_{\infty}$-ball around $\hat{\theta}^*$, i.e., a hypercube of width $2\phi_n$.\;
Solve 
$$
    \eta_n := \sup_{\bmx,\bmz\in \mcal{X}}\sup_{\theta\in A_n} \frac{d(\bmx;\theta)}{d(\bmz;\theta)}
$$
by solving (\ref{eq:eta_opt}) with respect to $A_n, p_n$\;
Set $\mcal{D}$ to be $\tilde{\bmX}$ with the last element discarded\;
Compute the normalized weights $\tilde{w}_i$ {\changed for each $\theta_i$ (from Step \ref{step:laplace}) } with respect to $\pi_{n-1}(\cdot;\mcal{D})$\label{step:w_tilde}\;
Solve for $$
\alpha := \sup_{\bmx} \sum_{i=1}^m \tilde{w}_id(\bmx;\theta_i), \quad \beta:= \inf_{\bmz} \sum_{i=1}^m \tilde{w}_id(\bmx;\theta_i)
$$ using $\theta_1,\dots,\theta_m$ and $\tilde{w}_1,\dots,\tilde{w}_m$\;
Set $\hat{\epsilon}_n^{(k)}:= \log\eta_n+\log\alpha - \log\beta$\;
}
Sort $\epsilon_n^{(k)}$ in ascending order and output the $q$-th percentile.\;
\Output{Estimated $\hat{\epsilon}_n$ with $(100-q)/100$ probability of failing ($\epsilon,\delta$)-DP.}
\end{algorithm}
% \begin{remark}
%     For $n$ moderately large, it is costly to recompute $\tilde{w}_i$ at Step \ref{step:w_tilde}.
%     The weights $w_i$ used to compute $\hat{\delta}_n$ are usually good enough to be used in place of $\tilde{w}_i$.
% \end{remark}

\subsection{Posterior Sampling for Private Mean Estimation}\label{sec:pme}
In this section, we consider the private mean estimation problem, a setting that has been extensively investigated in the differential privacy literature, see, e.g., \cite{biswas2020coinpress,covington2021unbiased,huang2021instance,duchi2023fast}, among others. 

Let $\bm{X}:=\{x_1,\dots,x_n\}$ denote a dataset of independent and identically distributed observations, where $x_i\sim\mcal{N}_{[l,u]}(\theta^*,\sigma^2)$ are drawn from a truncated normal distribution supported on the interval $[l,u]$ with standard deviation $\sigma=8$.
In the simulation study, the true mean is set to $\theta^* = 30$, and the objective is to estimate $\theta^*$ under differential privacy constraints.

This subsection compares the performance of the following methods under an equivalent privacy budget:
\begin{enumerate}
    \item Bayesian: draw $\hat{\theta}_{B}\sim \pi_n(\cdot|\bmX)$;
    \item CoinPress algorithm \citep{biswas2020coinpress};
    \item Private Quantile Estimation + Clipped Mean Estimator \citep{huang2021instance};
\end{enumerate}
For brevity, we omit the naive approach of injecting Laplace or Gaussian noise directly into the sample mean, as such methods perform significantly worse—by several orders of magnitude—under a conservative choice of interval size ($u-l = 600$).

For the Bayesian contamination model, we consider a contamination rate of $p_n:=n^{-1/8}$ where $n$ is the sample size. 
The dataset $\bm{X}$ is contaminated by replacing each observation with probability $p_n$ by a draw from a scaled-and-translated Student T distribution $\tilde{x}_i\sim T_{\nu}(\theta^*, \sigma)$ truncated to $[l,u]$.
The density of $T_{\nu}(\theta^*,\sigma)$ is given by
$$
    g(\tilde{x}_i)\propto \left(1+\frac{(\tilde{x}_i-\theta^*)^2}{\nu\sigma^2}\right)^{-\tfrac{\nu+1}{2}}\mathbb{I}_{l\leq \tilde{x}_i\leq u}, \quad \nu=5.
$$
We employ a relatively diffuse prior $\pi_0(\cdot) \sim \mathcal{N}(40, 40^2)$ for $\theta$ and estimate $\hat{\epsilon}_n$ with fixed $\delta_n = \tfrac{1}{10n}$ using Algorithm \ref{alg:empirical_estimation_full}.
The estimated $\hat{\epsilon}_n$ values for varying $n$ are presented in Table \ref{tab:epsilon}. 
Sampling from the posterior is done through an ordinary random-walk Metropolis algorithm.
We repeat the estimation for $1000$ simulated datasets, use the $99\%$ quantile of the resulting $\hat{\epsilon}_n$, and consider the Bayesian contamination mechanism to be $(\epsilon_n,\delta_n)$-DP with probability at least $99\%$.

{\changed 
The estimated $\hat{\epsilon}_n$ from the Bayesian approach is used as the privacy budget for the other two algorithms.
}
However, both the CoinPress algorithm and the Clipped Mean Estimator provide privacy guarantees in zero-Concentrated DP (zCDP) \citep{dwork2016concentrated}. 
{\changed 
The values $\hat{\epsilon}_n$ from Table \ref{tab:epsilon} and $\delta_n=\tfrac{1}{10n}$ are used to solve for an appropriate $\rho_n$ budget for $\rho$-zCDP, by using the result that $\tfrac{\epsilon^2}{2}$-zCDP implies $(\epsilon\sqrt{\log(1/\delta)},\delta)$-DP, $\forall\delta>0$, \citep{bun2016concentrated}.
Although this translation from $(\hat{\epsilon}_n,\delta_n)$-DP to $\rho_n$-zCDP imposes a stricter privacy restriction on frequentist approaches, it is a natural choice, e.g., the same translation would be adopted when comparing with the Gaussian mechanism under the $(\epsilon,\delta)$-DP framework.
}
\begin{table}[t]
    \caption{Estimated $\hat{\epsilon}_n$ values for different sample sizes $n$.}
    \label{tab:epsilon}
    \centering
    \begin{tabular}{crrrrrrr}
    \toprule
    $n$ & 100 & 1,000 & 2,000 & 5,000 & 10,000 & 20,000 & 50,000 \\
    \midrule
    $\hat{\epsilon}_n$ & 2.85 & 0.94 & 0.72 & 0.46 & 0.32 & 0.26 & 0.18 \\
    \bottomrule
    \end{tabular}
\end{table}

To evaluate performance, we simulate a fresh dataset $\bm{X}$ and apply each of the three mechanisms to obtain estimates of $\theta^*$. 
For each dataset, the private mean estimation is repeated $1000$ times to compute the bias, variance, and mean squared error (MSE) of each estimator. 
This process is then repeated over $1000$ datasets to generate the box plots shown in Figure \ref{fig:MSE_comparison_ct10}.

Figure \ref{fig:MSE_comparison_ct10} includes two versions of the CoinPress algorithm, which differ only by the number of iterations they take, $3$ and $10$, respectively. 
The same privacy budget is assigned to both versions.
Intuitively, we see that when the number of iterations is not large enough, the performance of the estimator ceases to improve with $n$.
Overall, a suitably tuned CoinPress algorithm outperforms other alternatives; however, the Bayesian approach achieves comparable performance to the Clipped Mean Estimator for $n \geq 1000$, with only a modest performance gap relative to CoinPress.
It is worth emphasising that a posterior sample inherently contains richer information than a mean estimate.
Also, we note that the variance of the MSE for the frequentist estimators is, in general, larger than that of the Bayesian estimator, which means that although the minimum MSE of the Bayesian estimator might not be as good as the frequentist methods, the performance is more consistent.

\begin{remark}
While the three methods are evaluated on the same problem, their applicability differs.
The CoinPress algorithm and the Clipped Mean algorithm are specifically tailored for private mean estimation, with the former assuming Gaussian-distributed data and the latter being model-free.
Both essentially circumvent the need for a user-defined clipping bound, but this feature might not generalise to other problems.
In contrast, the Bayesian approach naturally extends to a wider range of estimation tasks without altering the overall framework.

A potential drawback of introducing privacy through contamination is the absence of a straightforward, non-probabilistic relationship between the contamination rate and the resulting privacy level.
By contrast, frequentist approaches, or more generally, methods that inject Gaussian or Laplace noise into a quantity with bounded sensitivity, offer an explicit correspondence between the privacy budget and the level of perturbation.
\end{remark}

\subsection{Simulation Setups and Results}\label{sec:simulation_result}

Let $\bmX=\{X_1,\dots,X_n\}\in\mathbb{R}^n$ be the set of observations and $\bmW\in[-1,1]^{n\times d}$ be the covariate matrix associated with $\bmX$, i.e., the $i$-th row of $\bmW$ is associated with the $i$-th observation $X_i$.
By default, we assume the first column of $\bmW$ is a vector of $1$.
Let $\theta\in\mathbb{R}^d$ denote the parameter, and the model likelihood is given by a density function $f(X_i;\theta,\bmw_i)=f(X_i,\theta^\top \bmw_i)$, such that $\int_{\mathbb{R}}f(x;\theta,\bmw)\dd x=1$ for any pair of $\theta,\bmw\in\mathbb{R}^d$.
Let $\theta^*$ denote the true parameter value, which in practice may be replaced by an estimate $\hat{\theta}$, e.g., using the maximum a posteriori estimator.

For the cases where contamination is required, we choose a contamination density $g(x)$ and replace each $X_i$ with probability $p_n$ by a random sample from $g(\cdot)$.
Note that the contamination density is allowed to depend on the covariates $\bmw$, see the example below for the linear regression case.

When contamination is present, the Bayesian posterior will be computed with respect to the contaminated likelihood 
$$k_{p_n}(x;\theta,\bmw)=(1-p_n)f(x;\theta,\bmw)+p_n\,g(x).$$
The prior $\pi_0$ on $\theta$ is always set as a zero-mean Gaussian prior with uncorrelated dimensions and standard deviation $10$.

The above estimation procedure is applied to the following three models:
\begin{enumerate}
    \item \textit{Bayesian Linear Regression}:
    
    \noindent 
    The regression model is given by 
    $
        X_i = \theta^{*\top}\bmw_i + e_i,
    $
    where $e_i\sim \mcal{N}(0,1)$.
    The likelihood function $f(X;\theta,\bmw)$ is given by
    $$
        f(x;\theta,\bmw) = \frac{1}{\sqrt{2\pi}}\exp\left(-(x-\theta^\top\bmw)^2/2\right).
    $$
    The contamination density is chosen as $g(X_i; \bmw_i)\sim T_5(\theta^{*\top}\bmw_i)$, a shifted Student's t-distribution with degrees of freedom $5$, given by the following expression,
    $$
        g(x) = \frac{\Gamma(3)}{\lambda\sqrt{3\pi}\Gamma(\tfrac{5}{2})}\left(1+\frac{x^2}{\lambda}\right)^{-3},
    $$
    where $\lambda$ is the dispersion parameter, which may vary according to the dimension of $\theta$.
    In this setup, a heavy-tailed contamination is required in order to remove the compactness constraint on the observation space.

    \item \textit{Bayesian Logistic Regression}:
    
    \noindent 
    The regression model is given by 
    $
        X_i \sim \text{Ber}\left((1+e^{-\theta^{*\top}\bmw_i})^{-1}\right),
    $
    where $X\in\{0,1\}$ with the likelihood function 
    $$
        f(x;\theta,\bmw) = \left(1+e^{-\theta^{\top}\bmw_i}\right)^{-x}\left(1+e^{\theta^{\top}\bmw_i}\right)^{-1+x}.
    $$
    Since the observation space is compact, $\epsilon$ is bounded in this setup.
    However, the simulation suggests that contamination should still be adopted.
    To contaminate the observations in this case, we replace each observation $X_i$ with probability $p_n$ by a draw from $\text{Ber}(\tfrac{1}{2})$.
    Thus, the contaminated density is
    $
        k_{p_n}(x;\theta,\bmw) = (1-p_n)f(x;\theta,\bmw)+0.5p_n.
    $

    \item \textit{Bayesian Cauchy Regression}: 

    \noindent 
    The regression model is given by
    $
        X_i = \theta^{*\top}\bmw_i + e_i
    $
    where $e_i\sim \text{Cauchy}(0)$ with the likelihood function
    $$
        f(x;\theta,\bmw) = \frac{1}{\pi\left[1+\left(x-\theta^\top \bmw\right)^2\right]}.
    $$
    For contamination, we use another Cauchy distribution with a different scale 
    $$
        g(x)= \frac{1}{\lambda\pi\left[1+x^2/\lambda^2\right]}
    $$
\end{enumerate}
For each setting, we apply Algorithm \ref{alg:empirical_estimation_full} to estimate $\epsilon_n$ using simulated datasets with sizes ranging from $1{,}000$ to $500{,}000$ and record the $99$th percentile of the estimated $\epsilon_n$.
In all simulations, $\delta_n$ is fixed to $(10n)^{-1}$ for numerical stability.
The estimated quantiles of $\hat{\epsilon}_n$ for each setting are plotted in Figure \ref{fig:dim_compare} as solid lines with markers.
The $\hat{\epsilon}_n$ are not log-scaled to better contrast the difference in magnitude between curves.
We have included the same plots, but on a log scale, to better illustrate that $\hat{\epsilon}_n$ decreases polynomially with $n$, which is consistent with Theorem \ref{thm:e-d_privacy}. 
We therefore fit the empirical decay rate of $\hat{\epsilon}_n$ and extrapolate the trend to larger sample sizes (up to $10^7$), shown as semi-transparent lines in the figure (without markers).

In the case of logistic regression, the likelihood ratio $d(x;\theta,\bmw)$ is uniformly bounded $\forall x$ even without contamination.
Nevertheless, Figure \ref{subfig:lgr} shows that introducing contamination yields a substantial reduction in $\epsilon_n$, approximately by a factor of four.

The behaviour differs for Cauchy regression (Figure \ref{subfig:cr}), where the likelihood ratio $d(x;\theta,\bmw)$ is likewise uniformly bounded $\forall x$.
In this case, the uncontaminated setup achieves a lower estimated $\epsilon_n$, which is likely due to a faster contraction in the posterior distribution. 
Here, a smaller neighbourhood $A_n$ is shown to be more effective in controlling the likelihood ratio than the additional randomness introduced via contamination.

We also examine the effect of parameter dimensionality by estimating $\epsilon_n$ for both models with parameter dimensions $5$ and $10$.
As expected, higher-dimensional parameter spaces incur a greater privacy cost, requiring larger sample sizes to achieve the same level of privacy.
{
\changed
This issue is not intrinsic to our method but arises because the privacy leakage of an output is roughly proportional to its dimension, making this an intrinsically challenging setting.
Our methodology does have the limitation that $\epsilon$ is difficult to control for small $n$, as the method achieves privacy through contamination more effectively when the contamination rate is moderate-to-low.
This situation is analogous to applying insufficient noise to the output and trading privacy for statistical efficiency in the Laplace mechanism case.
When the sensitivity scales linearly with the dimension of the function being privatised, achieving the same level of privacy requires the noise to also scale linearly with the dimension. This places our method close to a regime in which nearly the entire dataset must be contaminated in order to provide privacy. 
}

\begin{remark}
    Differentially private Bayesian inference may also be achieved by introducing contamination at alternative stages of the inference procedure. Various such approaches have been proposed in the literature, e.g., \cite{bernstein2019differentially, kulkarni2021differentially, sheffet2019old}.
    {\changed 
    Although these methods provide non-probabilistic privacy guarantees, they typically rely on restrictive assumptions, such as bounded observation or parameter spaces, which our framework explicitly avoids. 
    Moreover, the strength of their privacy guarantees is highly sensitive to the size of the imposed bounds and can deteriorate rapidly as these bounds increase.
    In contrast, our proposed framework is more general, as it contaminates the model at the data-level and does not require boundedness assumptions. 
    Consequently, a fair comparison between these approaches is difficult, and we therefore omit a direct comparison.
    }
\end{remark}

\begin{figure}[t]
    \centering
    \begin{subfigure}{0.8\linewidth}
        \centering
        \caption{}\label{subfig:lr}
        \includegraphics[width=\linewidth]{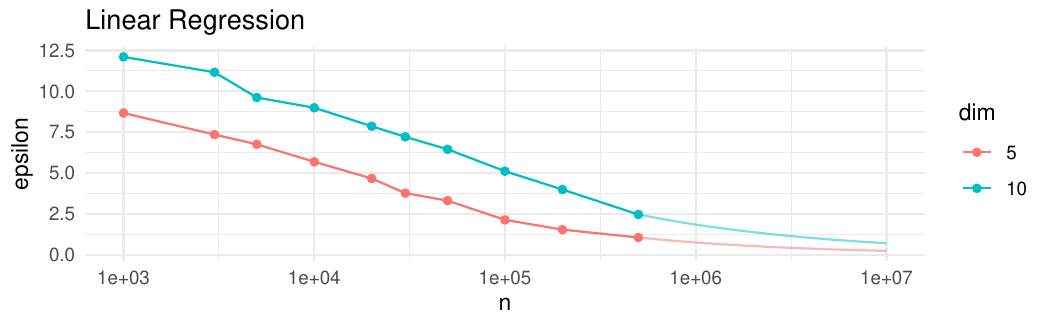}
    \end{subfigure} \hfill
    \begin{subfigure}{0.8\linewidth}
        \centering
        \caption{}\label{subfig:lgr}
        \includegraphics[width=\linewidth]{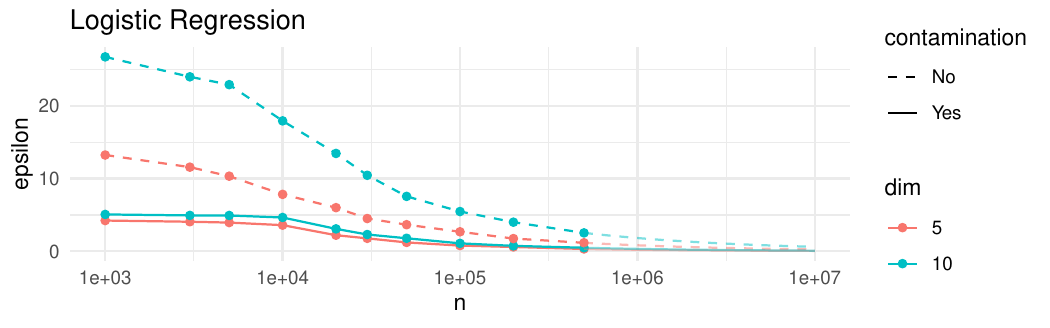}
    \end{subfigure}
    \begin{subfigure}{0.8\linewidth}
        \centering
        \caption{}\label{subfig:cr}
        \includegraphics[width=\linewidth]{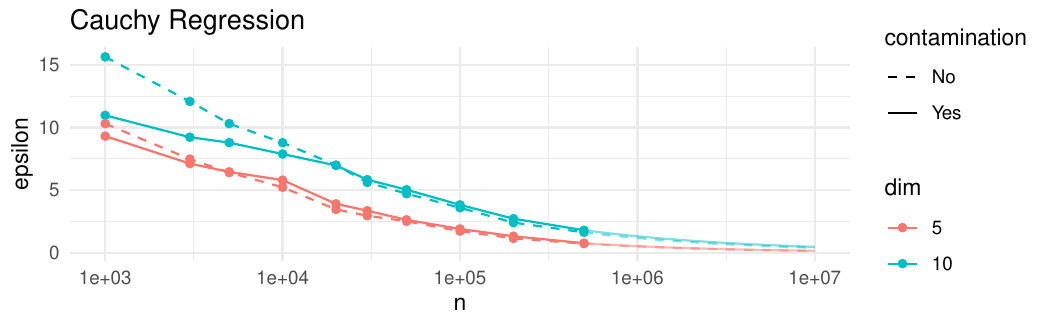}
    \end{subfigure}
    \caption{Plots of estimated $\epsilon_n$ under the setting of Linear Regression, Logistic Regression and Cauchy Regression as the dataset size $n$ varies. The pale lines without nodes are extrapolated estimations.}
    \label{fig:dim_compare}
\end{figure}

\section{Examples}\label{sec:examples}
We have seen in the previous section that empirically the approximate $\hat{\epsilon}_n$ follows the decreasing trend as in Theorem \ref{thm:e-d_privacy}.
In this section, we give a few examples where Theorem \ref{thm:e-d_privacy} is applicable theoretically. 
The verification of Assumptions \ref{as:bracket_entropy} to \ref{as:prior_tail} across different models shares a few common procedures.
For Assumption \ref{as:ratio_bound} to hold for regression models (with covariates), one needs to control the leverage of the covariates $\bm{W}$.
It suffices to assume $\|\bm{W}\|_\infty \leq n^{\iota}$, for some $\iota\in(0,1)$ that depends on the choice of $\phi_n$.
However, for simplicity, we will still assume $\|\bm{W}\|_\infty\leq 1$, as we did in the simulations in Section \ref{sec:simulation}.
With $\|\bm{W}\|_\infty\leq 1$, Assumption \ref{as:ratio_bound} can be trivially satisfied by correctly choosing a heavy-tailed contamination density $g$, and Assumption \ref{as:identifiability} is usually easy to check for linear models.

Due to space constraints, the detailed analysis is deferred to Appendix C.
Typically, we can set $\phi_n$ and $\psi_n$ in Proposition \ref{prop:hellinger_convergence} to decay at the same rate, with $\phi_n,\psi_n\propto n^{-1/q}$ and $p_n=n^{-1/2q}$, for some $q>4$, which gives
$$
\epsilon_n \sim n^{-1/2q},\qquad \delta_n\sim \exp(-\tfrac{1}{2}c_1 n^{1-2/q}),
$$
where $c_1$ is a fixed constant.

\section{Discussion}\label{sec:discussion}
Differential privacy mechanisms in statistical inference can be roughly divided into two categories:
1. Local models, where data are privatised before collection and inference is conducted on the perturbed dataset; 
2. Central models, where data are collected and processed without noise, but the result is privatised before release.

The "local model", or "local differential privacy" as defined in \cite{erlingsson2014rappor}, quantifies the level of differential privacy provided by the mechanism on "datasets" with only one data point.
Thus, contamination can be done individually before the dataset is gathered. Huber's contamination model is a popular approach to local differential privacy.
Our work, however, considers differential privacy from a \textbf{central model} perspective despite using a local privatisation mechanism.
Notably, the local differential privacy level only depends on the contamination probability $p_n$, but $p_n\rightarrow0$ as $n\rightarrow\infty$.
From the local model perspective, our setup will be asymptotically non-private as $n\rightarrow\infty$.
However, from the central model perspective, with a trusted curator, our approach can ensure full differential privacy almost surely with an asymptotically zero contamination rate.

Huber's contamination model for robust estimation by contaminating the dataset with a heavy-tailed distribution also introduces differential privacy to posterior sampling, similar to central models that directly inject Laplace/Gaussian noise into the output.
By contaminating the dataset with a heavy-tailed distribution, we can lift the assumption on bounded observation space.
We further extend the result, through a probabilistic argument, to lift the bound on the parameter space and upper bound the rate of convergence for $\epsilon$ and $\delta$.

Through simulation, a practitioner can estimate the level of differential privacy without having to hand-compute all the constants in the theorem.
Despite having several non-trivial assumptions for the theorem, we show in Section \ref{sec:examples} and Appendix C how the assumptions could be verified mathematically and how the convergence rate of $\epsilon$ and $\delta$ can be derived under a few common regression setups.

The analysis presented in this work relies on the assumption that i.i.d. samples are drawn exactly from the target distribution. 
The bounds may be extended to non-exact samplers by applying the results of \cite{minami2016differential}, provided that the algorithm admits a provable convergence guarantee. 
The study of convergence properties of Markov chain Monte Carlo methods is an active area of research (e.g., \cite{durmus2017nonasymptotic,andrieu2022comparison,andrieu2023weak}). 
Integrating such convergence results into our framework represents a promising avenue for future work.

Our main result (Theorem \ref{thm:e-d_privacy}) holds uniformly for all dataset sizes $n$, provided the underlying assumptions are satisfied.
Nevertheless, this should be interpreted as an asymptotic result since $n$ has to be large for some assumptions to hold in general.
This may be improved by using the results of \cite{ghosal2017fundamentals}, for which we include a detailed discussion in Appendix D.1.

Furthermore, posterior contraction results are also established in broader Bayesian inference frameworks, including misspecified models and infinite-dimensional parameter spaces.
While the specific assumptions employed here may not directly apply in those contexts, analogous differential privacy bounds can be derived under appropriately modified assumptions, provided a suitable contamination density can be constructed. We defer such generalisations to future work.

%%%%%%%%%%%%%%%%%%%%%%%%%%%%%%%%%%%%%%%%%%%%%%
%% Multiple Appendixes:                     %%
%%%%%%%%%%%%%%%%%%%%%%%%%%%%%%%%%%%%%%%%%%%%%%
\begin{appendix}
\section{Proofs for Section \ref{sec:main_noncompact}} \label{sec:appx_main}

\begin{lemma}[Theorem 1 \citep{wong1995probability}]\label{lemma:wong1995probability}
Let $\bmX_1,\dots,\bmX_n$ be i.i.d. random variables following density $k_p(\cdot;\theta^*)$. Let $d(\bmx,\theta)$ denote the likelihood ratio between $\theta$ and $\theta^*$ at $\bmx$, $d(\bmx,\theta)=k_p(\bmx;\theta)/k_p(\bmx;\theta^*)$. 
Then $\forall \phi_n>0$, $\Omega_n\subset\Omega$, if
\begin{equation}\label{eq:entropy_bound_wong}
   \int_{\phi_n^2/2^{10}}^{\phi_n} H_{[]}^{1/2}\left(u,\mcal{K}_p(\Omega_n)\right)\dd u \leq c\sqrt{n}\phi_n^2,
\end{equation}
then 
$$
\mbb{P}^*_{\theta^*}\left(\sup_{\theta\in A_n^c\cap \Omega_n} \prod_{i=1}^n d(\bmx_i,\theta)\geq \exp({-c_1n\phi_n^2}) \right) \leq (1+L(\phi_n))(1+N(\phi_n))\exp({-c_2n\phi_n^2}),
$$
where $\mbb{P}^*_{\theta^*}$ denotes the outer measure with respect to $\bmX_i\sim k_p(\cdot;\theta^*)$, $A_n:=\{\theta\in\Omega : h(\theta^*,\theta)\leq \phi_n\}$, and $c,c_1,c_2$ are positive constants independent of $n$ and $\phi_n$.

If in addition
$
\exp\left(- c_2n\phi_n^2\right) \leq \frac{1}{2},
$
then
$$
\mbb{P}^*_{\theta^*}\left(\sup_{\theta\in A_n^c\cap \Omega_n} \prod_{i=1}^n d(\bmx_i,\theta)\geq \exp({-c_1n\phi_n^2}) \right) \leq 4\exp({-c_2n\phi_n^2}).
$$
\end{lemma}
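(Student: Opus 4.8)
The plan is to reduce this to the likelihood-ratio probability inequality of \cite{wong1995probability} (their Theorem~1), applied to the family $\mcal{K}_p(\Omega_n)=\{k_p(\cdot;\theta):\theta\in\Omega_n\}$ with reference density $k_p(\cdot;\theta^*)$, and to indicate the empirical-process argument behind it. The engine is the Hellinger moment identity: for each $\theta$,
$$
\mbb{E}_{\theta^*}\sqrt{d(\bmX_1,\theta)}=\int\sqrt{k_p(\bmx;\theta)\,k_p(\bmx;\theta^*)}\,\dd\bmx=1-h^2(\theta,\theta^*)\leq e^{-h^2(\theta,\theta^*)},
$$
so $\mbb{E}_{\theta^*}\prod_{i=1}^n\sqrt{d(\bmX_i,\theta)}\leq e^{-n h^2(\theta,\theta^*)}$. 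For a \emph{single} $\theta\in A_n^c$ (so $h(\theta,\theta^*)>\phi_n$), Markov's inequality applied to $\prod_i\sqrt{d(\bmX_i,\theta)}$ gives
$$
\mbb{P}_{\theta^*}\Bigl(\textstyle\prod_i d(\bmX_i,\theta)\geq e^{-c_1 n\phi_n^2}\Bigr)\leq e^{-n h^2(\theta,\theta^*)+\frac12 c_1 n\phi_n^2}\leq e^{-(1-\frac12 c_1)n\phi_n^2},
$$
which is exponentially small whenever $c_1<2$. The whole task is to make this uniform over the uncountable set $A_n^c\cap\Omega_n$.

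First I would peel $A_n^c\cap\Omega_n$ into dyadic Hellinger shells $\Gamma_s:=\{\theta\in\Omega_n:2^{s-1}\phi_n\leq h(\theta,\theta^*)<2^s\phi_n\}$, $s=1,2,\dots$; since $h\leq1$ under the present normalisation there are only $O(\log(1/\phi_n))$ nonempty shells. On each $\Gamma_s$ I would replace $\{k_p(\cdot;\theta):\theta\in\Gamma_s\}$ by a Hellinger bracketing $\{[L_k,U_k]\}$ at a scale $\delta_s$ which is a small fixed multiple of the \emph{squared} shell radius $4^s\phi_n^2$ (this is why the entropy integral in (\ref{eq:entropy_bound_wong}) must run down to $\phi_n^2/2^{10}$: that lower limit is the bracket scale needed at the innermost shell). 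Since $L_k\leq k_p(\cdot;\theta)\leq U_k$ implies $\prod_i d(\bmX_i,\theta)\leq\prod_i U_k(\bmX_i)/k_p(\bmX_i;\theta^*)$, the supremum over $\Gamma_s$ is dominated by a finite maximum over the $N_{[]}(\delta_s,\mcal{K}_p(\Omega_n))$ brackets; this also removes the measurability obstruction, which is why the statement is phrased with the outer measure $\mbb{P}^*_{\theta^*}$.

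Next I would bound each bracket. Writing $g_0:=k_p(\cdot;\theta^*)$, one has $\mbb{E}_{\theta^*}\sqrt{U_k/g_0}=\int\sqrt{U_k g_0}=1+\tfrac12(\int U_k-1)-h^2(U_k,g_0)$; because some density lies in $[L_k,U_k]$, $\int U_k\geq1$ and $\int U_k-1\lesssim\delta_s$, while $h(U_k,g_0)\geq 2^{s-1}\phi_n-\delta_s$, so $h^2(U_k,g_0)\gtrsim 4^s\phi_n^2$. Choosing $\delta_s$ small enough relative to $4^s\phi_n^2$ makes the $\delta_s$-correction negligible and gives $\mbb{E}_{\theta^*}\sqrt{U_k/g_0}\leq e^{-c\,4^s\phi_n^2}$; Markov on $\prod_i\sqrt{U_k(\bmX_i)/g_0(\bmX_i)}$ then bounds the shell-$s$ contribution by $N_{[]}(\delta_s,\mcal{K}_p(\Omega_n))\exp(-c\,4^s n\phi_n^2+\tfrac12 c_1 n\phi_n^2)$. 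Condition (\ref{eq:entropy_bound_wong}) forces $H^{1/2}_{[]}(u,\mcal{K}_p(\Omega_n))$ to be small compared with $\sqrt n\,u$ on $[\phi_n^2/2^{10},\phi_n]$, hence $\log N_{[]}(\delta_s,\cdot)\leq c'4^s n\phi_n^2$ with $c'$ as small as desired, so the shell-$s$ bound is geometric in $s$, of order $\exp(-c''4^s n\phi_n^2)$. Summing over the $O(\log(1/\phi_n))$ shells (and over the within-shell unions) produces exactly the prefactors $(1+L(\phi_n))(1+N(\phi_n))$ multiplying $e^{-c_2 n\phi_n^2}$; when additionally $e^{-c_2 n\phi_n^2}\leq\tfrac12$ these prefactor series are dominated by convergent geometric series and collapse to the absolute constant $4$, which is the second display.

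I expect the main obstacle to be the calibration in the previous paragraph: $\delta_s$ must be small enough that the upper functions $U_k$ stay bounded away from $g_0$ in Hellinger distance and that the correction $\int U_k-1$ does not swamp the gain $e^{-c\,4^s\phi_n^2}$, yet large enough that $N_{[]}(\delta_s,\cdot)$ stays within the exponential budget that (\ref{eq:entropy_bound_wong}) allows \emph{simultaneously for every shell $s$}; reconciling these against a single entropy integral over the fixed window $[\phi_n^2/2^{10},\phi_n]$ and tracking the constants $c,c_1,c_2$ is precisely the bookkeeping of \cite{shen1994convergence} and \cite{wong1995probability}, which I would follow.
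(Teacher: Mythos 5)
Your overall strategy — peel $A_n^c\cap\Omega_n$ into Hellinger shells, reduce the supremum on each shell to a finite maximum over brackets, and control each bracket by Markov applied to the geometric mean $\prod_i\sqrt{d(\bmX_i,\theta)}$ via the affinity identity $\mbb{E}_{\theta^*}\sqrt{d(\bmX_1,\theta)}=1-h^2(\theta,\theta^*)$ — is a legitimate route, and it is genuinely different from the paper's. The paper follows Wong and Shen literally: it works with the truncated log-likelihood ratios $\tilde Z_\theta=\max\{-\tau,\log d(\cdot,\theta)\}$, bounds $\sup_{\theta}\nu_n(\tilde Z_\theta)$ by a \emph{multi-scale chaining} argument using Bernstein's inequality (Lemma B.2 in the supplement), combines this with the mean bound $\mbb{E}\tilde Z_\theta\leq -2(1-\varrho)h^2(\theta,\theta^*)$, and only then peels over Hellinger shells. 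The difference matters here, because the two methods consume the entropy hypothesis in different ways.

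The gap is the sentence claiming that (\ref{eq:entropy_bound_wong}) ``forces $H_{[]}^{1/2}(u,\mcal K_p(\Omega_n))$ to be small compared with $\sqrt n\,u$ on $[\phi_n^2/2^{10},\phi_n]$, hence $\log N_{[]}(\delta_s,\cdot)\leq c'4^s n\phi_n^2$.'' That inference is false: the integral condition controls the \emph{accumulated} entropy across scales, not the entropy at any one scale. Since $H_{[]}$ is nonincreasing, the best pointwise bound the integral yields at the lower end is
$$
H_{[]}(\phi_n^2)\leq\left(\frac{c\sqrt n\,\phi_n^2}{\phi_n^2(1-2^{-10})}\right)^{2}\asymp n,
$$
whereas your innermost shell $s=1$ (bracket scale $\delta_1\asymp\phi_n^2$, deviation budget $\asymp n\phi_n^2$) needs $H_{[]}(\delta_1)\lesssim n\phi_n^2$ — smaller by a factor $\phi_n^2\to 0$. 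In other words, the pointwise condition $H_{[]}(u)\lesssim nu^2$ that a single-scale bracketing-plus-Markov argument requires is strictly stronger than the entropy integral (\ref{eq:entropy_bound_wong}); the integral form is precisely what the chaining argument is designed to exploit, because chaining distributes the budget geometrically over a ladder of bracket scales $\delta_0>\delta_1>\cdots$ so that only the integral, not any individual $H_{[]}(\delta_j)$, needs to be controlled. If you replace (\ref{eq:entropy_bound_wong}) with the stronger pointwise hypothesis $H_{[]}(u,\mcal K_p(\Omega_n))\leq c^2 n\phi_n^2$ on the window — which the paper notes as a \emph{sufficient} condition for Assumption~\ref{as:bracket_entropy} but does not assume — your calculation closes, but it would not prove the lemma as stated. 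The remaining ingredients of your sketch (the affinity identity, the Cauchy--Schwarz estimate $\int U_k-1=O(\delta_s)$, the outer-measure remark, the shell peeling, and the collapse of the prefactor to $4$ once $e^{-c_2 n\phi_n^2}\leq\tfrac12$) are all sound.
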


\begin{proof}
    See Appendix B.2 in the supplementary material. In our choice of constants, we have $(1+L(\phi_n))(1+N(\phi_n))\leq 20\max\{-\log_2(\phi_n),(\log_2(\phi_n))^2\}$.
\end{proof}

\begin{lemma}\label{lemma:entropy_order}
    Let $p\in [0,1)$, $A\subset \Omega$, $\mcal{F}(A):=\{f(\cdot;\theta):\theta\in A\}$ and $\mcal{K}_p(A):=\{k_p(\cdot;\theta);\theta\in A\}$.
    Suppose that $1>p\geq p'$, then
    \begin{equation}
        H_{[]}\left(u\sqrt{\frac{1-p}{1-p'}}, \mcal{K}_{p}(A)\right) \leq H_{[]}(u,\mcal{K}_{p'}(A)).
    \end{equation}
    Setting $p'=0$, we have
    \begin{equation}
        H_{[]}\left(u, \mcal{K}_p(A)\right) \leq  H_{[]}\left(u\sqrt{1-p}, \mcal{K}_p(A)\right) \leq H_{[]}(u,\mcal{F}(A)).
    \end{equation}
\end{lemma}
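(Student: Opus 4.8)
The plan is to compare brackets for $\mathcal{K}_p(A)$ and $\mathcal{K}_{p'}(A)$ directly, exploiting the affine structure $k_p(\cdot;\theta) = (1-p)f(\cdot;\theta) + p\,g(\cdot)$. The key observation is that if $[L,U]$ is a bracket for $\mathcal{F}(A)$ in Hellinger distance, then $[(1-p)L + p\,g,\ (1-p)U + p\,g]$ is a bracket for $\mathcal{K}_p(A)$, since $L \le f(\cdot;\theta) \le U$ pointwise is preserved under multiplication by $1-p>0$ and addition of the common term $p\,g$. So the bracketing number cannot increase; the only content is tracking how the Hellinger width changes. More generally, given a bracket $[L',U']$ for $\mathcal{K}_{p'}(A)$, I would write $L' = (1-p')\ell + p'g$-style decompositions — but cleaner is to go through $\mathcal{F}(A)$ brackets in both cases, or to directly manipulate $k_p$ brackets into $k_{p'}$ brackets.

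First I would establish the pointwise identity relating the two contamination levels: for any nonnegative functions $\ell \le u$, writing $L = (1-p)\ell + pg$ and similarly with $p'$, one has $k_p(\cdot;\theta) = \frac{1-p}{1-p'}\big(k_{p'}(\cdot;\theta) - p'g\big) + pg$, which shows that a $k_{p'}$-bracket $[L',U']$ transforms into a $k_p$-bracket $\big[\frac{1-p}{1-p'}(L'-p'g)+pg,\ \frac{1-p}{1-p'}(U'-p'g)+pg\big]$ (one must check the lower endpoint stays nonnegative, which holds because $L' \ge p'g$ automatically when $L'$ brackets some $k_{p'}(\cdot;\theta) \ge p'g$; if not, one replaces $L'$ by $\max(L', p'g)$, which only shrinks the width). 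The difference of the new endpoints is $\frac{1-p}{1-p'}(U'-L')$.

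Next I would bound the Hellinger width of the transformed bracket. The main technical step is the inequality
$$
h^2\!\left(\tfrac{1-p}{1-p'}(U'-p'g)+pg,\ \tfrac{1-p}{1-p'}(L'-p'g)+pg\right) \le \tfrac{1-p}{1-p'}\, h^2(U',L'),
$$
or an $L_2$-of-square-roots analogue. This follows from the elementary fact that for $a \ge b \ge 0$ and $c \ge 0$ and scalar $\lambda \in (0,1]$, $\big(\sqrt{\lambda a + c} - \sqrt{\lambda b + c}\big)^2 \le \lambda\big(\sqrt{a}-\sqrt{b}\big)^2$ — which one checks by noting $\sqrt{\lambda a + c} - \sqrt{\lambda b + c} = \frac{\lambda(a-b)}{\sqrt{\lambda a+c}+\sqrt{\lambda b +c}} \le \frac{\lambda(a-b)}{\sqrt{\lambda a}+\sqrt{\lambda b}} = \sqrt{\lambda}(\sqrt a - \sqrt b)$ — applied with $a = U'(\bmx)$, $b = L'(\bmx)$, $c = pg(\bmx)$, $\lambda = \frac{1-p}{1-p'}$, then integrating in $\bmx$. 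Hence a $u$-bracket for $\mathcal{K}_{p'}(A)$ yields a $u\sqrt{(1-p)/(1-p')}$-bracket for $\mathcal{K}_p(A)$, and since the bracketing set retains the same cardinality, $N_{[]}\big(u\sqrt{(1-p)/(1-p')}, \mathcal{K}_p(A)\big) \le N_{[]}(u, \mathcal{K}_{p'}(A))$; taking logs gives the first claim. The second claim is the special case $p' = 0$ (so $\mathcal{K}_0(A) = \mathcal{F}(A)$), together with monotonicity of $u \mapsto H_{[]}(u,\cdot)$ in $u$ and $\sqrt{1-p} \le 1$ to insert the middle term.

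The main obstacle is the nonnegativity bookkeeping on the lower bracket endpoint when transforming brackets (ensuring $L' - p'g \ge 0$, or handling the truncation cleanly), and verifying that the scalar inequality $(\sqrt{\lambda a + c} - \sqrt{\lambda b+c})^2 \le \lambda(\sqrt a - \sqrt b)^2$ is exactly what is needed — everything else is routine manipulation of the definitions.
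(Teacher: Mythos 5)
Your proposal is correct and follows essentially the same route as the paper: take a $u$-bracket $[L',U']$ for $\mathcal{K}_{p'}(A)$, map it via the affine identity $k_p = \frac{1-p}{1-p'}k_{p'} + \frac{p-p'}{1-p'}g$ to a bracket for $\mathcal{K}_p(A)$, and control the new Hellinger width with the elementary inequality $\sqrt{a+c}-\sqrt{b+c}\le\sqrt{a}-\sqrt{b}$ (your $\lambda$-weighted version is a slight generalization of the same fact). Two small remarks. First, your nonnegativity worry about the transformed lower endpoint is unnecessary: once you expand, the new endpoint is $\frac{1-p}{1-p'}L' + \frac{p-p'}{1-p'}g$, a conic combination of nonnegative functions ($L'\in\Xi$ by the bracketing definition, $g\ge0$), so it is automatically admissible and no truncation by $\max(L',p'g)$ is needed. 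Second, when you invoke the scalar inequality $(\sqrt{\lambda a+c}-\sqrt{\lambda b+c})^2\le\lambda(\sqrt a-\sqrt b)^2$ you write $c=pg$, but the transformed endpoints are $\lambda U' + \frac{p-p'}{1-p'}g$ and $\lambda L' + \frac{p-p'}{1-p'}g$, so the correct choice is $c=\frac{p-p'}{1-p'}g$ (which is still $\ge0$ since $p\ge p'$); with that substitution the argument closes exactly as the paper's does.
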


\begin{lemma}[\cite{shen2001rates}]\label{lemma:shen_denominator}
Let $t_n$ be a sequence of positive numbers and $\alpha\in(0,1]$.
For any fixed $p\in[0,1)$,
$$
\mbb{P}\left(m_n(\Omega, \bm{X}) \leq \frac{1}{2}\mbb{P}_{\pi_0}(S_{p,\alpha}(t_n))e^{-2nt_n}\right) \leq 2e^{-n\alpha t_n}.
$$
\end{lemma}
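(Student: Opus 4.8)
The plan is to follow the classical ``evidence lower bound via prior mass'' argument (in the style of \cite{shen2001rates} and of Lemma~8.1 in \cite{ghosal2017fundamentals}). Since $m_n(\Omega,\bmX)\ge m_n(S_{p,\alpha}(t_n),\bmX)$, it suffices to lower bound the restricted quantity. Write $B:=S_{p,\alpha}(t_n)$; the case $\mbb{P}_{\pi_0}(B)=0$ is trivial because $k_p>0$ (as $g$ has full support), so $d(\bmx,\theta)>0$ and $m_n(\Omega,\bmX)>0$ $\mbb{P}_{\theta^*}$-a.s. Otherwise let $\pi_0(\cdot\mid B)$ denote the normalized restriction of $\pi_0$ to $B$ and set
$$
W_n:=\int_B \prod_{i=1}^n d(\bmx_i,\theta)\,\pi_0(\dd\theta\mid B),\qquad m_n(\Omega,\bmX)\ \ge\ \mbb{P}_{\pi_0}(B)\,W_n,
$$
so that the event in the statement is contained in $\{W_n\le \tfrac12 e^{-2nt_n}\}$ and it remains to bound $\mbb{P}_{\theta^*}(W_n\le \tfrac12 e^{-2nt_n})$.

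First I would record the identity $\mbb{E}_{\theta^*}\bigl[d(\bmX,\theta)^{-\alpha}\bigr]=1+\alpha\,\rho_{p,\alpha}(\theta^*,\theta)$, which is immediate on expanding the definition of $\rho_{p,\alpha}$ and using $d(\bmx,\theta)=k_p(\bmx;\theta)/k_p(\bmx;\theta^*)$; hence for every $\theta\in B$ one has $\mbb{E}_{\theta^*}[d(\bmX,\theta)^{-\alpha}]\le 1+\alpha t_n\le e^{\alpha t_n}$. Next, since $x\mapsto x^{-\alpha}$ is convex on $(0,\infty)$, Jensen's inequality applied to the probability measure $\pi_0(\cdot\mid B)$ gives $W_n^{-\alpha}\le \int_B \prod_{i=1}^n d(\bmx_i,\theta)^{-\alpha}\,\pi_0(\dd\theta\mid B)$. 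Taking $\mbb{E}_{\theta^*}$, invoking Tonelli (the integrand is non-negative and jointly measurable by joint continuity of $k_p$) together with the independence of $\bmx_1,\dots,\bmx_n$ under $\mbb{P}_{\theta^*}$, and then the bound valid on $B$,
$$
\mbb{E}_{\theta^*}\bigl[W_n^{-\alpha}\bigr]\ \le\ \int_B \Bigl(\mbb{E}_{\theta^*}[d(\bmX,\theta)^{-\alpha}]\Bigr)^n \pi_0(\dd\theta\mid B)\ \le\ e^{n\alpha t_n}.
$$

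Finally I would close with Markov's inequality: $\{W_n\le \tfrac12 e^{-2nt_n}\}=\{W_n^{-\alpha}\ge 2^{\alpha}e^{2n\alpha t_n}\}$, so
$$
\mbb{P}_{\theta^*}\Bigl(W_n\le \tfrac12 e^{-2nt_n}\Bigr)\ \le\ \frac{\mbb{E}_{\theta^*}[W_n^{-\alpha}]}{2^{\alpha}e^{2n\alpha t_n}}\ \le\ 2^{-\alpha}e^{-n\alpha t_n}\ \le\ 2e^{-n\alpha t_n},
$$
and combining with the containment established above yields the lemma. There is no serious obstacle here: the only points requiring care are using Jensen in the correct (convex) direction via $x\mapsto x^{-\alpha}$ — the restriction $\alpha\in(0,1]$ enters mainly through the definition of $\rho_{p,\alpha}$ and its finiteness — and the routine Tonelli/measurability justification for exchanging $\mbb{E}_{\theta^*}$ with the $\pi_0(\cdot\mid B)$-integral, which is licensed by the positivity and joint continuity of $k_p$.
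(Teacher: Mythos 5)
Your proof is correct and is essentially the argument the paper invokes by citing Lemma 2 of \cite{shen2001rates}: lower bound $m_n(\Omega,\bmX)$ by the restriction to $S_{p,\alpha}(t_n)$ with normalized prior, compute the $\alpha$-fractional negative moment via the identity $\mbb{E}_{\theta^*}[d(\bmX,\theta)^{-\alpha}]=1+\alpha\rho_{p,\alpha}(\theta^*,\theta)$ together with Jensen for the convex map $x\mapsto x^{-\alpha}$, and close by Markov. The paper does not reproduce the proof (it just notes the cited argument carries over verbatim for any fixed $p\in[0,1)$); your write-up supplies exactly those steps, and your constant $2^{-\alpha}$ is in fact slightly sharper than the stated $2$.
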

\begin{proof}
    See Lemma 2 of \cite{shen2001rates}; the proof holds for any fixed $p\in[0,1)$.
\end{proof}

\begin{lemma}\label{lemma:divergence_order}
For $\alpha\in(0,1]$, $S_{p,\alpha}(t)=\{\theta\in\Omega : \rho_{p,\alpha}(\theta^*,\theta)\leq t\}$, then $S_{0,\alpha}(t)\subseteq S_{p,\alpha}(t)$.
\end{lemma}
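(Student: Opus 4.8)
\textbf{Proof proposal for Lemma \ref{lemma:divergence_order}.}

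The plan is to show directly that $\rho_{p,\alpha}(\theta^*,\theta) \le \rho_{0,\alpha}(\theta^*,\theta)$ for every $\theta$, which immediately yields the set inclusion $S_{0,\alpha}(t)\subseteq S_{p,\alpha}(t)$. Writing $a(\bmx) := f(\bmx;\theta^*)$, $b(\bmx):=f(\bmx;\theta)$ and $g(\bmx)$ for the contamination density, the contaminated densities are $k_p(\bmx;\theta^*) = (1-p)a + pg$ and $k_p(\bmx;\theta) = (1-p)b + pg$. The quantity $\rho_{p,\alpha}$ has the form $\tfrac1\alpha \int\bigl[(k_p(\bmx;\theta^*)/k_p(\bmx;\theta))^{-\alpha}\cdot\text{(something)}\bigr]$ — more precisely, after substituting, I would rewrite
$$
\alpha\,\rho_{p,\alpha}(\theta^*,\theta) = \int \left[\left(\frac{k_p(\bmx;\theta^*)}{k_p(\bmx;\theta)}\right)^{\alpha} - 1\right] k_p(\bmx;\theta^*)\,\dd\bmx = \int \frac{k_p(\bmx;\theta^*)^{\alpha+1}}{k_p(\bmx;\theta)^{\alpha}}\,\dd\bmx - 1,
$$
using that $\int k_p(\bmx;\theta^*)\,\dd\bmx = 1$. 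So the claim reduces to the pointwise-integrated inequality
$$
\int \frac{\bigl((1-p)a+pg\bigr)^{\alpha+1}}{\bigl((1-p)b+pg\bigr)^{\alpha}}\,\dd\bmx \;\le\; \int \frac{a^{\alpha+1}}{b^{\alpha}}\,\dd\bmx .
$$

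The natural tool is the joint convexity of the function $(s,t)\mapsto s^{\alpha+1}/t^{\alpha}$ on $(0,\infty)^2$ for $\alpha\in(0,1]$ (this is the perspective-type function $t\,(s/t)^{\alpha+1}$, which is convex since $u\mapsto u^{\alpha+1}$ is convex and the perspective of a convex function is jointly convex). Applying joint convexity with the convex combination $(1-p)(a,b) + p(g,g)$ gives, pointwise in $\bmx$,
$$
\frac{\bigl((1-p)a+pg\bigr)^{\alpha+1}}{\bigl((1-p)b+pg\bigr)^{\alpha}} \;\le\; (1-p)\,\frac{a^{\alpha+1}}{b^{\alpha}} + p\,\frac{g^{\alpha+1}}{g^{\alpha}} = (1-p)\,\frac{a^{\alpha+1}}{b^{\alpha}} + p\,g.
$$
Integrating over $\bmx$ and using $\int g = 1$ yields $\alpha\rho_{p,\alpha}+1 \le (1-p)(\alpha\rho_{0,\alpha}+1) + p = (1-p)\alpha\rho_{0,\alpha} + 1$, hence $\rho_{p,\alpha}(\theta^*,\theta) \le (1-p)\rho_{0,\alpha}(\theta^*,\theta) \le \rho_{0,\alpha}(\theta^*,\theta)$ (note we even get a slightly stronger $(1-p)$ factor). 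Since $\rho_{0,\alpha}$ is nonnegative, $\rho_{0,\alpha}(\theta^*,\theta)\le t$ implies $\rho_{p,\alpha}(\theta^*,\theta)\le t$, which is exactly $S_{0,\alpha}(t)\subseteq S_{p,\alpha}(t)$.

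The main thing to be careful about — the one genuine obstacle — is the rigorous justification of the joint convexity of $(s,t)\mapsto s^{\alpha+1}/t^{\alpha}$ and the handling of measure-zero/degenerate issues (e.g. points where $b=0$ but $a>0$, making $\rho_{0,\alpha}$ infinite, in which case the inequality holds trivially; and ensuring $g>0$ so the perspective argument is valid, which is guaranteed by the standing assumption that $g$ has full support). I would dispatch the convexity either by citing it as a standard fact about perspective functions, or by a direct Hessian computation / an application of the two-variable Jensen inequality. Everything else is a one-line substitution plus monotone integration, so I do not expect further difficulty.
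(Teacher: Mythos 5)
Your proposal is correct and follows essentially the same route as the paper: the paper recognizes $\rho_{p,\alpha}$ as an $f$-divergence and invokes joint convexity of $f$-divergences in one line, while you unfold that very fact by writing the integrand as the perspective function $(s,t)\mapsto t(s/t)^{\alpha+1}=s^{\alpha+1}/t^{\alpha}$ of the convex map $u\mapsto u^{\alpha+1}$ and applying Jensen pointwise before integrating. Both arguments produce the same intermediate bound $\rho_{p,\alpha}(\theta^*,\theta)\le(1-p)\rho_{0,\alpha}(\theta^*,\theta)$, so the content is identical; yours is merely more self-contained.
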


\begin{lemma}\label{lemma:denominator_bound}
Under Assumption \ref{as:prior_center}, we have for any $p\in[0,1)$
    $$
    \mbb{P}\left(m_n(\Omega,\bm{X}) \leq \frac{c_3}{2}e^{-4nt_n} \right) \leq 2e^{-n\alpha t_n}.
    $$
\end{lemma}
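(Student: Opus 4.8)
The plan is to chain together the three ingredients already in place: the denominator lower-deviation bound of \cite{shen2001rates} (Lemma \ref{lemma:shen_denominator}), the set inclusion of Lemma \ref{lemma:divergence_order}, and the prior mass bound of Assumption \ref{as:prior_center}. The key observation is that Lemma \ref{lemma:shen_denominator} already gives, for any fixed $p\in[0,1)$,
$$
\mbb{P}\left(m_n(\Omega,\bm{X}) \leq \tfrac{1}{2}\mbb{P}_{\pi_0}(S_{p,\alpha}(t_n))e^{-2nt_n}\right) \leq 2e^{-n\alpha t_n},
$$
so all that remains is to show that the threshold $\tfrac{1}{2}\mbb{P}_{\pi_0}(S_{p,\alpha}(t_n))e^{-2nt_n}$ is at least $\tfrac{c_3}{2}e^{-4nt_n}$; the stated bound then follows by monotonicity of probability, since the event $\{m_n(\Omega,\bm{X}) \leq \tfrac{c_3}{2}e^{-4nt_n}\}$ is contained in the event appearing in Lemma \ref{lemma:shen_denominator}.

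First I would invoke Lemma \ref{lemma:divergence_order} to get $S_{0,\alpha}(t_n)\subseteq S_{p,\alpha}(t_n)$, hence $\mbb{P}_{\pi_0}(S_{p,\alpha}(t_n))\geq \mbb{P}_{\pi_0}(S_{0,\alpha}(t_n))$. Next, Assumption \ref{as:prior_center} (with $t_n = c_1\phi_n^2/8$) gives $\mbb{P}_{\pi_0}(S_{0,\alpha}(t_n))\geq c_3 e^{-2nt_n}$. Combining the two yields $\mbb{P}_{\pi_0}(S_{p,\alpha}(t_n))\geq c_3 e^{-2nt_n}$, and therefore
$$
\tfrac{1}{2}\mbb{P}_{\pi_0}(S_{p,\alpha}(t_n))e^{-2nt_n} \;\geq\; \tfrac{c_3}{2}e^{-2nt_n}e^{-2nt_n} \;=\; \tfrac{c_3}{2}e^{-4nt_n}.
$$
Plugging this into the inclusion of events and applying Lemma \ref{lemma:shen_denominator} closes the argument.

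There is no real obstacle here — the lemma is essentially a bookkeeping corollary that packages Lemma \ref{lemma:shen_denominator} into a form with explicit constants ($c_3$, $t_n$) suitable for later use in the proof of Theorem \ref{thm:e-d_privacy}. The only point requiring a word of care is the quantifier on $p$: Lemma \ref{lemma:shen_denominator} is asserted to hold for every fixed $p\in[0,1)$, and Lemma \ref{lemma:divergence_order} likewise holds for all $p$, so the conclusion is uniform in $p\in[0,1)$ as claimed, which is exactly what is needed since the contamination level $p_n$ will eventually be allowed to vary with $n$.
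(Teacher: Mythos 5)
Your proof is correct and follows exactly the same route as the paper: apply Lemma \ref{lemma:divergence_order} to get $\mbb{P}_{\pi_0}(S_{p,\alpha}(t_n))\geq\mbb{P}_{\pi_0}(S_{0,\alpha}(t_n))\geq c_3e^{-2nt_n}$ via Assumption \ref{as:prior_center}, then plug this into Lemma \ref{lemma:shen_denominator} and use monotonicity of the probability measure.
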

\begin{proof}
By Lemma \ref{lemma:divergence_order} and Assumption \ref{as:prior_center}, 
$$
\mbb{P}_{\pi_0}\Bigl(S_{p,\alpha}(t_n)\Bigr) \geq \mbb{P}_{\pi_0}\Bigl(S_{0,\alpha}(t_n)\Bigr)\geq c_3 e^{-2nt_n}.$$
Then applying Lemma \ref{lemma:shen_denominator} with the above inequality gives the bound.
\end{proof}

\begin{lemma}\label{lemma:numerator_bound}
Recall that $A_n:=\{\theta\in\Omega: h(\theta,\theta^*)\leq\phi_n\}$.
Under Assumptions \ref{as:bracket_entropy} and \ref{as:prior_tail}, we have for any $p\in(0,1)$, $n>0$,
$$
    \mbb{P}\left(m_n(A_n^c,\bm{X}) \geq 2e^{-c_1n\phi_n^2}\right) \leq C(n,\phi_n)e^{-c_2 n\phi_n^2} + c_4 e^{-n c_1\phi_n^2}.
$$
where 
$$
C(n,\phi_n) = \begin{dcases}
4, & \text{for } nc_2\phi_n^2\geq \log 2.\\
    20\max\{-\log_2(\phi_n), (\log_2(\phi_n))^2\}, & \text{otherwise} 
\end{dcases}
$$
%with probability one, for all $n$ large enough.
\end{lemma}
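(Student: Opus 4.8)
The plan is to split the region $A_n^c$ according to whether $\theta$ lies in the exhausting set $\Omega_n$ or in its complement, writing $m_n(A_n^c,\bmX) = m_n(A_n^c\cap\Omega_n,\bmX) + m_n(A_n^c\cap\Omega_n^c,\bmX)$. If each summand is below $e^{-c_1n\phi_n^2}$ then the total is below $2e^{-c_1n\phi_n^2}$, so the event $\{m_n(A_n^c,\bmX)\ge 2e^{-c_1n\phi_n^2}\}$ is contained in the union $\{m_n(A_n^c\cap\Omega_n,\bmX)\ge e^{-c_1n\phi_n^2}\}\cup\{m_n(A_n^c\cap\Omega_n^c,\bmX)\ge e^{-c_1n\phi_n^2}\}$, and a union bound will give the stated inequality once each piece is controlled.

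For the tail piece I would use a first-moment (Markov) argument. Since the $\bmX_i$ are i.i.d.\ from $k_p(\cdot;\theta^*)$ and $\mbb{E}_{\theta^*}[d(\bmx_i,\theta)]=\int k_p(\bmx;\theta)\dd\bmx=1$ for every $\theta$, independence gives $\mbb{E}_{\theta^*}[\prod_i d(\bmx_i,\theta)] = \prod_i\mbb{E}_{\theta^*}[d(\bmx_i,\theta)] = 1$, and hence by Tonelli $\mbb{E}[m_n(A_n^c\cap\Omega_n^c,\bmX)] \le \mbb{E}[m_n(\Omega_n^c,\bmX)] = \mbb{P}_{\pi_0}(\Omega_n^c)$. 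By Assumption \ref{as:prior_tail} this is at most $c_4 e^{-2c_1n\phi_n^2}$, so Markov's inequality at level $e^{-c_1n\phi_n^2}$ yields $\mbb{P}(m_n(A_n^c\cap\Omega_n^c,\bmX)\ge e^{-c_1n\phi_n^2})\le c_4 e^{-c_1n\phi_n^2}$, which is the $c_4 e^{-nc_1\phi_n^2}$ term.

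For the bulk piece I would bound the integral by a supremum: $m_n(A_n^c\cap\Omega_n,\bmX)\le \mbb{P}_{\pi_0}(A_n^c\cap\Omega_n)\sup_{\theta\in A_n^c\cap\Omega_n}\prod_i d(\bmx_i,\theta)\le \sup_{\theta\in A_n^c\cap\Omega_n}\prod_i d(\bmx_i,\theta)$, and then apply Lemma \ref{lemma:wong1995probability}. To invoke that lemma I first need its entropy hypothesis (\ref{eq:entropy_bound_wong}) for the \emph{contaminated} family $\mcal{K}_p(\Omega_n)$, whereas Assumption \ref{as:bracket_entropy} only supplies it for $\mcal{F}(\Omega_n)$; Lemma \ref{lemma:entropy_order} bridges this, since $H_{[]}(u,\mcal{K}_p(\Omega_n))\le H_{[]}(u,\mcal{F}(\Omega_n))$. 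Lemma \ref{lemma:wong1995probability} then bounds the outer probability that the supremum exceeds $e^{-c_1n\phi_n^2}$ by $(1+L(\phi_n))(1+N(\phi_n))e^{-c_2n\phi_n^2}$; using the estimate $(1+L(\phi_n))(1+N(\phi_n))\le 20\max\{-\log_2\phi_n,(\log_2\phi_n)^2\}$ from its proof in the general case, and the sharpened constant $4$ in the regime $e^{-c_2n\phi_n^2}\le\tfrac12$ (equivalently $nc_2\phi_n^2\ge\log 2$), this is precisely $C(n,\phi_n)e^{-c_2n\phi_n^2}$. A union bound over the two pieces then finishes the proof.

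The calculations are essentially routine; the only points needing care are (i) transferring the entropy condition from $\mcal{F}(\Omega_n)$ to $\mcal{K}_p(\Omega_n)$ via Lemma \ref{lemma:entropy_order}, which I expect to be the main conceptual step, and (ii) the measurability subtlety that the supremum over the uncountable set $A_n^c\cap\Omega_n$ forces Lemma \ref{lemma:wong1995probability} to be phrased with the outer measure $\mbb{P}^*_{\theta^*}$ — this is harmless here because $m_n(A_n^c\cap\Omega_n,\bmX)$ is itself measurable as an integral, so deducing a tail bound for $m_n$ from the tail bound for the supremum is legitimate. Everything else is a first-moment estimate together with bookkeeping of the constants appearing in $C(n,\phi_n)$.
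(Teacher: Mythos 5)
Your proposal matches the paper's proof step for step: the decomposition of $A_n^c$ into $\Omega_n$ and $\Omega_n^c$ with a union bound, the Markov/Fubini argument with Assumption~\ref{as:prior_tail} for the tail piece, and the supremum bound combined with Lemma~\ref{lemma:wong1995probability} (with the entropy condition transported from $\mcal{F}(\Omega_n)$ to $\mcal{K}_p(\Omega_n)$ via Lemma~\ref{lemma:entropy_order}) for the bulk piece, including the two regimes for $C(n,\phi_n)$. Your aside about the outer-measure versus measurability of $m_n$ is also the right way to read the paper's use of $\mbb{P}^*_{\theta^*}$ here.
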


\begin{lemma}\label{lemma:ratio_bound}
    Under Assumptions \ref{as:ratio_bound} and \ref{as:identifiability}, there exists a sequence $T_n<\infty$ such that for any sequence $p_n\rightarrow0$
    $$
    \sup_{x\in\mbb{R}^d}\sup_{\theta\in A_n}|\log d(\bmx;\theta)| \leq \frac{1-p_n}{p_n}T_n\psi_n.
    $$
    where $d(\bmx;\theta) = k_{p_n}(\bmx;\theta)/k_{p_n}(\bmx;\theta^*)$.
    Thus in (\ref{eq:dp_decompose}), $\eta_n = \exp({2(1-p_n)T_n\psi_n/p_n})$.
\end{lemma}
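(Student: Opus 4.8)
The plan is to reduce the claim to a pointwise bound on $|\log d(\bmx;\theta)|$ and then read off $\eta_n$. Write $a=k_{p_n}(\bmx;\theta)=(1-p_n)f(\bmx;\theta)+p_n g(\bmx)$ and $b=k_{p_n}(\bmx;\theta^*)=(1-p_n)f(\bmx;\theta^*)+p_n g(\bmx)$, so that $d(\bmx;\theta)=a/b$. Since $f\ge 0$ and, by assumption, $g$ has full support, both $a$ and $b$ are at least $p_n g(\bmx)>0$. The elementary identity $|\log(a/b)|=\log\!\bigl(1+|a-b|/\min(a,b)\bigr)$ together with $\log(1+t)\le t$ for $t\ge 0$ then gives
$$
|\log d(\bmx;\theta)|\;\le\;\frac{|a-b|}{\min(a,b)}\;\le\;\frac{(1-p_n)\,\bigl|f(\bmx;\theta)-f(\bmx;\theta^*)\bigr|}{p_n\,g(\bmx)},
$$
so it remains to bound $\bigl|f(\bmx;\theta)-f(\bmx;\theta^*)\bigr|/g(\bmx)$ uniformly over $\bmx\in\mcal{X}$ and $\theta\in A_n$.

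For the numerator I would invoke Proposition \ref{prop:hellinger_convergence}, which (using Assumption \ref{as:identifiability}) turns $\theta\in A_n$, that is $h(k_{p_n}(\cdot;\theta),k_{p_n}(\cdot;\theta^*))\le\phi_n$, into $\|\theta-\theta^*\|_2\le\psi_n$; in particular the segment $\{\theta^*+t(\theta-\theta^*):t\in[0,1]\}$ lies in the Euclidean ball $\bar{S}(\psi_n)$. By Assumption \ref{as:ratio_bound} all directional derivatives of $f(\bmx;\cdot)$ exist and are bounded, so $f(\bmx;\cdot)$ is locally Lipschitz along segments; integrating the directional derivative along that segment yields
$$
\bigl|f(\bmx;\theta)-f(\bmx;\theta^*)\bigr|\;\le\;\|\theta-\theta^*\|_2\sup_{t\in[0,1]}\tilde{\grad}_{\theta} f\bigl(\bmx;\theta^*+t(\theta-\theta^*)\bigr)\;\le\;\psi_n\sup_{\|\vartheta-\theta^*\|_2\le\psi_n}\tilde{\grad}_{\theta} f(\bmx;\vartheta).
$$
Dividing by $g(\bmx)$ and taking the supremum over $\bmx$, set $T_n:=\sup_{\bmx\in\mcal{X}}\sup_{\|\vartheta-\theta^*\|_2\le\psi_n}\tilde{\grad}_{\theta} f(\bmx;\vartheta)/g(\bmx)$. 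Since $\psi_n\to0$, for all large $n$ this ball is contained in a fixed small ball on which the double supremum is finite (Assumption \ref{as:ratio_bound}), and because the balls $\bar{S}(\psi_n)$ are nested decreasing, $\{T_n\}$ is eventually nonincreasing and hence bounded. Combining the two displays gives $\sup_{\bmx\in\mcal{X}}\sup_{\theta\in A_n}|\log d(\bmx;\theta)|\le (1-p_n)T_n\psi_n/p_n$, which is the first assertion.

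Finally, for the constant in (\ref{eq:dp_decompose}) I would write $d(\bmx;\theta)/d(\bmz;\theta)=\exp\!\bigl(\log d(\bmx;\theta)-\log d(\bmz;\theta)\bigr)$ and bound the exponent by $2\sup_{\bmx\in\mcal{X}}\sup_{\theta\in A_n}|\log d(\bmx;\theta)|$; taking suprema over $\bmx,\bmz\in\mcal{X}$ and $\theta\in A_n$ gives $\eta_n\le\exp\!\bigl(2(1-p_n)T_n\psi_n/p_n\bigr)$, as stated.

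Almost all of this is bookkeeping once Proposition \ref{prop:hellinger_convergence} is in hand — and that proposition, which I may assume, is where the real content (translating a Hellinger bound on contaminated densities into a Euclidean bound) sits. The only delicate point is the mean-value step: because Assumption \ref{as:ratio_bound} asserts the derivative bound only pointwise in $\theta$, I need to upgrade it to a bound uniform over the shrinking ball $\bar{S}(\psi_n)$, and the integration-along-a-segment argument must be valid for an $f$ that is merely directionally (not fully) differentiable, as with Laplace-type likelihoods. Both are routine: boundedness of one-sided derivatives gives local Lipschitzness along segments, and the supremum is finite near $\theta^*$ and stays finite as the ball is shrunk. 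I do not anticipate any genuine obstacle beyond this.
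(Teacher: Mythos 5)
Your proof is correct and takes essentially the same route as the paper's: both hinge on the pointwise lower bound $k_{p_n}\ge p_n g$, a directional mean--value bound through $\tilde{\grad}_\theta f$, Proposition \ref{prop:hellinger_convergence} to replace $A_n$ by the Euclidean ball of radius $\psi_n$, and Assumption \ref{as:ratio_bound} to obtain the constant $T_n$. The only mechanical difference is that the paper applies the directional mean--value inequality to $\log k_{p_n}$ directly (bounding $\tilde{\grad}_\theta\log k_{p_n}$ by $\tfrac{1-p_n}{p_n}\tilde{\grad}_\theta f/g$), whereas you first pass through $|\log(a/b)|\le|a-b|/\min(a,b)$ and apply the Lipschitz bound to $f$ itself, which avoids the chain-rule step and lands at the same constant.
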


\begin{proof}[Proof for Theorem \ref{thm:e-d_privacy}]
By Lemmas \ref{lemma:denominator_bound} and \ref{lemma:numerator_bound}, and recall that $t_n=c_1\phi_n^2/8$ defined in Assumption \ref{as:prior_center},
\begin{align*}
\mbb{P}\left(\frac{m_n(A_n^c,\bm{X})}{m_n(\Omega,\bm{X})} \geq \frac{4}{c_3}e^{-\frac{1}{2} c_1 n \phi_n^2}\right)  \leq &\, 
 \mbb{P}\left(\frac{m_n(A_n^c,\bm{X})}{m_n(\Omega,\bm{X})} \geq \frac{4}{c_3}e^{- c_1 n \phi_n^2 + 4nt_n}\right) \\
\leq &\, 2e^{-nc_1\alpha \phi_n^2/8} + C(n,\phi_n)e^{-c_2 n\phi_n^2} + c_4 e^{- c_1 n \phi_n^2}.
\end{align*}
Recall from (\ref{eq:dp_decompose}) that for any measurable $S\subset\Omega$
\begin{equation*}
    \mbb{P}_{\pi_n}(S|\bm{X}) \leq \eta_n\left[\eta_n + \frac{m_n(A_n^c,\bm{Z})}{m_n(\Omega,\bm{X})}\right]\mbb{P}_{\pi_n}(S|\bm{Z})+\frac{m_n(A_n^c,\bm{X})}{m_n(\Omega,\bm{X})},
\end{equation*}
and from Lemma \ref{lemma:ratio_bound} that
$
\eta_n \leq e^{2(1-p_n)T_n\psi_n/p_n}.
$
Let $\delta_n= \tfrac{4}{c_3}e^{-\frac{1}{2}c_1 n\phi_n^2}$, $\varepsilon_n = 2(1-p_n)T_n\psi_n/p_n$ and 
$\epsilon_n= \varepsilon_n + \log(\delta_n+\exp(\varepsilon_n)).$
Note that since $\bm{X}$ and $\bm{Z}$ have the same cardinality, applying Lemma \ref{lemma:numerator_bound} again to $m_n(A_n^c,\bm{Z})$, we have
\begin{align*}
  \mbb{P}\Bigl( \mbb{P}_{\pi_n}(S|\bm{X}) \leq e^{\epsilon_n} \mbb{P}_{\pi_n}(S|\bm{Z}) + \delta_n \Bigr)
  \geq &\, 1- 2\mbb{P}\left(\frac{m_n(A_n^c,\bm{X})}{m_n(\Omega,\bm{X})} \geq \frac{4}{c_3}e^{-\frac{1}{2}c_1 n\phi_n^2} \right) \\
  \geq &\, 1- 4e^{-nc_1\alpha \phi_n^2/8} - 2C(n,\phi_n) e^{-c_2n\phi_n^2} - 2c_4e^{-c_1 n \phi_n^2} \\
  \geq &\, 1- C'(n,\phi_n)e^{-nc_5 \phi_n^2},
\end{align*}
where $c_5 = \min\{c_1\alpha/8,c_2\}$ and $C'(n,\phi_n) = 4+2C(n,\phi_n)+2c_4$.
Note that by definition of $C(n,\phi_n)$ from Lemma \ref{lemma:numerator_bound}, $C'(n,\phi_n)=4+40(\log_2 n)^2+2c_4$.
On the other hand, if $n$ is large enough, then $C'(n,\phi_n)=12+2c_4$ is independent of $n$, and the proof is complete.
\end{proof}

\begin{proof}[Proof for Theorem \ref{thm:fisher_information}]
\begin{align*}
    I_{p_n}(\theta^*) & = \mbb{E}_{k_{p_n}}\left[(\nabla_{\theta}\log k_{p_n}(\bmx;\theta^*)) (\nabla_{\theta}\log k_{p_n}(\bmx;\theta^*))^{\top} \right] \\
    & = (1-p_n)^2 \int \frac{\left[\nabla_{\theta}f(\bmx;\theta^*)\right]\left[\nabla_{\theta}f(\bmx;\theta^*)\right]^{\top}}{k_{p_n}(\bmx;\theta^*)f(\bmx;\theta^*)} f(\bmx;\theta^*)\dd x.
\end{align*}
Let $H(\bmx) := \left[\nabla_{\theta}f(\bmx;\theta^*)\right]\left[\nabla_{\theta}f(\bmx;\theta^*)\right]^{\top}$ and let $[H(\bmx)]_{i,j}$ denote its $(i,j)$-th entry.
Consider the sets $A_{i,j}^+:=\{x: [H(\bmx)]_{i,j} > 0\}$ and similarly $A_{i,j}^-:=\{x:[H(\bmx)]_{i,j}<0\}$. Then we have
$$
0<\left|\frac{[H(\bmx)]_{i,j}}{k_{p_n}(\bmx;\theta^*)f(\bmx;\theta^*)} \right| \leq \frac{[H(\bmx)]_{i,j}}{(1-p_n)f^2(\bmx;\theta^*)}, \text{ for } x\in A_{i,j}^+,
$$
and
$$
0<\left|\frac{[H(\bmx)]_{i,j}}{k_{p_n}(\bmx;\theta^*)f(\bmx;\theta^*)}\right| \leq -\frac{[H(\bmx)]_{i,j}}{(1-p_n)f^2(\bmx;\theta^*)}, \text{ for } x\in A_{i,j}^-.
$$
Note that $A_{i,j}^+$ and $A_{i,j}^-$ are both measurable with
$$
[I(\theta^*)]_{i,j} = \int_{A_{i,j}^+\cup A_{i,j}^-} \frac{[H(\bmx)]_{i,j}}{f^2(\bmx;\theta^*)} f(\bmx;\theta^*)\dd x.
$$
Thus, applying the dominated convergence theorem on both $A_{i,j}^+$ and $A_{i,j}^-$,
\begin{align*}
    \lim_{n\rightarrow\infty} \left[I_{p_n}(\theta^*)\right]_{i,j} &\, = \lim_{n\rightarrow\infty} (1-p_n)^2 \left(\int_{A_{i,j}^+} \frac{[H(\bmx)]_{i,j}}{k_{p_n}(\bmx;\theta^*)f(\bmx;\theta^*)}f(\bmx;\theta^*)\dd x\right. \\
    &\,  \qquad\qquad\qquad\qquad \left.+ \int_{A_{i,j}^-} \frac{[H(\bmx)]_{i,j}}{k_{p_n}(\bmx;\theta^*)f(\bmx;\theta^*)}f(\bmx;\theta^*)\dd x  \right)\\
    &\, = \int_{A_{i,j}^+\cup A_{i,j}^-} \frac{[H(\bmx)]_{i,j}}{f^2(\bmx;\theta^*)}f(\bmx;\theta^*)\dd x= [I(\theta^*)]_{i,j}.
\end{align*}
\end{proof}

\section{Proofs for Section 3}

\subsection{Results in Main Text}

\begin{definition}[$u$-cover under $\|\cdot\|_\infty$]
    A $u$-cover of the set $\Omega$ is a set $\{\theta_1,\dots,\theta_n\}\subset\mbb{R}^d$ such that for any $\theta\in\Omega$, there exists $i\in\{1,\dots,n\}$ such that $\|\theta-\theta_i\|_\infty\leq u$.
\end{definition}
\begin{definition}[Covering Number]
    The $u$-covering number of $\Omega$ under $\|\cdot\|_\infty$ is 
    $$
    N(u,\Omega):=\min \{N\in\mbb{N} : \exists\text{ a $u$-cover under $\|\cdot\|_\infty$ of size }N \}.
    $$
\end{definition}

\begin{definition}[$u$-separated points under $\|\cdot\|_\infty$]
    A set of points $\{\theta_1,\dots,\theta_k\}\subset \Omega$ is $u$-separated if $\|\theta_i-\theta_j\|_\infty > u$, $\forall i\neq j$.
\end{definition}
\begin{definition}[Packing Number]
    The $u$-packing number of $\Omega$ under $\|\cdot\|_\infty$ is 
    $$
    D(u,\Omega):=\max \{N\in\mbb{N} : \exists\text{ a set of $N$ points that is $u$-separated under $\|\cdot\|_\infty$} \}.
    $$
\end{definition}

\subsubsection{Proof for Lemma 2} %\label{lemma:cover_bound}
\begin{proof}
    The first half of this proof is based on several results commonly seen in the empirical processes literature (with some variation), see, for instance, Section 2.8 of \cite{van1996weak}.

    Recall that the (8) in Assumption 1 requires
    \begin{equation*}
    \int_{\phi_n^2/2^{10}}^{\phi_n} H^{1/2}_{[]}(u,\mcal{F}(\Omega_n))\dd u \leq c\sqrt{n} \phi_n^2.
    \end{equation*}
    To deduce (8), we first prove the following bound on $N_{[]}$, %eq:bracket_entropy
    \begin{equation}\label{eq:bracketing_bound}
    N_{[]}(u M^{(n)}_{u},\mcal{F}(\Omega_n)) \leq C_d\text{Vol}(\tilde{\Omega}_n)u^{-d},
    \end{equation}
    where $\tilde{\Omega}_n:=\{\theta\in\mbb{R}^d: \exists\phi\in\Omega_n,\|\theta-\phi\|_\infty\leq u/2\}$.
    
    Let $\{\theta_1,\dots,\theta_k\}$ be a $u$-cover of $\Omega_n$ with respect to $\|\cdot\|_\infty$.
    Consider the following functions, for $i=1,\dots,k$,
    $$
    \tilde{L}_i(\bmx):= \max\left\{\sqrt{f_{\theta_i}(\bmx)}-u M^{(n)}_{\theta_i,u}(\bmx),0 \right\}, \,\,\, \tilde{U}_i := \sqrt{f_{\theta_i}(\bmx)} +u M^{(n)}_{\theta_i,u}(\bmx),
    $$
    and $L_i(\bmx):=\tilde{L}_i(\bmx)^2$, $U_i:=\tilde{U}_i(\bmx)^2$.
    Since 
    $$
    |\sqrt{f_\theta(\bmx)} - \sqrt{f_{\theta_i}(\bmx)}| \leq M^{(n)}_{\theta_i,u}(\bmx)\|\theta-\theta_i\|_\infty,
    $$
    we have, for any $\theta$ such that $\|\theta-\theta_i\|_\infty\leq u$,
    $$
    \tilde{L}_i(\bmx) \leq \sqrt{f_\theta(\bmx)} \leq \tilde{U}_i(\bmx).
    $$
    Moreover, recall that $M^{(n)}_{u}:=\sup_{\vartheta\in\Omega_n}\|M^{(n)}_{\vartheta,u}\|_2$,
    \begin{align*}
        h^2(L_i,U_i) &= \frac{1}{2}\iint \left(\sqrt{L_i}-\sqrt{U_i}\right)^2 \dd \bmx\\
        & \leq \iint u^2 M^{(n)}_{\theta_i,u}(\bmx)^2\dd \bmx \leq  (uM^{(n)}_{u})^2.
    \end{align*}
   Thus the set of $[L_i,U_i]$ forms a $u M^{(n)}_{u}$-bracketing for $\mcal{F}(\Omega_n)$ under the Hellinger distance $h(\cdot,\cdot)$, hence,
   $$
        N_{[]}(u M^{(n)}_{u},\mcal{F}(\Omega_n)) \leq N(u,\Omega_n).
   $$

   To complete (\ref{eq:bracketing_bound}), we recall that $N(u,\Omega_n)\leq D(u,\Omega_n)$ always holds.
   If $\{\theta_1,\dots,\theta_k\}$, $k:=D(u,\Omega_n)$, is a set of $u$-separated points, then the balls of radius $u/2$ centred at $\{\theta_1,\dots,\theta_k\}$ are disjoint and contained in $\tilde{\Omega}_n:=\{\theta\in\Omega: \exists \theta'\in\Omega_n,\|\theta-\theta'\|_2\leq u/2\}$.
   Thus $D(u,\Omega_n)$ is upper bounded by the ratio between the volume of the space $\tilde{\Omega}_n$ and the volume of a ball of radius $u/2$.
   Let $v_d$ denote the volume of a unit ball, and define $C_d:=2^d/v_d$, then
   $$
    N_{[]}(u M^{(n)}_{u},\mcal{F}(\Omega_n)) \leq N(u,\Omega_n) \leq D(u,\Omega_n) \leq C_d\Vol(\tilde{\Omega}_n) u^{-d}.
   $$
   Recall that $R_n$ is the radius of $\Omega_n$.
   $\phi_n$ is a decreasing sequence, let $\phi_0$ denote an upper bound for $\{\phi_n\}$ then $u\in(0,\phi_0)$ and $\Vol(\tilde{\Omega}_n)=O((R_n+u/2)^d)<O((R_n+\phi_0)^d)$, thus,
   \begin{equation}\label{eq:entropy_expanded}
       H_{[]}(u,\mcal{F}(\Omega_n)) \leq \log C^* + d\log(R_n+\phi_0) + d\log M_u^{(n)} - d\log u.
   \end{equation}
   Since $H_{[]}(u,\mcal{F}(\Omega_n)$ decreases with $u$ but $cnu^2$ increases with $u$, it suffices to check 
   $$
        H_{[]}(\phi_n^2/2^{10}, \mcal{F}(\Omega_n)) \leq  c^2n\phi_n^2
   $$
   to prove that
   $$
    H_{[]}(u,\mcal{F}(\Omega_n))\leq c^2n\phi_n^2, \quad \forall u\in(\phi_n^2/2^{10},\phi_n).
   $$
   Due to the restriction that $n\phi_n^2\rightarrow\infty$, we have $\phi_n^{-1}=o(n^{1/2})$, thus every term on the right-hand side of (\ref{eq:entropy_expanded}) are of order $\log n$, apart from $\log M_u^{(n)}$.
   By assumption, there exists $q\in(0,1)$ such that $\forall u\in(0,\phi_0)$,
   $$
        \frac{\log\log M_u^{(n)}}{\log n}< q,
   $$
   then $\log M_u^{(n)} = O(n^q)$, and we can choose $\phi_n> n^{-\tfrac{1-q}{2}}$ such that
   \begin{align*}
       H_{[]}(\phi_n^2/2^{10}, \mcal{F}(\Omega_n)) \leq &\, \log C^* + d\log(R_n+\phi_0)+d\log M_{\phi_n^2/2^{10}}^{(n)} - 4d\log \phi_n \\
       < &\, c^2n^q \leq c^2n\phi_n^2.
   \end{align*}
   for $n$ sufficiently large, which implies (8).%\ref{eq:bracket_entropy}
\end{proof}

\subsubsection{Proof for Lemma 6}%\label{lemma:local_set}
\begin{proof}
    Recall that
    $$
        \rho_{0,\alpha}(\theta^*,\theta) := \frac{1}{\alpha}\int\left[\left(\frac{f(\bmx;\theta^*)}{f(\bmx;\theta)}\right)^{\alpha}-1\right]f(\bmx;\theta^*)\dd \bmx.
    $$
    By the assumption that $f(\bmx;\theta)$ has all directional derivatives in $\theta$ and the definition of $\tilde{\bm{\grad}}$ from Definition 7,%\ref{def:sup_grad}
    $$
        \left|\left(\frac{f(\bmx;\theta^*)}{f(\bmx;\theta)}\right)^{\alpha}-1 \right| \leq \max_{\theta,\tilde{\theta}\in \bar{S}(c_r)} \alpha\left(\frac{f(\bmx;\theta^*)}{f(\bmx;\theta)}\right)^{\alpha-1}\frac{f(\bmx;\theta^*)}{f(\bmx;\theta)^2}\|\tilde{\bm{\grad}}_{\theta}f(\bmx;\tilde{\theta})\|_2\|\theta-\theta^*\|_2.
    $$
    Therefore, if $\|\theta-\theta^*\|_2\leq c_r$, then
    \begin{align*}
        \rho_{0,\alpha}(\theta^*,\theta) \leq &\, \left(\max_{\theta,\tilde{\theta}\in \bar{S}(c_r)}\int \left[\frac{f(\bmx;\theta^*)}{f(\bmx;\theta)}\right]^{\alpha+1}\|\tilde{\bm{\grad}}_{\theta}f(\bmx;\tilde{\theta})\|_2\dd \bmx\right) \|\theta-\theta^*\|_2 \\
        \leq &\, M_{c_r}\|\theta-\theta^*\|_2.
    \end{align*}
    In other words, $\bar{S}(t_n)\subseteq S_{0,\alpha}(M_{c_r}t_n)\cap \bar{S}(c_r)$, and $\bar{S}(t_n/M_{c_r})\subseteq S_{0,\alpha}(t_n)$.
\end{proof}

\subsubsection{Proof for Lemma 7}%\label{lemma:gaussian_prior_mass}
\begin{proof}%lemma:gaussian_prior_mass
Recall that $\bar{S}(r):=\{\theta\in\Omega:\|\theta^*-\theta\|_2\leq r\}$ and the prior distribution on $\theta$ is $\pi_0(\theta)\sim \mcal{N}(0,I_d)$.
Let $\theta_i$ denote the $i$th dimension of vector $\theta$, then
$$
\mbb{P}_{\pi_0}(\bar{S}(r)) = \int_{\|\theta^*-\theta\|_2\leq r} (2\pi)^{-d/2}\exp\left(-\frac{1}{2}\sum_{i=1}^d\theta_i^2\right)\dd \theta_1\cdots\dd\theta_d.
$$
\begin{enumerate}
    \item For the case when $r$ is small, let $w_i=\theta_i-\theta_i^*$. Then $w_i\sim\mcal{N}(\theta^*_i,1)$.
    Thus $\sum_i w_i^2$ is a non-central $\chi^2$ distribution with $d$ degrees of freedom and non-centrality parameter $\lambda=\|\theta^*\|_2^2$, with probability density function given by \citep{patnaik1949non}
    $$
        f_{\chi^2}(x;d,\lambda) = \frac{1}{2}\exp\left(-\frac{x+\lambda}{2}\right)\left(\frac{x}{\lambda}\right)^{(d-2)/4}I_{d/2-1}(\sqrt{\lambda x}),
    $$
    where $I_{\nu}(x)$ is the modified Bessel function of the first kind,
    $$
    I_{\nu}(x) = (x/2)^{\nu} \sum_{j=0}^\infty \frac{(x^2/4)^j}{j!\Gamma(\nu+j+1)}.
    $$
    Now,
    \begin{align*}
        \mbb{P}_{\pi_0}(\bar{S}(r)) =&\, \mbb{P}\left(\chi^2(d,\lambda)\leq r^2\right)\\
        = &\, \int_0^{r^2} \frac{1}{2}\exp\left(-\frac{x+\lambda}{2}\right)\left(\frac{x}{\lambda}\right)^{(d-2)/4}I_{d/2-1}(\sqrt{\lambda x}) \dd x \\
        \geq &\, \int_0^{r^2} \frac{1}{2\Gamma(d/2)}\exp\left(-\frac{x+\lambda}{2}\right)\left(\frac{x}{\lambda}\right)^{(d-2)/4} \left(\frac{\sqrt{\lambda x}}{2}\right)^{d/2-1}\dd x\\
        = &\, \frac{2^{-d/2}\exp(-\lambda/2)}{\Gamma(d/2)} \int_0^{r^2}x^{d/2-1}\exp(-x/2)\dd x \\
        =&\, \frac{\exp(-\lambda/2)}{\Gamma(d/2)}\gamma(d/2,r^2/2).
    \end{align*}
    \item For the case where $r>\|\theta^*\|_2$, note that
    $$
    \{\theta\in\Omega: \|\theta\|_2^2 \leq (r-\|\theta^*\|_2)^2\} \subseteq \{\theta\in\Omega: \|\theta^*-\theta\|_2^2\leq r^2\}. 
    $$
    Denote $R = r-\|\theta^*\|_2$,
    \begin{align*}
        \mbb{P}_{\pi_0}(\bar{S}(r)) \geq &\, \mbb{P}\left(\chi^2(d,0) \leq R^2\right) \\
        = &\, \frac{1}{\Gamma(d/2)}\gamma(d/2,R^2/2).
    \end{align*}
\end{enumerate}
\end{proof}

\subsubsection{Proof for Proposition 3} %\ref{prop:hellinger_convergence}

\begin{proof}
If Assumption 5 holds, then the same assumption holds for the set of density functions $k_p(\bmx;\theta)$ for any $p\in(0,1)$. %\ref{as:identifiability}
It suffices to prove the proposition holds for $f$ given Assumption 5; the same argument can be used to prove this proposition holds for any $k_p$, $p\in(0,1)$. %\ref{as:identifiability}

\noindent
Recall that by Assumption 5, $\forall \psi> 0$, %\ref{as:identifiability}
$$
    \inf_{\|\theta-\theta^*\|_2>\psi} h(f(\cdot;\theta),f(\cdot;\theta^*))>0.
$$
Denote
$$
    A(\psi) := \inf_{\|\theta-\theta^*\|_2>\psi} h(f(\cdot;\theta),f(\cdot;\theta^*)),
$$
and $M = \sup_{\psi} A(\psi)>0$.
Note that $A(\psi)$ is non-decreasing and right-continuous since the infimum is taken over the set excluding $\|\theta-\theta^*\|=\psi$.
Then $\forall \phi<M$, $\exists\psi$ such that $\psi=\min\{\psi':\phi\leq A(\psi')\}$.
Therefore, $\forall \phi_n\rightarrow0$, $\exists\psi_n$ non-increasing such that $\phi_n\leq A(\psi_n)$ and we may choose $\psi_n:=\min\{\psi:\phi_n\leq A(\psi)\}$.
Suppose for contradiction that $\psi_n\not\rightarrow0$, then $\forall m>0$, $\exists N>0$ such that $\forall n>N$,
$$
    \inf\{\psi: \phi_n\leq A(\psi)\} > m.
$$
Thus, $\forall\psi'\leq m$, we have $A(\psi')<\phi_n$, but since $\phi_n\rightarrow0$, we have
$$
\inf_{\|\theta-\theta^*\|_2>m} h(f(\cdot;\theta),f(\cdot;\theta^*))=0,
$$
which contradicts the assumption.
Hence $\psi_n\rightarrow0$ and the proof is completed.

% Thus apply Lemma \ref{lemma:hellinger_convergence} gives for any $p\in(0,1)$, $\phi_n\rightarrow0$ there exists $\psi_n\rightarrow0$ such that
% $$
% h(k_p(\cdot;\theta),k_p(\cdot;\theta^*))\leq \phi_n \implies \|\theta-\theta^*\|_2\leq \psi_n.
% $$
% Suppose that $p_n\rightarrow0$ is a decreasing sequence with $p_n\leq p_0<1$, and there exists sequences $\phi_n\rightarrow0$ and $\psi_n\rightarrow0$ such that
% $$
% h(k_{p_0}(\cdot;\theta),k_{p_0}(\cdot;\theta^*))\leq \phi_n \implies \|\theta-\theta^*\|_2\leq \psi_n.
% $$
% Then by
% $$
% h(k_{p_0}(\cdot;\theta),k_{p_0}(\cdot;\theta^*))\leq h(k_{p_n}(\cdot;\theta),k_{p_n}(\cdot;\theta^*))
% $$
% for any $n>0$, we have
% $$
% h(k_{p_n}(\cdot;\theta),k_{p_n}(\cdot;\theta^*)) \leq \phi_n \implies \|\theta-\theta^*\|_2\leq \psi_n.
% $$
\end{proof}

\subsection{Proofs Related to Lemma 8} %wong1995probability
Recall that $d(\bmx;\theta):=k_p(\bmx;\theta)/k_p(\bmx;\theta^*)$ where $\theta^*$ is the true parameter.
For $\tau>0$ to be chosen later, let $$\tilde{Z}_{\theta}(\bmx):=\max\{-\tau,\log d(\bmx,\theta)\},$$ 
and for functions $u:\bmX\mapsto u(\bmX)$, define the empirical process
$$
\nu_n(u):= n^{-1/2}\sum_{i=1}^n u(\bmX_i)-\mbb{E}u(\bmX_i).
$$
\begin{theorem}[One-sided Bernstein's Inequality]
Let $X_1,\dots,X_n$ be i.i.d. random variables, then the following holds:
\begin{itemize}
    \item If $|X_i-\mbb{E}X_i|\leq T,\forall i\in\{1,\dots,n\}$ and $\Var(X_i)\leq v$, then
    \begin{equation}\label{eq:bernstein-bdd}
        \mbb{P}^*\left(n^{-1/2}\sum_{i=1}^n (X_i-\mbb{E}X_i) > M\right) \leq \exp\left(-\frac{M^2}{2(v+MT/3n^{1/2})}\right);
    \end{equation}
    \item If $\mbb{E}|X_i|^j\leq j!b^{j-2}v/2$ for any $j\geq 2$, then
    \begin{equation}\label{eq:bernstein-moment}
        \mbb{P}^*\left(n^{-1/2}\sum_{i=1}^n (X_i-\mbb{E}X_i) > M\right) \leq \exp\left(-\frac{M^2}{2(v+bM/n^{1/2})}\right),
    \end{equation}
\end{itemize}
where $\mbb{P}^*$ denotes the outer measure with respect to the measure of $\bmX_i$s.
\end{theorem}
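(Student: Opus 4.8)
The plan is to apply the classical Cram\'er--Chernoff method. Write $Y_i := X_i - \mbb{E}X_i$ and $S_n := \sum_{i=1}^n Y_i$, so that $\mbb{E}Y_i = 0$; note that on the measurable event $\{n^{-1/2}S_n > M\}$ the outer measure $\mbb{P}^*$ coincides with $\mbb{P}$, so no extra care is needed there. The first step is the exponential Markov inequality: for any $t>0$, using independence,
$$
\mbb{P}^*\left(n^{-1/2}S_n > M\right) = \mbb{P}\left(S_n > M\sqrt{n}\right) \le e^{-tM\sqrt{n}}\prod_{i=1}^n \mbb{E}\left[e^{tY_i}\right].
$$
This reduces the whole statement to a one-variable bound on the moment generating function $\mbb{E}[e^{tY_1}]$, followed by a judicious choice of $t$.

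Next I would bound $\mbb{E}[e^{tY_1}]$ in each of the two regimes. In the bounded case, Taylor-expanding and using $\mbb{E}Y_1 = 0$ kills the linear term, so $\mbb{E}[e^{tY_1}] = 1 + \sum_{j\ge 2} t^j \mbb{E}Y_1^j / j!$; the crude estimate $|\mbb{E}Y_1^j| \le T^{j-2}\mbb{E}Y_1^2 \le T^{j-2}v$ for $j\ge 2$, summed with the factorial bound $j! \ge 2\cdot 3^{j-2}$, should give
$$
\mbb{E}\left[e^{tY_1}\right] \le 1 + \frac{v}{T^2}\left(e^{tT}-1-tT\right) \le \exp\left(\frac{vt^2/2}{1-tT/3}\right), \qquad 0<t<3/T .
$$
In the moment case, substituting the Bernstein condition $\mbb{E}|Y_1|^j \le j! b^{j-2} v/2$ directly into the series collapses it to a geometric series, giving $\mbb{E}[e^{tY_1}] \le \exp\left(\frac{vt^2/2}{1-tb}\right)$ for $0<t<1/b$. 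Either way, writing $c$ for $T/3$ in the first case and $b$ in the second, independence yields $\prod_{i=1}^n \mbb{E}[e^{tY_i}] \le \exp\left(\frac{nvt^2/2}{1-ct}\right)$ on $t\in(0,1/c)$.

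Finally, rather than optimising $t$ exactly I would simply take $t = M\sqrt{n}/(nv+cM\sqrt{n})$, which lies in $(0,1/c)$; since then $1-ct = nv/(nv+cM\sqrt{n})$, a one-line substitution collapses the exponent to
$$
-tM\sqrt{n} + \frac{nvt^2/2}{1-ct} = -\frac{M^2 n}{2\,(nv+cM\sqrt{n})} = -\frac{M^2}{2\,(v + cM/\sqrt{n})},
$$
which is exactly (\ref{eq:bernstein-bdd}) when $c = T/3$ and (\ref{eq:bernstein-moment}) when $c = b$.

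The only ingredient beyond routine bookkeeping is the elementary inequality $e^x - 1 - x \le \tfrac{x^2/2}{1-x/3}$ on $[0,3)$ (equivalently the factorial bound $j! \ge 2\cdot 3^{j-2}$), which is precisely what turns the naive bound into one with the variance proxy $v$ at leading order; the remaining subtlety is only to keep $t$ within the interval where the geometric series converges. I do not expect a genuine obstacle here, as this is a standard tool recorded for self-containedness; the main thing to be careful about is that the two parametrisations ($T$ versus $b$) are funnelled through the same final computation via the common constant $c$.
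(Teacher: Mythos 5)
The paper gives no proof of this result; it is recorded for reference with a pointer to \cite{bennett1962probability}, so there is no paper argument to compare against. Your Cram\'er--Chernoff argument is correct and is the classical one: the exponential Markov bound, the two moment-generating-function estimates (via $j!\ge 2\cdot 3^{j-2}$ in the bounded case and the geometric series in the moment case), and the substitution $t = M\sqrt{n}/(nv+cM\sqrt{n})$, which stays in $(0,1/c)$, all check out and collapse to the stated exponents with $c=T/3$ and $c=b$ respectively. One mismatch worth flagging: the paper's second hypothesis bounds the uncentred moments $\mbb{E}|X_i|^j$, whereas your series expansion tacitly uses the centred moments $\mbb{E}|X_i-\mbb{E}X_i|^j$. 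This is reconciled in one line: with $\mu=\mbb{E}X_1$ and $Y_1=X_1-\mu$,
$$
\mbb{E}\bigl[e^{tY_1}\bigr] = e^{-t\mu}\,\mbb{E}\bigl[e^{tX_1}\bigr] \le e^{-t\mu}\left(1+t\mu+\frac{vt^2/2}{1-bt}\right) \le \exp\left(\frac{vt^2/2}{1-bt}\right),
$$
where the last step uses $1+x\le e^x$ for all real $x$, so the $t\mu$ term cancels regardless of sign; the remainder of your argument then proceeds unchanged.
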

\begin{proof}See \cite{bennett1962probability}.\end{proof}

\begin{lemma}[Lemma 7 \citep{wong1995probability}]\label{lemma:wong7}
    Let $\bmX_1,\dots,\bmX_n$ be i.i.d. random variables following density $k_p(\cdot;\theta^*)$. For any $t>0$, $0<k<1$, $M>0$, let
    $$
        \Psi(M,t^2,n) :=\frac{M^2}{4(8c_0t^2+M/n^{1/2})},
    $$
    where 
    $$c_0:=\frac{\exp(\tau/2)-1-\tau/2}{(1-\exp(-\tau/2))^2},\quad M:=kn^{1/2}t^2,$$
    and $\tau>0$ is a constant to be chosen.
    Let 
    $$
    s(t):=\frac{kM}{16\sqrt{2n}\exp(\tau/2)}=\frac{k^2t^2}{2^{4.5}\exp(\tau/2)}.
    $$
    If 
    \begin{equation}\label{eq:wong7entropy}
        \int_{s(t)/4}^t H_{[]}^{1/2}(u,\mcal{K}_p(\Omega_n))\dd u\leq \frac{\sqrt{3}k^{3/2}M}{2^{10}\exp(\tau/2)} = \frac{\sqrt{3}k^{5/2}n^{1/2}t^2}{2^{10}\exp(\tau/2)},
    \end{equation}
    then
    $$
        \mbb{P}^*\left(\sup_{\substack{h(\theta,\theta^*)\leq t\\\theta\in\Omega_n}} \nu(\tilde{Z}_{\theta})\geq M \right)\leq (1+N(t))\exp(-(1-k)\Psi(M,t^2,n)),
    $$
    where
    $$
    N(t):=\min\{x\in\mbb{N}:2^x s(t)>t\}.
    $$
\end{lemma}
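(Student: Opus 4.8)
The plan is to run the chaining-with-bracketing argument of \citet{wong1995probability}, adapted line for line to the contaminated family $\mcal{K}_p(\Omega_n)$; the proof uses nothing about $k_p$ beyond its being a probability density, so replacing $f$ by $k_p$ costs nothing. Two ingredients are combined: one-sided exponential tail bounds for the individual truncated log-ratios $\tilde{Z}_\theta$ obtained from the Bernstein inequalities (\ref{eq:bernstein-bdd})--(\ref{eq:bernstein-moment}), and a multi-resolution bracketing of the index set $\{\theta\in\Omega_n:h(\theta,\theta^*)\le t\}$ whose total ``size budget'' is precisely what hypothesis (\ref{eq:wong7entropy}) pays for.

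First I would record the moment estimates for $\tilde{Z}_\theta(\bmx)=\max\{-\tau,\log d(\bmx,\theta)\}$. The elementary pointwise inequality $\bigl(\max\{-\tau,\log r\}\bigr)^2\le c_0'(\tau)\,(\sqrt{r}-1)^2$ for $r\ge 0$, with $c_0'(\tau)$ controlled by $(1-e^{-\tau/2})^{-2}$, gives $\mbb{E}_{\theta^*}[\tilde{Z}_\theta^2]\le c_0'(\tau)\int(\sqrt{k_p(\cdot;\theta)}-\sqrt{k_p(\cdot;\theta^*)})^2 = 2c_0'(\tau)\,h^2(\theta,\theta^*)\le 2c_0'(\tau)\,t^2$ whenever $h(\theta,\theta^*)\le t$; and since $\tilde{Z}_\theta$ is bounded below by $-\tau$ while its positive part has geometrically growing moments, one also gets $\mbb{E}_{\theta^*}|\tilde{Z}_\theta-\mbb{E}\tilde{Z}_\theta|^j\le j!\,b^{j-2}v/2$ with $v\asymp c_0 t^2$ and $b$ depending only on $\tau$ (this is exactly where the constant $c_0$ of the statement is born). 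Feeding these into (\ref{eq:bernstein-moment}) gives, for each fixed $\theta$ in the ball, $\mbb{P}^*(\nu_n(\tilde{Z}_\theta)\ge M)\le \exp(-\Psi(M,t^2,n))$; the same estimates, with $t^2$ replaced by a constant multiple of $h^2(\theta,\theta')$, handle the increments $\tilde{Z}_\theta-\tilde{Z}_{\theta'}$ needed for the chain.

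Then comes the chaining. For $l=0,1,\dots,N(t)$ set $\delta_l:=2^{-l}t$, so that $\delta_{N(t)}$ is of order $s(t)$, just above the lower integration limit $s(t)/4$, and take a minimal Hellinger $\delta_l$-bracketing of $\mcal{K}_p(\Omega_n\cap\{h(\cdot,\theta^*)\le t\})$, of log-cardinality $H_{[]}(\delta_l,\mcal{K}_p(\Omega_n))$. Each $\theta$ is assigned, at every level, a bracket whose midpoint-type representative $\tilde{Z}_\theta^{(l)}$ approximates $\tilde{Z}_\theta$ in $L_2(\mbb{P}_{\theta^*})$ to within $O(\delta_l)$, and we telescope $\tilde{Z}_\theta=\tilde{Z}_\theta^{(0)}+\sum_{l=1}^{N(t)}(\tilde{Z}_\theta^{(l)}-\tilde{Z}_\theta^{(l-1)})+(\tilde{Z}_\theta-\tilde{Z}_\theta^{(N(t))})$, the last remainder being small enough (by the choice of $s(t)$) that its empirical-process part is dominated deterministically. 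Allocating the budget as $M=\sum_{l=0}^{N(t)}M_l$ and applying (\ref{eq:bernstein-moment}) together with a union bound over the brackets at level $l$, each level contributes $\exp\bigl(H_{[]}(\delta_l,\mcal{K}_p(\Omega_n))-(1-k)\Psi_l\bigr)$, where $\Psi_l$ is the level-$l$ Bernstein exponent, and the entropy condition (\ref{eq:wong7entropy}) is exactly what makes the entropy term at every scale no larger than (a fraction of) $(1-k)\Psi_l$; summing the $1+N(t)$ level contributions yields the stated bound.

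The main obstacle is the uniform bookkeeping in that last step: one must pick the scale-dependent allocations $M_l$ (and, if the increments are truncated further, the associated truncation thresholds) so that \emph{simultaneously} for every $l$ the Bernstein exponent beats the log-bracket-count with the right margin while the $M_l$ still sum to $M$. Making this work across all scales at once is precisely what forces the exact constants ($2^{10}$, $\sqrt{3}\,k^{5/2}$, the factor $1/4$ inside $\Psi$, and the lower endpoint $s(t)/4$ in the integral) of hypothesis (\ref{eq:wong7entropy}); once those are matched to the Bernstein bound the whole sum collapses to the displayed inequality. Everything else — the pointwise comparison bounding $\tilde{Z}_\theta^2$ by $(\sqrt{r}-1)^2$, the variance-to-Hellinger identity, the telescoping, and the handling of the remainder term — is routine.
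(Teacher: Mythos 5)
Your opening sentence gets the framing right: this is indeed Wong and Shen's Lemma~7 transplanted verbatim to $\mcal{K}_p(\Omega_n)$, and nothing in that argument uses more than that $k_p(\cdot;\theta^*)$ is a probability density, so the transplant is costless. The moment estimates you record (the pointwise bound $\tilde Z_\theta^2\preceq(1-e^{-\tau/2})^{-2}(\sqrt{r}-1)^2$, giving $\mbb{E}\tilde Z_\theta^2\lesssim c_0 h^2(\theta,\theta^*)$, feeding into the moment version of Bernstein) are also exactly what the paper does for the term it calls $\mbb{P}_1$.

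Where your sketch drifts from what actually makes the lemma close is the chaining step. You describe a textbook dyadic telescope $\delta_l=2^{-l}t$ with ``midpoint-type representatives'' and a budget split $M=\sum_l M_l$, with the finest-scale remainder ``dominated deterministically.'' That is not how Wong--Shen's argument (or the paper's rendering of it) works, and the differences are not cosmetic. First, the scales $\delta_j$ in the paper are chosen adaptively by the entropy, via $\delta_{j+1}:=\max\{s(t),\sup\{\delta\le\delta_j/2: H_{[]}(\delta,\mcal{G},h)\ge 4H_{[]}(\delta_j,\mcal{G},h)\}\}$; this is what manufactures the geometric factor $4^j$ in the exponent of each summand in $\mbb{P}_2$ and lets the union-over-levels collapse to $N(t)\exp(-(1-k)\Psi)$. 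A naive dyadic scheme does not automatically give that geometric decay unless you allocate the $M_l$ to compensate, and you would then have to check that the single hypothesis (\ref{eq:wong7entropy}) still pays for every level simultaneously. Second, and more fundamentally, the representatives are not midpoints but one-sided envelopes $u_j:=\min_{i\le j}\tilde Z^U_{i,\theta}$, $l_j:=\max_{i\le j}\tilde Z^L_{i,\theta}$, and the decomposition is taken on a partition $B_0,\dots,B_N$ of the \emph{sample space} defined by the events $\{u_j-l_j\ge a_{j+1}\}$. That device is precisely what makes the two awkward terms $\mbb{P}_3$ and $\mbb{P}_4$ vanish identically: on $B_j$ one has $\nu_n(\tilde Z_\theta-u_j)I_{B_j}\le n^{1/2}\sup\mbb{E}[(u_j-l_j)I_{B_j}]\le\eta_j$, so nothing random is left to bound. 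Without the partition, the increments are only lower-bounded (by $-\tau$), not uniformly bounded, and the standard chaining remainder would not be controllable by the stated entropy condition. You gesture at this (``if the increments are truncated further...''), but the construction of $B_j$, the envelopes, and the $a_j,\eta_j$ sequences are the whole content of the lemma — the Bernstein step is the routine part. As written, your proposal would either fail or land on constants that do not match the hypothesis; adding the $B_j$-partition and the adaptive bracketing scales is what is missing.
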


\begin{proof}
    The proof follows from a similar chaining argument as in \cite{shen1994convergence}.
    In this proof only, we will also consider the bracketing entropy computed in $L_2$-norm, denoted by $H_{[]}(u,\mcal{F},\|\cdot\|_2)$. To make further distinction, we will denote the Hellinger bracketing entropy by $H_{[]}(u,\mcal{F},h)$.

    Recall that for $\theta\in\Omega_n$, we define $\tilde{Z}_{\theta}(\bmx):=\max\{-\tau,\log d(\bmx,\theta)\}$.
    Let $\mcal{G}:=\mcal{K}_p(\Omega_n)$ denote the space of density functions of interest, and let $\tilde{\mcal{Z}}:=\{\tilde{Z}_{\theta}: \theta\in\Omega_n\}$ be the space of lower-truncated log-likelihood ratios.
    Note that the two spaces are linked, in the sense that the Hellinger bracketing entropy of $\mcal{G}$ is related to the $L_2$-bracketing entropy of $\tilde{\mcal{Z}}$ by
    $$
        H_{[]}(2\sqrt{2}\exp(\tau/2)\epsilon,\tilde{\mcal{Z}},\|\cdot\|_2)\leq H_{[]}(\epsilon,\mcal{G},h),
    $$
    see Lemma 3 of \cite{wong1995probability} or note that
    \begin{equation}\label{eq:metric_link}
        \| \tilde{Z}_{\theta_1} - \tilde{Z}_{\theta_2}\|_2^2 \leq 4\exp(\tau)\|k_p^{1/2}(\cdot;\theta_1) - k_p^{1/2}(\cdot;\theta_2)\|_2^2 = 8\exp(\tau)h^2(\theta_1,\theta_2). 
    \end{equation}
    Provided that (which we will prove directly later)
    \begin{equation}\label{eq:low_entropy_as}
        H_{[]}(t,\mcal{G},h)\leq \frac{k}{4}\Psi(M,t^2,n),
    \end{equation}
    we can construct a sequence of bracketings on the space $\mcal{G}$ and $\tilde{\mcal{Z}}$.
    Let $t\geq \delta_0 > \delta_1>\cdots>\delta_N>0$, and let $\mcal{F}_0,\dots\mcal{F}_N$ denote the set of Hellinger bracketings of $\mcal{G}$ with bracket size $\delta_0,\dots,\delta_N$ respectively.
    We may choose $\delta_j$ such that 
    $$
    \delta_0:=\max\{\delta\leq t :  H_{[]}(\delta,\mcal{G},h)\leq \tfrac{k}{4}\Psi(M,t^2,n)\},
    $$
    and for $j=0,\dots,N-1$,
    $$
        \delta_{j+1} := \max\{s(t), \sup\{\delta\leq \delta_j/2: H_{[]}(\delta,\mcal{G},h)\geq 4H_{[]}(\delta_j,\mcal{G},h)\}\},
    $$
    with $N:=\min\{j:\delta_j\leq s(t)\}$.
    Furthermore, let 
    $$\tilde{\delta}_0 := \left(\sup\{\tilde{\delta} \leq 2\sqrt{2}\delta_0\exp(\tau/2): H_{[]}(\tilde{\delta},\tilde{\mcal{Z}},\|\cdot\|_2)=H_{[]}(\delta_0,\mcal{G},h)\}\right)^{-},
    $$ 
    and $\tilde{\delta}_j:=2\sqrt{2}\delta_j\exp(\tau/2)$, then
    $$ 
    H_{[]}(\tilde{\delta}_j,\tilde{\mcal{Z}},\|\cdot\|_2)\leq H_{[]}(\delta_j, \mcal{G},h).
    $$
    Note that the entropy function $H_{[]}$ is decreasing and right-continuous.
    We want to evaluate $H_{[]}(\tilde{\delta}_0,\tilde{\mcal{Z}},\|\cdot\|_2)$ at the left limit $(\tilde{\delta}_0)^{-}$ if a jump happens at $\tilde{\delta}_0$, so that $H_{[]}(\tilde{\delta}_0,\tilde{\mcal{Z}},\|\cdot\|_2)=H_{[]}(\delta_0,\mcal{G},h)$.
    
    The space $\tilde{\mcal{Z}}$ can be obtained by transforming elements of $\mcal{G}$ through mapping $f\mapsto \max\{-\tau, \log(f/k_p(\cdot;\theta^*))\}$, we may assume the choice of bracketing in one space induces another for the case $j=0$.
    
    For each $\theta\in\Omega_n$, there exists a bracket $[\tilde{Z}^{L}_{j,\theta},\tilde{Z}^{U}_{j,\theta}]$ for every $j$ such that
    $\tilde{Z}^{L}_{j,\theta} \leq \tilde{Z}_\theta \leq  \tilde{Z}^{U}_{j,\theta}$ and
    $\|\tilde{Z}^{L}_{j,\theta}-\tilde{Z}^{U}_{j,\theta}\|_2\leq \tilde{\delta}_j$.
    Define a sequence of shrinking envelope functions for each $\theta$ by
    $$
        u_j(\theta) := \min_{i\leq j} \tilde{Z}^{U}_{i,\theta},\quad l_j(\theta):= \max_{i\leq j} \tilde{Z}^{L}_{i,\theta}.
    $$
    Then $l_j(\theta)\leq \tilde{Z}_\theta \leq u_j(\theta)$, and $\|u_j-l_j\|_2\leq \tilde{\delta}_j$. 
    For notational simplicity, we hide the dependency of $u_j(\theta),l_j(\theta)$ on $\theta$ and write them directly as $u_j$ and $l_j$ as functions of $\bmX_i$.
    Finally, define sequences $a_1>\cdots>a_N$, $\eta_0,\dots,\eta_N$ by
    \begin{equation}\label{eq:eta,a}
        a_j := \frac{\sqrt{n}\tilde{\delta}_{j-1}^2}{\eta_{j-1}}, \quad \eta_j:= \frac{4\tilde{\delta}_j}{\sqrt{3k}}\left(\sum_{i\leq j+1}H_{[]}(\tilde{\delta}_i,\tilde{\mcal{Z}},\|\cdot\|_2)\right)^{1/2}, \quad \eta_N:=0.
    \end{equation}
    We may decompose $\tilde{Z}_\theta$ on a partition on the sample space $B_0,\dots,B_N$
    \begin{align}
        \tilde{Z}_\theta =&\,  u_0 + \sum_{j=0}^N (u_j I_{B_j} - u_0I_{B_j}) + \left(\tilde{Z}_{\theta}-\sum_{j=0}^N u_j I_{B_j}\right) \nonumber\\
        = &\, u_0 + \sum_{j=1}^N \sum_{i=1}^j (u_i-u_{i-1})I_{B_j} + \sum_{j=0}^N (\tilde{Z}_{\theta}-u_j)I_{B_j} \nonumber\\
        = &\, u_0 + \sum_{j=1}^N(u_j-u_{j-1})I_{\cup_{i\geq j}B_i} + \sum_{j=0}^N (\tilde{Z}_{\theta}-u_j)I_{B_j}, \label{eq:log_decomp_wong}
    \end{align}
    where 
    \begin{align*}
        & B_0:=\{(u_0-l_0)\geq a_1\},\\
        & B_j := \{(u_j-l_j)\geq a_{j+1},\,\,\, (u_i-l_i)<a_{i+1},i=1,\dots,j-1\},\\
        & B_N := \left(\cup_{j=0}^{N-1} B_j\right)^c.
    \end{align*}
    Note that
    \begin{align*}
        \sum_{j=0}^N \eta_j = &\, \sum_{j=0}^{N-1} 4\tilde{\delta}_j(3k)^{-1/2}\left(\sum_{i\leq j+1}H_{[]}(\tilde{\delta}_i,\tilde{\mcal{Z}},\|\cdot\|_2)\right)^{1/2} \\
        \leq &\, \frac{4\sqrt{2}}{\sqrt{3k}}\sum_{j=0}^{N-1}\tilde{\delta}_j(H(\delta_{j+1},\mcal{G},h))^{1/2}\\
        \leq &\, \frac{2^7}{\sqrt{3k}}\exp(\tau/2)\int_{s/4}^t H_{[]}^{1/2}(u,\mcal{G},h)\dd u \leq  \frac{k M}{8},
    \end{align*}
    where the second-last step is by Lemma 3.1 of \cite{alexander1984probability} and the last step is by (\ref{eq:wong7entropy}).
    Thus, through the decomposition of (\ref{eq:log_decomp_wong}), we note
    \begin{align*}
        \mbb{P}^*\left(\sup_{\substack{h(\theta,\theta^*)\leq t\\\theta\in\Omega_n}} \nu(\tilde{Z}_{\theta})\geq M \right) \leq &\, \mbb{P}^*\left(\sup_{h(\theta,\theta^*)\leq t} \nu_n(u_0) > (1-\tfrac{3k}{8})M\right) \\
         + &\, \sum_{j=0}^{N-1} \mbb{P}^*\left(\sup_{h(\theta,\theta^*)\leq t} \nu_n(u_{j+1}-u_j) I_{i\geq j+1}B_i >\eta_j\right) \\
         + &\, \sum_{j=0}^{N-1} \mbb{P}^*\left(\sup_{h(\theta,\theta^*)\leq t} \nu_n(\tilde{Z}_{\theta} - u_j)I_{B_j} > \eta_j\right) \\
         + &\, \mbb{P}^*\left(\sup_{h(\theta,\theta^*)\leq t} \nu_n(\tilde{Z}_{\theta} -u_N)I_{B_N} > \tfrac{k M}{8} +\eta_N\right) \\
         \leq &\, \mbb{P}_1+\mbb{P}_2 + \mbb{P}_3 + \mbb{P}_4,
    \end{align*}
    where
    \begin{align*}
    \mbb{P}_1 :=&\, |\mcal{F}_0|\sup_{h(\theta,\theta^*)\leq t} \mbb{P}(\nu_n(u_0)>(1-\tfrac{3k}{8})M), \\
    \mbb{P}_2 :=&\, \sum_{j=0}^{N-1} \prod_{l_1=0}^j |\mcal{F}_{l_1}|\prod_{l_2=0}^{j+1} |\mcal{F}_{l_2}| \sup_{h(\theta,\theta^*)\leq t}\mbb{P}(\nu_n(u_{j+1}-u_j) I_{\cup_{i\geq j+1} B_i} > \eta_j), \\
    \mbb{P}_3 :=&\,  \sum_{j=0}^{N-1} \mbb{P}^*\left(\sup_{h(\theta,\theta^*)\leq t} \nu_n(\tilde{Z}_{\theta} - u_j)I_{B_j} > \eta_j\right), \\
    \mbb{P}_4 :=&\, \mbb{P}^*\left(\sup_{h(\theta,\theta^*)\leq t} \nu_n(\tilde{Z}_{\theta} -u_N)I_{B_N} > \tfrac{k M}{8} +\eta_N\right).
    \end{align*}
    Before bounding $\mbb{P}_i$, we can quickly check (\ref{eq:low_entropy_as}). Note that Hellinger distance has an upper bound of 1, so we may assume $t\leq 1$.
    Then by definition of $s(t)$, $s(t) \leq t/2\sqrt{2} \leq t/2$, so since $H_{[]}(u,\mcal{G},h)$ is decreasing,
    $$
        H_{[]}(t,\mcal{G},h) \leq \left(\frac{1}{t-t/8}\int_{s/4}^t H_{[]}^{1/2}(u,\mcal{G},h)\dd u\right)^2 \leq \frac{3k^{5}nt^2}{2^{14}\cdot 49\exp(\tau)} \leq \frac{k^3nt^2}{2^7c_0+16k} = \frac{k}{4}\Psi(M,t^2,n).
    $$
    for any $2^7c_0+16k\leq 2^{14}\cdot49/3$, which can be easily satisfied by a wide range of choices, including the values we will use in the proof for main results.
    \begin{itemize}
        \item To bound $\mbb{P}_1$, we use (\ref{eq:bernstein-moment}). By Lemma 5 \citep{wong1995probability}, we have
        $$
        \mbb{E}|u_0|^j \leq j!2^jc_0 \|w^{1/2}(\cdot)-k_p^{1/2}(\cdot,\theta^*)\|_2^2, \,\,\forall j\geq 2,
        $$
        where $w(x):=\exp(u_0(x))k_p(x,\theta^*)$.
        Note that 
        $$
         \|w^{1/2}(\cdot)-k_p^{1/2}(\cdot,\theta^*)\|_2^2 \leq \|w^{1/2} - k_p^{1/2}(,\theta)\|_2^2 + h^2(\theta,\theta^*) \leq 2t^2.
        $$
        Thus, recall Bernstein's inequality (\ref{eq:bernstein-moment}) and letting $b:=2$, $v:=8c_0\|w^{1/2}(\cdot)-k_p^{1/2}(\cdot,\theta^*)\|_2^2$,
        \begin{align*}
            \mbb{P}_1 \leq &\, |\mcal{F}_0|\mbb{P}(\nu_n(u_0)>(1-\tfrac{3k}{8})M)\\
            \leq &\, \exp\left(H_{[]}(\delta_0,\mcal{G},h) -\Psi((1-\tfrac{3k}{8})M,t^2,n)\right)\\
            \leq &\, \exp\left(\tfrac{k}{4} \Psi(M,t^2,n) - (1-\tfrac{3k}{8})^2\Psi(M,t^2,n)\right)\\
            \leq &\, \exp\left(-(1-k)\Psi(M,t^2,n)\right),
        \end{align*}
        where we use (\ref{eq:low_entropy_as}) to bound $H_{[]}(\delta_0,\mcal{G},h)$ at the second step.
        \item 
        $$
         \mbb{P}_2 := \sum_{j=0}^{N-1} \prod_{l_1=0}^j |\mcal{F}_{l_1}|\prod_{l_2=0}^{j+1} |\mcal{F}_{l_2}| \sup_{h(\theta,\theta^*)\leq t}\mbb{P}(\nu_n(u_{j+1}-u_j) I_{\cup_{i\geq j+1} B_i} > \eta_j)
        $$
        By the definition of $B_j$, on the set $\cup_{i\geq j+1}B_j$, we have $-a_{j+1}<l_j-u_j<u_{j+1}-u_j<0$, thus
        \begin{align*}
            \Var((u_{j+1}-u_j) I_{\cup_{i\geq j+1} B_i}) \leq &\, \mbb{E}[(u_{j+1}-u_j)^2 I_{\cup_{i\geq j+1} B_i}] \\
            \leq &\, \mbb{E}[(u_{j+1}-u_j)^2] \leq  \tilde{\delta}_j^2.
        \end{align*}
        Thus, by the one-sided Bernstein's inequality for bounded random variables (\ref{eq:bernstein-bdd}),
        $$
            \mbb{P}(\nu_n(u_{j+1}-u_j) I_{\cup_{i\geq j+1} B_i} > \eta_j) \leq \exp\left(-\frac{\eta_j^2}{2(\tilde{\delta}_j^2+a_{j+1}\eta_j/3n^{1/2})}\right).
        $$
        For $j=0$, note that 
        $$
            (u_1-u_0)I_{\cup_{j\geq 1}B_j} - \mbb{E}[(u_1-u_0)I_{\cup_{j\geq 1}B_j}] \leq  \left|\mbb{E}[(u_1-u_0)I_{\cup_{j\geq 1}B_j}] \right| \leq \tilde{\delta}_0,
        $$
        hence by (\ref{eq:bernstein-bdd}) again,
        $$
            \mbb{P}(\nu_n(u_1-u_0) I_{\cup_{i\geq 1} B_i} > \eta_0) \leq \exp\left(-\frac{\eta_0^2}{2(\tilde{\delta}_0^2+\tilde{\delta}_0\eta_0/3n^{1/2})}\right).
        $$
        Recall from (\ref{eq:eta,a}), for $j=0,\dots,N-1$,
        $$
            \eta_j:= 4\tilde{\delta}_jk^{-1/2}\left(\sum_{i\leq j+1}H_{[]}(\tilde{\delta}_i,\tilde{\mcal{Z}},\|\cdot\|_2)\right)^{1/2}.
        $$
        Now, by $\eta_0\leq\sum \eta_j \leq kM/8= k^2n^{1/2}t^2/8$, we have
        \begin{align*}
            \frac{\eta_0^2}{2(\tilde{\delta}_0^2+\tilde{\delta}_0\eta_0/3n^{1/2})} \geq  &\,\frac{3\eta_0^2}{2(3\tilde{\delta}_0^2+\tilde{\delta}_0k^2t^2/8)} \\
            \geq &\, \frac{16}{6 + k^2t^2/4\tilde{\delta}_0} \cdot\frac{\sum_{j\leq 1}H(\tilde{\delta}_j,\tilde{\mcal{Z}},\|\cdot\|_2)}{k} \\
            \geq &\, 2\sum_{j\leq 1}\frac{H(\tilde{\delta}_j,\tilde{\mcal{Z}},\|\cdot\|_2)}{k},
        \end{align*}
        where the last step is due to $\tilde{\delta}_0\geq 2\sqrt{2}\exp(\tau/2)s=k^2t^2/8$.
        For $j\geq 1$, 
        \begin{align*}
            \frac{\eta_j^2}{2(\tilde{\delta}_j^2+a_{j+1}\eta_j/3n^{1/2})} \geq &\, \frac{3\eta_j^2}{6\tilde{\delta}_j^2 + 2a_{j+1}\eta_j/\sqrt{n}} \\
            \geq &\, \frac{3\eta_j^2}{8\tilde{\delta}_j^2} \geq 2\sum_{i\leq j+1}\frac{H(\tilde{\delta}_i,\tilde{\mcal{Z}},\|\cdot\|_2)}{k},
        \end{align*}
        which follows directly from the definition of $a_{j+1}$ and $\eta_j$.
        Now, 
        \begin{align}
            \mbb{P}_2 := &\, \sum_{j=0}^{N-1} \prod_{l_1=0}^j |\mcal{F}_{l_1}|\prod_{l_2=0}^{j+1} |\mcal{F}_{l_2}| \sup_{h(\theta,\theta^*)\leq t}\mbb{P}(\nu_n(u_{j+1}-u_j) I_{\cup_{i\geq j+1} B_i} > \eta_j) \nonumber\\
            \leq &\, \exp\left(2\sum_{j\leq 1}H(\tilde{\delta}_j,\tilde{\mcal{Z}},\|\cdot\|_2) - \frac{\eta_0^2}{2(\tilde{\delta}_0^2+\tilde{\delta}_0\eta_0/3n^{1/2})} \right) \nonumber\\
            +&\, \sum_{j=1}^{N-1} \exp\left(2\sum_{i\leq j+1}H(\tilde{\delta}_i,\tilde{\mcal{Z}},\|\cdot\|_2) - \frac{\eta_j^2}{2(\tilde{\delta}_j^2+a_{j+1}\eta_j/3n^{1/2})} \right)  \nonumber\\
            \leq &\, \sum_{j=0}^{N-1} \exp\left(-2\frac{1-k}{k} \sum_{i\leq j+1}H(\tilde{\delta}_i,\tilde{\mcal{Z}},\|\cdot\|_2) \right) \nonumber\\
            \leq &\, \sum_{j=0}^{N-1} \exp(-2(1-k)4^j\Psi(M,t^2,n)) \leq N(t)\exp(-(1-k)\Psi(M,t^2,n)).\label{eq:uniform_series_bound}
        \end{align}
        \item For $\mbb{P}_3$ and $\mbb{P}_4$, we show that they are both $0$. 
        $$
        \mbb{P}_3 :=  \sum_{j=0}^{N-1} \mbb{P}^* \left(\sup_{h(\theta,\theta^*)\leq t} \nu_n(\tilde{Z}_{\theta} - u_j)I_{B_j} > \eta_j\right).
        $$
        Note that by the definition of $\nu_n$
        $$
            \nu_n(\tilde{Z}_{\theta} - u_j)I_{B_j} \leq n^{-1/2}\sum_{j=1}^n (\tilde{Z}_\theta(\bmX_i)-u_j(\bmX_i))I_{B_j} + n^{1/2}\sup_{h(\theta,\theta^*)\leq t} \mbb{E}[(u_j-l_j)I_{B_j}].
        $$
        On $B_j$, $u_j-l_j\geq a_{j+1}$, thus $\mbb{P}(B_j) \leq \mbb{E}[(u_j-l_j)^2]/a_{j+1}^2 \leq \tilde{\delta}_j^2/a_{j+1}^2$,
        and 
        \begin{align*}
            \sup_{h(\theta,\theta^*)\leq t} \mbb{E}[(u_j-l_j)I_{B_j}] = &\, \sup_{h(\theta,\theta^*)\leq t} \left(\mbb{E}[(u_j-l_j)^2]\mbb{E}[I_{B_j}]\right)^{1/2} \\
            \leq  &\, \frac{\tilde{\delta}_j^2}{a_{j+1}} \leq \frac{\eta_j}{\sqrt{n}}.
        \end{align*}
        Hence $\nu_n(\tilde{Z}_{\theta} - u_j)I_{B_j} \leq \eta_j$, and $\mbb{P}_3=0$.
        \item Similarly,
        \begin{align*}
            \nu_n(\tilde{Z}_{\theta} - u_N)I_{B_N} \leq  &\, n^{1/2} \sup_{h(\theta,\theta^*)\leq t} \mbb{E}[(u_N-l_N)I_{B_N}] \\
            \leq &\, n^{1/2}\tilde{\delta}_N = 2\sqrt{2}n^{1/2}\exp(\tau/2)s(t) = \frac{kM}{8},
        \end{align*}
        therefore, $\mbb{P}_4=0$.
    \end{itemize}
    Sum $\mbb{P}_1$ through $\mbb{P}_4$ completes the proof.
\end{proof}

\begin{corollary}\label{cor:asymptotic_lemma7}
    With the same definitions as in Lemma \ref{lemma:wong7}, if in addition
    \begin{equation}\label{eq:reasonable_asymp}
        \exp(-(1-k)\Psi(M,t^2,n))\leq \frac{1}{2},
    \end{equation}
    then
    $$
        \mbb{P}^*\left(\sup_{\substack{h(\theta,\theta^*)\leq t \\ \theta\in\Omega_n}} \nu(\tilde{Z}_{\theta})\geq M \right)\leq 2\exp(-(1-k)\Psi(M,t^2,n)).
    $$
\end{corollary}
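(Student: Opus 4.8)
The plan is to \emph{not} invoke the statement of Lemma~\ref{lemma:wong7} as a black box: the prefactor $1+N(t)$ appearing there is genuinely unbounded as $t\downarrow 0$, so the desired conclusion cannot follow from that inequality alone. Instead I would reopen the chaining argument in the proof of Lemma~\ref{lemma:wong7} and sharpen the single place where a loss of order $N(t)$ was incurred, namely the termwise bound on $\mbb{P}_2$. Recall that in that proof the probability of interest was split as $\mbb{P}^*\le \mbb{P}_1+\mbb{P}_2+\mbb{P}_3+\mbb{P}_4$, where $\mbb{P}_3=\mbb{P}_4=0$, where $\mbb{P}_1\le \exp(-(1-k)\Psi(M,t^2,n))$, and where
$$
\mbb{P}_2\ \le\ \sum_{j=0}^{N-1}\exp\!\bigl(-2(1-k)4^{j}\Psi(M,t^2,n)\bigr),
$$
which was then crudely bounded by $N(t)\exp(-(1-k)\Psi(M,t^2,n))$. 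The point is that under the extra hypothesis (\ref{eq:reasonable_asymp}) this sum is in fact a rapidly converging geometric-type series and need not pick up the factor $N(t)$ at all.

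Concretely, I would first note that $4^{j}\ge j+1$ for every integer $j\ge 0$ (immediate by induction), so each summand is at most $\exp(-2(1-k)(j+1)\Psi(M,t^2,n))$; writing $r:=\exp(-2(1-k)\Psi(M,t^2,n))$ this gives $\mbb{P}_2\le \sum_{j\ge 0}r^{\,j+1}=r/(1-r)$. By (\ref{eq:reasonable_asymp}) we have $\sqrt r=\exp(-(1-k)\Psi(M,t^2,n))\le \tfrac12$, hence $r\le \tfrac14$, so $r/(1-r)\le \tfrac43 r\le \tfrac43\cdot\tfrac12\,\exp(-(1-k)\Psi(M,t^2,n))=\tfrac23\exp(-(1-k)\Psi(M,t^2,n))$. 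Combining this with the bound on $\mbb{P}_1$ and $\mbb{P}_3=\mbb{P}_4=0$,
$$
\mbb{P}^*\ \le\ \mbb{P}_1+\mbb{P}_2\ \le\ \Bigl(1+\tfrac23\Bigr)\exp\!\bigl(-(1-k)\Psi(M,t^2,n)\bigr)\ <\ 2\exp\!\bigl(-(1-k)\Psi(M,t^2,n)\bigr),
$$
which is the claimed inequality.

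I do not anticipate a genuine obstacle; the only subtlety worth stating explicitly is the one above — one must reopen the chaining argument of Lemma~\ref{lemma:wong7} rather than quote its final inequality, precisely because the $N(t)$ prefactor there is the quantity we are trying to eliminate and it cannot be absorbed into $\Psi$ (the two are essentially independent, $\Psi(M,t^2,n)\asymp nt^2$ while $N(t)\asymp\log(1/t)$). Everything else is the elementary geometric-series estimate, and all the numerical constants ($\tfrac23$, $\tfrac43$, and the final $2$) carry slack, so the argument is insensitive to the exact normalisations used in the lemma.
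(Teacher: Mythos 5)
Your proposal is correct and follows essentially the same route as the paper: both reopen the proof of Lemma~\ref{lemma:wong7}, leave the bounds on $\mbb{P}_1$, $\mbb{P}_3$, $\mbb{P}_4$ unchanged, and replace the crude $N(t)$ prefactor on $\mbb{P}_2$ by a convergent geometric-series estimate made possible by the extra hypothesis~(\ref{eq:reasonable_asymp}). The paper uses $2\cdot 4^j\geq 2+4j$ to factor out $\exp(-2(1-k)\Psi)$ and bound the remaining series by $16/15$, yielding $\mbb{P}_2\le\tfrac{8}{15}\exp(-(1-k)\Psi)$, whereas you use $4^j\geq j+1$ and the bound $r/(1-r)$ to get $\mbb{P}_2\le\tfrac{2}{3}\exp(-(1-k)\Psi)$; both land strictly below $\exp(-(1-k)\Psi)$ and hence give the stated factor of~$2$.
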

\begin{proof}
    Following the same proof as in Lemma \ref{lemma:wong7}, but (\ref{eq:uniform_series_bound}) can be instead bounded by
    \begin{align*}
        &\, \sum_{j=0}^{N-1} \exp(-2(1-k)4^j\Psi(M,t^2,n))\\
        \leq &\, \exp(-(1-k)\Psi(M,t^2,n))^2\left( \sum_{j=0}^{\infty} \exp(-(1-k)4j\Psi(M,t^2,n)) \right) \\
        \leq &\, \exp(-(1-k)\Psi(M,t^2,n)) \times \frac{1}{2} \times \frac{16}{15} \\
        \leq &\, \exp(-(1-k)\Psi(M,t^2,n)).
    \end{align*}
    due to the additional assumption (\ref{eq:reasonable_asymp}).
\end{proof}

\begin{proof}[Proof for Lemma 8]%wong1995probability
By the choice of $\tau$ and $k$ we use later, we see that $t^2/2^{10} < s(t)$, thus (19) implies (\ref{eq:wong7entropy}), treating $\phi_n$ as $t$. %eq:entropy_bound_wong
If (\ref{eq:wong7entropy}) holds for some $t>0$, then it holds for any $\tilde{t}\geq t$. 
Thus, if (19) holds, we may apply the result of Lemma \ref{lemma:wong7} on any $\tilde{t}\geq t$, then %eq:entropy_bound_wong
\begin{align*}
    \mbb{P}^*\left(\sup_{\substack{h(\theta,\theta^*)\leq \tilde{t} \\ \theta\in\Omega_n}} \nu(\tilde{Z}_{\theta})\geq M(\tilde{t}) \right)\leq &\, (1+N(\tilde{t}))\exp(-(1-k)\Psi(M,\tilde{t}^2,n)) \\
    \leq &\, (1+N(t))\exp\left(-\frac{(1-k)k^2}{2^5c_0+4k}n\tilde{t}^2\right).
\end{align*}
From Lemma 4 \citep{wong1995probability}, 
$$
\mbb{E}\tilde{Z}_{\theta} \leq -2(1-\varrho)h^2(\theta,\theta^*)
$$
where $\varrho:= 2\exp(-\tau/2)/(1-\exp(-\tau/2))^2$.
Let $A_{\tilde{t}}:=\{\theta \in\Omega_n:\tilde{t}\leq h(\theta,\theta^*)\leq \sqrt{2}\tilde{t}\}$, we have for any $\tilde{t}>t$,
$$
\left\{\sup_{A_{\tilde{t}}}\sum_{i=1}^n \log d(\bmx_i,\theta) \geq -2(1-\varrho-k/2) n \tilde{t}^2\right\} \subseteq \left\{\sup_{A_{\tilde{t}}}\nu_n(\tilde{Z}_\theta)\geq k\sqrt{n}\tilde{t}^2\right\}.
$$
Therefore 
\begin{align}
    &\, \mbb{P}\left(\sup_{A_{\tilde{t}}}\prod_{i=1}^n d(\bmx_i,\theta) \geq \exp\left(-2(1-\varrho-k/2)n\tilde{t}^2\right) \right) \\
    \leq &\, \mbb{P}\left( \sup_{A_{\tilde{t}}}\nu_n(\tilde{Z}_\theta)\geq k\sqrt{n}\tilde{t}^2 \right) \nonumber\\
    \leq &\, (1+N(t))\exp\left(-\frac{(1-k)k^2}{2^5c_0+4k}n\tilde{t}^2\right).\nonumber
\end{align}
Let $L(t):=\min \{x\in\mbb{N}:2^{x+1}t^2>1\}$, then
\begin{align}
    &\, \mbb{P}^*\left(\sup_{A^c\cap \Omega_n}\prod_{i=1}^n d(\bmx_i,\theta) \geq \exp\left(-2(1-\varrho-k/2)nt^2\right) \right) \nonumber\\
    = &\, \sum_{j=0}^L \mbb{P}^*\left( \sup_{2^jt^2\leq h(\theta,\theta^*)< 2^{j+1}t^2} \prod_{i=1}^n d(\bmx_i,\theta) \geq \exp\left(-2(1-\varrho-k/2)nt^2\right) \right) \nonumber\\
    \leq &\, \sum_{j=0}^L \mbb{P}^*\left( \sup_{2^jt^2\leq h(\theta,\theta^*)< 2^{j+1}t^2} \prod_{i=1}^n d(\bmx_i,\theta) \geq \exp\left(-2(1-\varrho-k/2)n2^{j+1} t^2\right) \right)\nonumber \\
    \leq &\, \sum_{j=0}^L (1+N(t)) \exp\left(- \frac{(1-k)k^2}{2^5c_0+4k}n2^{j}t^2\right)\label{eq:bound_wong_thm1} \\
    \leq &\, (L(t)+1)(N(t)+1)\exp\left(- \frac{(1-k)k^2}{2^5c_0+4k}nt^2\right) .\nonumber
\end{align}
To derive the choice of constants, we follow the suggestion of \cite{wong1995probability} and choose $k=2/3$, $\exp(\tau/2)=5$, then $c_0=25(4-\tau/2)/16\approx 3.74$, $c_1=1/12$, $c_2=4/27(32c_0+8/3)\approx 1.212\times10^{-3}$ and $c=2^{-7.5}\cdot3^{-1.5}/5\approx 2^{-13}$.
Also, for the uniform bound, $N\leq \ceil{8+\log_2(t^{-1})}$ and $L=\ceil{2\log_2(t^{-1})-1}$, in which case $(1+L)(1+N)\leq 20\max\{-\log_2(t),(\log_2(t))^2\}$.

For the last result, note that $\exp(-c_2nt^2)\leq \frac{1}{2}$ implies (\ref{eq:low_entropy_as}), thus by Corollary \ref{cor:asymptotic_lemma7}, we can replace $(1+N)$ by $2$ in (\ref{eq:bound_wong_thm1}) and get
$$
 \sum_{j=0}^L 2 \exp\left(- c_2n2^{j}t^2\right) \leq 4  \exp\left(- c_2nt^2\right).
$$
Finally, relabel $t$ as $\phi_n$ to get the expression in the statement of the Lemma.
\end{proof}

\subsection{Remaining Proofs for Section 3}
\subsubsection{Proof for Lemma 9} %\label{lemma:entropy_order}

\begin{proof}
    It is enough to check that a $u$-bracket for $\mcal{K}_{p'}(A)$ is also a $u\sqrt{\frac{1-p}{1-p'}}$-bracketing for $\mcal{K}_{p}(A)$, where $1>p\geq p'$.
    Suppose that $\{[L'_i,U'_i],i=1,\dots,N\}$ is a $u$-bracketing for $\mcal{K}_{p'}(A)$.
    Pick an arbitrary $\theta\in A$, there exists $i\in\{1,\dots,N\}$ such that the $u$-bracket $[L'_i,U'_i]$ forms an envelope for $k_{p'}(\bmx;\theta)$, i.e.,
    $$
    L'_i(\bmx)\leq (1-p')f(\bmx;\theta)+p'g(\bmx)\leq U'_i(\bmx),
    $$
    for which $h(L'_i,U'_i)\leq u$.
    Note that by direct multiplication and addition to the preceding inequality, we can find $L_i$ and $U_i$ such that
    $$
    L_i(\bmx):=\frac{1-p}{1-p'}L'_i(\bmx)+\frac{p-p'}{1-p'}g(\bmx)\leq k_{p}(\bmx;\theta) \leq \frac{1-p}{1-p'}U'_i(\bmx) + \frac{p-p'}{1-p'}g(\bmx)=: U_i(\bmx).
    $$
    Thus
    \begin{align*}
        h^2(L_i,U_i) = &\, \frac{1}{2}\int \left(\sqrt{L_i(\bmx)} - \sqrt{U_i(\bmx)}\right)^2\dd \bmx \\
        \leq &\, \frac{1}{2}\int \left(\sqrt{\frac{1-p}{1-p'}L'_i(\bmx)} - \sqrt{\frac{1-p}{1-p'}U'_i(\bmx)}\right)^2\dd \bmx \\
        = &\, \frac{1-p}{1-p'}\cdot \frac{1}{2}\int \left(\sqrt{L'_i(\bmx)}-\sqrt{U'_i(\bmx)}\right)^2\dd \bmx\\
        = &\, \frac{1-p}{1-p'}u^2,
    \end{align*}
    where the second inequality is by noting that for any $a\geq b\geq0,c\geq 0$, $\sqrt{a+c}-\sqrt{b+c}\leq \sqrt{a}-\sqrt{b}$.
    Thus $\{[L_i,U_i],i=1,\dots,N\}$ is a $u\sqrt{\frac{1-p}{1-p'}}$-bracketing for $\mcal{K}_{p}(A)$.
    
    For the second statement, note that $H_{[]}(u,\mcal{F})$ is decreasing as $u$ increases and $\mcal{F}(A)=\mcal{K}_0(A)$.
\end{proof}

\subsubsection{Proof for Lemma 11} %\label{lemma:divergence_order}
\begin{proof}
    Recall that 
    $$
    \rho_{p,\alpha}(\theta^*,\theta) := \frac{1}{\alpha}\int \left[\left(\frac{k_p(\bmx;\theta^*)}{k_p(\bmx;\theta)}\right)^\alpha - 1\right] k_p(\bmx;\theta)\dd x.
    $$
    Note that the above expression is a special case of the $f$-divergence with convex function
    $$f(u) = \frac{1}{\alpha}\left(u^{-\alpha}-1\right),$$
    and to emphasize this fact, we will rewrite the expression as
    $$
    \rho_\alpha\left(k_p(\bmx;\theta^*) \| k_p(\bmx;\theta)\right) := \frac{1}{\alpha}\int \left[\left(\frac{k_p(\bmx;\theta^*)}{k_p(\bmx;\theta)}\right)^\alpha - 1\right] k_p(\bmx;\theta)\dd x.
    $$
    By joint convexity of $f$-divergence,
    \begin{align*}
        \rho_{p,\alpha}(\theta^*, \theta) &\, = \rho_\alpha\left.\Bigl( (1-p)f(\bmx;\theta^*)+p\,g(\bmx) \right\| (1-p)f(\bmx;\theta) + p\,g(\bmx) \Bigr) \\
        &\, \leq  (1-p)\rho_\alpha\left.\bigl(f(\bmx;\theta^*) \right\| f(\bmx;\theta)\bigr) + p\, \rho_\alpha\left.\bigl(g(\bmx) \right\| g(\bmx)\bigr) \\
        &\, = (1-p)\rho_\alpha\left.\bigl(f(\bmx;\theta^*) \right\| f(\bmx;\theta)\bigr) \\
        &\, \leq \rho_{0,\alpha}\left(\theta^*, \theta \right).
    \end{align*}
    Thus $S_{0,\alpha}(t)\subseteq S_{p,\alpha}(t)$.
\end{proof}

\subsubsection{Proof for Lemma 13}%\label{lemma:numerator_bound}

\begin{proof}
Note that $m_n(A_n^c,\bm{X}) = m_n(A_n^c\cap \Omega_n,\bm{X}) + m_n(A_n^c\cap \Omega_n^c, \bm{X})$.
% Recall (\ref{eq:bracket_entropy}) from Assumption \ref{as:bracket_entropy},
% \begin{equation}\tag{\ref{eq:bracket_entropy}}
%     \int_{r_n^2/2^8}^{\sqrt{2}r_n} \left\{H_{[]}(u/a,\Omega_n)\right\}^{1/2}\dd u\leq c\sqrt{n}r_n^2.
% \end{equation}
% Note that by reparameterizing the above integral using $w=ur_n^{-2}$,
% $$
% r_n^{-2}\int_{r_n^2/2^8}^{\sqrt{2}r_n} \left\{H_{[]}(u/a,\Omega_n)\right\}^{1/2}\dd u = \int_{2^8}^{\sqrt{2}/r_n}  \left\{H_{[]}(wr_n^2/a,\Omega_n)\right\}^{1/2} \dd w.
% $$
% Since the bracketing entropy $H_{[]}(u,\Omega)$ is also non-increasing in $u$, the above expression is non-increasing in $r_n$.
% Therefore, if the inequality (\ref{eq:bracket_entropy}) holds for $r_n$, then it holds for any $\phi_n\geq r_n$.
From Lemmas 8 and 9, we have  %\ref{lemma:wong1995probability}, \ref{lemma:entropy_order}
$$
\mbb{P}^*_{\theta^*}\left(\sup_{\theta\in A_n^c\cap \Omega_n} \prod_{i=1}^n d(\bmx_i,\theta) \geq e^{-c_1 n\phi_n^2}\right) \leq C(n,\phi_n)e^{-c_2 n\phi_n^2}.
$$
% Thus if $\frac{n\phi_n^2}{\log n} > \frac{1}{c_2}$, then 
% $$
% \sup_{\theta\in A_n^c\cap \Omega_n} \prod_{i=1}^n d(\bmx_i,\theta) < e^{-c_1 n\phi_n^2}
% $$
% for all large $n$ with probability one by  Borel-Cantelli Lemma.
Also, by
$$
m_n(A_n^c\cap \Omega_n, \bmX) \leq \sup_{\theta\in A_n^c\cap \Omega_n} \prod_{i=1}^n d(\bmx_i,\theta),
$$
we have
$$
\mbb{P}\left(m_n(A_n^c\cap \Omega_n, \bmX) \geq e^{-c_1 n\phi_n^2}\right) \leq C(n,\phi_n)e^{-c_2 n\phi_n^2}.
$$
% If $\frac{n\phi_n^2}{\log n} > \frac{1}{c_2}$, then 
% $$m_n(A_n^c\cap \Omega_n, \bm{X}) < e^{-nc_1\phi_n^2}$$
% for all large $n$ with probability one.
By Markov's inequality, Fubini's Theorem and Assumption 3, we have %\ref{as:prior_tail}
\begin{align*}
\mbb{P}\left(m_n(A_n^c\cap \Omega_n^c,\bm{X})\geq e^{- c_1 n \phi_n^2}\right) \leq &\, e^{ c_1 n \phi_n^2}\iint_{A_n^c\cap \Omega_n^c} \prod_{i=1}^n d(\bmx_i;\theta)\pi_0(\theta)\dd\theta \dd \mbb{P}_{\theta^*}(\bm{X})\\
= &\, e^{c_1 n \phi_n^2}\int_{A_n^c\cap \Omega_n^c} \int\prod_{i=1}^n d(\bmx_i;\theta)\dd \mbb{P}_{\theta^*}(\bm{X}) \pi_0(\theta)\dd\theta \\
= &\, e^{c_1 n\phi_n^2} \mbb{P}_{\pi_0}(A_n^c\cap \Omega_n^c) \leq c_4 e^{- c_1 n\phi_n^2}.
\end{align*}
Thus 
$$
\mbb{P}\left(m_n(A_n^c,\bm{X})\geq 2e^{- c_1 n\phi_n^2}\right) \leq C(n,\phi_n)e^{-c_2 n\phi_n^2} + c_4 e^{-n c_1\phi_n^2}.
$$
\end{proof}

\subsubsection{Proof for Lemma 14} %\label{lemma:ratio_bound}

\begin{proof}
Since $f(\bmx;\theta)$ has all directional derivatives, let $\phi(\alpha) = \theta+\alpha(\theta^*-\theta)$ for some $\alpha\in(0,1)$.
Note that 
$$
    \log k_{p_n}(\bmx;\theta) - \log k_{p_n}(\bmx;\theta^*) \leq \sup_{\alpha\in(0,1)} \| \tilde{\grad}_{\theta} \log k_{p_n}(\bmx;\phi(\alpha))\|_2 \|\theta-\theta^*\|_2,
$$
where $\tilde{\grad}$ operator is defined in Definition 7. %\ref{def:sup_grad}
Note that
\begin{align*}
    \|\tilde{\grad}_\theta \log k_{p_n}(\bmx;\theta)\|_2 = & \left\|\frac{(1-p_n)\tilde{\grad}_\theta f(\bmx;\theta)}{(1-p_n)f(\bmx;\theta) + p_ng(\bmx)} \right\|_2\\
    \leq & \frac{1-p_n}{p_n}\left\|\frac{\tilde{\grad}_\theta f(\bmx;\theta)}{g(\bmx)}\right\|_2.
\end{align*}
Since by Assumption 4, $\|\tilde{\grad}_\theta f(\bmx;\theta)/g(\bmx)\|_2$ is bounded as $\|\bmw\|_2\rightarrow\infty$ in all directions, hence $\tilde{\grad} f(\bmx;\theta)/g(\bmx)$ is bounded on $\mbb{R}^d\times A$ for any compact set $A\subset\Omega$. %\ref{as:ratio_bound}s
Thus there exists a sequence $T_n<\infty$ such that,
$$
\sup_{x\in\mbb{R}^d}\sup_{\theta\in A_n}\left\|\frac{\tilde{\grad}_\theta f(\bmx;\theta)}{g(\bmx)}\right\|_2 \leq T_n.
$$
Thus
$$
|\log d(\bmx;\theta)| \leq \frac{1-p_n}{p_n}T_n\psi_n,
$$
and 
$$
\eta_n=\sup_{\bmx,\bmz\in\mbb{R}^d}\sup_{\theta\in A_n}\frac{d(\bmx;\theta)}{d(\bmz;\theta)} \leq e^{2(1-p_n)T_n\psi_n/p_n}.
$$
\end{proof}

\section{Theoretical Application on Regression Settings}\label{appx:proof_examples}

In this section, we will verify the assumptions in Theorem 4 for linear regression, logistic regression, and Cauchy regression.
The analysis is presented first, followed by technical lemmas.

\subsection{Linear Regression Case}
Let
$$
X_i = \theta^\top \bm{W}_i + e_i, \qquad i=1,\dots,n
$$
where $e_i$ are assumed to be i.i.d. $\mcal{N}(0,\sigma^2)$ random variables with unknown variance $\sigma^2$ and $W_i$ are $d$-dimensional covariates with $\|W_i\|_\infty\leq 1$.
The likelihood function of $X_i$ given $\theta,\sigma,W_i$ can be written as
$$
f_{\theta,\sigma}(x|\bm{w}) = (2\pi\sigma^2)^{-1/2} \exp\left(-(x-\theta^{\top}\bm{w})^2/(2\sigma^2)\right).
$$
% \noindent \textit{Requirement:}
% In order for the divergence $\rho_{0,\alpha}$ to be integrable, we need $f_{\bmx}(\cdot)$ to have a tail no lighter than a Gaussian distribution such that $\exp(r\|\bmw\|_2^2) $ is integrable for small $r$.

\vspace{0.5em}
\noindent
Firstly, we shall address condition (8) in Assumption 1.  %\ref{eq:bracket_entropy}, \ref{as:bracket_entropy}
To avoid singularity, we choose 
$$\Omega_n:=\{(\theta,\sigma) \in \mbb{R}^d: \|\theta^*-\theta\|_2\leq R_n,\, |\sigma-\sigma^*|<R_n, \sigma>1/R_n\},$$ 
where $R_n\rightarrow\infty$ is defined later.

\noindent
By Lemma \ref{lemma:gaussian_lipschitz}, we have for any $(\theta,\sigma),(\vartheta,\varsigma)\in\Omega_n$, $\|(\theta,\sigma)-(\vartheta,\varsigma)\|_\infty\leq r$,
$$
    \left|\sqrt{f_{\theta,\sigma}(x|\bm{w})} - \sqrt{f_{\vartheta,\varsigma}(x|\bm{w})}\right| \leq M^{(n)}_{\vartheta,\varsigma,r}(x,\bm{w}) \|\theta-\vartheta\|_\infty,
$$
with 
\begin{multline*}
    M^{(n)}_{\vartheta,\varsigma,r}(x,\bm{w}) = \frac{R_n^2}{2}\Bigl(d\|\bm{w}\|_2+R_n(|x|+M_n\|\bm{w}\|_2)\Bigr)(|x|+M_n\|\bmw\|_2)\\
    \left(\frac{R_n^2}{2\pi}\right)^{1/4}\exp\left(-\frac{1}{4R_n^2}\min_{\varphi\in B_r(\vartheta)}(x-\varphi^\top \bm{w})^2\right)
\end{multline*}
where $M_n = \|\theta^*\|_2 + R_n$.
Then by Lemma \ref{lemma:gaussian_M2}, $\|M^{(n)}_{\vartheta,\varsigma,r}\|^2_2\preceq M_{n}^{12}$, $\forall (\vartheta,\varsigma)\in\Omega_n$ which satisfies the requirements of Lemma 3.  %\ref{lemma:cover_bound}
% we know that for any $\Omega_n\subset\mbb{R}^d$, $N(r,\Omega_n,\|\cdot\|_\infty)\leq C_d\text{Vol}(\Omega_n)r^{-d}$, and
% \begin{align*}
%      N_{[]}(\phi_{n},\mcal{F}(\Omega_{n})) \leq & \, C_d\text{Vol}(\Omega_n)\left(\frac{\phi_n}{\beta\|M_{r}\|_2}\right)^{-d} \\
%      \preceq &\,  \phi_{n}^{-d}M_{n}^{7d},\\
%      \implies \quad H_{[]}(\phi_{n},\mcal{F}(\Omega_{n})) \preceq &\, \log\frac{M_{n}^{7}}{\phi_{n}}.
% \end{align*}
Since $\log M_{r}^{(n)}=O(\log M_n)$, the condition (8) is satisfied by taking %\ref{eq:bracket_entropy}
$$
\phi_n=n^{-1/q}, \quad M_n = n^{k},
$$
for some $q>4$ and $k>0$ to be chosen to satisfy Assumptions 3. %\ref{as:prior_tail}

It is not hard to check that the Gaussian likelihood function satisfies the condition in Lemma 6, when $(\theta,\sigma)$ is close enough to $(\theta^*,\sigma^*)$, i.e., within $L_2$-distance of some constant $c_r$. %\ref{lemma:local_set}
Thus $S_{0,\alpha}(t_n)$ contains an $L_2$-neighbourhood around $\theta^*$ with its radius proportional to $t_n$ up to a constant depending on $(\theta^*,\sigma^*)$ and $c_r$.
In addition, Assumption 2 is satisfied by applying Lemma 7 and Remark 12. %\ref{as:prior_center}, \ref{rmk:gaussian_prior_mass}
% With Lemma D.5, we show that if $\exp(r\|\bm{w}\|_2^2)$ is integrable for small $r$, then
% $$\rho_{\alpha}((\theta^*,\sigma^*),(\theta,\sigma)) \preceq \|(\theta^*,\sigma^*)-(\theta,\sigma)\|_2$$ 
% for $(\theta,\sigma)$ close enough to $(\theta^*,\sigma^*)$, which means the set $S_{0,\alpha}(t_n)$ in (\ref{eq:prior_centre}) contains an $L_2$-neighbourhood around $\theta^*$, i.e.,
% $$
% \bar{S}(at_n):=\{\theta\in\Omega : \|\theta-\theta^*\|_2\leq a t_n\}\subseteq S_{0,\alpha}(t_n).
% $$
% for some constant $a$ that possibly depends on $\theta^*$.
% For Assumptions \ref{as:prior_center} and \ref{as:prior_tail}, consider an ordinary Gaussian prior distribution on the parameter $(\theta,\sigma)$, $\pi_0(\theta,\sigma)\sim \mcal{N}(0,I_{d+1})$.
% By Lemma \ref{lemma:gaussian_prior_mass} and Remark \ref{rmk:gaussian_prior_mass},
% $$
%    \mbb{P}_{\pi_0}(S_{0,\alpha}(t_n)) \geq \mbb{P}_{\pi_0}(\bar{S}(at_{n})) \succeq t_{n}^{d}\exp(-a^2t_{n}^2/2).
% $$
% Hence any $t_n\rightarrow 0$ will satisfy (\ref{eq:prior_centre}).

Moreover, for $R_n$ large, by Remark 13 %\ref{rmk:gaussian_prior_tail}
$$
\mbb{P}_{\pi_0}(\Omega_n^c) = 1-\mbb{P}_{\pi_0}(\Omega_{n}) \sim R_{n}^{d-2}e^{-R_{n}^2/2}.
$$
Recall that $M_n=R_n+\|\theta^*\|_2$ and $\phi_n=n^{-1/q}$, this gives the restriction that
$$
M_n^2 = n^{2k} \approx R_n^2 \geq 4c_1n\phi_n^2 = 4c_1n^{(q-2)/q},
$$
which can be satisfied by choosing $k  > (q-2)/2q$.
Finally, Lemma \ref{lemma:gaussian_hellinger} shows that the pair of sequences $\phi_n$ and $\psi_n$ in Proposition 3 can be chosen (up to a multiplicative constant) to diminish at the same rate when $(\theta,\sigma)$ is close to $(\theta^*,\sigma^*)$. %\ref{prop:hellinger_convergence}
Therefore $\psi_n$ can also be chosen to be $\propto \phi_n=n^{-1/q}$.
By choosing $p_n$ to converge at a rate slower than $n^{-1/q}$, e.g., $p_n=n^{-1/2q}$, then
$$
\epsilon_n = O(n^{-1/2q}),\qquad \delta_n = O(\exp(-\tfrac{1}{2}c_1 n^{1-2/q})),
$$
where $c_1$ is a fixed constant and $q>4$.

\begin{lemma}\label{lemma:gaussian_lipschitz}
Let 
$$
f_{\theta,\sigma}(x|\bm{w}) := (2\pi\sigma^2)^{-1/2}\exp(-(x-\theta^\top \bmw)^2/(2\sigma^2)),
$$
where $(\theta,\sigma)\in\Omega_n:=\{(\theta,\sigma)\in\Omega:\|\theta-\theta^*\|_2\leq R_n, |\sigma^*-\sigma|\leq R_n, \sigma >1/R_n\}$, for some $R_n>0$.
If $(\theta,\sigma),(\vartheta,\varsigma)\in\Omega_n$ with $\|(\theta,\sigma)-(\vartheta,\varsigma)\|_\infty\leq r$, then
$$
\left|\sqrt{f_{\theta,\sigma}(x|\bm{w})} - \sqrt{f_{\vartheta,\varsigma}(x|\bm{w})}\right| \leq M_{\vartheta,\varsigma,r}(x,\bmw) \|(\theta,\sigma)-(\vartheta,\varsigma)\|_\infty,
$$
where the function $M_{\vartheta,\varsigma,r}$ is given by 
\begin{multline*}
    M_{\vartheta,\varsigma,r}(x,\bmw) := \frac{R_n^2}{2}\Bigl(d\|\bmw\|_2+R_n(|x|+M_n\|\bmw\|_2)\Bigr)(|x|+M_n\|\bmw\|_2)\\
    \left(\frac{R_n^2}{2\pi}\right)^{1/4}\exp\left(-\frac{1}{4R_n^2}\min_{\varphi\in B_r(\vartheta)}(x-\varphi^\top \bmw)^2\right),
\end{multline*}
where $M_n = \|\theta^*\|_2 + R_n$.
\end{lemma}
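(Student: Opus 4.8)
The plan is to prove the bound by the mean value theorem applied to the map $(\theta,\sigma)\mapsto\sqrt{f_{\theta,\sigma}(x\mid\bm{w})}$ along the line segment joining $(\vartheta,\varsigma)$ and $(\theta,\sigma)$. First I would write
$$
\sqrt{f_{\theta,\sigma}(x\mid\bm{w})} = (2\pi)^{-1/4}\,\sigma^{-1/2}\exp\!\left(-\frac{(x-\theta^\top\bm{w})^2}{4\sigma^2}\right),
$$
and note that, since $\|(\theta,\sigma)-(\vartheta,\varsigma)\|_\infty\le r$ and $\Omega_n$ is convex, the whole segment lies inside $\Omega_n$ and inside $B_r(\vartheta)\times[\varsigma-r,\varsigma+r]$, so in particular every $\theta'$ on the segment satisfies $\theta'\in B_r(\vartheta)$. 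Because the parameter increment is measured in $\|\cdot\|_\infty$, the fundamental theorem of calculus gives
$$
\left|\sqrt{f_{\theta,\sigma}(x\mid\bm{w})}-\sqrt{f_{\vartheta,\varsigma}(x\mid\bm{w})}\right| \le \left(\sup_{(\theta',\sigma')}\left\|\nabla_{(\theta',\sigma')}\sqrt{f_{\theta',\sigma'}(x\mid\bm{w})}\right\|_1\right)\|(\theta,\sigma)-(\vartheta,\varsigma)\|_\infty,
$$
the supremum being over the segment, so it suffices to bound the $\ell^1$ norm of the gradient uniformly there.

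Next I would compute the gradient explicitly,
$$
\nabla_{\theta}\sqrt{f_{\theta,\sigma}} = \sqrt{f_{\theta,\sigma}}\,\frac{x-\theta^\top\bm{w}}{2\sigma^2}\,\bm{w},\qquad \partial_\sigma\sqrt{f_{\theta,\sigma}} = \sqrt{f_{\theta,\sigma}}\left(\frac{(x-\theta^\top\bm{w})^2}{2\sigma^3}-\frac{1}{2\sigma}\right),
$$
and estimate each factor on $\Omega_n$: the constraint $\sigma>1/R_n$ yields $\sigma^{-1/2}<R_n^{1/2}$, $\sigma^{-2}<R_n^2$, $\sigma^{-3}<R_n^3$; the constraint $\|\theta-\theta^*\|_2\le R_n$ yields $\|\theta\|_2\le M_n:=\|\theta^*\|_2+R_n$, hence $|x-\theta^\top\bm{w}|\le|x|+M_n\|\bm{w}\|_2$; and $\|\bm{w}\|_1\le d\|\bm{w}\|_2$. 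The one factor that genuinely varies along the segment is the exponential $\exp(-(x-\theta'^\top\bm{w})^2/(4\sigma'^2))$; I would bound it uniformly by replacing $(x-\theta'^\top\bm{w})^2$ with $\min_{\varphi\in B_r(\vartheta)}(x-\varphi^\top\bm{w})^2$ and $\sigma'^2$ by the largest admissible value, which is of order $R_n^2$, producing the factor $\exp\!\big(-\tfrac{1}{4R_n^2}\min_{\varphi\in B_r(\vartheta)}(x-\varphi^\top\bm{w})^2\big)$. Collecting the polynomial factors — $\sigma^{-2}<R_n^2$ together with $\|\bm{w}\|_1\le d\|\bm{w}\|_2$ for the $\theta$-part, $\sigma^{-3}<R_n^3$ for the leading $\sigma$-part, and the $1/(2\sigma)$ term absorbed into the quadratic one using $\|\bm{w}\|_2\ge1$ (the intercept column) — and the $(2\pi)^{-1/4}\sigma^{-1/2}<(R_n^2/2\pi)^{1/4}$ factor, yields exactly the stated $M_{\vartheta,\varsigma,r}(x,\bm{w})$.

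There is no conceptual obstacle here; the only points requiring care are: (i) keeping the exponential factor evaluated at the \emph{worst} point of the segment, which is the reason the $\min$ over $B_r(\vartheta)$ appears in $M_{\vartheta,\varsigma,r}$ and is precisely what will make $M^{(n)}_{\vartheta,\varsigma,r}$ square-integrable in $x$ when the companion lemma bounds $\|M^{(n)}_{\vartheta,\varsigma,r}\|_2^2$; and (ii) bookkeeping the powers of $\sigma^{-1}$ and of $|x-\theta^\top\bm{w}|$ so that the constants and the $R_n$-dependence match the claimed form. The exclusion of a neighbourhood of $\sigma=0$ in the definition of $\Omega_n$ is what makes all inverse powers of $\sigma$ controllable, and the convexity of $\Omega_n$ is what keeps the whole segment within the parameter set.
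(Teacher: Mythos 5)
Your argument is essentially the paper's own: compute $\nabla_\theta\sqrt{f}$ and $\partial_\sigma\sqrt f$, control the difference by the supremum of the gradient along the segment (the paper phrases this via Cauchy's mean value theorem at an intermediate point $(\varphi,\zeta)$, you via FTC plus the $\ell^1$–$\ell^\infty$ pairing, but the resulting bound is the same), then estimate each factor using $\sigma>1/R_n$, $\|\theta\|_2\le M_n$, and replace the exponential by its worst value over $B_r(\vartheta)$. One point worth flagging: the paper's displayed chain simply drops the $-\zeta^{-1}$ term of $\partial_\sigma\sqrt{f}$, which is not valid when $(x-\varphi^\top\bm{w})^2<\zeta^2$; you explicitly absorb that term into the quadratic part, which is the more careful thing to do, but your absorption step uses $\|\bm{w}\|_2\ge 1$ (an intercept column), a hypothesis that is standing elsewhere in the paper but is not stated in this lemma — so you should either add it to the lemma's hypotheses or adjust the claimed $M_{\vartheta,\varsigma,r}$ to include the extra $\zeta^{-1}$-sized summand.
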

\begin{proof}
Recall that
$$
f_{\theta,\sigma}(x|\bm{w}) = (2\pi\sigma^2)^{-1/2}\exp(-(x-\theta^\top \bmw)^2/(2\sigma^2)),
$$
Then, by Cauchy's Mean Value Theorem and the Cauchy-Schwarz inequality, there exists some $a\in(0,1)$, $\varphi=\vartheta+a(\theta-\vartheta)$, $\zeta = \varsigma+a(\sigma-\varsigma)$, such that
\begin{align*}
    &\, \left|\sqrt{f_{\theta,\sigma}(x|\bm{w})} - \sqrt{f_{\vartheta,\varsigma}(x|\bm{w})}\right| \\
    =&\, \frac{x-\varphi^\top \bmw}{2\zeta^2}f_{\varphi,\zeta}^{\frac{1}{2}}(x|\bm{w})\,(\theta-\vartheta)^\top \bmw + \frac{1}{2}\left(\frac{(x-\varphi^\top \bmw)^2}{\zeta^3}-\zeta^{-1}\right)f_{\varphi,\zeta}^{\frac{1}{2}}(x|\bm{w})\,(\sigma-\varsigma) \\
    \leq &\, \frac{1}{2\zeta^2}\left((x-\varphi^\top \bmw)d\|\bmw\|_2 + \frac{(x-\varphi^\top \bmw)^2}{\zeta}\right) f_{\varphi,\zeta}^{\frac{1}{2}}(x|\bm{w}) \| (\theta,\sigma) - (\vartheta,\varsigma)\|_\infty\\
    \leq &\, \frac{R_n^2}{2}\Bigl(d\|\bmw\|_2+R_n(|x|+M_n\|\bmw\|_2)\Bigr)(|x|+M_n\|\bmw\|_2) \\
    &\, \times \left(\frac{R_n^2}{2\pi}\right)^{1/4}\exp\left(-\frac{1}{4R_n^2}\min_{\varphi\in B_r(\vartheta)}(x-\varphi^\top \bmw)^2\right) \| (\theta,\sigma)- (\vartheta,\varsigma)\|_\infty.
\end{align*}
The last step is by noting the following upper bounds since $(\varphi,\zeta)\in\Omega_n$
$$
|x-\varphi^\top \bmw|\leq |x|+M_n\|\bmw\|_2,\quad \frac{1}{\zeta} \leq R_n.
$$
\end{proof}

\begin{lemma}\label{lemma:gaussian_moment}
    For constants $R,\alpha\in(0,M]$, $k\in\mbb{Z}_+$,
    $$
    \int_0^\infty x^k \frac{1}{\sqrt{2\pi R^2}}\exp\left(-\frac{(x-\alpha w)^2}{2R^2}\right)\dd x \leq \beta_k(w) M^k,
    $$
    where 
    $$\beta_k(w)=\frac{1}{\sqrt{\pi}}\left[ 2\sum_{j=0}^{\floor{\frac{k}{2}}} \begin{pmatrix}
    k\\2j
    \end{pmatrix} 2^{j-1}w^{k-2j}\Gamma(\tfrac{2j+1}{2}) + \sum_{j=1}^{\ceil{\frac{k}{2}}} \begin{pmatrix}
        k\\2j-1
    \end{pmatrix} 2^{\frac{2j-1}{2}-1} w^{k-2j+1}\Gamma(j)\right].$$
\end{lemma}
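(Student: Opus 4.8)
The plan is to recognise the integral as a truncated moment of a Gaussian and to bound it term by term after a binomial expansion. Write $I$ for the integral in the statement. Since $\int_{\mathbb{R}}(2\pi R^2)^{-1/2}\exp(-(x-\alpha w)^2/(2R^2))\dd x=1$, we may read $I=\mathbb{E}[X^k\one{X>0}]$ for $X\sim\mathcal{N}(\alpha w,R^2)$. Substituting $X=\alpha w+RZ$ with $Z$ standard normal turns $\{X>0\}$ into $\{Z>a\}$ where $a:=-\alpha w/R$; because $\alpha,R>0$ and $w\ge 0$ (it denotes $\|\bmw\|_2$ in the application), we have $a\le 0$, which is the feature the whole argument rests on. Expanding $(\alpha w+RZ)^k$ by the binomial theorem gives $I=\sum_{i=0}^k\binom{k}{i}(\alpha w)^{k-i}R^i\,\mathbb{E}[Z^i\one{Z>a}]$.

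Next I would bound the three pieces of each summand. The prefactor is controlled by $0<\alpha\le M$ and $0<R\le M$, which give $(\alpha w)^{k-i}R^i\le M^k w^{k-i}$. For the truncated moment I split on the parity of $i$: if $i$ is even then $Z^i\ge 0$, so $\mathbb{E}[Z^i\one{Z>a}]\le\mathbb{E}[Z^i]$; if $i$ is odd then, since $a\le 0$ and $z^i\le 0$ on $[a,0]$, we get $\mathbb{E}[Z^i\one{Z>a}]=\int_a^0 z^i\varphi(z)\dd z+\int_0^\infty z^i\varphi(z)\dd z\le\int_0^\infty z^i\varphi(z)\dd z$, where $\varphi$ is the standard normal density; and this quantity is nonnegative, being equal to $-\int_{-\infty}^a z^i\varphi(z)\dd z$. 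This asymmetric treatment of the odd moments — only half of $\mathbb{E}|Z|^i$ survives, not all of it — is exactly what produces the factor $2^{(2j-1)/2-1}$ (rather than $2^{(2j-1)/2}$) in the odd part of $\beta_k$; a cruder bound $X^k\le M^k(w+|Z|)^k$ followed by symmetrisation would lose this factor of $2$ and would not give the stated constant.

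Finally I would evaluate the Gaussian integrals with the substitution $s=z^2/2$: this yields $\int_0^\infty z^i\varphi(z)\dd z=\tfrac{1}{\sqrt{2\pi}}2^{(i-1)/2}\Gamma(\tfrac{i+1}{2})$, and hence $\mathbb{E}[Z^i]=2\int_0^\infty z^i\varphi(z)\dd z=\tfrac{1}{\sqrt{2\pi}}2^{(i+1)/2}\Gamma(\tfrac{i+1}{2})$ for even $i$. Substituting these bounds into the binomial sum, reindexing the even terms by $i=2j$ with $0\le j\le\floor{k/2}$ and the odd terms by $i=2j-1$ with $1\le j\le\ceil{k/2}$, and absorbing one $\sqrt2$ from each $1/\sqrt{2\pi}$ into $1/\sqrt\pi$, collapses the bound to exactly $I\le\beta_k(w)M^k$.

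The computation is routine; the only real care needed is the parity bookkeeping of the truncated moments in the second step. As a sanity check, at $k=1$ one has the closed form $I=\alpha w\,\Phi(\alpha w/R)+R\varphi(\alpha w/R)\le M(w+1/\sqrt{2\pi})=\beta_1(w)M$, and the analogous computation at $k=2$ using $\mathbb{E}[X^2\one{X>0}]=\mu^2\Phi(\mu/R)+2\mu R\varphi(\mu/R)+R^2(\Phi(\mu/R)-\tfrac{\mu}{R}\varphi(\mu/R))$ with $\mu=\alpha w$ gives $I\le M^2(w^2+2w/\sqrt{2\pi}+1)=\beta_2(w)M^2$, confirming that the constants are as claimed.
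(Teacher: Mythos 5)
Your proposal is correct and follows essentially the same route as the paper: shift/rescale the Gaussian, expand $(\mu + RZ)^k$ binomially, bound even-power truncated moments by the full moment (factor $2$) and odd-power ones by the half-line integral (using $\alpha w \ge 0$ so the $[a,0]$ contribution is non-positive), then evaluate half-line Gaussian moments via $s=z^2/2$ and absorb $\alpha,R \le M$. The only cosmetic difference is that the paper substitutes $t=x-\alpha w$ and keeps scale $R$ throughout rather than standardizing to $Z$.
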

\begin{proof}
First, we compute the following integral
\begin{align*}
    &\, \int_0^\infty \frac{w^k}{\sqrt{2\pi R^2}}\exp\left(-\frac{w^2}{2R^2}\right)\dd w & \\
    = &\, \frac{1}{\sqrt{2\pi}}\int_0^\infty (2R^2t)^{\frac{k-1}{2}}R\exp(-t)\dd t & [t=\tfrac{w^2}{2R^2}] \\
    = &\, \frac{1}{\sqrt{\pi}}2^{k/2-1}R^k \Gamma(\tfrac{k+1}{2}).
\end{align*}
Therefore,
\begin{align*}
    &\, \int_0^\infty x^k \frac{1}{\sqrt{2\pi R^2}}\exp\left(-\frac{(x-\alpha w)^2}{2R^2}\right)\dd x & \\
    = &\, \int_{-\alpha w}^\infty (t+\alpha w)^k \frac{1}{\sqrt{2\pi R^2}}\exp\left(-\frac{t^2}{2R^2}\right)\dd t & [t=x-\alpha w] \\
    \leq &\, 2\sum_{j=0}^{\floor{\frac{k}{2}}} \begin{pmatrix}
    k\\2j
    \end{pmatrix} \int_0^\infty t^{2j} (\alpha w)^{k-2j} \frac{1}{\sqrt{2\pi R^2}} \exp\left(-\frac{t^2}{2R^2}\right)\dd t \\
    &\, +\, \sum_{j=1}^{\ceil{\frac{k}{2}}} \begin{pmatrix}
        k\\2j-1
    \end{pmatrix} \int_0^\infty t^{2j-1}(\alpha w)^{k-2j+1}\frac{1}{\sqrt{2\pi R^2}} \exp\left(-\frac{t^2}{2R^2}\right)\dd t \\
    \leq &\, \beta_k(w) M^k &
\end{align*}
where
$$
\beta_k(w) = \frac{1}{\sqrt{\pi}}\left[ 2\sum_{j=0}^{\floor{\frac{k}{2}}} \begin{pmatrix}
    k\\2j
    \end{pmatrix} 2^{j-1}w^{k-2j}\Gamma(\tfrac{2j+1}{2}) + \sum_{j=1}^{\ceil{\frac{k}{2}}} \begin{pmatrix}
        k\\2j-1
    \end{pmatrix} 2^{\frac{2j-1}{2}-1} w^{k-2j+1}\Gamma(j)\right].
$$
\end{proof}

\begin{lemma}\label{lemma:gaussian_M2}
Define 
$$\Omega_n:=\{(\theta,\sigma)\in\Omega:\|\theta-\theta^*\|_2\leq R_n, |\sigma^*-\sigma|\leq R_n, \sigma >1/R_n\},$$ 
for some $R_n>0$.
For $(\vartheta,\varsigma)\in\Omega_n$, let
\begin{multline*}
    M_{\vartheta,\varsigma,r}(x,\bm{w}) := \frac{R_n^2}{2}\Bigl(d\|\bmw\|_2+R_n(|x|+M_n\|\bmw\|_2)\Bigr)(|x|+M_n\|\bmw\|_2)\\
    \left(\frac{R_n^2}{2\pi}\right)^{1/4}\exp\left(-\frac{1}{4R_n^2}\min_{\varphi\in B_r(\vartheta)}(x-\varphi^\top \bmw)^2\right),
\end{multline*}
where $M_n = \|\theta^*\|_2+R_n$.
If $\|\bm{w}\|_\infty\leq 1$, then
$$\|M_{\vartheta,\varsigma,r}(x,\bm{w})\|_2^2 \preceq M_n^{12}.$$
\end{lemma}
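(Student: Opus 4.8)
The plan is to bound the $x$-integral $\int M_{\vartheta,\varsigma,r}(x,\bm w)^2\,\dd x$ uniformly over $\|\bm w\|_\infty\le1$; since $\|M_{\vartheta,\varsigma,r}(x,\bm w)\|_2^2=\iint M_{\vartheta,\varsigma,r}(x,\bm w)^2\,\dd\bm w\,\dd x$, integrating $\bm w$ over $[-1,1]^d$ then contributes only a factor $\le2^d$, $d$ being fixed. To do so I would bound the polynomial-in-$|x|$ factor crudely using $\|\bm w\|_2\le\sqrt d$ and $\|\bm w\|_1\le d$, and integrate the leftover Gaussian kernel term by term via Lemma \ref{lemma:gaussian_moment}. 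First I would square the definition: setting $A:=d\|\bm w\|_2+R_n(|x|+M_n\|\bm w\|_2)$ and $B:=|x|+M_n\|\bm w\|_2$, and noting $\bigl(\tfrac{R_n^2}{2}\bigr)^2\bigl(\tfrac{R_n^2}{2\pi}\bigr)^{1/2}=\tfrac{R_n^5}{4\sqrt{2\pi}}$,
$$M_{\vartheta,\varsigma,r}(x,\bm w)^2=\frac{R_n^5}{4\sqrt{2\pi}}\,A^2B^2\exp\!\left(-\frac{1}{2R_n^2}\min_{\varphi\in B_r(\vartheta)}(x-\varphi^\top\bm w)^2\right).$$
For $n$ large (so $R_nM_n\ge d^{3/2}$) one has $A\le CR_n(|x|+M_n)$ and $B\le C(|x|+M_n)$ with $C$ depending only on $d$, hence $A^2B^2\le C^4R_n^2(|x|+M_n)^4$ and
$$M_{\vartheta,\varsigma,r}(x,\bm w)^2\le C'R_n^7(|x|+M_n)^4\exp\!\left(-\frac{1}{2R_n^2}\min_{\varphi\in B_r(\vartheta)}(x-\varphi^\top\bm w)^2\right).$$

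Next I would remove the $\min$ from the exponent. With $\mu:=\vartheta^\top\bm w$ and $\rho:=rd$ one has $|\varphi^\top\bm w-\mu|\le\rho$ for $\varphi\in B_r(\vartheta)$ (since $\|\bm w\|_1\le d$), so $\min_{\varphi\in B_r(\vartheta)}(x-\varphi^\top\bm w)^2=\bigl((|x-\mu|-\rho)_+\bigr)^2$; the elementary bound $(a-\rho)_+^2\ge\tfrac12a^2-\rho^2$ (equivalently $\tfrac12(a-2\rho)^2\ge0$) together with $\rho^2/(2R_n^2)\to0$ then shows the exponential is at most a constant times $\exp\!\bigl(-(x-\mu)^2/(4R_n^2)\bigr)$. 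Moreover $|\mu|\le\|\vartheta\|_2\|\bm w\|_2\le(\|\theta^*\|_2+R_n)\sqrt d\le CM_n$, so $\mu=\alpha w$ with $w\in\{-1,0,1\}$, $\alpha:=|\mu|\le CM_n=:M$, and $\sqrt2R_n\le M$ for $n$ large; expanding $(|x|+M_n)^4=\sum_{k=0}^4\binom4k|x|^kM_n^{4-k}$ and applying Lemma \ref{lemma:gaussian_moment} (standard deviation $\sqrt2R_n$, mean $\mu$; the case $\mu=0$ being immediate) to each $\int|x|^k\exp(-(x-\mu)^2/(4R_n^2))\,\dd x\preceq R_nM^k\preceq R_nM_n^k$ yields $\int(|x|+M_n)^4\exp(-(x-\mu)^2/(4R_n^2))\,\dd x\preceq R_nM_n^4$. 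Feeding this into the last display, $\|M_{\vartheta,\varsigma,r}(x,\bm w)\|_2^2\preceq R_n^8M_n^4\le M_n^{12}$, using $R_n\le M_n=\|\theta^*\|_2+R_n$; finitely many small $n$ are absorbed into the constant, and every constant is uniform over $\vartheta\in\Omega_n$, over $\varsigma$ (which does not enter $M_{\vartheta,\varsigma,r}$ at all), and over $r\le\phi_0$.

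The one genuine obstacle is the $\min_{\varphi\in B_r(\vartheta)}$ sitting inside the Gaussian exponent: one must verify that replacing the quadratic by its minimum over an $r$-ball costs only a bounded multiplicative factor, uniformly in $\vartheta$ and $r\le\phi_0$, rather than fattening the tail — which is precisely what $(a-\rho)_+^2\ge\tfrac12a^2-\rho^2$ with $\rho=rd$ bounded delivers. A more hands-on alternative is to split $\int\dd x$ at $|x|$ of order $M_n$: on the inner interval the polynomial is $O(M_n^4)$, the exponential is $\le1$, and the interval has length $O(M_n)$; on the outer region $\min_\varphi(x-\varphi^\top\bm w)^2\ge x^2/4$, so the contribution reduces to a plain Gaussian tail integral. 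Everything else is just bookkeeping of powers of $R_n$ and $M_n$, taking care that all constants remain independent of $n$ and of $\vartheta\in\Omega_n$, which the crude bound $|\mu|\le CM_n$ ensures.
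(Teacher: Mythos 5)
Your proof is correct and yields the stated $M_n^{12}$, but it takes a genuinely different route from the paper's. The paper handles the $\min_{\varphi\in B_r(\vartheta)}(x-\varphi^\top\bmw)^2$ term by noting it equals $(x-u_-)^2$, $0$, or $(x-u_+)^2$ on the three regions $(-\infty,u_-]$, $[u_-,u_+]$, $[u_+,\infty)$, then bounding each region separately (the middle one by the length of the interval, the outer ones via Lemma~\ref{lemma:gaussian_moment}). You instead replace the $\min$ globally using $(a-\rho)_+^2\ge\tfrac12a^2-\rho^2$, which collapses the troublesome exponential into a single Gaussian kernel $\exp(-(x-\mu)^2/(4R_n^2))$ at the cost of a bounded constant $\exp(\rho^2/2R_n^2)$; after that a single application of Lemma~\ref{lemma:gaussian_moment} finishes the job. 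Your route is slightly cleaner (no case split), the paper's is more explicit about the geometry of the $\min$. Two small presentational points that do not affect correctness: (i) the $\|\cdot\|_2$ norm here is with respect to $x$ alone, with $\bmw$ held fixed — not the double integral $\iint\dd\bmw\,\dd x$ you write — but since your bound on $\int M^2\,\dd x$ is uniform in $\bmw$ the conclusion is unchanged; (ii) the display $\min_{\varphi\in B_r(\vartheta)}(x-\varphi^\top\bm w)^2=\bigl((|x-\mu|-\rho)_+\bigr)^2$ should be $\ge$, since $\rho=rd$ is an upper bound on the actual half-range $r\|\bmw\|_1$ of $\varphi^\top\bmw$, not its exact value; fortunately this inequality points in the direction you need.
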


\begin{proof}
$$
\|M_{\vartheta,\varsigma,r}(x,\bmw)\|_2^2 = \int M_{\vartheta,\varsigma,r}^2(x,\bmw)\dd x.
$$
Consider the value of
$$
\min_{\varphi\in B_r(\vartheta)}(x-\varphi^\top \bmw).
$$
Note that $\varphi = \vartheta+\tilde{r}\bm{v}$ for some $\tilde{r}<r$ and unit vector $\bm{v}$.
When $\bmw$ is fixed, $\varphi^\top \bmw$ takes value within the interval $(u_-,u_+)$ where
$$
u_-:= \vartheta^\top \bmw - r\|\bmw\|_2, \quad u_+ := \vartheta^\top \bmw + r\|\bmw\|_2,
$$
with the infimum and the supremum attained at $\vartheta-\frac{r\bmw}{\|\bmw\|_2}$ and $\vartheta+\frac{r\bmw}{\|\bmw\|_2}$ respectively.
Then
$$
\min_{\varphi\in B_r(\vartheta)} (x-\varphi^\top \bmw)^2 = \begin{dcases}
    (x-\vartheta^\top \bmw+r\|\bmw\|_2)^2, & x<u_-;\\
    0, & u_-\leq x\leq u_+;\\
    (x-\vartheta^\top \bmw-r\|\bmw\|_2)^2, & x>u_+;
\end{dcases}
$$
Write
$$
\int_{-\infty}^\infty M_{\vartheta,\varsigma,r}^2(x,\bmw)\dd x = \int_{-\infty}^{u_-}+\int_{u_-}^{u_+}+\int_{u_+}^\infty M_{\vartheta,\varsigma,r}^2(x,\bmw)\dd x. 
$$
Firstly, by noting that $|u_+|\leq M_n\|\bmw\|_2$
\begin{align*}
&\, \int_{u_-}^{u_+} M_{\vartheta,\varsigma,r}^2(x,\bmw)\dd x\\
= &\, \frac{1}{\sqrt{2\pi}}\frac{R_n^5}{4} \int_{u_-}^{u_+} \Bigl(d\|\bmw\|_2+R_n(|x|+M_n\|\bmw\|_2)\Bigr)^2(|x|+M_n\|\bmw\|_2)^2 \exp\left(-\frac{1}{2R_n^2}\right)\dd x \\
\leq &\, \frac{1}{\sqrt{2\pi}}\frac{M_n^5}{4}(d+2M_n^2)^2\|\bmw\|_2^2(2M_n\|\bmw\|_2)^2 2r\|\bmw\|_2 \\
\preceq & M_n^{11}.
\end{align*}
Note that $\int_{-\infty}^{u_-} M_{\vartheta,\varsigma,r}^2(x,\bmw)\dd x$ is comprised of summing integrals of form
$$
\int_{-\infty}^{u_-} \frac{1}{\sqrt{2\pi R_n^2}}|x|^k \exp\left(-\frac{(x-u_-)^2}{2R_n^2}\right)\dd x.
$$
By Lemma \ref{lemma:gaussian_moment},
\begin{align*}
    &\, \int_{-\infty}^{u_-} \frac{1}{\sqrt{2\pi R_n^2}}|x|^k \exp\left(-\frac{(x-u_-)^2}{2R_n^2}\right)\dd x \\
    \leq &\, \int_{-\infty}^{\infty} \frac{1}{\sqrt{2\pi R_n^2}}|x|^k \exp\left(-\frac{(x-u_-)^2}{2R_n^2}\right)\dd x \\
    \leq &\, 2\int_{0}^{\infty} \frac{1}{\sqrt{2\pi R_n^2}}x^k \exp\left(-\frac{(x-|u_-|)^2}{2R_n^2}\right)\dd x \\
    \preceq &\, 2\beta_k(\|\bmw\|_2)M_n^k.
\end{align*}
Since $|u_-|,|u_+|\leq M_n\|\bmw\|_2$, the integrals
$\int_{-\infty}^{u_-} M_{\vartheta,\varsigma,r}^2(x,\bmw)\dd x$ and $\int_{u_-}^{\infty} M_{\vartheta,\varsigma,r}^2(x,\bmw)\dd x$ share the same upper bound as above (see by change of variable $w=-x$).
Then $$\|M_{\vartheta,\varsigma,r}^2(x,\bmw)\|_2^2\preceq M_n^{12}.$$

\end{proof}

\begin{lemma} \label{lemma:gaussian_hellinger}
Let $\|\bmw\|_\infty\leq 1$ with nonzero entries,
$$
f_{\theta,\sigma}(x|\bm{w}) := (2\pi\sigma^2)^{-1/2}\exp\left(-\frac{(x-\theta^\top \bmw)^2}{2\sigma^2}\right).
$$
Then we have for $(\theta,\sigma)$ close to $(\theta^*,\sigma^*)$, there exists a positive constant $C$ independent of  $(\theta,\sigma)$ such that
$$
    h((\theta^*,\sigma^*),(\theta,\sigma)) \leq \phi_n \implies \|(\theta^*,\sigma^*)-(\theta,\sigma)\|_2 \leq C\phi_n.
$$
\end{lemma}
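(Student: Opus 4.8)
The plan is to exploit the closed form of the Hellinger affinity between two univariate Gaussians and then read off the local behaviour of $h^2$ by a second-order expansion. Writing $\mu=\theta^\top\bmw$ and $\mu^*=\theta^{*\top}\bmw$, a direct Gaussian integral gives
$$
1-h^2\bigl((\theta^*,\sigma^*),(\theta,\sigma)\bigr)=\int\sqrt{f_{\theta^*,\sigma^*}(x|\bmw)\,f_{\theta,\sigma}(x|\bmw)}\,\dd x=\sqrt{\frac{2\sigma^*\sigma}{\sigma^{*2}+\sigma^2}}\exp\left(-\frac{(\mu-\mu^*)^2}{4(\sigma^{*2}+\sigma^2)}\right).
$$
Setting $u:=(\theta-\theta^*)^\top\bmw$ and $v:=\sigma-\sigma^*$, the right-hand side is a smooth function $g(u,v)$ with $g(0,0)=1$ and vanishing gradient there, so the first concrete step is to evaluate its Hessian at the origin.

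Next I would carry out the expansion. Using the identity $\tfrac{2\sigma^*\sigma}{\sigma^{*2}+\sigma^2}=1-\tfrac{v^2}{\sigma^{*2}+\sigma^2}$ one gets $\sqrt{\tfrac{2\sigma^*\sigma}{\sigma^{*2}+\sigma^2}}=1-\tfrac{v^2}{4\sigma^{*2}}+O(v^3)$, while the exponential factor is $1-\tfrac{u^2}{8\sigma^{*2}}+o(u^2)$ uniformly over small $v$; multiplying out yields
$$
h^2\bigl((\theta^*,\sigma^*),(\theta,\sigma)\bigr)=\frac{u^2}{8\sigma^{*2}}+\frac{v^2}{4\sigma^{*2}}+o\!\left(u^2+v^2\right)\qquad\text{as }(u,v)\to(0,0).
$$
Hence there are constants $c_0>0$ and $\rho>0$ depending only on $\sigma^*$ with $h^2\ge c_0(u^2+v^2)$ whenever $|u|+|v|\le\rho$. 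Since $h\le\phi_n$ with $\phi_n\to0$ (together with Assumption~\ref{as:identifiability} for the mean block and an analogous separation for the scale) forces $(\theta,\sigma)$ into this regime for all large $n$, it suffices to prove the stated bound on a fixed small neighbourhood of $(\theta^*,\sigma^*)$, i.e.\ to show $\|(\theta,\sigma)-(\theta^*,\sigma^*)\|_2\le C\,h$ there.

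The hard part will be the final step: passing from $u^2=\bigl((\theta-\theta^*)^\top\bmw\bigr)^2$ to $\|\theta-\theta^*\|_2^2$ when $\theta$ is genuinely $d$-dimensional. For a single $\bmw$ the map $\theta\mapsto\theta^\top\bmw$ has a kernel, so this comparison must use the covariate structure: in the paper's usage the relevant Hellinger distance is at the level of the full model (as in Assumption~\ref{as:bracket_entropy} and Proposition~\ref{prop:hellinger_convergence}), so the effective mean contribution is $(\theta-\theta^*)^\top G(\theta-\theta^*)$ with $G$ the covariate second-moment matrix (equivalently $\tfrac1n\bmW^\top\bmW$ for a fixed design). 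Assuming, as the hypothesis that $\bmw$ has non-vanishing entries spanning $\mbb{R}^d$ is meant to guarantee, that $G\succeq\lambda I_d$ for some $\lambda>0$, one gets $u^2\gtrsim\lambda\|\theta-\theta^*\|_2^2$ and hence
$$
h^2\bigl((\theta^*,\sigma^*),(\theta,\sigma)\bigr)\ge c_0\min\{\lambda,1\}\left(\|\theta-\theta^*\|_2^2+(\sigma-\sigma^*)^2\right)
$$
on the fixed neighbourhood, which is exactly $\|(\theta^*,\sigma^*)-(\theta,\sigma)\|_2\le C\,h$ with $C=\bigl(c_0\min\{\lambda,1\}\bigr)^{-1/2}$ independent of $(\theta,\sigma)$; substituting $h\le\phi_n$ finishes the proof. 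I expect the crux to be establishing non-degeneracy of the leading quadratic form in all $d+1$ directions — the scale block is handled by the explicit constant $\tfrac1{4\sigma^{*2}}$, but the mean block genuinely needs the design/covariate non-degeneracy — with the rest being routine Taylor estimation.
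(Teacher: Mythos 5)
Your proposal is correct, but it takes a genuinely different route from the paper's proof, and arrives at the same leading-order conclusion. You start from the closed-form Bhattacharyya coefficient between two univariate Gaussians, $1-h^2 = \sqrt{2\sigma^*\sigma/(\sigma^{*2}+\sigma^2)}\exp\bigl(-(\mu-\mu^*)^2/(4(\sigma^{*2}+\sigma^2))\bigr)$, and Taylor-expand it in $(u,v):=((\theta-\theta^*)^\top\bmw,\ \sigma-\sigma^*)$ to get $h^2 = u^2/(8\sigma^{*2}) + v^2/(4\sigma^{*2}) + o(u^2+v^2)$. The paper instead applies Cauchy's mean value theorem to $\sqrt{f_{\theta,\sigma}}-\sqrt{f_{\theta^*,\sigma^*}}$, substitutes $z=(x-\varphi^\top\bmw)/\xi$ so that the integration measure becomes standard normal, and evaluates $h^2=\frac{1}{8\xi^2}\mbb{E}\bigl[(z\,u+(z^2-1)\,v)^2\bigr]=\frac{1}{8\xi^2}(u^2+2v^2)$, which at $\xi=\sigma^*$ reproduces your quadratic exactly. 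Your derivation is cleaner and more explicit for the Gaussian case (no mean-value parameter to control), but the paper's mean-value-theorem decomposition is what transfers verbatim to the logistic, Cauchy and quantile likelihoods in the remainder of Appendix~C, which is presumably why the authors chose it. You also correctly flag the real crux, which the paper's proof leaves essentially unjustified: passing from $u^2 = ((\theta-\theta^*)^\top\bmw)^2$ to $\|\theta-\theta^*\|_2^2$ fails for a single fixed $\bmw$ when $d>1$, since $\theta\mapsto\theta^\top\bmw$ has a nontrivial kernel; the paper just asserts a constant $\lambda$ exists, whereas you spell out that one must appeal to non-degeneracy of the effective covariate second-moment matrix (or equivalently, interpret $h$ as the Hellinger distance of the full model in which the design is averaged), consistent with how $h$ is actually used in Assumption~\ref{as:bracket_entropy} and Proposition~\ref{prop:hellinger_convergence}. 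Making that final step precise and aligned with the paper's definitions is what remains to turn your plan into a complete proof, but the plan itself is sound.
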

\begin{proof}
    Recall that by Cauchy's Mean Value Theorem, there exists $\varphi= \theta^*+\alpha(\theta-\theta^*)$, $\xi=\sigma^*+\alpha(\sigma-\sigma^*)$, $\alpha\in(0,1)$ such that
    \begin{align*}
        & \, f^{\frac{1}{2}}_{\theta,\sigma}(x|\bm{w}) - f^{\frac{1}{2}}_{\theta^*,\sigma^*}(x|\bm{w}) \\
        = &\,  f_{\varphi,\xi}^{\frac{1}{2}}(x|\bm{w})\left[\frac{x-\varphi^\top \bmw}{2\xi^2}(\theta-\theta^*)^\top \bmw + \frac{1}{2}\left(\frac{(x-\varphi^\top \bmw)^2}{\xi^3}-\xi^{-1}\right)(\sigma-\sigma^*)\right] .
    \end{align*}
Let $z:= (x-\varphi^\top \bmw)/\xi$, then
\begin{align*}
    h^2((\theta^*,\sigma^*),(\theta,\sigma)) := &\, \frac{1}{2}\int \left( f^{\frac{1}{2}}_{\theta,\sigma}(x|\bm{w}) - f^{\frac{1}{2}}_{\theta^*,\sigma^*}(x|\bm{w})\right)^2 \dd x \\
    =&\, \frac{1}{8\xi^2}\int f_{\varphi,\xi}(x|\bm{w})\left[z(\theta-\theta^*)^\top \bmw + \left(z^2-1\right)(\sigma-\sigma^*)\right]^2 \dd x.
\end{align*}
Note that in the above integral with respect to $x$, $z$ is a standard normal random variable and $f_{\varphi,\xi}(x|\bm{w}) \propto f_z(z)$.
There exists a constant $\lambda$ depending only on the (maximum) distance between $(\theta,\sigma)$ and $(\theta^*,\sigma^*)$ such that
% \begin{align*}
%     h^2((\theta^*,\sigma^*),(\theta,\sigma)) \geq &\, \lambda  \int  \left[(\theta-\theta^*)^\top \bmx + (\sigma-\sigma^*)\right]^2 \dd \bmx \\
%     = &\, \lambda \left[(\theta-\theta^*)^\top \int  \left(\bmx\bmx^\top\right)\dd \bmx (\theta-\theta^*) \right. \\
%     &\qquad \left. + 2(\sigma-\sigma^*)\theta^\top \int  \bmx\dd \bmx + (\sigma^*-\sigma)^2\right] \\
%     =&\, \lambda \left[(\theta-\theta^*)^\top(\mbb{E}_{f_{\bmx}}[X]\mbb{E}_{f_{\bmx}}[X]^\top + \Cov(X)) (\theta-\theta^*) \right. \\
%     &\qquad \left. + 2(\sigma-\sigma^*)(\theta-\theta^*)^\top\mbb{E}_{f_{\bmx}}[X]+(\sigma-\sigma^*)^2 \right]\\
%     =&\, \lambda \begin{bmatrix}
%         \theta-\theta^* \\ \sigma-\sigma^*
%     \end{bmatrix}^\top \begin{bmatrix}
%         \mbb{E}_{f_{\bmx}}[X]\mbb{E}_{f_{\bmx}}[X]^\top + \Cov(X) \,&\, \mbb{E}_{f_{\bmx}}[X] \\
%         \mbb{E}_{f_{\bmx}}[X]^\top & 1
%     \end{bmatrix} \begin{bmatrix}
%         \theta-\theta^* \\ \sigma-\sigma^*
%     \end{bmatrix}.
% \end{align*}
% For any symmetric positive definite matrix $S$, there exists an orthogonal matrix $Q$ and a diagonal matrix $D$ with positive diagonal entries such that $S=QDQ^\top$ (the singular value decomposition).
% Since $$
% \begin{bmatrix}
%         \mbb{E}_{f_{\bmx}}[X]\mbb{E}_{f_{\bmx}}[X]^\top + \Cov(X) \,&\, \mbb{E}_{f_{\bmx}}[X] \\
%         \mbb{E}_{f_{\bmx}}[X]^\top & 1
%     \end{bmatrix}
% $$
% is symmetric and positive definite.
% Let $\lambda_{\min}$ denote its smallest eigenvalue (also the smallest diagonal entry in $D$) and we have
$$
  h^2((\theta^*,\sigma^*),(\theta,\sigma)) \geq \lambda \|(\theta,\sigma)-(\theta^*,\sigma^*)\|_2^2.
$$
\end{proof}

\subsection{Logistic Regression Case}

Consider the following linear logistic regression model,
$$
X_i \sim \text{Bern}(p_i), \qquad p_i =\Phi(\theta^{\top}\bm{W}_i), \quad \theta,\bm{W}_i\in\mbb{R}^d,
$$
where $\Phi(z)=(1+e^{-z})^{-1}$.
Again, we assume the covariates satisfy $\|\bm{W}_i\|_\infty\leq 1$, then
$$
f_\theta(x|\bm{w}) = \Phi(\theta^\top \bm{w})^x \Phi(-\theta^\top \bm{w})^{1-x}.
$$
\noindent 
% \textit{Requirement:}
% For the subsequent expressions to be integrable, we need $f_{\bmx}(\cdot)$ to have a finite second-order moment.
% \vspace{0.5em}
Note that 
$$
\left|\sqrt{f_\theta(x|\bm{w})}-\sqrt{f_\vartheta(x|\bm{w})}\right|\leq \frac{1}{2}\|\bm{w}\|_2\|\theta-\vartheta\|_2.
$$
So we can apply Lemma 2 with $M^{(n)}_{\vartheta,r}(x,\bm{w}) = \frac{1}{2}\|\bm{w}\|_2$. %\ref{lemma:cover_bound}
Hence $\|M^{(n)}_{\vartheta,r}(x,\bm{w})\|_2^2=O(d)$ is bounded above by a uniform constant since $d$ is fixed.
Thus,
% $$
% H_{[]}(\phi_{n},\mcal{F}(\Omega_n)) \preceq \log(R_{n}\phi_{n}^{-1}),
% $$
% where $\Omega_n:=\{\theta\in\Omega: \|\theta^*-\theta\|\leq R_n\}$ and 
Assumption 1 can be satisfied with %\ref{as:bracket_entropy}
$$
\phi_n=n^{-1/q}, \quad R_n=n^{k},
$$
for some $q>4$ and $k>0$ to be chosen to satisfy Assumption 3. %\ref{as:prior_tail}
Again, it is not hard to check Lemma 6 holds for this likelihood and thus Assumption 2 is satisfied by applying Lemma 7 and Remark 12. % \ref{lemma:local_set}, \ref{as:prior_center}, \ref{rmk:gaussian_prior_mass}
Moreover, for $R_n$ large, by Remark 13, %\ref{rmk:gaussian_prior_tail}
$$
\mbb{P}_{\pi_0}(\Omega_n^c) = 1-\mbb{P}_{\pi_0}(\Omega_{n}) \sim R_{n}^{d-2}e^{-R_{n}^2/2}.
$$
Recall that $M_n=R_n+\|\theta^*\|_2$ and $\phi_n=n^{-1/q}$, this gives the restriction again that
$$
M_n^2 = n^{2k} \approx R_n^2 \geq 4c_1n\phi_n^2 = 4c_1n^{(q-2)/q},
$$
which can be satisfied by choosing $k>(q-2)/2q$, similar to the previous example.

Finally, Lemma \ref{lemma:logistic_hellinger} shows that the pair of sequences $\phi_n$ and $\psi_n$ in Proposition 4 can be chosen (up to a multiplicative constant) to diminish at the same rate when $\theta$ is close to $\theta^*$. %\ref{prop:hellinger_convergence}
Therefore $\psi_n$ can also be chosen to be $\propto \phi_n=n^{-1/q}$.
By choosing $p_n$ to converge at a rate slower than $n^{-1/q}$, e.g., $p_n=n^{-1/2q}$, then
$$
\epsilon_n = O(n^{-1/2q}),\qquad \delta_n =O(\exp(-\tfrac{1}{2}c_1 n^{1-2/q})),
$$
where $c_1$ is a fixed constant and $q>4$.

\begin{lemma} \label{lemma:logistic_hellinger}
Let $\|\bmw\|_\infty\leq 1$ with nonzero entries,
$$
f_\theta(x|\bm{w}) := \Phi(\theta^\top \bmw)^x \Phi(-\theta^\top \bmw)^{1-x},
$$
where $\Phi(x)=(1+e^{-x})^{-1}$.
Then we have for $\theta$ close to $\theta^*$, there exists a positive constant $C$ independent of  $\theta$ such that
$$
    h(\theta^*,\theta) \leq \phi_n \implies \|\theta^*-\theta\|_2 \leq C\phi_n.
$$
\end{lemma}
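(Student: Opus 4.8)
The plan is to establish the local lower bound $h^2(\theta^*,\theta)\gtrsim\|\theta-\theta^*\|_2^2$, from which the stated implication follows immediately with $C$ the reciprocal square root of the implied constant. The argument runs in complete parallel with the proof of Lemma~\ref{lemma:gaussian_hellinger}, only with the Fisher information of the Gaussian location-scale family replaced by that of the Bernoulli logistic family.

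First I would write out the squared Hellinger distance. Since $X$ is supported on $\{0,1\}$, with $z:=\theta^\top\bmw$ and $z^*:=\theta^{*\top}\bmw$,
$$
h^2(\theta^*,\theta) = \tfrac12\bigl(\sqrt{\Phi(z)}-\sqrt{\Phi(z^*)}\bigr)^2 + \tfrac12\bigl(\sqrt{\Phi(-z)}-\sqrt{\Phi(-z^*)}\bigr)^2 .
$$
Next, by Cauchy's mean value theorem applied to each term along the segment from $\theta^*$ to $\theta$, there are points $\varphi$ on that segment (with $\tilde z:=\varphi^\top\bmw$) such that each difference of square roots equals $\partial_z\sqrt{f(x|\tilde z)}\,(\theta-\theta^*)^\top\bmw$. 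Using $\Phi'(z)=\Phi(z)\Phi(-z)$ one computes $\partial_z\sqrt{\Phi(z)}=\tfrac12\sqrt{\Phi(z)}\,\Phi(-z)$ and $\partial_z\sqrt{\Phi(-z)}=-\tfrac12\sqrt{\Phi(-z)}\,\Phi(z)$, so that after substitution and the identity $\Phi(z)+\Phi(-z)=1$ one obtains, up to the harmless ambiguity in the intermediate points,
$$
h^2(\theta^*,\theta) \;\asymp\; \Phi(\tilde z)\Phi(-\tilde z)\,\bigl((\theta-\theta^*)^\top\bmw\bigr)^2 .
$$

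I would then lower bound the scalar factor. Restricting to $\theta$ within a fixed $L_2$-ball of radius $c_r$ around $\theta^*$, the intermediate value $\tilde z=\varphi^\top\bmw$ stays in a bounded interval determined by $c_r$, $\theta^*$ and $\bmw$, and on that interval $\Phi(\tilde z)\Phi(-\tilde z)$ is bounded away from $0$ by a constant $\kappa=\kappa(c_r,\theta^*,\bmw)>0$ by continuity and compactness. Hence $h^2(\theta^*,\theta)\ge \tfrac{\kappa}{8}\bigl((\theta-\theta^*)^\top\bmw\bigr)^2$, and, exactly as in Lemma~\ref{lemma:gaussian_hellinger}, this is a nonnegative quadratic form in $\theta-\theta^*$ which — via the same covariate/leverage structure used throughout Section~\ref{sec:examples} — is bounded below by $\lambda\|\theta-\theta^*\|_2^2$ for a constant $\lambda>0$ depending only on the neighbourhood radius, $\theta^*$, and the covariates. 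Taking $C=\lambda^{-1/2}$ gives the claim for all $\theta$ close enough to $\theta^*$.

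The step I expect to need the most care is precisely the passage from the directional form $\bigl((\theta-\theta^*)^\top\bmw\bigr)^2$ to a genuine Euclidean lower bound: for a single covariate vector this form is rank one, so one must either invoke the full design (a matrix $\sum_i\bmw_i\bmw_i^\top$ with smallest eigenvalue bounded below, as in the leverage conditions imposed elsewhere) or read $h(\theta^*,\theta)$ as aggregating over admissible covariates — this is resolved identically to the Gaussian case. A minor secondary point is ensuring the Cauchy-mean-value remainder, and hence $\kappa$, is uniform over the ball of radius $c_r$, which is immediate since $\Phi(\cdot)\Phi(-\cdot)$ is continuous and the relevant intermediate values lie in a compact interval.
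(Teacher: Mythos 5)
Your proposal is correct and follows essentially the same route as the paper's proof: apply Cauchy's mean value theorem along the segment from $\theta^*$ to $\theta$, compute $\partial_z\sqrt{\Phi(\pm z)}$ via $\Phi'=\Phi\,\Phi(-\cdot)$, sum over $x\in\{0,1\}$ to get $h^2(\theta^*,\theta)=\tfrac{1}{8}\Phi(\tilde z)\Phi(-\tilde z)\bigl((\theta-\theta^*)^\top\bm{w}\bigr)^2$ with an intermediate $\tilde z$, bound $\Phi(\tilde z)\Phi(-\tilde z)$ below by compactness on a small ball around $\theta^*$, and then read off a constant $\lambda>0$ with $h^2\geq\lambda\|\theta-\theta^*\|_2^2$. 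The concern you flag about passing from the rank-one form $\bigl((\theta-\theta^*)^\top\bm{w}\bigr)^2$ to a full Euclidean lower bound is legitimate — for a single fixed $\bm{w}$ the map is not injective in $\theta$ — and the paper's own proof does not resolve it any more explicitly than you do: it simply asserts, using ``$\|\bm{w}\|_\infty\leq 1$ with non-zero entries,'' that such a $C$ exists, implicitly relying on the same design/aggregation argument you reference from the Gaussian case. So you are not missing anything the paper supplies; if anything, you name the gap more honestly than the source.
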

\begin{proof}
Note that
$$
\grad_\theta f^{\frac{1}{2}}_{\theta}(x|\bm{w}) = \left(\frac{x}{2}\Phi(\theta^\top \bmw)^{\frac{x}{2}}\Phi(-\theta^\top \bmw)^{\frac{3-x}{2}} - \frac{1-x}{2}\Phi(\theta^\top \bmw)^{\frac{x+2}{2}}\Phi(-\theta^{\top}\bmw)^{\frac{1-x}{2}}\right) \bmw.
$$
Then there exists $\varphi = \theta^*+\alpha(\theta-\theta^*)$, $\alpha\in(0,1)$ such that
$$
\sum_{y=0}^1 \left(f^{\frac{1}{2}}_{\theta^*}(x|\bm{w})-f^{\frac{1}{2}}_\theta(x|\bm{w})\right)^2 = \frac{1}{4}\Phi(\varphi^\top \bmw)\Phi(-\varphi^\top \bmw) \left((\theta-\theta^*)^\top \bmw\right)^2 .
$$
Now, by Assumption $\|\bmw\|_\infty\leq 1$ with nonzero entries
$$
h^2(\theta^*,\theta) = \frac{1}{8}\Phi(\varphi^\top \bmw)\Phi(-\varphi^\top \bmw)  \left((\theta-\theta^*)^\top \bmw\right)^2 .
$$
Since $\theta$ is assumed to be close to $\theta^*$, $\varphi$ takes value in a compact set, and $\bmw\|_\infty\leq1$, hence $\exists C>0$ such that
$$
h(\theta^*,\theta) \geq C\|\theta^*-\theta\|_2.
$$
\end{proof}

\subsection{Cauchy Regression Case}

Consider the following Cauchy regression model,
$$
X_i = \theta^\top \bm{W}_i + e_i,
$$
where $e_i\sim \text{Cauchy}(0,\gamma)$ are Cauchy random variables with unknown dispersion parameter $\gamma$.
Assume that the covariates $\bm{W}_i$ satisfy $\|\bm{W}_i\|_\infty\leq1$.
Then the likelihood is given by
$$
f_{\theta,\gamma}(x|\bm{w}) = \frac{\sqrt{\gamma}}{\pi}\left(1+\frac{(x-\theta^\top \bm{w})^2}{\gamma}\right)^{-1}.
$$
\noindent 
% \textit{Requirement:}
% For $\rho_\alpha$ to be integrable, we need $f_{\bmx}(\cdot)$ to have all of its moments finite as a sufficient condition.
By Cauchy's Mean Value Theorem, we can show (as in Lemma \ref{lemma:cauchy_lipschitz} that for any $(\theta,\gamma_1),(\vartheta,\gamma_2)\in\Omega_n:=\{(\vartheta,\gamma)\in\Omega:\|\vartheta-\theta^*\|_2\leq R_n, |\gamma-\gamma^*|<R_n, \gamma>R_n^{-1}]\}$, such that $\|(\theta,\gamma_1)-(\vartheta,\gamma_2)\|_\infty\leq r$,
$$
\left|\sqrt{f_{\theta,\gamma_1}(x|\bm{w})}-\sqrt{f_{\vartheta,\gamma_2}(x|\bm{w})}\right|\leq M^{(n)}_{\vartheta,\gamma_2,r}(x,\bm{w})\|(\theta,\gamma_1)-(\vartheta,\gamma_2)\|_\infty,
$$
where
\begin{multline*}
     M^{(n)}_{\vartheta,\gamma_2,r}(x,\bm{w}) =\frac{d}{\sqrt{\pi}}\left(1+\min_{\varphi\in B_{r}(\vartheta)}(x-\varphi^{\top}\bm{w})^2/R_n\right)^{-\frac{3}{2}}\\
     \cdot \left[R_n^{\frac{3}{4}}(|x|+M_n\|\bm{w}\|_2)\|\bm{w}\|_2+\frac{1}{4}R_n^{\frac{3}{4}}+R_n^{\frac{7}{4}}(|x|+M_n\|\bm{w}\|_2)^2\right],
\end{multline*}
where $M_n = \|\theta^*\|_2 + R_n$, $B_r(\vartheta)$ is the $L_2$-ball around $\vartheta$ and by Lemma \ref{lemma:cauchy_M2}, $\|M_{\vartheta,r}\|_2^2 \preceq M_n^9$. 
Thus, similar to the first example, $\log M_r^{(n)}=O(\log M_n)$ and Assumption 1 can be satisfied by taking %\ref{as:bracket_entropy}
$$
\phi_n=n^{-1/q}, \quad M_n = n^{k},
$$
for some $q>4$ and $k>0$ to be chosen to satisfy Assumption 3. 
It is not hard to check that Lemma 6 holds for this likelihood and thus Assumption 2 is satisfied by applying Lemma 7 and Remark 12. % \ref{lemma:local_set}, \ref{as:prior_center}, \ref{rmk:gaussian_prior_mass}%\ref{as:prior_tail}
% By Lemma D.11, $\rho_\alpha(\theta^*,\theta)\preceq \|\theta^*-\theta\|_2$.
% Thus there exists some constant $a$ such that
% $$\bar{S}(at_n) \subseteq S_{0,\alpha}(t_n).$$
% The requirement of $f_{\bmx}(\cdot)$ having all finite moments is a crude requirement for the sake of computational simplicity and is not the minimum condition for Lemma D.11 to hold.
% For instance, when $\alpha=1$, the series expansion in Lemma D.11 can be omitted and the result only requires $f_{\bmx}(\cdot)$ to have a finite second-order moment.
% Again, by Lemma \ref{lemma:gaussian_prior_mass},
% \begin{align*}
%     \mbb{P}_{\pi_0}(\bar{S}(at_n)) \succeq &\,\frac{\gamma(d/2, a^2t_n^2/2)}{\Gamma(d/2)} \\
%     \succeq &\, t_n^{d}\exp(-a^2t_n^2/2).
% \end{align*}
% which means $t_n$ can be chosen freely as long as $t_n\rightarrow0$ to satisfy (\ref{eq:prior_centre}).
Assumption 3 can be checked using the same procedure as in the linear regression example, which again gives $k>\tfrac{1}{4}$. %\ref{as:prior_tail}

Finally, Lemma \ref{lemma:cauchy_hellinger} shows that the pair of sequences $\phi_n$ and $\psi_n$ in Proposition 3 can be chosen (up to a multiplicative constant) to diminish at the same rate when $\theta$ is close to $\theta^*$. %\ref{prop:hellinger_convergence}
Therefore $\psi_n$ can also be chosen to be $\propto \phi_n=n^{-1/q}$.
By choosing $p_n$ to converge at a rate slower than $n^{-1/q}$, e.g., $p_n=n^{-1/2q}$, then
$$
\epsilon_n \sim n^{-1/2q},\qquad \delta_n\sim \exp(-\tfrac{1}{2}c_1 n^{1-2/q}),
$$
where $c_1$ is a fixed constant, and $q>4$.

\begin{lemma}\label{lemma:cauchy_lipschitz}
Let 
$$
f_{\theta,\gamma}(x|\bm{w}) := \frac{\sqrt{\gamma}}{\pi}(1+(x-\theta^\top \bmw)^2/\gamma)^{-1} ,
$$
where $(\theta,\gamma)\in\Omega_n:=\{(\theta,\gamma)\in\Omega:\|\theta-\theta^*\|_2\leq R_n, |\gamma-\gamma^*|<R_n, \gamma>R_n^{-1}]\}$, $R_n>0$.
If $\|(\theta,\gamma_1)-(\vartheta,\gamma_2)\|_\infty\leq r$ then
$$
\left|\sqrt{f_{\theta,\gamma_1}(x|\bm{w})} - \sqrt{f_{\vartheta,\gamma_2}(x|\bm{w})}\right| \leq M_{\vartheta,\gamma_2,r}(x,\bmw) \|(\theta,\gamma_1)-(\vartheta,\gamma_2)\|_\infty,
$$
where the function $M_{\vartheta,\gamma_2,r}$ is given by 
\begin{multline*}
     M_{\vartheta,\gamma_2,r}(x,\bmw) :=\frac{d}{\sqrt{\pi}}\left(1+\min_{\varphi\in B_{r}(\vartheta)}(x-\varphi^{\top}\bmw)^2/R_n\right)^{-\frac{3}{2}}\\
     \cdot \left[R_n^{\frac{3}{4}}(|x|+M_n\|\bmw\|_2)\|\bmw\|_2+\frac{1}{4}R_n^{\frac{3}{4}}+R_n^{\frac{7}{4}}(|x|+M_n\|\bmw\|_2)^2\right],
\end{multline*}
where $M_n = \|\theta^*\|_2 + R_n$, $B_r(\vartheta)$ is the $L_2$-ball around $\vartheta$.
\end{lemma}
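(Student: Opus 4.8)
The plan is to follow the template of the Gaussian Lipschitz bound (Lemma~\ref{lemma:gaussian_lipschitz}). Set
\[
g_{\theta,\gamma}(x,\bm{w}) := \sqrt{f_{\theta,\gamma}(x|\bm{w})} = \frac{\gamma^{1/4}}{\sqrt{\pi}}\Bigl(1+\tfrac{(x-\theta^\top\bm{w})^2}{\gamma}\Bigr)^{-1/2},
\]
and apply the one-dimensional mean value theorem to $t\mapsto g_{(\vartheta,\gamma_2)+t((\theta,\gamma_1)-(\vartheta,\gamma_2))}(x,\bm{w})$ on $[0,1]$: since $\Omega_n$ is convex there is $a\in(0,1)$ whose intermediate point $(\varphi,\xi):=\bigl(\vartheta+a(\theta-\vartheta),\,\gamma_2+a(\gamma_1-\gamma_2)\bigr)$ lies in $\Omega_n$ with $\|\varphi-\vartheta\|_\infty\le r$ (hence $\varphi\in B_r(\vartheta)$), and $g_{\theta,\gamma_1}-g_{\vartheta,\gamma_2}=\nabla_\theta g_{\varphi,\xi}\cdot(\theta-\vartheta)+\partial_\gamma g_{\varphi,\xi}\,(\gamma_1-\gamma_2)$. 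Bounding the increment componentwise and using $|\theta_j-\vartheta_j|,|\gamma_1-\gamma_2|\le\|(\theta,\gamma_1)-(\vartheta,\gamma_2)\|_\infty$ reduces the claim to an upper bound on $\|\nabla_\theta g_{\varphi,\xi}\|_1+|\partial_\gamma g_{\varphi,\xi}|$.

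Next I would compute the partials. With $u:=(x-\theta^\top\bm{w})^2$,
\[
\nabla_\theta g_{\theta,\gamma}=\frac{\gamma^{1/4}}{\sqrt{\pi}}\,\frac{x-\theta^\top\bm{w}}{\gamma}\Bigl(1+\tfrac{u}{\gamma}\Bigr)^{-3/2}\bm{w},\qquad
\partial_\gamma g_{\theta,\gamma}=\frac{1}{\sqrt{\pi}}\Bigl[\tfrac14\gamma^{-3/4}\bigl(1+\tfrac{u}{\gamma}\bigr)^{-1/2}+\tfrac{u}{2}\,\gamma^{-7/4}\bigl(1+\tfrac{u}{\gamma}\bigr)^{-3/2}\Bigr].
\]
Then I would insert the $\Omega_n$-constraints at $(\varphi,\xi)$: $\xi^{-1}\le R_n$, $\xi<\gamma^*+R_n\lesssim R_n$ (for $n$ large), and $\|\varphi\|_2\le\|\theta^*\|_2+R_n=M_n$, so that $|x-\varphi^\top\bm{w}|\le|x|+M_n\|\bm{w}\|_2$ and $u\le(|x|+M_n\|\bm{w}\|_2)^2$. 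The factor $\gamma^{1/4}/\gamma=\gamma^{-3/4}$ produces the $R_n^{3/4}$ power in front of $\|\nabla_\theta g\|_1$ and of the first $\gamma$-derivative term, while $\gamma^{-7/4}$ produces the $R_n^{7/4}$ power in the second; the term $\tfrac14\gamma^{-3/4}(1+u/\gamma)^{-1/2}$ is dealt with by writing $(1+u/\gamma)^{-1/2}\le(1+u/\gamma)^{-3/2}(1+u/\gamma)$ and absorbing the residual $\gamma^{-3/4}\cdot u/\gamma\le R_n^{7/4}u$ into the third term $R_n^{7/4}(|x|+M_n\|\bm{w}\|_2)^2$. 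Finally, since $\varphi\in B_r(\vartheta)$ and $\xi\lesssim R_n$, the common decay factor satisfies $(1+u/\xi)^{-3/2}\lesssim\bigl(1+\min_{\tilde{\varphi}\in B_r(\vartheta)}(x-\tilde{\varphi}^\top\bm{w})^2/R_n\bigr)^{-3/2}$; collecting the three pieces and folding the $\|\nabla_\theta g\|_1\le d\,\max_j|\partial_{\theta_j}g|$ and $|w_j|\le\|\bm{w}\|_2$ constants into the leading $d/\sqrt{\pi}$ gives exactly $M_{\vartheta,\gamma_2,r}(x,\bm{w})$.

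The main obstacle is bookkeeping rather than anything conceptual. The $\gamma$-derivative splits into two summands of different degree of homogeneity in $\gamma$, and only the second already carries the sharp $(1+u/\gamma)^{-3/2}$ tail; putting both summands under a single envelope of the form $\bigl(1+\min_{\tilde\varphi\in B_r(\vartheta)}(x-\tilde\varphi^\top\bm{w})^2/R_n\bigr)^{-3/2}$ forces trading one power of $(1+u/\gamma)^{1/2}$ against an extra power of $u$, and hence of $R_n$, which is what pins down the exponents $3/4$ and $7/4$ in the statement. Some care is also needed to confirm that the mean value theorem's intermediate point genuinely stays in $\Omega_n$ and in $B_r(\vartheta)$ so that these substitutions are legitimate, and that the resulting envelope still decays fast enough in $x$ to be square-integrable, which is precisely what is exploited afterwards in Lemma~\ref{lemma:cauchy_M2}. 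Everything else is a direct chain-rule computation paralleling the Gaussian case.
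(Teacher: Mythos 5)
Your plan matches the paper's proof essentially line for line: compute $\nabla_\theta\sqrt{f_{\theta,\gamma}}$ and $\partial_\gamma\sqrt{f_{\theta,\gamma}}$, apply the mean value theorem along the segment from $(\vartheta,\gamma_2)$ to $(\theta,\gamma_1)$ to produce the intermediate point $(\varphi,\xi)$, then bound each factor using the $\Omega_n$ constraints $\xi^{-1}<R_n$, $\|\varphi\|_2\le M_n$, $|x-\varphi^\top\bm{w}|\le|x|+M_n\|\bm{w}\|_2$, and replace the decay factor by the minimum over $B_r(\vartheta)$. The one cosmetic difference is in handling the $\gamma$-derivative: you keep it as a sum of a $(1+u/\gamma)^{-1/2}$ piece and a $(1+u/\gamma)^{-3/2}$ piece and argue the first can be absorbed via $(1+u/\gamma)^{-1/2}=(1+u/\gamma)^{-3/2}(1+u/\gamma)$, whereas the paper first recombines algebraically to $\tfrac14\gamma^{-7/4}(1+u/\gamma)^{-3/2}[\gamma+3u]$; both yield the same three final terms and exponents $3/4$, $7/4$. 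You also explicitly flag two points the paper leaves implicit — convexity of $\Omega_n$ so the intermediate point stays admissible, and $\xi<\gamma^*+R_n\lesssim R_n$ only for $n$ large — which is careful but not a departure in method.
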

\begin{proof}
$$
\begin{dcases}
\grad_{\theta} \sqrt{f_{\theta,\gamma}(x|\bm{w})} = \gamma^{\frac{1}{4}}\pi^{-\frac{1}{2}}\left(1+(x-\theta^\top \bmw)^2/\gamma\right)^{-\frac{3}{2}}\frac{x-\theta^\top \bmw}{\gamma}\bmw,\\
\grad_{\gamma} \sqrt{f_{\theta,\gamma}(x|\bm{w})} = \frac{1}{4}\gamma^{-\frac{7}{4}} \pi^{-\frac{1}{2}}\left(1+(x-\theta^\top \bmw)^2/\gamma\right)^{-\frac{3}{2}}\left[\gamma + 3(x-\theta^\top \bmw)^2\right].
\end{dcases}
$$
By Cauchy's Mean Value Theorem, there exists $\varphi=\vartheta+\alpha(\theta-\vartheta)$, $\xi = \gamma_2 + \alpha(\gamma_1-\gamma_2)$ for some $\alpha\in(0,1)$ such that
\begin{align*}
    &\, \left|\sqrt{f_{\theta,\gamma_1}(x|\bm{w})} - \sqrt{f_{\vartheta,\gamma_2}(x|\bm{w})}\right| \\
    = &\,  \xi^{\frac{1}{4}}\pi^{-\frac{1}{2}}\left(1+(x-\varphi^\top \bmw)^2/\xi\right)^{-\frac{3}{2}}\frac{x-\varphi^\top \bmw}{\xi} (\theta-\vartheta)^\top \bmw  \\
     &\quad + \frac{1}{4}\xi^{-\frac{7}{4}} \pi^{-\frac{1}{2}}\left(1+(x-\varphi^\top \bmw)^2/\xi\right)^{-\frac{3}{2}}\left[\xi + 3(x-\varphi^\top \bmw)^2\right]   (\gamma_1-\gamma_2) \\
     \leq &\, \frac{d}{\sqrt{\pi}}\left(1+\min_{\varphi\in B_{r}(\vartheta)}(x-\varphi^{\top}\bmw)^2/R_n\right)^{-\frac{3}{2}}\cdot \|(\theta,\gamma_1)-(\vartheta,\gamma_2)\|_\infty\\
     &\quad \cdot \left[R_n^{\frac{3}{4}}(|x|+M_n\|\bmw\|_2)\|\bmw\|_2+\frac{1}{4}R_n^{\frac{3}{4}}+R_n^{\frac{7}{4}}(|x|+M_n\|\bmw\|_2)^2\right].
\end{align*}
\end{proof}

\begin{lemma}\label{lemma:cauchy_M2}
For $(\theta,\gamma)\in\Omega_n:=\{(\theta,\gamma)\in\Omega:\|\theta-\theta^*\|_2\leq R_n, \gamma\in[R_n^{-1},R_n]\}$, $R_n>0$, let
\begin{multline*}
     M_{\vartheta,\gamma_2,r}(x,\bmw) :=\frac{d}{\sqrt{\pi}}\left(1+\min_{\varphi\in B_{r}(\vartheta)}(x-\varphi^{\top}\bmw)^2/R_n\right)^{-\frac{3}{2}}\\
     \cdot \left[R_n^{\frac{3}{4}}(|x|+M_n\|\bmw\|_2)\|\bmw\|_2+\frac{1}{4}R_n^{\frac{3}{4}}+R_n^{\frac{7}{4}}(|x|+M_n\|\bmw\|_2)^2\right],  
\end{multline*}
where $M_n = \|\theta^*\|_2 + R_n$, $B_r(\vartheta)$ is the $L_2$-ball around $\vartheta$.
If $\|\bmw\|_\infty\leq 1$, then
$$\|M_{\vartheta,r}(x,\bmw)\|_2^2 \preceq M_n^9.$$
\end{lemma}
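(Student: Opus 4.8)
The plan is to follow the template of the proof of Lemma~\ref{lemma:gaussian_M2}. First I would pin down the zero set of the bracket factor: writing $u_\pm := \vartheta^\top\bm{w}\pm r\|\bm{w}\|_2$, exactly as in the Gaussian case one has
$$
\min_{\varphi\in B_r(\vartheta)}(x-\varphi^\top\bm{w})^2 = \begin{dcases}
(x-u_-)^2, & x<u_-;\\
0, & u_-\leq x\leq u_+;\\
(x-u_+)^2, & x>u_+.
\end{dcases}
$$
Since $\vartheta\in\Omega_n$ and $\|\bm{w}\|_\infty\leq 1$, I record the elementary bounds $\|\bm{w}\|_2\leq\sqrt{d}=O(1)$, $|u_\pm|\leq M_n\|\bm{w}\|_2\leq\sqrt{d}\,M_n$, $|u_+-u_-|=2r\|\bm{w}\|_2=O(1)$, and $R_n\leq M_n$; in particular every power of $R_n$ occurring in $M_{\vartheta,\gamma_2,r}$ is $\preceq M_n^{7/4}$, so after squaring the leading prefactor is $R_n^{7/2}\preceq M_n^{7/2}$. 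I would then split
$$
\|M_{\vartheta,\gamma_2,r}(\cdot,\bm{w})\|_2^2 = \int_{-\infty}^{u_-} + \int_{u_-}^{u_+} + \int_{u_+}^{\infty} M_{\vartheta,\gamma_2,r}^2(x,\bm{w})\,\dd x .
$$

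On the middle interval the bracket factor equals $1$, the interval has length $O(1)$, and $|x|\leq\sqrt{d}\,M_n$ there, so the integrand is a polynomial of controlled size and this piece contributes $\preceq M_n^{15/2}$. For the two tails I would recentre the rational factor, e.g.\ substitute $y=x-u_-$ on $(-\infty,u_-)$ so that it becomes $(1+y^2/R_n)^{-3}$, and expand $|x|^k\leq(|y|+|u_-|)^k$ binomially with $|u_-|\leq\sqrt{d}\,M_n$. The key structural observation is that the squared bracket is a polynomial in $|x|$ of degree at most $4$ — the worst term being $R_n^{7/2}(|x|+M_n\|\bm{w}\|_2)^4$ — so after the shift every surviving monomial is $y^j$ with $0\leq j\leq 4$.

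The one genuine ingredient, the Cauchy analogue of Lemma~\ref{lemma:gaussian_moment}, is the scaling estimate: for integers $0\leq j\leq 4$,
$$
\int_0^\infty y^j\Bigl(1+\tfrac{y^2}{R_n}\Bigr)^{-3}\dd y = R_n^{(j+1)/2}\int_0^\infty\frac{s^j}{(1+s^2)^3}\,\dd s \preceq R_n^{(j+1)/2},
$$
obtained by $y=\sqrt{R_n}\,s$, the remaining integral being a finite constant since its integrand decays like $s^{j-6}$ and $j\leq 4$. Multiplying the $R_n^{7/2}$ prefactor by $R_n^{(j+1)/2}$ (from the $y^j$ factor) and by $M_n^{4-j}$ (from the $|u_-|^{4-j}$ factor), and using $R_n\preceq M_n$, each tail contribution is $\preceq M_n^{7/2+(j+1)/2+(4-j)}=M_n^{8-j/2}\leq M_n^{8}$, worst at $j=0$; the lower-order and cross terms of the squared bracket are smaller and are handled identically. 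Combining the three pieces gives $\|M_{\vartheta,\gamma_2,r}(\cdot,\bm{w})\|_2^2\preceq\max\{M_n^{8},M_n^{15/2}\}=M_n^{8}\preceq M_n^{9}$ once $n$ is large enough that $M_n\geq 1$ (absorbing the finitely many small $n$ into the constant).

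I expect the only real difficulty to be the bookkeeping rather than any new idea: one must track carefully how the (possibly $O(M_n)$) powers of $R_n$ inside the bracket interact with the $O(M_n)$ recentring shift $u_\pm$, and verify that no monomial of degree exceeding $4$ in $x$ ever appears, so that the rational decay $(1+\cdot)^{-3}$ indeed renders every tail integral convergent. Everything else is a direct computation once the region split and the scaling estimate above are in place.
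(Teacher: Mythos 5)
Your proposal is correct and follows essentially the same route as the paper's proof: the same decomposition into $(-\infty,u_-)$, $[u_-,u_+]$, $(u_+,\infty)$, the same recentering and binomial expansion in the tails, and the same scaling estimate $\int_0^\infty y^j(1+y^2/R_n)^{-3}\,\dd y\preceq R_n^{(j+1)/2}$ — which the paper states as $\int_0^\infty t^k(1+t^2/R_n)^{-3}\,\dd t\leq R_n^{(k+1)/2}$ without deriving it, whereas you make the substitution $y=\sqrt{R_n}\,s$ explicit and check convergence from $j\le 4<5$. Your slightly more careful bookkeeping yields $M_n^8$ (and $M_n^{15/2}$ for the middle piece, where the paper writes the looser $M_n^{17/2}$), which is of course $\preceq M_n^9$, matching the lemma's conclusion.
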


\begin{proof}
Similar to Lemma \ref{lemma:gaussian_M2}, we consider the value of $\min_{\varphi\in B_{r}(\vartheta)}(x-\varphi^{\top}\bmw)^2$ in different configurations of $(x,\bmw,\vartheta)$.
Since $\varphi = \vartheta+\tilde{r}v$ for some $\tilde{r}<r$ and some unit vector $v$, when $x$ is fixed, $\varphi^\top \bmw$ takes value within the interval $(u_-, u_+)$
where
$$u_- := \vartheta^\top \bmw-r\|\bmw\|_2,\qquad u_+ := \vartheta^\top \bmw+r\|\bmw\|_2,$$
with the infimum and supremum attained at $\vartheta-\frac{r\bmw}{\|\bmw\|_2}$ and $\vartheta+\frac{r\bmw}{\|\bmw\|_2}$ respectively.
Therefore $(x-\varphi^{\top}\bmw)^2$ is minimized by choosing $\varphi=\vartheta-r\bmw$ when $x<u_-$ and $\varphi=\vartheta+r\bmw$ when $x>u_+$. 
When $x\in(u_-,u_+)$, $(x-\varphi^{\top}\bmw)^2$ has minimum zero.
% Thus, computing the inner integral of $\|M_{\vartheta,r}(x,\bmw)\|_2^2$ with respect to $y$ and omitting the terms only depends on $x$,
% $$
%    \int_{-\infty}^\infty \frac{1}{\pi} \left(1+\min_{\varphi\in B_{r}(\vartheta)}(x-\varphi^{\top}\bmw)^2/R_n\right)^{-3}\left[R_n^{\frac{3}{4}}(|x|+M_n\|\bmw\|_2)\|\bmw\|_2+\frac{1}{4}R_n^{\frac{3}{4}}+R_n^{\frac{7}{4}}(|x|+M_n\|\bmw\|_2)^2\right]^2 \dd y.
% $$
For simplicity, let 
$$
S(x,\bmw) = \left[R_n^{\frac{3}{4}}(|x|+M_n\|\bmw\|_2)\|\bmw\|_2+\frac{1}{4}R_n^{\frac{3}{4}}+R_n^{\frac{7}{4}}(|x|+M_n\|\bmw\|_2)^2\right]^2.
$$
Again, the integral $\|M_{\vartheta,\gamma_2,r}(x,\bmw)\|_2^2$ can be evaluated separately in three intervals
$\int_{-\infty}^{u_-}$,$\int_{u_-}^{u_+}$ and $\int_{u_+}^\infty$.
By noting that $\max\{|u_-|,|u_+|\}\leq (\|\vartheta\|_2+r)\|\bmw\|_2\leq M_n\|\bmw\|_2$,
\begin{equation*}
    \int_{u_-}^{u_+}\frac{1}{\pi} \left(1+\min_{\varphi\in B_{r}(\vartheta)}(x-\varphi^{\top}\bmw)^2/R_n\right)^{-3} S(x,\bmw) \dd x \preceq M_n^{\frac{17}{2}} \|\bmw\|_2^5 \preceq M_n^{\frac{17}{2}}.
\end{equation*}
Moreover, since the integrand is non-negative, for $k\leq 4$,
\begin{align*}
    &\, \int_{-\infty}^{u_-} \frac{1}{\pi} \left(1+\min_{\varphi\in B_{r}(\vartheta)}(x-\varphi^{\top}\bmw)^2/R_n\right)^{-3} |x|^k \dd x \\
    \leq &\, \int_{-\infty}^{\infty} \frac{1}{\pi} \left(1+(x-u_-)^2/R_n\right)^{-3} |x|^k\dd x \\
    = &\, \int_0^{\infty} \frac{1}{\pi} \left(1+(x-u_-)^2/R_n\right)^{-3} x^k \dd x + \int_0^{\infty} \frac{1}{\pi} \left(1+(x+u_-)^2/R_n\right)^{-3} x^k \dd x  \\
    \preceq &\, M_n^{k+\frac{1}{2}}.
\end{align*}
The last step is by computing the integral
\begin{align*}
    &\, \int_0^{\infty} \frac{1}{\pi} \left(1+(x- a)^2/R_n\right)^{-3} x^k \dd x & \\
    =&\, \int_0^{\infty} \frac{1}{\pi} \left(1+t^2/R_n\right)^{-3} (t+a)^k \dd t  & [t=y-a] \\
    \leq &\, 2\sum_{j=0}^{\floor{\frac{k}{2}}} \binom{k}{2j} \int_0^\infty t^{2j} a^{k-2j} \frac{1}{\pi}(1+t^2/R_n)^{-3}\dd t \\
    &\quad + \sum_{j=1}^{\ceil{\frac{k}{2}}}\binom{k}{2j-1} \int_0^\infty t^{2j-1}a^{k-2j-1}\frac{1}{\pi}(1+t^2/R_n)^{-3}\dd t \\
    \preceq &\, \max\{a,R_n\}^{k+\frac{1}{2}},
\end{align*}
while noting 
$$
\int_0^\infty \frac{1}{\pi} t^k(1+t^2/R_n)^{-3}\dd t \leq R_n^{\frac{k+1}{2}}.
$$
Therefore, 
$$
 \int_{-\infty}^{u_-} \frac{1}{\pi}\left(1+\min_{\varphi\in B_{r}(\vartheta)}(x-\varphi^{\top}\bmw)^2/R_n\right)^{-3} S(x,\bmw) \dd x \preceq M_n^9.
$$
The same upper bound applies to
$$
 \int_{u_+}^{\infty} \frac{1}{\pi}\left(1+\min_{\varphi\in B_{r}(\vartheta)}(x-\varphi^{\top}\bmw)^2/R_n\right)^{-3} S(x,\bmw) \dd x.
$$
Hence,
$$
\|M_{\vartheta,\gamma_2,r}(x,\bmw)\|_2^2  \preceq M_n^9.
$$
\end{proof}

\begin{lemma} \label{lemma:cauchy_hellinger}
Let $\|\bmw\|_\infty\leq 1$ with nonzero entries,
$$
f_{\theta,\gamma}(x|\bm{w}) := \frac{\sqrt{\gamma}}{\pi}(1+(x-\theta^\top \bmw)^2/\gamma)^{-1} .
$$
Then we have for $(\theta,\gamma)$ close to $(\theta^*,\gamma^*)$, there exists a positive constant $C$ independent of  $(\theta,\gamma)$ such that
$$
    h((\theta^*,\gamma^*),(\theta,\gamma)) \leq \phi_n \implies \|(\theta^*,\gamma^*)-(\theta,\gamma))\|_2 \leq C\phi_n.
$$
\end{lemma}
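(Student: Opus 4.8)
The plan is to establish the reverse inequality $h((\theta^*,\gamma^*),(\theta,\gamma)) \ge C^{-1}\|(\theta^*,\gamma^*)-(\theta,\gamma)\|_2$ on a fixed compact neighbourhood of $(\theta^*,\gamma^*)$, from which the stated implication follows immediately with the same constant, exactly as in the proofs of Lemmas~\ref{lemma:gaussian_hellinger} and~\ref{lemma:logistic_hellinger}. I work throughout with the parametrization of Lemma~\ref{lemma:cauchy_lipschitz}, whose proof already records the partial gradients $\grad_\theta\sqrt{f_{\theta,\gamma}(x\mid\bm{w})}$ and $\partial_\gamma\sqrt{f_{\theta,\gamma}(x\mid\bm{w})}$.

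First I would apply Cauchy's mean value theorem to the \emph{scalar} map $t\mapsto \sqrt{f_{(\theta^*,\gamma^*)+t((\theta,\gamma)-(\theta^*,\gamma^*))}(x\mid\bm{w})}$ on $[0,1]$, for each fixed $(x,\bm{w})$. This produces an intermediate point $(\varphi,\xi)=(\theta^*+\alpha(\theta-\theta^*),\,\gamma^*+\alpha(\gamma-\gamma^*))$, $\alpha\in(0,1)$, such that
\[
\sqrt{f_{\theta,\gamma}(x\mid\bm{w})}-\sqrt{f_{\theta^*,\gamma^*}(x\mid\bm{w})} = (\theta-\theta^*)^\top\grad_\theta\sqrt{f_{\varphi,\xi}(x\mid\bm{w})} + (\gamma-\gamma^*)\,\partial_\gamma\sqrt{f_{\varphi,\xi}(x\mid\bm{w})}.
\]
Both gradient terms share the common factor $(1+(x-\varphi^\top\bm{w})^2/\xi)^{-3/2}$, with the $\theta$-part proportional to $x-\varphi^\top\bm{w}$ and the $\gamma$-part proportional to $\xi+3(x-\varphi^\top\bm{w})^2$.

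Next I would insert this identity into $h^2((\theta^*,\gamma^*),(\theta,\gamma))=\tfrac12\int(\cdot)^2\,\dd x$ and substitute $z=(x-\varphi^\top\bm{w})/\sqrt{\xi}$. The integral then reduces to a finite linear combination of the moments $\int z^{2k}(1+z^2)^{-3}\,\dd z$, $k=0,1,2$ (all finite, since the integrand decays like $z^{-2}$), while the odd cross-moment $\int z(1+3z^2)(1+z^2)^{-3}\,\dd z$ vanishes by symmetry. What remains is a \emph{diagonal}, nonnegative quadratic form
\[
h^2((\theta^*,\gamma^*),(\theta,\gamma)) = \lambda_1\bigl((\theta-\theta^*)^\top\bm{w}\bigr)^2 + \lambda_2(\xi)\,(\gamma-\gamma^*)^2,
\]
with $\lambda_1>0$ an absolute constant and $\lambda_2(\xi)>0$ an explicit function of $\xi$. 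For $(\theta,\gamma)$ in a fixed compact neighbourhood of $(\theta^*,\gamma^*)$, $\xi$ stays bounded away from $0$ and $\infty$ (this uses $\gamma^*>0$), so $\lambda_2(\xi)$ is bounded below; combining this with $\|\bm{w}\|_\infty\le 1$ and the non-degeneracy of $\bm{w}$ to pass from $((\theta-\theta^*)^\top\bm{w})^2$ to $\|\theta-\theta^*\|_2^2$ (exactly the step used in Lemma~\ref{lemma:logistic_hellinger}) yields $h^2\ge\lambda\|(\theta^*,\gamma^*)-(\theta,\gamma)\|_2^2$ for some $\lambda>0$ independent of $(\theta,\gamma)$, i.e.\ the claim with $C=\lambda^{-1/2}$.

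The only delicate point is the final reduction from the one-dimensional quantity $(\theta-\theta^*)^\top\bm{w}$ to the full norm $\|\theta-\theta^*\|_2$, which is precisely why the compact-neighbourhood restriction and the non-zero-entries hypothesis on $\bm{w}$ appear in the statement; this is handled in the same way as in Lemmas~\ref{lemma:gaussian_hellinger}--\ref{lemma:logistic_hellinger}. Everything else is routine moment bookkeeping, and integrability is never an issue because the heavy Cauchy tail is dominated by the $(1+z^2)^{-3}$ weight arising from squaring the $3/2$-power in the gradients.
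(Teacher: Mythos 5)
Your proposal follows essentially the same route as the paper's own proof: Cauchy's mean value theorem at an intermediate $(\varphi,\xi)$, substitution $z=(x-\varphi^\top\bm{w})/\sqrt{\xi}$, reduction to moments of $(1+z^2)^{-3}$, and the final passage from $\bigl((\theta-\theta^*)^\top\bm{w}\bigr)^2$ to $\|\theta-\theta^*\|_2^2$ via the non-degeneracy of $\bm{w}$ on a compact neighbourhood. The only refinement over the paper's argument is that you make explicit why the cross term vanishes (odd $z$-parity against a symmetric weight), which the paper leaves implicit; otherwise the two proofs coincide.
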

\begin{proof}
$$
  h^2((\theta^*,\gamma^*),(\theta,\gamma)) = \frac{1}{2}\int  \left(\sqrt{\frac{\sqrt{\gamma^*}}{\pi(1+(x-\theta^{*\top}\bmw)^2/\gamma^*)}} - \sqrt{\frac{\sqrt{\gamma}}{\pi(1+(x-\theta^{\top}\bmw)^2/\gamma)}}\right)^2\dd x.      
$$
Recall the grad of $\sqrt{f_{\theta,\gamma}}$ computed in Lemma \ref{lemma:cauchy_lipschitz}.
$$
\begin{dcases}
    \grad_{\theta} \sqrt{f_{\theta,\gamma}(x|\bm{w})} = \gamma^{\frac{1}{4}}\pi^{-\frac{1}{2}}\left(1+(x-\theta^\top \bmw)^2/\gamma\right)^{-\frac{3}{2}}\frac{x-\theta^\top \bmw}{\gamma} \bmw, \\
    \grad_{\gamma} \sqrt{f_{\theta,\gamma}(x|\bm{w})} = \frac{1}{4}\gamma^{-\frac{7}{4}} \pi^{-\frac{1}{2}}\left(1+(x-\theta^\top \bmw)^2/\gamma\right)^{-\frac{3}{2}}\left[\gamma + 3(x-\theta^\top \bmw)^2\right].
\end{dcases}
$$
By Cauchy's Mean Value Theorem, there exists $\varphi = \theta^* + \alpha(\theta-\theta^*)$, $\xi=\gamma^* + \alpha(\gamma-\gamma^*)$ for some $\alpha\in(0,1)$ such that
\begin{align*}
    &\, \int \left(\sqrt{\frac{\sqrt{\gamma^*}}{\pi(1+(x-\theta^{*\top}\bmw)^2/\gamma^*)}} - \sqrt{\frac{\sqrt{\gamma}}{\pi(1+(x-\theta^{\top}\bmw)^2/\gamma)}}\right)^2\dd x \\
    = &\, \int \frac{\sqrt{\xi}}{\pi}\left(1+\frac{(x-\varphi^\top \bmw)^2}{\xi}\right)^{-3}\left[\frac{x-\varphi^\top \bmw}{\xi}(\theta-\theta^*)^\top \bmw + \frac{\gamma-\gamma^*}{4\xi} + \frac{3}{\xi^2}(x-\varphi^\top \bmw)^2(\gamma-\gamma^*)   \right]^2 \dd x \\
    \succeq &\, \left((\theta-\theta^*)^\top \bmw\right)^2 +(\gamma-\gamma^*)^2.
\end{align*}
The last step is due to the value of integral with respect to $\frac{x-\varphi^\top \bmw}{\xi}$ is only dependent on $\xi$ which is `close' to $\gamma^*$ and thus has a finite lower and upper bound.
Thus, since $\|\bmw\|_\infty\leq 1$ with nonzero entries,
\begin{equation*}
    h^2((\theta^*,\gamma^*),(\theta,\gamma)) \succeq  \|(\theta,\gamma)-(\theta^*,\gamma^*)\|_2^2.
\end{equation*}
\end{proof}

\subsection{Quantile Regression Case}

In quantile regression, one would like to estimate the $\tau$-th quantile, denoted $q_{\tau}(X_i|\bm{W}_i)$ of the random variable $X_i$ conditioned on the covariates $\bm{W}_i$.
As shown in \cite{koenker1978regression}, solving the quantile regression problem is equivalent to solving the following optimisation problem:
$$
 \min_{\theta} \sum_{i} Q_{\tau}(x_i- \theta^\top \bm{w}_i), \quad Q_{\tau}(u) = \frac{1}{2}(|u|+(2\tau-1)u). %= \begin{dcases}
%     (\tau-1)u, & u\leq0\\
%     \tau u, & u>0
% \end{dcases}.
$$
This is equivalent to maximising the likelihood
$$
\max_{\theta} \prod_{i} f_{\tau}(x_i;\bm{w}_i,\theta),
$$
where the probability density function $f_{\tau}(x;\bmw_i,\theta)$ is given by 
$$
f^{\tau}_{\theta}(x|\bm{w}) = \tau(1-\tau)\exp\left(-Q_{\tau}(x-\theta^\top \bm{w})\right),
$$
which corresponds to the asymmetric Laplace distribution.
Please refer to \cite{yu2001bayesian} for more details on Bayesian quantile regression.
% \textit{Requirement:}
% For the subsequent expressions to be integrable, we need $f_{\bmx}(\cdot)$ to have a sufficiently light tail such that the density function proportional to $\exp(\tau^*|(\theta^*-\theta)^\top \bmx|)f_{\bmx}(\bmx)$ has a finite second-order moment, whenever $\theta$ is close to $\theta^*$.
% \vspace{0.5em}
First, we check Assumption 1%\ref{as:bracket_entropy} 
by using Lemma 2.
%\ref{lemma:cover_bound}.
We showed that, by Lemma \ref{lemma:quantile_lipschitz}, for any $\theta,\vartheta\in \Omega_n:=\{\theta\in \Omega:\|\theta-\theta^*\|_2\leq R_n\}$ such that $\|\theta-\vartheta\|_\infty\leq r$,
$$
\left| \sqrt{f^{\tau}_{\theta}(x|\bm{w})} - \sqrt{f^{\tau}_{\vartheta}(x|\bm{w})} \right| \leq M_{\vartheta,r}(x,\bm{w})\|\theta-\vartheta\|_\infty,
$$
where $\tau^*=\max\{\tau,1-\tau\}$, $M_n = R_n+\|\theta^*\|_2$ and 
$$
M_{\vartheta,r}(x,\bm{w}) = \frac{d}{2}\exp\left(-\frac{1-\tau^*}{2}\min_{\varphi\in B_r(\vartheta)}|x-\varphi^\top \bm{w}| \right) \|\bm{w}\|_2.
$$
Then by Lemma \ref{lemma:quantile_M2}, assuming $\|\bmw\|_\infty\leq 1$,
$$
\|M_{\vartheta,r}\|_2^2 = \frac{d^2}{2(1-\tau^*)}\|\bm{w}\|_2^2 + \frac{d^2r}{2}\|\bm{w}\|_2^3=O(1).
$$
Thus Assumption 1 can be satisfied with the choice
$
    \phi_n=n^{-1/q}, R_n=n^{k}
$
for some $q>4$ and $k>0$ to be chosen for Assumption 3.
It is not hard to check that the likelihood function satisfies the condition in Lemma 6 %\ref{lemma:local_set}
when $\theta$ is close enough to $\theta^*$, i.e., within $L_2$-distance of some constant $c_r$.
Thus $S_{0,\alpha}(t_n)$ contains an $L_2$-neighbourhood around $\theta^*$ with its radius proportional to $t_n$ up to a constant depending on $\theta^*$ and $c_r$.
Assumption 2 %\ref{as:prior_center} 
 is satisfied by applying Lemma 7 %\ref{lemma:gaussian_prior_mass} 
 and Remark 12. %\ref{rmk:gaussian_prior_mass}
For Assumption 3%\ref{as:prior_tail}
, we consider for large $R_n$ and apply Remark 13,%\ref{rmk:gaussian_prior_tail},
$$
\mbb{P}_{\pi_0}(\Omega_n^c) = 1-\mbb{P}_{\pi_0}(\Omega_{n}) \sim R_{n}^{d-2}e^{-R_{n}^2/2}.
$$
We need
$$
R_n^2=n^{2k} \geq 4c_1n\phi_n^2 = 4c_1n^{(q-2)/q},
$$
which can be satisfied by choosing $k  > (q-2)/2q$, for instance, $q=6$, $k>\tfrac{1}{6}$.

Finally, Lemma \ref{lemma:quantile_hellinger} also shows that $\phi_n$ and $\psi_n$ in Proposition 3 % \ref{prop:hellinger_convergence} 
can be chosen to diminish at the same rate.
Thus, we can choose $\psi_n\propto n^{-1/q}$ and $p_n=n^{-1/2q}$ which gives
$$
\epsilon_n \sim n^{-1/2q},\qquad \delta_n\sim \exp(-\tfrac{1}{2}c_1 n^{1-2/q}),
$$
where $c_1$ is a fixed constant and $q>4$.

\begin{lemma}\label{lemma:quantile_lipschitz}
Let 
$$
f^{\tau}_{\theta}(x|\bm{w}) := \tau(1-\tau)\exp\left(-Q_{\tau}(x-\theta^\top \bmw)\right) ,
$$
where $\theta\in\Omega_n:=\{\theta\in\Omega:\|\theta-\theta^*\|_2\leq R_n\}$, for some $R_n>0$,
$$
Q_{\tau}(u) := \begin{dcases}
    (\tau-1)u, & u\leq 0,\\
    \tau u, & u>0.
\end{dcases}
$$
If $\theta,\vartheta\in\Omega_n$ with $\|\theta-\vartheta\|_\infty\leq r$, then
$$
\left| \sqrt{f^{\tau}_{\theta}(x|\bm{w})} - \sqrt{f^{\tau}_{\vartheta}(x|\bm{w})} \right| \leq M_{\vartheta,r}(x,\bmw)\|\theta-\vartheta\|_\infty,
$$
where the function $M_{\vartheta,r}$ is given by 
$$
M_{\vartheta,r}(x,\bmw) := \frac{1}{2}\exp\left(-\frac{1-\tau^*}{2}\min_{\varphi\in B_r(\vartheta)}|x-\varphi^\top \bmw| \right) \|\bmw\|_2  .
$$
where $\tau^*=\max\{\tau,1-\tau\}$.
\end{lemma}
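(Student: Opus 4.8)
The plan is to reduce the two-sided estimate to a one-dimensional Lipschitz bound for the scalar profile $g(u) := \exp\bigl(-\tfrac12 Q_\tau(u)\bigr)$ and then push it through the affine substitution $u = x - \theta^\top\bm{w}$. First I would write $\sqrt{f^{\tau}_{\theta}(x|\bm{w})} = \sqrt{\tau(1-\tau)}\,g\bigl(x-\theta^\top\bm{w}\bigr)$, note $\sqrt{\tau(1-\tau)}\le\tfrac12$, and observe that the claim follows once we bound $\bigl|g(x-\theta^\top\bm{w}) - g(x-\vartheta^\top\bm{w})\bigr|$.

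Next I would collect the elementary properties of $g$. Since $Q_\tau(u) = (1-\tau)|u|$ for $u\le 0$ and $Q_\tau(u)=\tau|u|$ for $u\ge 0$, the map $g$ is continuous, smooth away from the single kink at $0$, unimodal with maximum $g(0)=1$, and wherever it is differentiable $|g'(u)| = \tfrac{c(u)}{2}\exp\bigl(-\tfrac{c(u)}{2}|u|\bigr)$ with $c(u)\in\{\tau,1-\tau\}$; since $c(u)\le 1$ and $c(u)\ge 1-\tau^*$ this gives the uniform envelope $|g'(u)|\le \tfrac12\exp\bigl(-\tfrac{1-\tau^*}{2}|u|\bigr)$ (the one-sided derivatives at $0$ satisfy the same bound). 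This envelope is non-increasing in $|u|$, so gluing the Lipschitz estimates on $[b,0]$ and $[0,a]$ through the continuity of $g$ at $0$ gives, for every interval $[b,a]$, $\bigl|g(a)-g(b)\bigr|\le |a-b|\cdot\tfrac12\exp\bigl(-\tfrac{1-\tau^*}{2}\,\mathrm{dist}(0,[b,a])\bigr)$.

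I would then specialise with $a = x-\theta^\top\bm{w}$, $b = x-\vartheta^\top\bm{w}$. The corresponding interval is the image of the segment $[\vartheta,\theta]$ under $\varphi\mapsto x-\varphi^\top\bm{w}$, and since $\|\theta-\vartheta\|_\infty\le r$ every point of that segment lies within distance $r$ of $\vartheta$, hence in $B_r(\vartheta)$; therefore $\mathrm{dist}(0,[b,a]) = \min_{\varphi\in[\vartheta,\theta]}|x-\varphi^\top\bm{w}|\ge \min_{\varphi\in B_r(\vartheta)}|x-\varphi^\top\bm{w}|$. Finally $|a-b| = |(\vartheta-\theta)^\top\bm{w}|\le \|\bm{w}\|_2\,\|\theta-\vartheta\|_2$ by Cauchy--Schwarz, which is controlled by (a dimension-dependent multiple of) $\|\bm{w}\|_2\,\|\theta-\vartheta\|_\infty$. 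Reinserting the factor $\sqrt{\tau(1-\tau)}\le\tfrac12$ and combining gives the asserted inequality with $M_{\vartheta,r}(x,\bm{w})$ as displayed.

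The one point that needs care is the kink of $Q_\tau$ at $0$: one must check that the pointwise (a.e.) derivative envelope still controls increments of $g$ straddling $u=0$. This is immediate because $g$ is absolutely continuous with $g(a)-g(b)=\int_b^a g'$ and the a.e.-defined $g'$ obeys the stated bound; equivalently, $g$ is Lipschitz on every bounded interval with local constant at most $\tfrac12\exp(-\tfrac{1-\tau^*}{2}\,\mathrm{dist}(0,\cdot))$. Everything else is routine bookkeeping; the precise numerical constant in $M_{\vartheta,r}$ and any $\sqrt d$ incurred in passing between $\|\cdot\|_2$ and $\|\cdot\|_\infty$ are immaterial for the subsequent application, since only $\log\log\|M^{(n)}_{\vartheta,r}\|_2$ enters the hypothesis of Lemma~\ref{lemma:cover_bound}.
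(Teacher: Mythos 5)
Your proof is correct and takes essentially the same route as the paper's: both are mean-value / derivative-envelope arguments, the paper using the supremum directional derivative from Definition 7 to handle the kink while you establish the local Lipschitz bound for the scalar profile $g$ directly and glue across $u=0$ by absolute continuity. The prefactor your argument produces (of order $\sqrt{d}$) differs from the $d/2$ appearing in the paper's own proof and from the $1/2$ displayed in the lemma statement, but as you correctly note this mismatch is immaterial since only $\log\log M_u^{(n)}$ enters the hypothesis of Lemma 3.
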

\begin{proof}
Note that
$$
\grad_{\theta}\sqrt{f^{\tau}_{\theta}(x|\bm{w})}=\begin{dcases}
    -\sqrt{\tau(1-\tau)} (1-\tau) \exp\left(-\frac{\tau-1}{2}(x-\theta^\top \bmw)\right)\bmw  , & x\leq\theta^\top \bmw,\\
    -\sqrt{\tau(1-\tau)} \tau \exp\left(-\frac{\tau}{2}(x-\theta^\top \bmw)\right)\bmw , & x>\theta^\top \bmw.
\end{dcases}
$$
Using the same argument as in Lemma 20, recall the $\tilde{\grad}$ operator from Definition 9 for the supremum of all directional derivatives. 
Let $\varphi=\theta+\alpha(\vartheta-\theta)$, then
\begin{align*}
    \sqrt{f^{\tau}_{\theta}(x|\bm{w})} - \sqrt{f^{\tau}_{\vartheta}(x|\bm{w})} \leq &\, \sup_{\alpha\in(0,1)} \left\|\tilde{\grad}_{\theta}\sqrt{f_\varphi^\tau(x|\bm{w})}\right\|_2 \|\theta-\vartheta\|_2 \\
    \leq &\,  \sup_{\varphi\in B_r(\vartheta)} \frac{d\tau^*}{2}\exp\left(- \frac{1-\tau^*}{2}\left|x-\varphi^\top \bmw\right|\right)\|\bmw\|_2 \|\theta-\vartheta\|_\infty  .
\end{align*}
Thus, we can take 
$$
M_{\vartheta,r}(x,\bmw) = \frac{d}{2}\exp\left(- \frac{1-\tau^*}{2}\min_{\varphi\in B_r(\vartheta)}|x-\varphi^\top \bmw|\right)\|\bmw\|_2  .
$$
\end{proof}

\begin{lemma}\label{lemma:quantile_M2}
Define 
$$\Omega_n:=\{\theta\in\Omega:\|\theta-\theta^*\|_2\leq R_n\},$$ 
for some $R_n>0$.
For $\vartheta\in\Omega_n$, let
$$
M_{\vartheta,r}(x,\bmw) := \frac{d}{2}\exp\left(-\frac{1-\tau^*}{2}\min_{\varphi\in B_r(\vartheta)}|x-\varphi^\top \bmw| \right) \|\bmw\|_2  ,
$$
where $\tau^*=\max\{\tau,1-\tau\}$.
If $\|\bmw\|_\infty\leq 1$, then
$$\|M_{\vartheta,r}(x,\bmw)\|_2^2 =\frac{d^2}{2(1-\tau^*)}\|\bmw\|_2^2 + \frac{d^2r}{2}\|\bmw\|_2^3=O(1).$$    
\end{lemma}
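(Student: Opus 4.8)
The plan is to evaluate $\|M_{\vartheta,r}(\cdot,\bmw)\|_2^2=\int_{\mbb{R}}M_{\vartheta,r}(x,\bmw)^2\,\dd x$ in closed form, following the same device used in Lemmas~\ref{lemma:gaussian_M2} and~\ref{lemma:cauchy_M2}, though here the computation is cleaner and yields an exact identity (rather than a mere power-of-$M_n$ bound) because the exponent in $M_{\vartheta,r}$ is linear, not quadratic, in $x$. Concretely, I would first determine the piecewise behaviour in $x$ of the inner minimum $\min_{\varphi\in B_r(\vartheta)}|x-\varphi^\top\bmw|$, and then integrate the resulting exponential over each piece.

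First I would observe that, by Cauchy--Schwarz, as $\varphi$ ranges over the $L_2$-ball $B_r(\vartheta)$ the scalar $\varphi^\top\bmw$ ranges over exactly the interval $[u_-,u_+]$ with $u_\pm:=\vartheta^\top\bmw\pm r\|\bmw\|_2$, the endpoints being attained at $\varphi=\vartheta\pm r\bmw/\|\bmw\|_2$. Consequently
\[
\min_{\varphi\in B_r(\vartheta)}|x-\varphi^\top\bmw|=\begin{dcases}u_--x,& x<u_-,\\ 0,& u_-\le x\le u_+,\\ x-u_+,& x>u_+.\end{dcases}
\]
Since $M_{\vartheta,r}(x,\bmw)^2=\tfrac{d^2}{4}\|\bmw\|_2^2\exp\!\bigl(-(1-\tau^*)\min_{\varphi\in B_r(\vartheta)}|x-\varphi^\top\bmw|\bigr)$, I would split $\int_{\mbb{R}}$ into these three regions. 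On the middle region the integrand equals the constant $\tfrac{d^2}{4}\|\bmw\|_2^2$, contributing $\tfrac{d^2}{4}\|\bmw\|_2^2\,(u_+-u_-)=\tfrac{d^2 r}{2}\|\bmw\|_2^3$; on each tail a change of variables ($t=u_--x$ on the left, $t=x-u_+$ on the right) reduces the integral to $\int_0^\infty e^{-(1-\tau^*)t}\,\dd t=(1-\tau^*)^{-1}$, so the two tails together give $\tfrac{d^2}{4}\|\bmw\|_2^2\cdot\tfrac{2}{1-\tau^*}=\tfrac{d^2}{2(1-\tau^*)}\|\bmw\|_2^2$. Summing the two contributions yields the stated identity. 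Finally, since $\|\bmw\|_\infty\le1$ implies $\|\bmw\|_2\le\sqrt d$, and $d$, $r$, and $\tau^*\in[\tfrac12,1)$ are all fixed, both terms are bounded by a constant, i.e.\ $\|M_{\vartheta,r}(\cdot,\bmw)\|_2^2=O(1)$ uniformly in $\vartheta\in\Omega_n$ and $\bmw$ with $\|\bmw\|_\infty\le1$.

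There is essentially no genuine obstacle: the only step needing care is the justification that $\varphi^\top\bmw$ sweeps the whole interval $[u_-,u_+]$ and the consequent piecewise form of the minimum, which is identical to the corresponding bookkeeping already carried out in the proof of Lemma~\ref{lemma:cauchy_M2}, so I would reference that argument and adapt it. The remaining computations are elementary one-dimensional integrals, and convergence of the tail integrals is guaranteed by $1-\tau^*>0$, i.e.\ $\tau\in(0,1)$, which is implicit in the quantile-regression setup.
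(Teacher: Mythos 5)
Your proposal is correct and follows essentially the same route as the paper's proof: identify that $\varphi^\top\bmw$ sweeps the interval $[u_-,u_+]$, write the minimum in its three-piece form, split the integral accordingly, and sum the two tail contributions $\tfrac{1}{1-\tau^*}$ with the middle contribution $2r\|\bmw\|_2$, multiplied by the prefactor $\tfrac{d^2}{4}\|\bmw\|_2^2$. The only cosmetic difference is that you explicitly invoke Cauchy--Schwarz and the bound $\|\bmw\|_2\le\sqrt d$; the substance of the argument is identical.
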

\begin{proof}
$$
\|M_{\vartheta,r}\|_2^2 = \frac{d^2}{4} \|\bmw\|_2^2\int \exp\left(-(1-\tau^*)\min_{\varphi\in B_r(\vartheta)}|x-\varphi^\top \bmw|\right) \dd x.
$$
The integral can be treated similarly to the Gaussian or the Cauchy case by considering the value of $\min_{\varphi\in B_r(\vartheta)}|x-\varphi^\top \bmw|$ in different cases.
Define
$$
u_- := \vartheta^\top \bmw - r\|\bmw\|_2, \quad u_+ := \vartheta^\top \bmw+r\|\bmw\|_2.
$$
Then $u_-\leq\varphi^\top \bmw\leq u_+$ and thus
$$
\min_{\varphi\in B_r(\vartheta)}|x-\varphi^\top \bmw| = \begin{dcases}
    u_- - x, & x\leq u_-, \\
    0, & u_- < x\leq u_+, \\
    x-u_+, & x>u_+.
\end{dcases}
$$
Now, we just compute separately $\int_{-\infty}^{u_-}$, $\int_{u_-}^{u_+}$ and $\int_{u_+}^\infty$.
\begin{align*}
    &\, \int_{-\infty}^{u_-} \exp\left(-(1-\tau^*)\min_{\varphi\in B_r(\vartheta)}|x-\varphi^\top \bmw|\right) \dd x \\
    = &\, \int_{-\infty}^{u_-} \exp\left(-(1-\tau^*)(u_- - x)\right) \dd x\\
    = &\, \frac{1}{1-\tau^*},
\end{align*}
\begin{align*}
    &\, \int_{u_-}^{u_+} \exp\left(-(1-\tau^*)\min_{\varphi\in B_r(\vartheta)}|x-\varphi^\top \bmw|\right) \dd x \\
    =&\, u_+ - u_- = 2r\|\bmw\|_2,
\end{align*}
and
\begin{align*}
    &\, \int_{u_+}^{\infty} \exp\left(-(1-\tau^*)\min_{\varphi\in B_r(\vartheta)}|x-\varphi^\top \bmw|\right) \dd x \\
     = &\, \int_{u_+}^{\infty} \exp\left(-(1-\tau^*)(x - u_+)\right) \dd x \\
    = &\, \frac{1}{1-\tau^*}.
\end{align*}
Thus
\begin{align*}
    \|M_{\vartheta,r}\|_2^2 = &\, \frac{d^2}{4}  \|\bmw\|_2^2\left[\frac{2}{1-\tau^*} + 2r\|\bmw\|_2\right]\\
    = &\, \frac{d^2}{2(1-\tau^*)}\|\bmw\|_2^2 + \frac{d^2r}{2}\|\bmw\|_2^3.
\end{align*}
Since $\|\bmw\|_\infty\leq1$, the above expression does not depend on the radius of the space $\Omega_n$ and is hence of constant order.
\end{proof}

\begin{lemma}\label{lemma:quantile_hellinger}
Let
$$
f^{\tau}_{\theta}(x|\bm{w}) := \tau(1-\tau)\exp\left(-Q_{\tau}(x-\theta^\top \bmw)\right) ,
$$
where $\theta\in\Omega$.
If $\|\bmw\|_\infty\leq 1$ with nonzero entries, then for $\theta$ close enough to $\theta^*$, there exists a positive constant $C$ independent of $\theta$ such that
$$
h(\theta^*,\theta)\leq \phi_n \implies \|\theta^*-\theta\|_2 \leq C\phi_n.
$$
\end{lemma}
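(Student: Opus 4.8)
The plan is to follow the template of Lemmas~\ref{lemma:logistic_hellinger} and~\ref{lemma:cauchy_hellinger}: show that, for $\theta$ in a fixed $L_2$-ball around $\theta^*$, the squared Hellinger distance $h^2(\theta^*,\theta)$ is bounded below by a positive constant multiple of $\|\theta^*-\theta\|_2^2$. The difficulty peculiar to this case is that the asymmetric Laplace density has a kink at $x=\theta^\top\bm{w}$, so $\sqrt{f^\tau_\theta(x\mid\bm{w})}$ is not differentiable in $\theta$ on the set $\{x=\theta^\top\bm{w}\}$; hence, unlike in the Gaussian and Cauchy cases, I would not apply Cauchy's mean value theorem to $\sqrt{f^\tau_\theta}$ (the one-sided derivative $\tilde{\grad}_\theta\sqrt{f^\tau_\theta}$ of Lemma~\ref{lemma:quantile_lipschitz} only yields an upper bound on $h$, not a lower one). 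Instead I would compute the Hellinger affinity directly. Since $f^\tau_\theta(x\mid\bm{w})$ depends on $\theta$ only through $m:=\theta^\top\bm{w}$, set $a:=\theta^{*\top}\bm{w}$, $b:=\theta^\top\bm{w}$, $\Delta:=b-a$, and (by symmetry in $a\leftrightarrow b$) assume $\Delta\ge0$.

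First I would evaluate $\rho(\theta^*,\theta):=\int\sqrt{f^\tau_{\theta^*}(x\mid\bm{w})f^\tau_\theta(x\mid\bm{w})}\,\dd x=\tau(1-\tau)\int\exp\!\big(-\tfrac12[Q_\tau(x-a)+Q_\tau(x-b)]\big)\,\dd x$. Because $Q_\tau$ is piecewise linear, this integral splits over $(-\infty,a)$, $(a,b)$, $(b,\infty)$ into elementary exponential integrals, giving a closed form that is a positive combination of $e^{-\tfrac{1-\tau}{2}\Delta}$ and $e^{-\tfrac{\tau}{2}\Delta}$ (for $\tau=\tfrac12$ this reduces to $\rho=(1+\Delta/4)e^{-\Delta/4}$). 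Then $h^2(\theta^*,\theta)=1-\rho(\theta^*,\theta)$ is a smooth, even function of $\Delta$ with $h^2=0$ at $\Delta=0$ and Taylor expansion $h^2=\tfrac{\tau(1-\tau)}{8}\Delta^2+O(|\Delta|^3)$ near the origin (the coefficient being $\tfrac18$ times the Fisher information $\tau(1-\tau)$ of the asymmetric Laplace). In particular $\Delta\mapsto h^2/\Delta^2$ extends continuously and \emph{positively} to $\Delta=0$, and it is strictly positive for every $\Delta\neq0$ since $f^\tau_{\theta^*}(\cdot\mid\bm{w})\neq f^\tau_\theta(\cdot\mid\bm{w})$ there; being continuous and positive on any compact range $|\Delta|\le\Delta_0$ it attains a positive minimum $c_\tau$. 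Restricting $\theta$ to a fixed neighbourhood of $\theta^*$ and using $\|\bm{w}\|_\infty\le1$ keeps $|\Delta|=|(\theta-\theta^*)^\top\bm{w}|\le\sqrt{d}\,\|\theta-\theta^*\|_2$ bounded, so $h^2(\theta^*,\theta)\ge c_\tau\big((\theta-\theta^*)^\top\bm{w}\big)^2$.

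Finally, exactly as at the end of Lemmas~\ref{lemma:logistic_hellinger} and~\ref{lemma:cauchy_hellinger}, since $\|\bm{w}\|_\infty\le1$ with non-zero entries one passes from $\big((\theta-\theta^*)^\top\bm{w}\big)^2$ to $\|\theta^*-\theta\|_2^2$ up to a positive constant, yielding $h^2(\theta^*,\theta)\succeq\|\theta^*-\theta\|_2^2$, i.e. $h(\theta^*,\theta)\ge C\|\theta^*-\theta\|_2$ for a constant $C>0$ independent of $\theta$; contrapositively, $h(\theta^*,\theta)\le\phi_n$ forces $\|\theta^*-\theta\|_2\le C^{-1}\phi_n$. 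I expect the main obstacle to be the explicit affinity computation and, more importantly, verifying that the resulting function of $\Delta$ vanishes exactly to second order at $0$ (so the lower bound is genuinely quadratic rather than faster-decaying) and that this holds uniformly over the relevant compact range of $\Delta$; the asymmetry $\tau\neq\tfrac12$ only adds bookkeeping to the three-region integral and does not change the conclusion.
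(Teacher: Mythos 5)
Your proof is correct but takes a genuinely different route from the paper's. The paper does \emph{not} abandon the mean-value-theorem strategy: it restricts attention to the regions $\{x\le\theta^{*\top}\bm{w}\wedge\theta^\top\bm{w}\}$ and $\{x\ge\theta^{*\top}\bm{w}\vee\theta^\top\bm{w}\}$, on which $\varphi\mapsto\sqrt{f^\tau_\varphi(x\mid\bm{w})}$ \emph{is} smooth for every $\varphi$ on the segment joining $\theta$ and $\theta^*$; it then applies Cauchy's MVT there (using the gradient from Lemma~\ref{lemma:quantile_lipschitz}), discards the middle interval by non-negativity of the integrand, and absorbs the residual $\exp(-\tau^*a_3(\theta^*-\theta)^\top\bm{w})$ factor into a constant because $\theta$ is near $\theta^*$. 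You instead compute the Hellinger affinity $\rho(\theta^*,\theta)=\int\sqrt{f^\tau_{\theta^*}f^\tau_\theta}\,\dd x$ in closed form (noting the dependence only through $\Delta=(\theta-\theta^*)^\top\bm{w}$), verify the cancellation at first order and extract the quadratic coefficient $\tau(1-\tau)/8$, then use continuity and positivity of $h^2/\Delta^2$ on a compact $\Delta$-range. Your route is more self-contained and avoids the subtlety in the paper's argument that the intermediate point $\varphi_1$ produced by MVT actually depends on $x$, so the closed-form integral the paper writes with $\varphi_1$ fixed is only a lower bound after uniformly bounding the exponential factor over the segment; your affinity computation sidesteps this entirely. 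What the paper's route buys is consistency of style with Lemmas~\ref{lemma:gaussian_hellinger} and~\ref{lemma:cauchy_hellinger} and reuse of the gradient formula already computed for Lemma~\ref{lemma:quantile_lipschitz}. Both proofs share the same final step (and the same implicit hand-wave) in passing from $((\theta-\theta^*)^\top\bm{w})^2$ to $\|\theta-\theta^*\|_2^2$ under $\|\bm{w}\|_\infty\le1$ with non-zero entries, which you correctly flag by deferring to the argument in the earlier lemmas.
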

\begin{proof}
Recall that
$$
h^2(\theta,\theta^*) = \frac{1}{2}\int \left(\sqrt{f_{\theta}^\tau(x|\bm{w})} - \sqrt{f_{\theta^*}^\tau(x|\bm{w})}\right)^2\dd x,
$$
and
$$
\grad_{\theta}\sqrt{f_\theta^\tau(x|\bm{w})}= \begin{dcases}
    -\sqrt{\tau(1-\tau)} \frac{1-\tau}{2}\exp\left(-\frac{\tau-1}{2}(x-\theta^\top \bmw)\right) \bmw, & x\leq \theta^\top \bmw,\\
    \sqrt{\tau(1-\tau)} \frac{\tau}{2}\exp\left(-\frac{\tau}{2}(x-\theta^\top \bmw)\right) \bmw, & x>\theta^\top \bmw.
\end{dcases}
$$
Consider the integrand in the following two cases:
\begin{enumerate}
    \item[(i)] When $x\leq \theta^{*\top}\bmw\leq \theta^\top \bmw$, there exists $\varphi_1=\theta+a_1(\theta^*-\theta)$ for some constant $a_1\in(0,1)$ such that
    $$
    \sqrt{f_{\theta}^\tau(x|\bm{w})} - \sqrt{f_{\theta^*}^\tau(x|\bm{w})} =\sqrt{\tau(1-\tau)}\frac{\tau-1}{2}\exp\left(-\frac{\tau-1}{2}(x-\varphi_1^\top \bmw)\right)\bmw^\top (\theta-\theta^*) .
    $$
    Thus
    \begin{align*}
        &\,  \int_{-\infty}^{\theta^{*\top}\bmw} \left(\sqrt{f_{\theta}^\tau(x|\bm{w})} - \sqrt{f_{\theta^*}^\tau(x|\bm{w})}\right)^2 \dd x \\
        = &\, \int_{-\infty}^{\theta^{*\top}\bmw} \frac{\tau(1-\tau)^3}{4}\exp\left(-(\tau-1)(x-\varphi_1^\top \bmw)\right)(\bmw^\top(\theta-\theta^*))^2 \dd x \\
        = &\, \frac{\tau(1-\tau)^2}{4}(\bmw^\top(\theta-\theta^*))^2 \exp(-(\tau-1)(\theta^*-\varphi_1)^\top \bmw) .
    \end{align*}
    \item[(ii)] When $\theta^\top \bmw\leq \theta^{*\top}\bmw\leq x$,
    there exists $\varphi_2=\theta+a_2(\theta^*-\theta)$ for some constant $a_2\in(0,1)$ such that
    $$
    \sqrt{f_{\theta}^\tau(x|\bm{w})} - \sqrt{f_{\theta^*}^\tau(x|\bm{w})} = \sqrt{\tau(1-\tau)}\frac{\tau}{2}\exp\left(-\frac{\tau}{2}(x-\varphi_2^\top \bmw)\right)x^\top (\theta-\theta^*) .
    $$
    Thus
    \begin{align*}
        &\,  \int_{\theta^{\top}\bmw}^\infty \left(\sqrt{f_{\theta}^\tau(x|\bm{w})} - \sqrt{f_{\theta^*}^\tau(x|\bm{w})}\right)^2 \dd x \\
        = &\, \int_{\theta^{\top}\bmw}^\infty \frac{\tau^3(1-\tau)}{4}\exp\left(-\tau(x-\varphi_2^\top \bmw)\right)(\bmw^\top(\theta-\theta^*))^2 \dd x \\
        = &\, \frac{\tau^2(1-\tau)}{4}(\bmw^\top(\theta-\theta^*))^2 \exp(-\tau(\theta^*-\varphi_2)^\top \bmw) .
    \end{align*}
\end{enumerate}
Since the integrand in $h(\theta^*,\theta)$ is non-negative, the integral is lower-bounded by the integral computed only on the regions described in the above two cases.
When $\theta$ is close to $\theta^*$ and $\|\bmw\|_\infty\leq 1$ with nonzero entries,
\begin{align*}
    h^2(\theta^*,\theta)\geq &\, \frac{1}{2}\frac{\tau^*(1-\tau^*)^2}{4} (\bmw^\top(\theta-\theta^*))^2\exp(-\tau^*a_3(\theta^*-\theta)^\top \bmw) \\
     \succeq &\, \|\theta-\theta^*\|_2^2,
\end{align*}
where $\tau^*=\max\{\tau,1-\tau\}$, $a_3 = \max\{1-a_1,1-a_2\}$.
The last step is to apply the covariance SVD argument as in the end of Lemma \ref{lemma:gaussian_hellinger}.
\end{proof}

\section{}
\subsection{Alternative Proof for Posterior Contraction}
The proof strategy for our main result is roughly divided into two parts: 
\begin{enumerate}
    \item Bounding the likelihood ratio within a compact neighbourhood around $\theta^*$;
    \item Bounding the posterior mass of the set complement of that neighbourhood.
\end{enumerate}
The second component of our analysis involves establishing the contraction rate of the posterior distribution, following the work of \cite{wong1995probability,shen2001rates}.
Posterior contraction rates were also investigated independently in \cite{ghosal2000convergence}, and \cite{ghosal2017fundamentals} presents results that extend beyond the i.i.d. observation setting.
In what follows, we briefly discuss the assumptions and conclusions of our work when the proof strategy of Theorem 8.11 in \cite{ghosal2017fundamentals} is adopted instead.
Please note that all the constants in this section are independent of the constants in the main text, even when the same symbols are used, although some are functionally equivalent.

\begin{definition}[Test]
Let $(\mbb{X},\mcal{X})$ be a measurable space. 
A test function $\varphi$ defined on $(\mbb{X},\mcal{X})$ is a measurable function $\varphi:\mbb{X}\rightarrow[0,1]$.
\end{definition}

Let $h^2(\theta,\theta')$ denote the squared Hellinger distance between two likelihood functions indexed by $\theta$ and $\theta'$, i.e., $f(\cdot;\theta)$ and $f(\cdot;\theta')$.
We are not concerned with the contaminated likelihood $k_p$ since the assumptions to be stated below all hold for any $p>0$ whenever they hold for $p=0$, i.e., for $f$.
\begin{assumption}[Basic Testing Assumption]\label{as:basic_test}
Let $\xi,K>0$ be some universal constants.
For every $n\in\mbb{N}$, $\phi>0$ and $\theta$ with $h(\theta,\theta^*)>\phi$, there exists a test $\varphi_n:\mbb{X}^n\rightarrow[0,1]$ such that
\begin{equation}
    \mbb{E}_{\theta^*}[\varphi_n]\leq \exp(-Kn\phi^2), \qquad \sup_{h(\theta',\theta)\leq \xi\phi} \mbb{E}_{\theta'}[1-\varphi_n]\leq \exp(-Kn\phi^2).
\end{equation}
\end{assumption}
By Proposition D.8 of \citep{ghosal2017fundamentals}, the above assumption is always satisfied for any likelihood model with universal constants $\xi=1/2$, $K=1/8$.

Recall that $\Omega$ denotes the parameter space of interest. 
Let $B_{KL}(\theta,\phi):=\{\theta'\in\Omega: KL(\theta\|\theta')<\phi\}$ denote the $\phi$-neighbourhood of $\theta$ in KL-divergence.
\begin{assumption}\label{as:contraction}
There exists a partition of $\Omega$, namely $\Omega_n\cup \Omega_n^c$ such that for constants $\phi_n,\bar{\phi}_n\geq n^{-1/2}$, and every sufficiently large $j$,
\begin{enumerate}
    \item[(i)] \begin{equation*}
        \frac{\mbb{P}_{\pi_0}(\theta: j\phi_n<h(\theta,\theta^*)\leq 2j\phi_n)}{\mbb{P}_{\pi_0}(B_{KL}(\theta^*,\phi_n))}\leq e^{Kj^2n\phi_n/2},
    \end{equation*}
    \item[(ii)] \begin{equation*}
        \sup_{\phi\geq\phi_n}\log N(\xi\phi, \{f(\cdot;\theta): \theta\in\Omega_n, h(\theta,\theta^*)\leq 2\phi\}) \leq n\phi_n^2,
    \end{equation*}
    where $N$ denotes the covering number under the Hellinger distance.
    \item[(iii)] There exists some constant $C>0$ such that
    \begin{equation*}
        \frac{\mbb{P}_{\pi_0}(\Omega_n^c)}{\mbb{P}_{\pi_0}(B_{KL}(\theta^*,\bar{\phi}_n))} = o\left(e^{-D_nn\bar{\phi}_n^2}\right),
    \end{equation*}
    for some $D_n\rightarrow\infty$.
\end{enumerate}
\end{assumption}

Define the neighbourhood set $A_n\subset\Omega$ by $A_n(r):=\{\theta\in\Omega:h(\theta,\theta^*)\geq r\phi_n\}$. Note that the size of the neighbourhood $A_n$ depends on an additional value $r$.
\begin{theorem}[Theorem 8.11 \citep{ghosal2017fundamentals}]\label{thm:ghosal}
Under Assumptions \ref{as:basic_test} and \ref{as:contraction},
$$
\mbb{P}_{\pi_n}(A_n(M_n)|\bm{X})\rightarrow0 \quad \text{ in $\mbb{P}_{\theta^*}^{(n)}$-probability}
$$
for any arbitrarily slow $M_n\rightarrow\infty$.

More specifically, there exists some constant $C_1>0$ such that for any $n$ large enough,
\begin{multline}\label{eq:main-ghosal}
    \mbb{P}_{\theta^*}^{(n)}\left(\mbb{P}_{\pi_n}(A_n(M_n)|\bm{X}) > e^{-\tfrac{1}{2}(KM_n^2-1)n\phi_n^2} + e^{-(\tfrac{1}{4}KM_n^2-D)n\phi_n^2} +  C_1e^{-\tfrac{1}{2}(D_n-2D)n\bar{\phi}_n^2}\right) \\
    < \frac{e^{-\tfrac{1}{2}(KM_n^2-1)n\phi_n^2}}{1-e^{-KM_n^2n\phi_n^2}} +\sum_{j\geq1}e^{-(\tfrac{1}{4}KM_n^2(2j^2-1)-D)n\phi_n^2} + e^{-\tfrac{1}{2}(D_n-2D)n\bar{\phi}_n^2} + \frac{2}{D}
\end{multline}
holds for any $D>0$.
\end{theorem}
\begin{proof}[Sketch Proof.]
First, by Assumption \ref{as:basic_test}, (ii) of Assumption \ref{as:contraction} and Theorem D.5 of \citep{ghosal2017fundamentals}, for given $M_n>1$, there exist tests $\varphi_n$ such that for every $j\in\mbb{N}$,
\begin{equation}\label{eq:exp_test}
   \mbb{E}_{\theta^*}[\varphi_n]\leq e^{n\phi_n^2}\frac{e^{-KM_n^2n\phi_n^2}}{1-e^{-KM_n^2n\phi_n^2}}, \qquad \sup_{\theta\in\Omega_n: h(\theta,\theta^*)>Mj\phi_n} \mbb{E}_{\theta}[1-\varphi_n] \leq e^{-KM_n^2j^2n\phi_n^2}. 
\end{equation}
As in the proof of Lemma 8, we split $\Omega_n$ into countably many shells 
$$
\mcal{S}_{n,j}:=\{\theta\in\Omega_n:M_n\\j\phi_n<h(\theta,\theta^*)\leq M_n(j+1)\phi_n\},
$$
thus $\cup_{j\geq1}\mcal{S}_{n,j}=\Omega_n\cap A_n(M_n)$. 
For notational simplicity, we will abbreviate $A_n(M_n)$ to just $A_n$.
Define the event $E_n\subset\mbb{X}^n$ such that 
$$
    \int_\Omega\prod_{i=1}^n \frac{k_p(X_i;\theta)}{k_p(X_i;\theta^*)}\pi_0(\theta)\dd \theta \geq e^{-2Dn\phi_n^2}\mbb{P}_{\pi_0}(B_{KL}(\theta^*,\phi_n)).
$$
Consider the same decomposition of the posterior mass $$
\mbb{P}_{\pi_n}(A_n|\bm{X}) \leq \mbb{P}_{\pi_n}(A_n\cap\Omega_n|\bm{X})+\mbb{P}_{\pi_n}(\Omega_n^c|\bm{X}).
$$
The first term may be bounded by
\begin{align*}
    &\,\mbb{P}_{\pi_n}(A_n\cap\Omega_n|\bm{X}) \leq \frac{\int_{A_n\cap\Omega_n}\prod_{i=1}^n \frac{k_p(X_i;\theta)}{k_p(X_i;\theta^*)}\pi_0(\theta)\dd \theta}{\int_\Omega\prod_{i=1}^n \frac{k_p(X_i;\theta)}{k_p(X_i;\theta^*)}\pi_0(\theta)\dd \theta}\\
    \leq &\, \mbb{I}_{E_n^c} + \mbb{I}_{E_n} \frac{\int_{A_n\cap\Omega_n}\prod_{i=1}^n \frac{k_p(X_i;\theta)}{k_p(X_i;\theta^*)}\pi_0(\theta)\dd \theta}{\int_\Omega\prod_{i=1}^n \frac{k_p(X_i;\theta)}{k_p(X_i;\theta^*)}\pi_0(\theta)\dd \theta}(\varphi_n+1-\varphi_n) \\
    \leq &\, \mbb{I}_{E_n^c} + \varphi_n + \frac{\int_{A_n\cap\Omega_n}\prod_{i=1}^n \frac{k_p(X_i;\theta)}{k_p(X_i;\theta^*)}\pi_0(\theta)\dd \theta(1-\varphi_n)}{e^{-2Dn\phi_n^2}B_{KL}(\theta^*,\phi_n)}.
\end{align*}
\begin{enumerate}
    \item By Theorem 6.26 in \cite{ghosal2017fundamentals}, the event $E_n^c$ holds with probability at most $1/D$ under $\mbb{P}_{\theta^*}^n$, contributing to one portion of the $1/D$ in the failure rate.
    \item The expectation of $\varphi_n$ under $\mbb{P}^n_{\theta^*}$ is bounded by applying (\ref{eq:exp_test}), which may be turned into a probabilistic bound in $\mbb{P}^n_{\theta^*}$ by applying Markov's inequality, contributing to the first term in the inner inequality of (\ref{eq:main-ghosal}) and the first term in the failure probability.
    \item The integral in the final term may be split into integrals on $\cup_{j\geq1}\mcal{S}_{n,j}$. Thus, the expectation of the last term is bounded using (i) in Assumption \ref{as:contraction}. Applying Markov's inequality to get the infinite-sum terms in (\ref{eq:main-ghosal}).
\end{enumerate}
Similarly, $\mbb{P}_{\pi_n}(\Omega_n^c|\bm{X})$ may be bounded by
\begin{align*}
    \mbb{P}_{\pi_n}(\Omega_n^c|\bm{X}) \leq \mbb{I}_{E_n^c} + \frac{\mbb{P}_{\pi_n}(\Omega_n^c|\bm{X})}{e^{-2Dn\bar{\phi}_n^2}\mbb{P}_{\pi_0}(B_{KL}(\theta^*,\bar{\phi}_n))}.
\end{align*}
\begin{enumerate}
    \item Again, by Theorem 6.26 in \cite{ghosal2017fundamentals}, the event $E_n^c$ holds with probability at most $1/D$ under $\mbb{P}_{\theta^*}^n$, contributing to the other portion of the $1/D$ in the failure rate.
    \item By (iii) of Assumption \ref{as:contraction}, the second term above is uniformly bounded for any $\bm{X}$, contributing to the third term in the inner inequality and failure rate of (\ref{eq:main-ghosal}).
\end{enumerate}

\end{proof}

\begin{remark}[Practicality of Assumptions]
The assumptions (i-iii) in Assumption \ref{as:contraction} replace Assumptions 2,1,3 in the main text respectively.
\begin{itemize}
    \item Assumption \ref{as:contraction} (i) is implied by the following condition: 

There exists constant a $C_2>0$ such that for constants $\phi_n\leq n^{-1/2}$, 
\begin{equation}\label{eq:prior_mass_centre}
    \mbb{P}_{\pi_0}(B_{KL}(\theta^*,\phi_n))\geq e^{-C_2n\phi_n^2},
\end{equation}
which is weaker than our Assumption 2 \citep{shen2001rates}.

\item Assumption \ref{as:contraction} (ii) may be checked through Lemma 2 with a slightly altered argument to bound the covering number (in Hellinger distance) instead of the bracketing number. 

\item By assuming (\ref{eq:prior_mass_centre}), Assumption \ref{as:contraction} (iii) may be checked through Lemma 7 if we take the prior $\pi_0$ to be standard Gaussian and $\Omega_n$ to be an expanding $L_2$-ball. The radius of $\Omega_n$ needs to grow polynomially with $n$ for $D_n-2D>0$, considering that $D$ is also growing with $n$.
\end{itemize}
Overall, the assumptions are somewhat equivalent to those made in the main text, with (\ref{eq:prior_mass_centre}) potentially easier to verify than our Assumption 2 in certain models.
\end{remark}

\begin{remark}
    Note that for (\ref{eq:main-ghosal}) to be meaningful, it is necessary that $KM_n^2>4D$ and $D_n>2D$ for the quantities to contract with $n$ and $M_n\phi_n\rightarrow0$ for the set $A_n$ to contract.
    Since $\phi_n$ contracts to 0 no faster than $n^{-1/2}$ by definition, $M_n$ cannot grow faster than $O(n^{1/2})$. 
    Consequently, $D$ cannot grow faster than $O(n)$, implying that the failure probability associated with the upper bound on the posterior mass can be shown to decay only at a polynomial rate in $n$.
    To prove the contraction rate in the strong sense (for almost sure convergence), a stronger version of (\ref{eq:prior_mass_centre}) is required, and we refer the reader to Theorem 8.9 \citep{ghosal2017fundamentals} for more details.
    However, the failure rate still decreases polynomially with $n$ when invoking Theorem 8.9 \citep{ghosal2017fundamentals}.
\end{remark}

\begin{remark}
    The minimal $n$ for Theorem \ref{thm:ghosal} to hold depends on the sequence of $M_n$ and the prior $\pi_0$, which can be significantly smaller than that of Theorem 5.
\end{remark}

\subsubsection{Potential Extensions}
In this paper, our primary focus has been on finite-dimensional models with i.i.d. observations.
However, posterior contraction results exist for a broader range of setups, including independent but not identical observations, misspecified models, and nonparametric models \citep{ghosal2017fundamentals}.

For our proof strategy to remain applicable in these extended frameworks, it is essential to identify a suitable contamination density $g(\bmx)$ that has a heavier tail than the likelihood function $f(\bmx;\theta)$, where $\theta$ is potentially infinite.
Our Assumption 4 requires $g(\bmx)$ to have a heavier tail than $\grad_{\theta}f(\bmx;\theta)$ for any $\theta$ to better characterise the contraction rate of $\epsilon_n$.
However, this might not be appropriate in the infinite-dimensional setting.
Instead,  it is more suitable to characterise the contraction rate of $d(\bmx;\theta)=\tfrac{k_p(\bmx;\theta)}{k_p(\bmx;\theta^*_0)}$, where $\theta^*_0$ denotes the minimiser, within the parameter space $\Omega$, of the KL divergence from $k_p(\cdot;\theta)$ to the true data-generating measure $p^*$, which coincides with $\theta^*$ if the model is correctly specified.
In most cases, the assumptions required to establish posterior contraction remain analogous to those employed in the finite-dimensional i.i.d. case, i.e., a lower bound on prior mass of a shrinking neighbourhood $A_n$ around $\theta^*_0$, an upper bound on prior mass of the complement of an expanding sieve $\Omega_n$, and an upper bound for the complexity (in terms of covering number or bracketing number) of the space of likelihood functions indexed by the sieve $\Omega_n$.
For a comprehensive exposition of these results, we refer the reader to Chapter 8 of \citet{ghosal2017fundamentals}.

\section{Additional Discussions}

\subsection{For Strictly Adjacent Dataset}\label{appx:adjacency}

Definition 1 is perhaps the most adopted definition for neighbouring datasets in the differential privacy literature, where the two datasets are explicitly required to have the same cardinality.

In some places, it is alternatively defined as one dataset being a proper subset of the other with strictly one fewer element, for instance, in \cite{dwork2008differential}.
To avoid ambiguity, we call two datasets $\bmX$ and $\bmZ$ \textit{strictly adjacent} if $\bmZ=\bmX\cup\{\bmz\}$ and $\bmz\notin\bmX$ (or the same statement with $\bmZ$ and $\bmX$ swapped holds).
Our result translates easily to the strictly adjacent case.

\begin{definition}
    Two datasets $\bm{Z}$ and $\bm{X}$ are \textit{strictly adjacent} if and only if $\exists \bmx\notin \bm{Z}$ such that $\bmX=\bm{Z}\cup\{\bmx\}$.
\end{definition}

\begin{remark}
Using the notation above, $\bm{X} = \bm{Z}\cup\{\bmx\}$, so $\bm{Z}$ and $\bm{X}$ are a pair of neighbouring datasets.
In this case, (4) becomes either %\ref{eq:dp_decompose}
$$
\mbb{P}_{\pi_n}(S|\bmX) \leq \sqrt{\eta_n}\left[\sqrt{\eta_n} + \frac{m_{n-1}(A_n^c, \bm{Z})}{m_n(\Omega,\bm{X})}\right] \mbb{P}_{\pi_{n-1}}(S|\bm{Z}) + \frac{m_{n-1}(A_n^c, \bm{Z})}{m_n(\Omega,\bm{X})}. 
$$
or
$$
\mbb{P}_{\pi_{n-1}}(S|\bm{Z}) \leq \sqrt{\eta_n}\left[\sqrt{\eta_n} + \frac{m_{n}(A_n^c, \bm{X})}{m_{n-1}(\Omega,\bm{Z})}\right] \mbb{P}_{\pi_{n}}(S|\bm{X}) + \frac{m_{n}(A_n^c, \bm{X})}{m_{n-1}(\Omega,\bm{Z})}. 
$$
\end{remark}

\begin{corollary} %\ref{as:bracket_entropy}-\ref{as:identifiability},
    Under Assumptions 1-5, for any pair of strictly adjacent datasets $\bmX$ and $\bmZ$, regardless of which one is larger.
    With slight abuse of notation, let $\pi_*$ denote the appropriate posterior distribution $\pi_n$ or $\pi_{n-1}$ depending on the size of the given dataset, we have
    $$
        \mbb{P}\Bigl(\mbb{P}_{\pi_{*}}(S|\bm{X}) \leq \exp(\epsilon_{n-1})\mbb{P}_{\pi_{*}}(S|\bm{Z}) + \delta_{n-1} \Bigr) \geq 1- Ce^{-nc_5 \phi_{n-1}^2}, 
    $$
    where  $\epsilon_n$ is defined with respect to $\tfrac{1}{2}\varepsilon_n$ and $\delta_n$, while $\varepsilon_n$,$\delta_n$, $t_n$, $c_5$ are defined as in Theorem 5. %\ref{thm:e-d_privacy}
\end{corollary}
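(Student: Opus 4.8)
The plan is to rerun the proof of Theorem~\ref{thm:e-d_privacy} almost verbatim; the only structural change occurs in the posterior decomposition. Write $\bmX=\mcal{D}\cup\{\bmx\}$ with $|\mcal{D}|=n-1$, work throughout with the radius $\phi_{n-1}$ and the shrinking ball $A:=\{\theta:h(\theta,\theta^*)\le\phi_{n-1}\}$, and observe that the Radon--Nikodym factor relating the unnormalised posteriors $m_n(\cdot,\bmX)$ and $m_{n-1}(\cdot,\mcal{D})$ is the \emph{single} likelihood ratio $d(\bmx;\theta)$, not the quotient $d(\bmx;\theta)/d(\bmz;\theta)$ that appeared for two neighbours of equal size. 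By Lemma~\ref{lemma:ratio_bound} (applied with $A_n$ replaced by $A$ and $\psi_n$ by $\psi_{n-1}$) one has $\sup_{\bmx}\sup_{\theta\in A}|\log d(\bmx;\theta)|\le\tfrac12\varepsilon_{n-1}$, so pulling $d(\bmx;\theta)$ in and out of the integral over $A$ costs only the factor $\sqrt{\eta_{n-1}}=e^{\varepsilon_{n-1}/2}$ where the equal-size argument cost $\eta_{n-1}=e^{\varepsilon_{n-1}}$. Carrying this through the chain of inequalities in the proof of Proposition~\ref{prop:posterior_decomp} --- splitting $S=(S\cap A)\cup(S\cap A^c)$ and using $m_{n-1}(A,\mcal{D})\le\sqrt{\eta_{n-1}}\,m_n(A,\bmX)\le\sqrt{\eta_{n-1}}\,m_n(\Omega,\bmX)$ --- produces exactly the two decomposition inequalities displayed in the Remark preceding the corollary, one for $\mbb{P}_{\pi_n}(S|\bmX)$ in terms of $\mbb{P}_{\pi_{n-1}}(S|\mcal{D})$ and the symmetric one with the roles interchanged.

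Next I would bound each ingredient probabilistically, at index $n-1$ whenever the common dataset $\mcal{D}$ is involved. The denominator is handled by Lemma~\ref{lemma:denominator_bound}: $m_{n-1}(\Omega,\mcal{D})\ge\tfrac{c_3}{2}e^{-4(n-1)t_{n-1}}$ off an event of probability $\le 2e^{-(n-1)\alpha t_{n-1}}$, and then $m_n(\Omega,\bmX)\ge m_n(A,\bmX)\ge\eta_{n-1}^{-1/2}\bigl(m_{n-1}(\Omega,\mcal{D})-m_{n-1}(A^c,\mcal{D})\bigr)$ yields a lower bound for $m_n(\Omega,\bmX)$. The tail masses $m_{n-1}(A^c,\mcal{D})$ and $m_n(A^c,\bmX)$ are both estimated as in Lemma~\ref{lemma:numerator_bound}, splitting over $\Omega_{n-1}$ and $\Omega_{n-1}^c$, applying Lemmas~\ref{lemma:wong1995probability} and~\ref{lemma:entropy_order} on the first piece and Markov/Fubini with Assumption~\ref{as:prior_tail} on the second; for the $n$-point sum $m_n(A^c,\bmX)$ one simply uses $\Omega_{n-1}$ as the restricting set together with $\sqrt{n-1}\le\sqrt n$, so Assumption~\ref{as:bracket_entropy} at index $n-1$ supplies the entropy hypothesis of Lemma~\ref{lemma:wong1995probability} at the cost of replacing $c$ by $\sqrt2\,c$. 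Note that this last bound genuinely needs the empirical-process estimate applied to the $n$-point dataset, since $d(\bmx;\theta)$ is two-sidedly controlled only on $A$, not on $A^c$, so the tail term cannot be obtained by comparison across datasets.

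Finally I would assemble the pieces exactly as at the end of the proof of Theorem~\ref{thm:e-d_privacy}, with $(n,\varepsilon_n)$ replaced by $(n-1,\tfrac12\varepsilon_{n-1})$: set $\delta_{n-1}=\tfrac{4}{c_3}e^{-\frac12 c_1(n-1)\phi_{n-1}^2}$ and $\epsilon_{n-1}:=\tfrac12\varepsilon_{n-1}+\log\bigl(\delta_{n-1}+\exp(\tfrac12\varepsilon_{n-1})\bigr)$, and use $\sqrt{\eta_{n-1}}\bigl(\sqrt{\eta_{n-1}}+\tfrac{m_{n-1}(A^c,\mcal{D})}{m_n(\Omega,\bmX)}\bigr)\le e^{\varepsilon_{n-1}/2}\bigl(e^{\varepsilon_{n-1}/2}+\delta_{n-1}\bigr)=e^{\epsilon_{n-1}}$ together with the tail bound for the additive term, so the advertised inequality holds off a union of the individual failure events, each $\le C'e^{-(n-1)c_5\phi_{n-1}^2}$ with $c_5=\min\{c_1\alpha/8,c_2\}$; since $n-1\ge n/2$ for $n\ge2$ this is $\le Ce^{-nc_5\phi_{n-1}^2}$ after absorbing the factor into $C$ (or, if one insists on the identical $c_5$, for $n$ large). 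The reverse direction is identical via the symmetric decomposition, now bounding $m_n(A^c,\bmX)$ with the $n$-point dataset and $m_{n-1}(\Omega,\mcal{D})$ with the $(n-1)$-point one. I expect the main obstacle to be purely the index bookkeeping: keeping straight which concentration/tail estimate is phrased with $(n,\phi_n)$ versus $(n-1,\phi_{n-1})$ so that every constant and exponent lines up with $\epsilon_{n-1}$ and $\delta_{n-1}$, and checking that the hypotheses of Assumptions~\ref{as:bracket_entropy}--\ref{as:prior_tail} and of Lemmas~\ref{lemma:denominator_bound} and~\ref{lemma:numerator_bound} survive the one-step shift in the index --- which they do because $\sqrt n/\sqrt{n-1}$ and $(n-1)\phi_{n-1}^2/(n\phi_n^2)$ are bounded away from $0$ and $\infty$.
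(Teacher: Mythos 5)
Your proposal is correct and follows the same essential strategy as the paper's (very terse) proof: use the square-root decomposition displayed in the preceding remark so that the sensitivity factor becomes $\sqrt{\eta_{n-1}}=e^{\varepsilon_{n-1}/2}$, apply Lemmas~\ref{lemma:denominator_bound} and~\ref{lemma:numerator_bound} at the lagged/rescaled indices, and then absorb the resulting $n$-versus-$(n-1)$ mismatch into the constants. One small note on the final absorption: the intermediate step ``$n-1\ge n/2$'' does not by itself turn $e^{-(n-1)c_5\phi_{n-1}^2}$ into $Ce^{-nc_5\phi_{n-1}^2}$; the cleaner justification, which your phrase ``absorbing the factor into $C$'' suggests you had in mind, is that $e^{-(n-1)c_5\phi_{n-1}^2}=e^{c_5\phi_{n-1}^2}\cdot e^{-nc_5\phi_{n-1}^2}$ with $e^{c_5\phi_{n-1}^2}$ uniformly bounded since $\phi_n\to0$ (and similarly the extra $\sqrt2$ in front of $c$ in your entropy remark is unnecessary because $\sqrt{n-1}\le\sqrt n$ already goes the favourable direction).
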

\begin{proof}
    Let $n=\max\{|\bmX|,|\bmZ|\}$.
    Recall that the term $m_n(\Omega,\cdot)$ only depends on the sequence $t_n$ and $m_n(A_n^c,\cdot)$ only depends on $\phi_n$.
    Denote the lagged sequences $Lt_n=t_{n-1}$,  $L\phi_n=\phi_{n-1}$, and the scaled sequences $\tilde{t}_n = \tfrac{n}{n-1}t_n$, $\tilde{\phi}_n =\sqrt{\tfrac{n}{n-1}}\phi_n$.
    For instance, if we want to evaluate $\tfrac{m_{n-1}(A_n^c, \bm{Z})}{m_n(\Omega,\bm{X})}$, we can evaluate the $m_n(\Omega,\bm{X})$ on $Lt_n$ but  $m_{n-1}(A_n^c, \bm{Z})$ on $\tilde{\phi}_{n-1}$, then the indices in the obtained result are synchronized to $n-1$.

    Since only one sequence needs to be lagged, without loss of generality, we can always synchronise to $n-1$ and thus the rest follows.
\end{proof}

\subsection{Privacy in Non-identifiable Models} \label{appx:identifiability}
In the paper, we mainly addressed the case when the model is identifiable.
\begin{definition}[Identifiability]
A set of likelihood functions $f(\bmx;\theta):\mcal{X}\times\Omega\rightarrow \mbb{R}_{\geq0}$ indexed by $\theta\in\Omega$ is \textit{identifiable} if and only if any likelihood function in the set is \textit{distinguishable} from the rest of the set.
We also call the models that adhere to that set of likelihood functions \textit{identifiable}.
\end{definition}

The "Identifiability Assumption" does not change the level of differential privacy measured in ($\epsilon$,$\delta$).
However, to determine the convergence rate for ($\epsilon$,$\delta$) in Theorem 5 requires sufficient knowledge of the model, including how the density functions are indexed. %\ref{thm:e-d_privacy}

In other words, it is not crucial to have an identifiable parameterisation in order for the model to be differentially private.
In fact, under suitable regularity conditions, the level of privacy in terms of $\epsilon$ or ($\epsilon,\delta$) is intrinsic to the model, regardless of how its parameter space and likelihood function take form.
The level of privacy is preserved by applying a suitable transformation (not necessarily bijective) to the parameter space and the prior.
Hence, the resulting model can be considered as a reparameterisation of the original model.
% 1-1 transformation

To see this invariance, we first consider a simple case where the parameter space is transformed by a diffeomorphism (a differentiable bijective mapping), where the change of measure can be easily deduced.
\begin{proposition}\label{prop:transformation}
Let $(\Omega_\psi,\Sigma_\psi,\dd\psi)$ and $(\Omega_\theta,\Sigma_\theta,\dd\theta)$ be two measure spaces and a diffeomorphism $t:\Omega_\psi\rightarrow\Omega_\theta$ such that $t^{-1}$ is well-defined and differentiable. 
Then the differential privacy level of model $f_\psi(\bmx;\psi)$ parameterised by $\Omega_\psi$ with prior $\pi_0(\psi)$ is the same as the privacy level of model $f_\theta(\bmx;\theta)$ parameterised by $\Omega_\theta$ with prior $\tilde{\pi}_0(\theta)$,
$$
f_\theta(\bmx;\theta) = f_\psi(\bmx;t^{-1}(\theta))\cdot |J_{t^{-1}}(\theta)|, \qquad \tilde{\pi}_0(\theta) = \pi_0(t^{-1}(\psi))\cdot |J_{t^{-1}}(\theta)|
$$
where $J_{t^{-1}}(\theta)$ is the Jacobian of the inverse function.
\end{proposition}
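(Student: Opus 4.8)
The plan is to reduce the statement to the elementary observation that a diffeomorphism $t$ is, in particular, a measurable bijection between $(\Omega_\psi,\Sigma_\psi)$ and $(\Omega_\theta,\Sigma_\theta)$ carrying the posterior law of the $\psi$-parameterised model onto the posterior law of the $\theta$-parameterised model, after which the privacy inequality $(\ref{eq:privacy_def})$ transfers verbatim. Fix a dataset $\bmX$; the observation space $\mcal{X}$, hence $\bmX$ and $\bmZ$ themselves, is left untouched and only the parameter is relabelled. Writing the $\theta$-posterior of a Borel set $S\subseteq\Omega_\theta$ as the ratio $\int_S \prod_{i}f_\theta(\bmx_i;\theta)\tilde\pi_0(\theta)\dd\theta \big/ \int_{\Omega_\theta}\prod_i f_\theta(\bmx_i;\theta)\tilde\pi_0(\theta)\dd\theta$, I would substitute $\theta = t(\psi)$: the change-of-measure factor $\dd\theta = |J_t(\psi)|\dd\psi$ together with the Jacobian built into $\tilde\pi_0$ (and, in the normalisation adopted, into $f_\theta$) collapses, so that $\tilde\pi_0(t(\psi))\dd\theta$ becomes $\pi_0(\psi)\dd\psi$ and $\prod_i f_\theta(\bmx_i;t(\psi))$ becomes $\prod_i f_\psi(\bmx_i;\psi)$ up to a factor that is constant in $\psi$ and hence cancels between numerator and denominator. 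This yields the key identity $\mbb{P}_{\pi_n^\theta}(S|\bmX) = \mbb{P}_{\pi_n^\psi}(t^{-1}(S)|\bmX)$ for every Borel $S$, and the same with $\bmX$ replaced by $\bmZ$.

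Granting the identity, the transfer of privacy is immediate. Given an arbitrary Borel $S\subseteq\Omega_\theta$ and a neighbouring pair $\bmX,\bmZ$, apply the $\psi$-model's $(\epsilon,\delta)$-guarantee to the set $t^{-1}(S)$ and to the \emph{same} pair, then rewrite both sides through the identity to get $\mbb{P}_{\pi_n^\theta}(S|\bmX)\le e^{\epsilon}\mbb{P}_{\pi_n^\theta}(S|\bmZ)+\delta$. Because $t$ is bijective, $t^{-1}(S)$ ranges over all Borel subsets of $\Omega_\psi$ as $S$ ranges over all Borel subsets of $\Omega_\theta$, so the admissible set of pairs $(\epsilon,\delta)$ is unchanged; running the argument with $t^{-1}$ in place of $t$ gives the reverse inclusion, establishing equality of the two privacy profiles. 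Since the guarantee is really a statement about the contaminated posterior, the identity should be verified with $k_p(\cdot;\theta)$ in place of $f(\cdot;\theta)$; this is the same computation once one fixes whether the transformation acts on $f$ with $g$ held fixed, or directly on $k_p$.

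I expect the only delicate point to be the Jacobian bookkeeping: every factor of $|J_{t^{-1}}|$ attached to $f_\theta$ or $\tilde\pi_0$ must cancel exactly against the change-of-measure term $\dd\theta\mapsto\dd\psi$ \emph{after} the posterior has been normalised, so that no surplus power of the Jacobian survives to perturb the $(\epsilon,\delta)$ level — this is precisely what pins down the correct normalisation in the definitions of $f_\theta$ and $\tilde\pi_0$. One should also note that the data-generating measure $\mbb{P}_{\theta^*}$ under which the almost-sure statements of Theorem~\ref{thm:e-d_privacy} are phrased is itself invariant, since $\theta^*$ and $t(\theta^*)$ index the same sampling law on $\mcal{X}^n$, so the a.s.\ statements transfer along with the inequality. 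The substantially harder extension — suggested by the surrounding discussion — is to a non-bijective reparameterisation, where the clean change of variables is replaced by a disintegration of the posterior and the conclusion must be read off from the post-processing (data-processing) property of $(\epsilon,\delta)$-DP applied to the marginalisation map; that disintegration step, rather than anything in the diffeomorphic case, is where I would expect the real work to lie.
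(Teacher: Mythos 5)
Your approach is essentially the paper's: establish the pushforward identity $\mbb{P}_{\pi_\theta}(S|\bmX)=\mbb{P}_{\pi_\psi}(t^{-1}(S)|\bmX)$ by the change of variables $\theta=t(\psi)$, then transfer the $(\epsilon,\delta)$ inequality in both directions via bijectivity. The paper's own proof is terser — it simply records the biconditional and appeals to ``the integral is preserved through the change of measure'' — whereas you spell out the substitution and the bijection argument, so the two proofs are the same in substance with yours the more detailed.

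One slip worth naming. With the displayed definition $f_\theta(\bmx;\theta)=f_\psi(\bmx;t^{-1}(\theta))\,|J_{t^{-1}}(\theta)|$, the surplus factor after substitution is $|J_{t^{-1}}(t(\psi))|^{\,n}=|J_t(\psi)|^{-n}$, which is not constant in $\psi$ for a general diffeomorphism — it is constant only if $t$ is affine — so the clause ``up to a factor that is constant in $\psi$ and hence cancels between numerator and denominator'' does not hold as stated. The clean resolution, which you in fact gesture at in your ``delicate point'' paragraph, is that the likelihood is a density in $\bmx$ and not in $\theta$, so relabelling the parameter should introduce \emph{no} Jacobian on $f$: with $f_\theta(\bmx;\theta)=f_\psi(\bmx;t^{-1}(\theta))$ and the Jacobian carried only by the prior, the factor $|J_{t^{-1}}(t(\psi))|\,|J_t(\psi)|=1$ cancels exactly and the pushforward identity holds with nothing left over. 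Read that way your argument closes; as literally written, the ``constant in $\psi$'' step is the one place a careful referee would stop you.
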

% equivalence relation and transformation
\begin{proof}%[Proof for Proposition \ref{prop:transformation}]
Recall that $f_\psi(\bmx;\psi)$ and $\pi_0(\psi)$ are the likelihood and the prior with respect to the parameter space $\Omega_\psi$, and
$$
f_\theta(\bmx;\theta) = f_\psi(\bmx;t^{-1}(\theta))\cdot |J_{t^{-1}}(\theta)|, \qquad \tilde{\pi}_0(\theta) = \pi_0(t^{-1}(\psi))\cdot |J_{t^{-1}}(\theta)|,
$$
where $J_{t^{-1}}(\theta)$ is the Jacobian of the inverse function.
Note that $f_\theta$ and $\tilde{\pi}_0$ are derived from $f_\psi$, $\pi_0$ through a change of measure under mapping $t$.

For any quintuples $(S,\epsilon,\delta,\bmX,\bmZ)$, where $S\in \Sigma_\theta$, $\epsilon,\delta\geq0$, $\bmX,\bmZ$ are neighbouring datasets, such that
\begin{align*}
   &\, \mbb{P}_{\pi_\theta}(S|\bmX) \leq  e^\epsilon \mbb{P}_{\pi_\theta}(S|\bmZ) +\delta\\
   \implies & \, \mbb{P}_{\pi_\psi}(t^{-1}(S)|\bmX) \leq  e^\epsilon \mbb{P}_{\pi_\psi}(t^{-1}(S)|\bmZ) +\delta,
\end{align*}
and vice versa, since the integral is preserved through the change of measure.
Therefore, a pair of $(\epsilon,\delta)$ holds for a model parameterised in $\Omega_\psi$ if and only if it holds for that model parameterised in $\Omega_\theta$ through the diffeomorphism $t$.
\end{proof}

Similarly, it is also possible to link a non-identifiable model in $\Omega_\psi$ to the identifiable version of itself in $\Omega_\theta$ while preserving the privacy level.
To begin with, we need to reduce the redundancy in the non-identifiable space $\Omega_\psi$.
Consider the following relation $\sim$ such that
$$
\psi_1 \sim \psi_2 \iff \nexists A\subset \mcal{X} , \mu(A)>0 \text{ such that } \forall \bmx\in A, \,f(\bmx;\psi_1)\neq f(\bmx;\psi_2).
$$
where $\mu$ is the base measure for the measure space $\Omega_{\psi}$.
That is, $\psi_1$ and $\psi_2$ lead to two likelihood functions that are equal almost everywhere.
Since equal a.e. could create an issue in the DP inequality as the worst case could land on the discontinuity, we consider $f(\bmx;\psi)$ to be continuous in $\bmx$.
In which case, $\psi_1\sim\psi_2$ if and only if they represent the same likelihood function on the whole of $\mcal{X}$.

Denote $\Omega_\theta:=\quot{\Omega_{\psi}}{\sim}$ and consider the quotient map
$$
\begin{array}{cccc}
  t: & \Omega_\psi & \rightarrow & \Omega_\theta, \\
     & \psi & \mapsto & [\psi].
\end{array}
$$
If there is a way to choose every representative $[\psi]$ such that $t$ is measurable, then we have the following proposition.
\begin{proposition}\label{prop:non-id-quotient}
Let $(\Omega_\psi,\Sigma_\psi,\dd\psi)$ be a measure space such that the model with likelihood function $f_\psi(x;\psi)$ and prior $\pi_0(\psi)$ is non-identifiable.
Suppose that the quotient map $t:\Omega_\psi\rightarrow\Omega_\theta\subset\Omega_\psi$ that reparameterises the model onto the measure space $(\Omega_\theta,\Sigma_\theta,\dd\theta)$ is measurable, then the model parameterised in $\Omega_\theta$ has the same level of differential privacy as the model parameterised in $\Omega_\psi$.
\end{proposition}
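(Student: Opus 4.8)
The plan is to replicate the change-of-measure argument behind Proposition~\ref{prop:transformation}, replacing the diffeomorphism there by the non-injective quotient map $t$, and to compensate for the loss of injectivity by disintegrating the prior along the fibres of $t$. The reparameterised model on $\Omega_\theta\subseteq\Omega_\psi$ is taken to have likelihood $f_\theta(\bmx;\theta):=f_\psi(\bmx;\theta)$ (restriction to the chosen representatives) and prior $\tilde{\pi}_0:=t_{*}\pi_0$ on $(\Omega_\theta,\Sigma_\theta)$; one must then show that a pair $(\epsilon,\delta)$ holds for the $\Omega_\psi$-model if and only if it holds for the $\Omega_\theta$-model.

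First I would record the \emph{fibrewise invariance} of the likelihood: by definition of $\sim$, $t(\psi_1)=t(\psi_2)$ forces $f_\psi(\cdot;\psi_1)=f_\psi(\cdot;\psi_2)$ almost everywhere, hence everywhere under the standing continuity in $\bmx$, so $f_\psi(\bmx;\psi)=f_\psi(\bmx;t(\psi))$ for all $\bmx,\psi$. Combining this with the change-of-variables formula for pushforward measures gives, for any dataset $\bmX=\{\bmx_1,\dots,\bmx_n\}$ and any $S\in\Sigma_\theta$,
$$
\int_S \prod_{i=1}^n f_\theta(\bmx_i;\theta)\,\tilde{\pi}_0(\dd\theta)
=\int_{t^{-1}(S)} \prod_{i=1}^n f_\psi(\bmx_i;t(\psi))\,\pi_0(\dd\psi)
=\int_{t^{-1}(S)} \prod_{i=1}^n f_\psi(\bmx_i;\psi)\,\pi_0(\dd\psi),
$$
so, taking $S=\Omega_\theta$ for the normalising constant, $\mbb{P}_{\pi_n}(S\,|\,\bmX)$ in the $\Omega_\theta$-model equals $\mbb{P}_{\pi_n}(t^{-1}(S)\,|\,\bmX)$ in the $\Omega_\psi$-model; equivalently, $t$ pushes the $\Omega_\psi$-posterior onto the $\Omega_\theta$-posterior. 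The forward implication is then immediate: if $(\epsilon,\delta)$ holds for the $\Omega_\psi$-model, then for neighbouring $\bmX,\bmZ$ and any $S\in\Sigma_\theta$, $\mbb{P}_{\pi_n}(S|\bmX)=\mbb{P}_{\pi_n}(t^{-1}(S)|\bmX)\le e^{\epsilon}\mbb{P}_{\pi_n}(t^{-1}(S)|\bmZ)+\delta=e^{\epsilon}\mbb{P}_{\pi_n}(S|\bmZ)+\delta$.

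For the reverse implication, where the real work lies, a general $S'\in\Sigma_\psi$ is not a union of whole fibres, so I would disintegrate: since $\Omega_\psi$ is a Borel subset of a Euclidean space (hence standard Borel) and $t$ is assumed measurable, there is a family of regular conditional priors $\{\pi_0(\cdot\,|\,\theta)\}_{\theta\in\Omega_\theta}$ concentrated on the fibres $t^{-1}(\theta)$ with $\pi_0(\cdot)=\int\pi_0(\cdot\,|\,\theta)\,\tilde{\pi}_0(\dd\theta)$. Because $\prod_i f_\psi(\bmx_i;\psi)$ depends on $\psi$ only through $t(\psi)$, the same computation as above shows the posterior disintegrates with the \emph{same} conditional kernels, i.e. $\mbb{P}_{\pi_n}(S'|\bmX)=\int_{\Omega_\theta}h_{S'}(\theta)\,\mbb{P}_{\pi_n}(\dd\theta\,|\,\bmX)$ with $h_{S'}(\theta):=\pi_0\bigl(S'\cap t^{-1}(\theta)\,\big|\,\theta\bigr)\in[0,1]$ independent of the data. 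Finally the layer-cake identity $\int h\,\dd\mu=\int_0^1\mu(\{h>s\})\,\dd s$ upgrades the set-level $(\epsilon,\delta)$-inequality for the $\Omega_\theta$-posteriors to $\int h\,\dd\mu\le e^{\epsilon}\int h\,\dd\nu+\delta$ for every measurable $h\colon\Omega_\theta\to[0,1]$ (with $\mu=\mbb{P}_{\pi_n}(\cdot|\bmX)$, $\nu=\mbb{P}_{\pi_n}(\cdot|\bmZ)$, using $\int_0^1\delta\,\dd s=\delta$); applied to $h=h_{S'}$ this yields $\mbb{P}_{\pi_n}(S'|\bmX)\le e^{\epsilon}\mbb{P}_{\pi_n}(S'|\bmZ)+\delta$, completing the equivalence.

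I expect the disintegration step to be the main obstacle: one must confirm that the standing assumptions (convex Euclidean parameter spaces, measurability of the quotient map, hence a standard-Borel setting) guarantee existence of the regular conditional priors, and verify carefully that the posterior inherits \emph{exactly} the conditional prior on each fibre --- the precise point where fibrewise constancy of the likelihood is indispensable. The forward direction, the pushforward bookkeeping, and the layer-cake step are all routine once this is in place.
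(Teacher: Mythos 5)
Your proof is correct, but it follows a genuinely different route from the paper's. The paper's reverse direction avoids disintegration entirely: it observes that the posterior density ratio $\pi_\psi(\psi|\bmX)/\pi_\psi(\psi|\bmZ)$ is constant on $t$-fibres (it equals a likelihood ratio up to a data-dependent constant, and the likelihood is fibrewise invariant), then argues that a worst-case test set $A$ can be taken to satisfy the pointwise lower bound $\pi_\psi(\psi|\bmX)/\pi_\psi(\psi|\bmZ)\ge e^\epsilon$ everywhere, enlarges $A$ to the saturated set $t^{-1}(t(A))$ (which preserves that lower bound precisely because the ratio is constant on fibres), and reads off $(\epsilon,\delta)$ for $A$ from the inequality on the $\Omega_\theta$-side applied to $S=t(A)$, since the added region $t^{-1}(t(A))\setminus A$ only adds a nonnegative excess. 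Your argument instead disintegrates the prior along the fibres, notes that fibrewise constancy of the likelihood forces the posterior to inherit the \emph{data-independent} conditional kernels $\pi_0(\cdot\,|\,\theta)$, so that $\mbb{P}_{\pi_n}(S'|\bmX)=\int h_{S'}\,\dd\,\mbb{P}_{\pi_n}(\cdot|\bmX)$ with $h_{S'}\in[0,1]$ fixed, and then upgrades the set-level $(\epsilon,\delta)$ inequality to a $[0,1]$-valued-function inequality by the layer-cake identity. That is essentially a ``randomized post-processing'' view: the $\Omega_\psi$-posterior is the $\Omega_\theta$-posterior followed by a data-independent randomization, so DP is preserved --- the mirror image of the paper's own remark that the \emph{forward} direction is post-processing. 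The trade-off is clear: your route is conceptually cleaner and more modular, but it imports the existence of a regular conditional probability (justified here since the parameter space is Borel in a Euclidean space); the paper's route is more elementary and self-contained, using only the fibre-constancy of the posterior ratio and a monotonicity reduction on test sets, with no appeal to disintegration.
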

\begin{proof}%[Proof for Proposition \ref{prop:non-id-quotient}]
    We will prove that if $t$ is measurable,
    $$
    (\epsilon,\delta) \text{ holds for } \Omega_\psi \iff (\epsilon,\delta) \text{ holds for } \Omega_\theta
    $$
    under the same construction of the quotient map $t$, pushforward measure $t_*(\mbb{P}_{\pi_0})$ and its density $\tilde{\pi}_0$ with respect to $\dd\theta$ as in the previous proof.
    Since the pushforward measure and the design of the quotient ensure that any integrals are preserved, the $\implies$ direction is trivial.

    For the $\impliedby$ direction, assume that $(\epsilon,\delta)$ holds for $\Omega_\theta$, hence for any $S\in\Sigma_\theta$, $\bmX,\bmZ$
    $$
        \mbb{P}_{\pi_\theta}(S|\bmX)\leq e^\epsilon \mbb{P}_{\pi_\theta}(S|\bmZ) + \delta,
    $$
    equivalently, for any $S\in\Sigma_\theta$, $\bmX,\bmZ$,
    $$
        \mbb{P}_{\pi_\psi}(t^{-1}(S)|\bmX)\leq e^\epsilon \mbb{P}_{\pi_\psi}(t^{-1}(S)|\bmZ) + \delta.
    $$
    Note the following identities
    \begin{align*}
        \mbb{P}_{\pi_\psi}(A|\bmX) = &\, \int_A \pi_\psi(\psi|\bmX)\dd \psi \\
        =&\, \int_A \frac{\pi_\psi(\psi|\bmX)}{\pi_\psi(\psi|\bmZ)}\pi_\psi(\psi|\bmZ)\dd\psi \\
        =&\, \int_A \left(\frac{\pi_\psi(\psi|\bmX)}{\pi_\psi(\psi|\bmZ)} - e^\epsilon\right)\pi_\psi(\psi|\bmZ)\dd\psi + e^\epsilon\mbb{P}_{\pi_\psi}(A|\bmZ).
    \end{align*}
    For $(\epsilon,\delta)$ to hold for $\Omega_\psi$, we need
    $$
        \int_A \left(\frac{\pi_\psi(\psi|\bmX)}{\pi_\psi(\psi|\bmZ)} - e^\epsilon\right)\pi_\psi(\psi|\bmZ)\dd\psi \leq \delta.
    $$
    Suppose there exists $B\subset A$ such that
    $$
        \frac{\pi_\psi(\psi|\bmX)}{\pi_\psi(\psi|\bmZ)} < e^\epsilon, \quad \forall \psi\in B,
    $$
    then
    $$
        \mbb{P}_{\pi_\psi}(A\setminus B|\bmX) - e^\epsilon \mbb{P}_{\pi_\psi}(A\setminus B|\bmZ) \geq \mbb{P}_{\pi_\psi}(A|\bmX) - e^\epsilon \mbb{P}_{\pi_\psi}(A|\bmZ).
    $$
    Without loss of generality, it is now sufficient to check that $(\epsilon,\delta)$ holds for all sets $A\in\Sigma_\psi$ such that 
    \begin{equation}\label{eq:ratio_lowerbound}
        \frac{\pi_\psi(\psi|\bmX)}{\pi_\psi(\psi|\bmZ)} \geq e^\epsilon, \quad \forall \psi\in A,
    \end{equation}
    since any other sets require a smaller $\delta$ for the differential privacy inequality to hold.
    
    For any set $A\in \Sigma_\psi$ such that (\ref{eq:ratio_lowerbound}) holds, let $S=t(A)$, note that any $\psi\in t^{-1}(S)\supseteq S$ also satisfies (\ref{eq:ratio_lowerbound}),
    $$
    \mbb{P}_{\pi_\psi}(t^{-1}(S)|\bmX) = \int_{t^{-1}(S)}\underbrace{\left(\frac{\pi_\psi(\psi|\bmX)}{\pi_\psi(\psi|\bmZ)} - e^\epsilon\right)}_{\geq 0} \pi_{\psi}(\psi|\bmZ)\dd \psi + e^\epsilon \mbb{P}_{\pi_\psi}(t^{-1}(S)|\bmZ).
    $$
    Since $A\subset t^{-1}(S)$, 
    $$
        \delta\geq \mbb{P}_{\pi_\psi}(t^{-1}(S)|\bmX) - e^\epsilon \mbb{P}_{\pi_\psi}(t^{-1}(S)|\bmZ) \geq \mbb{P}_{\pi_\psi}(A|\bmX) - e^\epsilon \mbb{P}_{\pi_\psi}(A|\bmZ).
    $$
    and hence the $\impliedby$ direction is proved.
\end{proof}
\begin{remark}
    The forward invariance in differential privacy level through mapping $t$ essentially follows from the post-processing property \citep{dwork2014algorithmic}, though note that the mapping has to be measurable for the equations to be well-defined.
\end{remark}

\section{Extra Figures}
Figure \ref{fig:dim_compare} is a replot of Figure 2 in the main text, where now the $\epsilon$ is plotted in log scale, and it is easier to see the polynomial decay in $\epsilon$ with respect to $n$.
\begin{figure}[t]
    \centering
    \begin{subfigure}{0.8\linewidth}
        \centering
        \caption{}\label{subfig:lr_appx}
        \includegraphics[width=\linewidth]{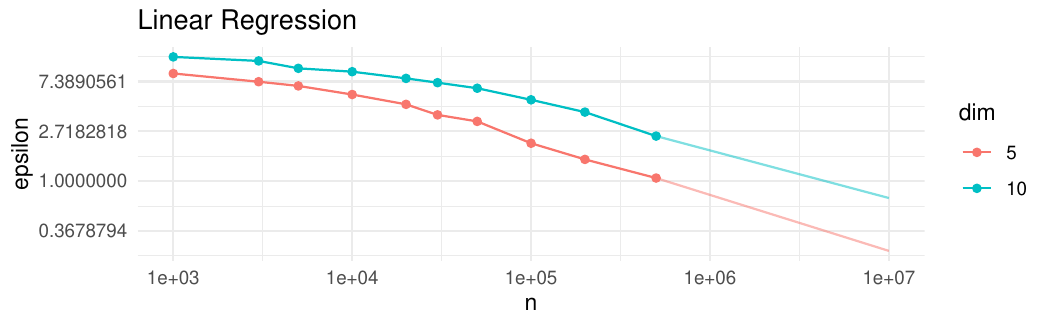}
    \end{subfigure} \hfill
    \begin{subfigure}{0.8\linewidth}
        \centering
        \caption{}\label{subfig:lgr_appx}
        \includegraphics[width=\linewidth]{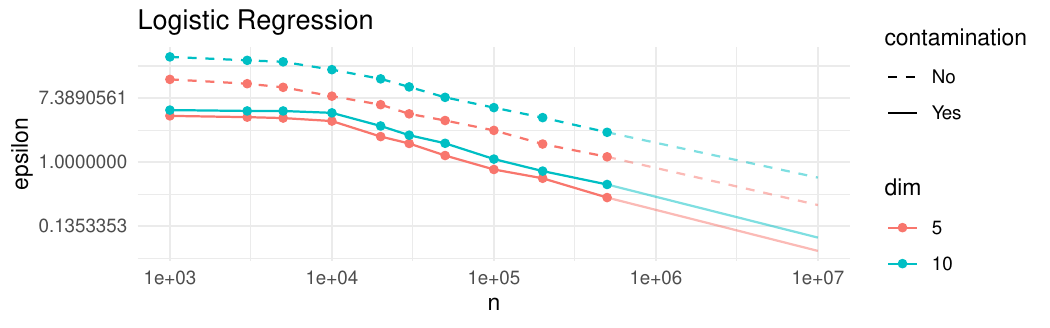}
    \end{subfigure}
    \begin{subfigure}{0.8\linewidth}
        \centering
        \caption{}\label{subfig:cr_appx}
        \includegraphics[width=\linewidth]{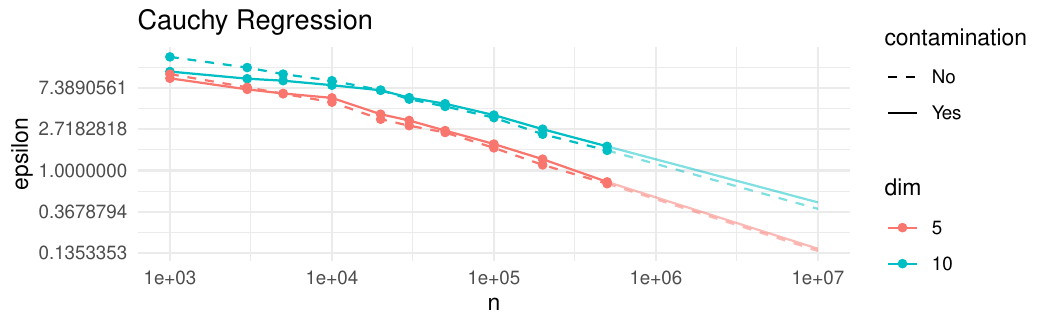}
    \end{subfigure}
    \caption{Figure 2 plotted in log-scale. Plots of estimated $\epsilon_n$ under the setting of Linear Regression, Logistic Regression and Cauchy Regression as the dataset size $n$ varies. The pale lines without nodes are extrapolated estimations.}
    \label{fig:dim_compare}
\end{figure}

\end{appendix}

%%%%%%%%%%%%%%%%%%%%%%%%%%%%%%%%%%%%%%%%%%%%%%
%% Support information, if any,             %%
%% should be provided in the                %%
%% Acknowledgements section.                %%
%%%%%%%%%%%%%%%%%%%%%%%%%%%%%%%%%%%%%%%%%%%%%%
\begin{acks}[Acknowledgments]
We would like to thank all the reviewers and the associate editor for reviewing and providing constructive comments!
\end{acks}
%%%%%%%%%%%%%%%%%%%%%%%%%%%%%%%%%%%%%%%%%%%%%%
%% Funding information, if any,             %%
%% should be provided in the                %%
%% funding section.                         %%
%%%%%%%%%%%%%%%%%%%%%%%%%%%%%%%%%%%%%%%%%%%%%%
\begin{funding}
All authors were supported by the EPSRC research grant "Pooling INference and COmbining Distributions Exactly: A Bayesian approach (PINCODE)", reference (EP/X028100/1, EP/X028119/1, EP/X028712/1, EP/X027872/1). 

LA, HD, MP and GOR were also supported by the UKRI grant, "On intelligenCE And Networks (OCEAN)", reference (EP/Y014650/1).

GOR was also supported by EPSRC grants Bayes for Health (R018561), CoSInES (R034710), and EP/V009478/1.
\end{funding}

%%%%%%%%%%%%%%%%%%%%%%%%%%%%%%%%%%%%%%%%%%%%%%
%% Supplementary Material, including data   %%
%% sets and code, should be provided in     %%
%% {supplement} environment with title      %%
%% and short description. It cannot be      %%
%% available exclusively as external link.  %%
%% All Supplementary Material must be       %%
%% available to the reader on Project       %%
%% Euclid with the published article.       %%
%%%%%%%%%%%%%%%%%%%%%%%%%%%%%%%%%%%%%%%%%%%%%%
\begin{supplement}
\stitle{Supplement to "Privacy Guarantees in Posterior Sampling under Contamination"}

\sdescription{The supplementary material contains
the proofs not included in the appendix and additional technical lemmas for the example section.}
\end{supplement}

%%%%%%%%%%%%%%%%%%%%%%%%%%%%%%%%%%%%%%%%%%%%%%%%%%%%%%%%%%%%%
%%                  The Bibliography                       %%
%%                                                         %%
%%  imsart-???.bst  will be used to                        %%
%%  create a .BBL file for submission.                     %%
%%                                                         %%
%%  Note that the displayed Bibliography will not          %%
%%  necessarily be rendered by Latex exactly as specified  %%
%%  in the online Instructions for Authors.                %%
%%                                                         %%
%%  MR numbers will be added by VTeX.                      %%
%%                                                         %%
%%  Use \cite{...} to cite references in text.             %%
%%                                                         %%
%%%%%%%%%%%%%%%%%%%%%%%%%%%%%%%%%%%%%%%%%%%%%%%%%%%%%%%%%%%%%

%% if your bibliography is in bibtex format, uncomment commands:
\bibliographystyle{imsart-nameyear} % Style BST file (imsart-number.bst or imsart-nameyear.bst)
\bibliography{bibtex}       % Bibliography file (usually '*.bib')
\nocite{ghosal1997review}
\nocite{bennett1962probability}

\end{document}